\documentclass[12pt, reqno]{amsart}
\usepackage{mathrsfs}
\usepackage{amsfonts}
\usepackage[centertags]{amsmath}
\usepackage{amssymb,array}
\usepackage{amsthm}

\usepackage{graphicx}
\usepackage{caption}

\usepackage{enumerate,enumitem}

\SetLabelAlign{center}{\hfill #1\hfill}
\usepackage[textwidth=16cm, hmarginratio=1:1]{geometry}
\usepackage{tikz-cd, tikz}
\usepackage{rotating}
\usepackage[all,cmtip]{xy}
\usepackage[titletoc, title]{appendix}
\usepackage{amscd}
\usepackage[colorlinks]{hyperref}
\usepackage{float}
 
\usepackage{diagbox}

\hypersetup{
pdftitle={Towards Monoidal Categorifications of Twisted Products of Flag Varieties},
pdfauthor={Yingjin Bi},
bookmarksnumbered,
pdfstartview={FitH},
breaklinks=true,
linkcolor=blue,
urlcolor=blue,
citecolor=blue,
bookmarksdepth=2
}
\usetikzlibrary{arrows.meta}

\theoremstyle{plain}
   \newtheorem{theorem}{Theorem}[section]
   
   \newtheorem{proposition}[theorem]{Proposition}
   
   \newtheorem{lemma}[theorem]{Lemma}
   \newtheorem{corollary}[theorem]{Corollary}
   \newtheorem{conjecture}[theorem]{Conjecture}
   
\theoremstyle{definition}
   \newtheorem{definition}[theorem]{Definition}
   
   \newtheorem{example}[theorem]{Example}

   \newtheorem{remark}[theorem]{Remark}

\numberwithin{equation}{section}

\newcommand{\be}{\begin{enumerate}}

    \newcommand{\ene}{\end{enumerate}}

    \newcommand{\ZZ}{\mathbb{Z}}

    \newcommand{\NN}{\mathbb{N}}

    \newcommand{\CC}{\mathbb{C}}

    \newcommand{\DD}{\mathbb{D}}

    \newcommand{\Id}{\operatorname{Id}}

    \newcommand{\fg}{\operatorname{\mathfrak{g}}}

    \newcommand{\bb}{\mathbf{b}}
    \newcommand{\bfa}{\mathbf{a}}

    \newcommand{\cA}{\mathcal{A}}

    \newcommand{\CM}[1]{\mathcal{M}}

    \newcommand{\Br}{{\operatorname{Br}}}
    \newcommand{\qbinom}[2]{\left[\begin{array}{c} #1 \\ #2 \end{array}\right]_q}

\newlength{\mysizetiny}
\newlength{\mysizesmall}
\newlength{\mysize}
\newlength{\mysizelarge}
\begin{document}

\title[Towards categorifications of twisted products]{Towards Monoidal Categorifications of Twisted Products of Flag Varieties}
\author{Yingjin Bi}
\address{Department of Mathematics, Harbin Engineering University}
\email{yingjinbi@mail.bnu.edu.cn}
\date{}

\begin{abstract}
Let $G$ be a simple, simply connected algebraic group of simply-laced type.
For a positive braid word $\beta$ and $v\le\delta(\beta)$, we study the
cluster algebra associated with the twisted product of flag varieties
$\mathring{\mathcal Z}_{v,\beta}$. We compare its Bao--Ye seed with a
right-inductive weave seed and obtain local acyclicity and equality of
the cluster and upper cluster algebras. Using Lusztig parameters in a
bosonic extension algebra, we construct a monoidal subcategory
$\mathscr C_{v,\beta}$ of a Hernandez--Leclerc category and prove that
its Grothendieck ring contains the integral cluster algebra with
noninvertible frozen variables. Every cluster monomial is the class of
a real simple object of $\mathscr C_{v,\beta}$. The reverse inclusion,
which would give a full monoidal categorification, is left as a conjecture.
\end{abstract}

\maketitle
\tableofcontents

\section{Introduction}

Cluster algebras, introduced by Fomin and Zelevinsky
\cite{fomin2002cluster}, connect the geometry of Lie-theoretic varieties
with representation theory and canonical bases. On the geometric side,
coordinate rings of double Bruhat cells, double Bott--Samelson cells,
and braid varieties admit cluster structures
\cite{shen2021cluster,casals2025cluster,galashin2025braid}.
On the representation-theoretic side, monoidal categorification realizes
cluster variables and cluster monomials by simple modules
\cite{hernandez2010cluster,kang2018monoidal,kashiwara2024monoidal,
kashiwara2025monoidal}.

Twisted products of flag varieties provide a common framework for many
of these examples. Bao and Ye \cite{bao2025upper} constructed upper
cluster structures on their open Richardson strata, extending the
mutation constructions for open Richardson varieties developed by
M\'enard \cite{menard2022cluster}. They also proposed comparing their
braid-variety seeds with the weave seeds
\cite[Section~1.2]{bao2025upper}. We establish this comparison in
simply-laced finite type and study its representation-theoretic realization
inside a fixed Hernandez--Leclerc category.

The paper has three main contributions. Lemma~\ref{lem:inductivewave}
identifies the Bao--Ye seed with an appropriate right-inductive weave
seed, yielding local acyclicity and equality of the cluster and upper
cluster algebras. Proposition~\ref{pro:bfadotbeta} and
Theorem~\ref{thm:induction} compute the selected Lusztig coordinates
through the prescribed mutation sequence. Finally,
Theorem~\ref{thm:categorification} embeds the integral cluster algebra
in the Grothendieck ring of an intersection subcategory
$\mathscr C_{v,\beta}$, with every cluster monomial represented by a
real simple object. Surjectivity is formulated as
Conjecture~\ref{conj:full-categorification}.

The representation-theoretic argument uses the bosonic extension
algebra and the categorical PBW theory of Kashiwara--Kim--Oh--Park
\cite{kashiwara2024braid,kashiwara2025global,kashiwara2025monoidal}.
The decisive invariant is the initial segment of a Lusztig parameter
computed in a word beginning with a reduced expression of $v$.
Membership in the relevant tail subcategory is characterized by the
vanishing of these coordinates. We prove this vanishing for the
Bao--Ye initial variables by following the prescribed mutation
sequence. In this argument the auxiliary quiver records only variables
with nonzero truncated parameter; factors omitted from that quiver are
retained in the full exchange relation. The vanishing then propagates
along arbitrary mutations of the resulting seed.

We now describe the geometric, algebraic, and categorical objects more
precisely. Let $G$ be a simple, simply connected complex algebraic group
of simply-laced type, with opposite Borel subgroups $B^+$ and $B^-$.
Let $I$ index the simple roots, let $W$ be the Weyl group, and let
$\operatorname{Br}^+$ be the positive braid monoid. For an expression
\[
\beta=(i_1,\ldots,i_r)
\]
of $b\in\operatorname{Br}^+$, its Demazure product is denoted by
$\delta(b)=\delta(\beta)$. Thus $\delta(1)=e$ and
\[
\delta(\sigma_i b)=\max\{\delta(b),s_i\delta(b)\},
\]
where the maximum is taken in Bruhat order.

For $v\le\delta(b)$, let
\[
\beta_v=(i_{p_1},\ldots,i_{p_m}),\qquad m=\ell(v),
\]
be the leftmost reduced subexpression of $v$ in $\beta$: the increasing
sequence $(p_1,\ldots,p_m)$ is lexicographically minimal among reduced
subexpressions representing $v$. We use the empty sequence when $v=e$.

Consider the open twisted product
\[
\mathring{\mathcal Z}_\beta
=
\bigl(B^+\dot s_{i_1}B^+\times^{B^+}\cdots
\times^{B^+}B^+\dot s_{i_r}B^+\bigr)/B^+,
\]
and its multiplication map
\[
m_\beta:\mathring{\mathcal Z}_\beta\longrightarrow G/B^+,
\qquad (g_1,\ldots,g_r)\longmapsto g_1\cdots g_rB^+.
\]
We set
\[
\mathring{\mathcal Z}_{v,\beta}
:=m_\beta^{-1}(B^-\dot vB^+/B^+).
\]
These varieties include braid varieties and reduced double Bruhat
cells. In particular,
$\mathring{\mathcal Z}_{\delta(b),\beta}\simeq X(\beta)$;
see Proposition~\ref{pro:braid}. More generally, the geometric
isomorphisms established in Section~\ref{sec:twisted} identify
$\mathring{\mathcal Z}_{v,\beta}$ with a braid variety obtained by
adjoining a reduced word for $w_0v^{-1}$ to the left of $\beta$.

Let $\widehat{\mathcal A}$ be the bosonic extension algebra, with
formal deformation parameter $q$, and let $T_i$ be its braid
symmetries. For $b\in\operatorname{Br}^+$, let
$\widehat{\mathcal A}(b)$ be the associated braid subalgebra. We define
\[
\widehat{\mathcal A}_{v,b}
:=\widehat{\mathcal A}(b)\cap T_v(\widehat{\mathcal A}_{\ge0}),
\]
where $T_v$ is defined by any reduced expression of $v$.
This algebra depends only on $v$ and $b$.
Proposition~\ref{prop:Avb-characterization} describes its global basis
in terms of vanishing initial Lusztig coordinates.

For the categorical realization, fix a generic parameter
$\zeta\in\mathbb C^*$ and a complete duality datum
$\mathbb D=(L_i^{\mathbb D})_{i\in I}$ in the Hernandez--Leclerc
category $\mathscr C^{\mathbb Z}$ for
$U_\zeta'(\widehat{\mathfrak g})$.
The word $\beta$ determines affine cuspidal modules
$C_k^{\mathbb D,\beta}$, $1\le k\le r$.
Let $\mathscr C_{\mathbb D}(\beta)$ be the full subcategory they
generate under tensor products, extensions, and subquotients.
When $\mathbb D$ is fixed, we abbreviate it to $\mathscr C(\beta)$.

Extend $\beta_v$ to a reduced expression
$\overline w_0=(j_1,\ldots,j_N)$ of the longest element, and form
\begin{equation}\label{eq:dotw0}
\dot w_0
=(j_1,\ldots,j_N,j_1^*,\ldots,j_N^*,j_1,\ldots,j_N,\ldots),
\end{equation}
where $w_0(\alpha_j)=-\alpha_{j^*}$. We also denote this infinite word
by $\dot\beta_v$. Let $\mathscr C^v$ be the full subcategory generated
under tensor products, extensions, and subquotients by
\[
C_k^{\mathbb D,\dot\beta_v},\qquad k>\ell(v),
\]
and set
\[
\mathscr C_{v,\beta}:=\mathscr C(\beta)\cap\mathscr C^v.
\]
The intersection is taken inside $\mathscr C^{\mathbb Z}$.
Proposition~\ref{prop:categorical-tail-intrinsic} shows that
$\mathscr C^v$ is independent of the reduced expression and completion
used above; in particular, $\mathscr C_{v,\beta}$ depends only on
$\mathbb D,v,b$.
The categorical parameter criterion in Section~\ref{sec:category}
relates this intersection to $\widehat{\mathcal A}_{v,b}$ without
assuming a quantum Grothendieck-ring isomorphism for an arbitrary
complete duality datum.

Let $\mathbf s(v,\beta)$ be the seed obtained from $\mathbf s(\beta)$
by the mutation, freezing, and deletion procedure of
Definition~\ref{def:svbeta}. We write $\mathcal A_0(\mathbf s(v,\beta))$ for
the integral cluster algebra with noninvertible frozen variables,
and use the subscript $\mathrm{loc}$ to indicate localization at
those variables. Our main result is the following.

\begin{theorem}[{Theorems~\ref{thm:categorification} and~\ref{thm:clusterproduct}}]
\label{thm:main-intro}
Let $b\in\operatorname{Br}^+$, let $v\le\delta(b)$, and let $\beta$
be an expression of $b$. For every complete duality datum $\mathbb D$,
there is an injective homomorphism of $\mathbb Z$-algebras
\[
\mathcal A_0(\mathbf s(v,\beta))
\hookrightarrow K_0(\mathscr C_{v,\beta})
\]
under which every cluster monomial is the class of a real simple object.
Moreover, the Bao--Ye cluster structure is locally acyclic and
\[
\mathcal A_0(\mathbf s(v,\beta))_{\mathrm{loc}}
\otimes_{\mathbb Z}\mathbb C
\simeq \mathbb C[\mathring{\mathcal Z}_{v,\beta}].
\]
\end{theorem}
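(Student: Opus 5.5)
The theorem has a geometric half (local acyclicity and identification with $\mathbb C[\mathring{\mathcal Z}_{v,\beta}]$) and a categorical half (the embedding into $K_0(\mathscr C_{v,\beta})$). I would treat them separately, joined only by the common seed $\mathbf s(v,\beta)$ of Definition~\ref{def:svbeta}.

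\emph{Geometric half.} First I would use the isomorphisms of Section~\ref{sec:twisted}. They identify $\mathring{\mathcal Z}_{v,\beta}$ with the braid variety $X(\gamma)$, where $\gamma$ is a reduced word for $w_0v^{-1}$ placed to the left of $\beta$; Proposition~\ref{pro:braid} covers the case $v=\delta(b)$. Under this identification I would invoke Lemma~\ref{lem:inductivewave}, which matches the Bao--Ye seed with a right-inductive weave seed on $X(\gamma)$. Weave seeds coming from Demazure weaves are known to be locally acyclic, with $\mathcal A=\mathcal U$ and with $\mathcal U$ equal to the coordinate ring of the braid variety, by the results of Casals et al.\ and Galashin et al. After localizing at the frozen variables, these give
\[
\mathcal A_0(\mathbf s(v,\beta))_{\mathrm{loc}}\otimes\mathbb C\simeq \mathbb C[\mathring{\mathcal Z}_{v,\beta}].
\]
The only check needed is that the freezing and deletion in Definition~\ref{def:svbeta} correspond to the frozen variables of the weave seed for $\gamma$. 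Deleted variables should become units, or equal to $1$, on the open stratum.

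\emph{Categorical half.} Next I would start from the known monoidal categorification of $\mathscr C(\beta)$ by Kashiwara--Kim--Oh--Park. This gives a cluster algebra $\mathcal A_0(\mathbf s(\beta))\cong K_0(\mathscr C(\beta))$ in which the cluster monomials are exactly the real simples appearing in monoidal seeds. The seed $\mathbf s(v,\beta)$ is reached from $\mathbf s(\beta)$ by mutation, freezing and deletion. The steps are:
\begin{enumerate}
\item Apply the prescribed mutations inside the categorification of $\mathscr C(\beta)$. This produces a monoidal seed whose variables are real simples with Lusztig parameters computed by Proposition~\ref{pro:bfadotbeta} and Theorem~\ref{thm:induction}.
\item Show that the variables kept in $\mathbf s(v,\beta)$ have vanishing initial segment, of length $\ell(v)$, in the word $\dot\beta_v$. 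By Proposition~\ref{prop:Avb-characterization} and the categorical parameter criterion of Section~\ref{sec:category}, they then lie in $\mathscr C^v$, hence in $\mathscr C_{v,\beta}$.
\item Propagate this along arbitrary mutations at the unfrozen vertices of $\mathbf s(v,\beta)$. A mutated variable is the head of a tensor product of objects in the seed, and $\mathscr C_{v,\beta}$ is closed under tensor products and subquotients, so every cluster variable, and hence every cluster monomial, is a real simple object of $\mathscr C_{v,\beta}$.
\end{enumerate}
Injectivity of $\mathcal A_0(\mathbf s(v,\beta))\to K_0(\mathscr C_{v,\beta})$ then follows because the map is the restriction of the injective map for $\mathbf s(\beta)$ to a subalgebra. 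Here I must confirm that freezing and deleting yields a genuine subalgebra. Deleted variables should be sent to classes of trivial modules, i.e.\ to $1$, because their truncated parameters vanish entirely.

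\emph{Main obstacle.} I expect step (2) to be the hardest: tracking the truncated Lusztig parameters through the mutation sequence. In an exchange relation the truncated parameter of the new variable is governed by only one of the two monomials. The factors omitted from the auxiliary quiver, those with zero truncated parameter, still occur in the full relation, and I must show they do not spoil the vanishing. My first attempt would be to compare both exchange monomials through their $\dot\beta_v$-Lusztig data. I would use the PBW/$\Lambda$-invariant additivity of the head of a tensor product of real simples commuting up to a $q$-power, and argue by induction along the right-inductive order that the dominant term's initial segment is zero.
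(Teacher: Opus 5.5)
Your geometric half is essentially the paper's proof of Theorem~\ref{thm:clusterproduct}. The paper takes the coordinate-ring identification from \cite[Theorem~6.5]{bao2025upper} and only local acyclicity from \cite{casals2025cluster}, while you take both from the braid-variety side; given Lemma~\ref{lem:inductivewave}, either works.

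The categorical half has a step that fails. In (3) you assert that a mutated variable is the head of a tensor product of seed objects, so that closure of $\mathscr C_{v,\beta}$ under tensor products and subquotients suffices. This is false. In a monoidal seed, $M_k'$ is characterized only through $M_k\otimes M_k'$, whose composition factors are $\bigotimes_{b_{ik}>0}M_i$ and $\bigotimes_{b_{ik}<0}M_i$. Recovering $M_k'$ requires ``dividing'' by $M_k$, and that leaves $\mathscr C_{v,\beta}$. A concrete counterexample: take $\beta=(1,2,1)$ in type $A_2$. Mutating $D_1$ (weight $\alpha_1$) gives a variable of weight $\alpha_2$, while the seed variables have weights $\alpha_1,\alpha_1+\alpha_2,\alpha_1+\alpha_2$. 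So the new module is not even a composition factor of a tensor product of seed objects. Closure properties only give $M_k\otimes M_k'\in\mathscr C_{v,\beta}$; what you need is a cancellation statement.

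The paper obtains cancellation by a dominance argument in $\widehat{\cA}(\Delta^{M+1})$:
\begin{itemize}
\item The span $\mathcal V_m$ of global basis elements whose first $m$ $\dot\beta^M$-coordinates vanish is a subalgebra, because that parameter set is additive and downward closed for $\prec$.
\item In $XX'=q^a\mathcal M_++q^b\mathcal M_-$, the left side has the maximal index $\bfa^{\dot\beta^M}(X)+\bfa^{\dot\beta^M}(X')$ with nonzero coefficient (Lemma~\ref{lem:twoprod}). This index must occur on the right side, which lies in $\mathcal V_m$.
\item Since $X$ has vanishing prefix, so does $X'$. Lemma~\ref{lem:categorical-prefix-vanishing} then gives $M_{X'}^{\DD}\in\mathscr C^v$.
\end{itemize}
Your ``dominant term'' idea belongs exactly here. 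In the prescribed sweep of step (2), by contrast, the dominant monomial is the horizontal one, whose selected parameter has $l$-th coordinate $1$. Vanishing there comes from cancellation against the old variable, as in \eqref{eq:induction-recurrence-general}, not from the dominant term vanishing.

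Your justification of injectivity is also wrong, although the conclusion is right. Deleted variables are not sent to $1$, and their truncated parameters do not vanish. The variable at the vertex $(i_{p_l},n_{i_{p_l}}-a_l+1)$ deleted at stage $l$ has selected parameter containing $E_l$; compare $\nu(D_2)=(1,0)$ and $\nu(D_4)=(0,1)$ in Example~\ref{ex:rank-two-truncated-mutation}. Moreover, a vanishing truncated parameter means membership in $\mathscr C^v$, not triviality. Specializing variables to $1$ is also incompatible with restricting the ambient isomorphism $\cA_0(\mathbf s(\beta))\cong K_0(\mathscr C_{\DD}(b))$.

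The missing point is combinatorial:
\begin{itemize}
\item Every remaining neighbour of $J_m$ is frozen, so $B_{J_m\times J_{\rm ex}}=0$.
\item This persists under mutation at any $k\in J_{\rm ex}$, because the new entry $b'_{dj}$ involves only $b_{dj}$ and $b_{dk}$.
\item Hence every exchange relation reachable from $\mathbf s(v,\beta)$, classical or quantum, is an ambient exchange relation in which no deleted variable appears.
\item Therefore $\cA_0(\mathbf s(v,\beta))$ is the subring of $\cA_0(\mathbf s(\beta))$ generated by a subset of ambient cluster variables, and it inherits injectivity.
\end{itemize}
Your geometric remark that deleted variables become units is a different statement, namely Lemma~\ref{lem:terminal-left-restriction}. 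It plays no role in the categorical embedding.
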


The theorem establishes the inclusion needed for a monoidal
categorification. The remaining direction is the following conjecture,
where the two rings are compared through the displayed embedding.
\begin{conjecture}\label{conj:full-categorification}
One has
\[
\mathcal A_0(\mathbf s(v,\beta))=K_0(\mathscr C_{v,\beta}).
\]
\end{conjecture}
We also expect a suitable integral form of
$\widehat{\mathcal A}_{v,b}$, after localization at the frozen
variables, to give a quantization of
$\mathbb C[\mathring{\mathcal Z}_{v,\beta}]$.
No equality with this entire intersection algebra, or flat
specialization theorem for it, is asserted here.

\medskip
\noindent\textbf{Organization of the paper.}
Section~\ref{sec:pre} fixes the word, seed, and coefficient conventions.
Section~\ref{sec:twisted} studies the geometric realizations and compares
the cluster seeds. Section~\ref{sec:bosonic} recalls the bosonic
extension, its normalized global basis, and its quantum cluster
structure. Section~\ref{sec:subalgebra} proves the parameter criterion
and the mutation induction. Section~\ref{sec:category} transfers these
results to the intersection subcategory and proves the categorical
part of Theorem~\ref{thm:main-intro}.

\medskip
\noindent\textbf{Acknowledgements.}
The author is deeply grateful to Masaki Kashiwara for proposing the
theme of this work and for his guidance and encouragement throughout
the project. The author thanks Ryo Fujita for many helpful discussions
and for the invitation to RIMS, which provided an excellent environment
for this research, and Huanchen Bao for helpful discussions on twisted
products of flag varieties. The author is supported by the China
Scholarship Council. Grant No. 202506680034.

\section{Preliminaries}\label{sec:pre}

Let \(Q=(I,\Omega)\) be a Dynkin quiver with vertex set \(I\), and let
\(C=(c_{ij})_{i,j\in I}\) be the corresponding Cartan matrix.
Let $\Br$ and $W$ denote the braid group and the Weyl group associated with $Q$,
generated by $\{\sigma_i\}_{i\in I}$ and $\{s_i\}_{i\in I}$, respectively.
For a nonnegative integer $r$, we write $[r]:=\{1,\dots,r\}$, with $[0]=\varnothing$. Denote by $d(i,j)$ the number of edges connecting vertex $i$ with vertex $j$ in $Q$. 

We denote by $R^+$ the set of positive roots, by $Q$ and $Q^+$ the root
lattice and its positive cone, and by $\alpha_i$ and $\alpha_i^\vee$ the
simple roots and coroots. The fundamental weights are denoted by
$\varpi_i$, and $P=\bigoplus_{i\in I}\mathbb Z\varpi_i$ is the weight
lattice. We use the $W$-invariant bilinear form
$(\cdot,\cdot):P\times P\to\mathbb Q$ normalized by
$(\alpha_i,\alpha_j)=c_{ij}$. Its restriction to $Q\times P$ is
integer-valued.

\subsection{Words of vertices}\label{subsec:words}

Let \(b\in \Br^+\), and fix an expression
\(
\beta=(i_1,\dots,i_r)
\)
of \(b\), that is,
\(
b=\sigma_{i_1}\cdots \sigma_{i_r}.
\)
We say that the letter \(i_k\) has \emph{color} \(j\in I\) if \(i_k=j\).
For each \(j\in I\), let \(n_j\) be the number of occurrences of \(j\) in \(\beta\).

For \(k\in[1,r]\), define
\[
N_j(k):=\#\{p\le k\mid i_p=j\}.
\]
Thus, if \(i_k=j\) and \(N_j(k)=n\), then the position \(k\) is the
\(n\)-th occurrence of the color \(j\) in \(\beta\). In this case, we sometimes
write
\[
k\leftrightarrow (j,n).
\]

For a position \(k\in[1,r]\), set
\[
k^{\min}:=\min\{p\in[1,r]\mid i_p=i_k\},
\qquad
k^{\max}:=\max\{p\in[1,r]\mid i_p=i_k\}.
\]
We also define
\[
k^+:=\min\{p>k\mid i_p=i_k\},
\qquad
k^-:=\max\{p<k\mid i_p=i_k\},
\]
with the convention that \(k^+=+\infty\) if no such \(p\) exists, and
\(k^-=-\infty\) if no such \(p\) exists. More generally, for \(j\in I\), define
\[
k(j)^+ := \min\{p>k\mid i_p=j\},
\qquad
k(j)^- := \max\{p<k\mid i_p=j\},
\]
with the same convention.

For \(b\in \Br^+\), we define the \emph{Demazure product}
\(\delta(b)\in W\) inductively by
\[
\delta(1)=e,\qquad
\delta(\sigma_i b)=\max\{\delta(b),s_i\delta(b)\},
\]
where the maximum is taken with respect to the Bruhat order on \(W\).

Let \(v\le \delta(b)\), and let \(m=\ell(v)\). By the subword property for the
Weyl group, there exists a subsequence
\(
1\le q_1<\cdots<q_m\le r
\)
such that
\(
v=s_{i_{q_1}}\cdots s_{i_{q_m}}.
\)
Among all such subsequences, let
\(
p_1<\cdots<p_m
\)
be the lexicographically minimal one. We call
\[
\beta_v=(i_{p_1},\dots,i_{p_m})
\]
the \emph{leftmost subexpression} of \(\beta\) associated with \(v\).

For \(k\in[m]\), define
\[
a_k:=\#\{s\le k\mid i_{p_s}=i_{p_k}\},
\]
and
\[
b_k:=\#\{t\le p_k\mid t\notin\{p_1,\dots,p_m\},\ i_t=i_{p_k}\}.
\]
Equivalently,
\[
d_k:=a_k+b_k
     =\#\{t\le p_k\mid i_t=i_{p_k}\}.
\]
Thus, if \(j=i_{p_k}\), then the position \(p_k\) is the \(d_k\)-th occurrence
of the color \(j\) in the word \(\beta\), that is,
\[
p_k\leftrightarrow (j,d_k).
\]

For \(k\in[m]\), define
\[
k^{\oplus}:=\min\{k<s\leq m \mid i_{p_s}=i_{p_k}\},
\]
with the convention that \(k^{\oplus}=+\infty\) if no such \(s\) exists.
More generally, for \(j\in I\) and \(0\le k\le m\), set
\[
k(j)^{\oplus}:=\min\{k<s\leq m\mid i_{p_s}=j\},
\]
again with the convention that \(k(j)^{\oplus}=+\infty\) if the set is empty.

Finally, for \(j\in I\) and \(0\le k\le m\), define
\[
\alpha(j,k):=\#\{s\le k\mid i_{p_s}=j\}.
\]
In particular,
\[
\alpha(i_{p_k},k)=a_k.
\]
We set $d_{+\infty}=+\infty$ and $\alpha(j,0)=0$. If $v=e$, the
selected word is empty and every mutation sequence indexed by $[m]$ is empty.
\begin{example}\label{exm:words}
Let us consider type $A_3$ and the word
\[
\beta := (3,2,1,2,3,1,3,2) = (i_1,\ldots,i_8).
\]
Let
\(
v = s_3 s_2 s_3 s_1 s_2.
\)
Then we have $5 \leftrightarrow (3,2)$, with
\[
5^{+}=7,\qquad 5^{-}=1,
\]
and
\[
5(2)^{+}=8,\qquad 5(2)^{-}=4.
\]
The Demazure product satisfies $\delta(\beta)=w_0$.
The leftmost reduced subexpression corresponding to $v$ is
\[
(\underline{3},\underline{2},1,2,\underline{3},\underline{1},3,\underline{2}).
\]
That is,
\[
\beta_v=(i_1,i_2,i_5,i_6,i_8)=(j_1,\ldots,j_5).
\]

Let $k=5\in[5]$ and $p_5=8$. Then we have $a_5=2$ and $b_5=1$, and
\[
8\leftrightarrow(2,3).
\]
For $3\in [5]$, we have $\alpha(2,3)=1, \alpha(3,3)=2$, and $3^{\oplus}=+\infty, 3(2)^{\oplus}=5$. 
\end{example}

\subsection{Cluster algebras}

Let \(Q=(K,Q_1)\) be a quiver without loops or 2-cycles, and let
\[
K=K_{\operatorname{ex}}\sqcup K_{\operatorname{fr}}
\]
be a decomposition of its set of vertices into exchangeable and frozen vertices.
We associate to \(Q\) the integer matrix
\begin{equation}\label{eq_exchange_matrix}
B_Q=(b_{ij})_{i\in K,\ j\in K_{\operatorname{ex}}},
\qquad
b_{ij}=\#\{j\to i\}-\#\{i\to j\}.
\end{equation}
Let \(L=(\lambda_{ij})_{i,j\in K}\) be a skew-symmetric integer matrix. We say
that \(L\) is compatible with \(B_Q\) if
\[
\sum_{k\in K} b_{ki}\lambda_{kj}=2\delta_{ij}
\qquad
\text{for all } i\in K_{\operatorname{ex}},\ j\in K.
\]

Let \(\mathfrak F\) be an ambient field and $x_i\in \mathfrak F$ for all $i\in K$. A \(\Lambda\)-seed in \(\mathfrak F\) is a quadruple
\[
\mathbf s=(\{x_i\}_{i\in K},L,B_Q,K_{\operatorname{ex}})
\]
such that:
\begin{enumerate}
\item the elements \(\{x_i\}_{i\in K}\) are algebraically independent over
\(\mathbb Q\);
\item \(L\) is compatible with \(B_Q\).
\end{enumerate}
The set \(\{x_i\}_{i\in K}\) is called the cluster of \(\mathbf s\). Its elements
are called cluster variables, and the variables \(x_i\) with
\(i\in K_{\operatorname{fr}}\) are called frozen variables.

For \(\mathbf a=(a_i)_{i\in K}\in \mathbb Z^K\), we write
\[
x^{\mathbf a}:=\prod_{i\in K}x_i^{a_i}.
\]
If \(\mathbf a\in \mathbb Z_{\ge 0}^K\), then \(x^{\mathbf a}\) is called a
cluster monomial.

Let \(k\in K_{\operatorname{ex}}\). The mutation \(\mu_k(\mathbf s)\) is defined as follows.
The mutated exchange matrix \(B'=\mu_k(B_Q)\) is given by
\[
b'_{ij}
=
\begin{cases}
-b_{ij}, & i=k \text{ or } j=k,\\
b_{ij}+[b_{ik}]_+[b_{kj}]_+
-[-b_{ik}]_+[-b_{kj}]_+,
& \text{otherwise},
\end{cases}
\]
where \([a]_+=\max(a,0)\). The mutated skew-symmetric matrix
\(L'=\mu_k(L)\) is given by
\[
\lambda'_{ij}
=
\begin{cases}
-\lambda_{kj}+\displaystyle\sum_{t\in K}[-b_{tk}]_+\lambda_{tj},
& i=k,\ j\ne k,\\[6pt]
-\lambda_{ik}+\displaystyle\sum_{t\in K}[-b_{tk}]_+\lambda_{it},
& i\ne k,\ j=k,\\[6pt]
\lambda_{ij}, & \text{otherwise}.
\end{cases}
\]
Finally, the mutated cluster variables are
\[
x'_i=
\begin{cases}
x^{\mathbf a'}+x^{\mathbf a''}, & i=k,\\
x_i, & i\ne k,
\end{cases}
\]
where
\[
a'_i=
\begin{cases}
-1, & i=k,\\
[b_{ik}]_+, & i\ne k,
\end{cases}
\qquad
a''_i=
\begin{cases}
-1, & i=k,\\
[-b_{ik}]_+, & i\ne k.
\end{cases}
\]
Then
\[
\mu_k(\mathbf s)
=
(\{x'_i\}_{i\in K},L',B',K_{\operatorname{ex}})
\]
is again a \(\Lambda\)-seed.

We use an integral form for statements about Grothendieck rings.
Let \(\cA_0(\mathbf s)\) be the \(\mathbb Z\)-subalgebra of
\(\mathfrak F\) generated by all cluster variables, including the
frozen variables, without adjoining their inverses. Let \(S_{\rm fr}\)
be the multiplicative set generated by the frozen variables, and set
\[
\cA_0(\mathbf s)_{\rm loc}:=S_{\rm fr}^{-1}\cA_0(\mathbf s),
\qquad
\cA(\mathbf s):=\mathbb Q\otimes_{\mathbb Z}
\cA_0(\mathbf s)_{\rm loc}.
\]
Thus \(\cA(\mathbf s)\) is the cluster algebra over \(\mathbb Q\)
with all frozen variables inverted. Complex coordinate rings will be
compared with \(\cA(\mathbf s)\otimes_{\mathbb Q}\mathbb C\), or
equivalently with
\(\cA_0(\mathbf s)_{\rm loc}\otimes_{\mathbb Z}\mathbb C\).

The upper cluster algebra is
\[
U(\mathbf s)
=
\bigcap_{\mathbf t\in T}
\mathbb Q[x_{\mathbf t,i}^{\pm1}\mid i\in K],
\]
where \(T\) is the set of all seeds mutation-equivalent to \(\mathbf s\), and
the intersection is taken inside \(\mathfrak F\).

Assume now that a total order on \(K\) is fixed. Let
\[
\mathbb K_t=\mathbb Z[t^{\pm1/2}].
\]
For a skew-symmetric integer matrix \(L=(\lambda_{ij})_{i,j\in K}\), the
quantum torus \(\mathcal T_L\) is the \(\mathbb K_t\)-algebra generated by
\(X_i^{\pm1}\), \(i\in K\), with relations
\[
X_iX_j=t^{\lambda_{ij}}X_jX_i,
\qquad
X_iX_i^{-1}=X_i^{-1}X_i=1.
\]
For \(\mathbf a=(a_i)_{i\in K}\in\mathbb Z^K\), define the normalized monomial
\[
X^{\mathbf a}
=
t^{\frac12\sum_{i>j}a_ia_j\lambda_{ij}}
\prod_{i\in K}X_i^{a_i},
\]
where the product is taken with respect to the fixed order on \(K\).

For a quantum seed
\[
\mathbf s=(\{X_i\}_{i\in K},L,B_Q,K_{\operatorname{ex}})
\]
and \(k\in K_{\operatorname{ex}}\), the mutation is defined by the same formulas
as above, with the exchange relation
\[
X'_k=X^{\mathbf a'}+X^{\mathbf a''},
\qquad
X'_i=X_i\quad (i\ne k),
\]
where the monomials are normalized quantum monomials.

The \emph{quantum cluster algebra} \(\mathcal A_t(\mathbf s)\) is the
\(\mathbb K_t\)-subalgebra of the ambient skew-field of \(\mathcal T_L\)
generated by all quantum cluster variables appearing in seeds
mutation-equivalent to \(\mathbf s\), together with the inverses
\(X_f^{-1}\) of the frozen variables for all \(f\in K_{\operatorname{fr}}\).

If the frozen variables are not inverted, we denote the corresponding
\(\mathbb K_t\)-subalgebra by \(\overline{\mathcal A}_t(\mathbf s)\).
Thus \(\overline{\mathcal A}_t(\mathbf s)\) is generated by all quantum
cluster variables appearing in seeds mutation-equivalent to \(\mathbf s\),
but not by the inverses of the frozen variables.

The bar anti-involution fixes every \(X_i\), sends
\(t^{1/2}\) to \(t^{-1/2}\), and reverses products. The normalized
monomials above are bar-invariant. In Section~\ref{sec:bosonic}, the
bosonic deformation parameter is denoted by \(q\). The two parameters
are related by
\begin{equation}\label{eq:quantum-parameter-convention}
\mathbb K_t\longrightarrow\mathbb K:=\mathbb Z[q^{\pm1/2}],
\qquad t^{1/2}\longmapsto q^{-1/2}.
\end{equation}
In particular, a realization of the quantum seed in the bosonic extension
satisfies \(D_iD_j=q^{-\lambda_{ij}}D_jD_i\).
We distinguish this scalar identification from specialization at
\(t^{1/2}=1\), which gives the commutative seed. We use the same vertex
labels and exchange matrix in these two realizations.

\subsubsection{Seed of words}\label{subsec:seed_words}

Let \(\beta=(i_1,\dots,i_r)\) be an expression of a positive braid
\(b\in \Br^+\), that is,
\[
b=\sigma_{i_1}\cdots \sigma_{i_r}.
\]
Set \(K=[r]\). We use the occurrence notation introduced above. Thus, if
\(i_k=j\) and \(k\) is the \(n\)-th occurrence of \(j\) in \(\beta\), we may
write \(k\leftrightarrow(j,n)\).

We define the set of frozen vertices by
\[
K_{\mathrm{fr}}
:=
\{k\in K\mid k^+=+\infty\}.
\]
Equivalently, under the occurrence notation,
\[
K_{\mathrm{fr}}
=
\{(j,n_j)\mid j\in I,\ n_j>0\}.
\]
We set
\[
K_{\mathrm{ex}}:=K\setminus K_{\mathrm{fr}}.
\]

We next recall the exchange matrix associated with \(\beta\). Define
\[
B_{\beta}=(b_{kl})_{k\in K,\ l\in K_{\mathrm{ex}}}
\]
by
\begin{equation}
\label{eq_braidmatrix}
b_{kl}=
\begin{cases}
1,
& \text{if } l=k^-,
\\
-1,
& \text{if } l=k^+,
\\
c_{i_k i_l},
& \text{if } l<k<l^+<k^+,
\\
-c_{i_k i_l},
& \text{if } k<l<k^+<l^+,
\\
0,
& \text{otherwise}.
\end{cases}
\end{equation}
Here the inequalities are understood with the conventions
\(k^-=-\infty\) if \(k^-\) does not exist and \(k^+=+\infty\) if \(k^+\)
does not exist.

Define
\[
L_\beta=(\lambda_{kl})_{k,l\in K}
\]
as follows. Let
\[
w_s=s_{i_1}\cdots s_{i_s}
\qquad (s\in [r]).
\]
We set \(\lambda_{kk}=0\), and for \(k<l\), define
\[
\lambda_{kl}
=
-\bigl(
\varpi_{i_k}-w_k\varpi_{i_k},
\varpi_{i_l}+w_l\varpi_{i_l}
\bigr),
\qquad
\lambda_{lk}=-\lambda_{kl}.
\]
By \cite[Proposition~1.2]{fujita2023isomorphisms},  the pair
\((B_\beta,L_\beta)\) is compatible, see Remark \ref{rem_BGLS}.

The ice quiver \(Q_\beta\) is the quiver whose extended exchange matrix is
\(B_\beta\). Thus \(Q_\beta\) has horizontal arrows
\[
k\longrightarrow k^+
\]
whenever \(k^+\) exists. It also has ordinary arrows
\[
k\longrightarrow l
\]
whenever
\[
l<k<l^+<k^+
\qquad\text{and}\qquad
c_{i_k i_l}=-1.
\]
\begin{remark}
\label{rem_BGLS}
Both signs in a compatible pair must be changed together when reversing
all arrows. We retain the matrices displayed above and the compatibility
condition \(B_\beta^{\mathsf T}L_\beta=2I\) on exchangeable columns.
Their bosonic realization uses the parameter map
\eqref{eq:quantum-parameter-convention}; in particular the exponent in
\(D_kD_l=q^{-\lambda_{kl}}D_lD_k\) is \(-\lambda_{kl}\).
This convention will be used in every quantum exchange relation.
\end{remark}

\begin{example}\label{exm:quiver}
Following Example~\ref{exm:words}, the quiver \(Q_\beta\) is shown in
Figure~\ref{fig:Qbeta}.
\begin{figure}[htbp]
\centering
\begin{tikzpicture}[
  x=0.85cm,
  y=0.78cm,
  >={Stealth[length=1.5mm]},
  every node/.style={font=\scriptsize},
  every edge/.style={draw, ->, thick}
]

\node (1) at (7,0) {$1$};
\node (2) at (6,1) {$2$};
\node (3) at (5,2) {$3$};
\node (4) at (4,1) {$4$};
\node (5) at (3,0) {$5$};
\node (6) at (2,2) {$6$};
\node (7) at (1,0) {$7$};
\node (8) at (0,1) {$8$};

\draw (3) edge (6);
\draw (2) edge (4);
\draw (4) edge (8);
\draw (1) edge (5);
\draw (5) edge (7);

\draw (3) edge (2);
\draw (4) edge (3);
\draw (6) edge (4);
\draw (4) edge (1);
\draw (7) edge (4);

\end{tikzpicture}
\caption{The quiver \(Q_{\beta}\).}
\label{fig:Qbeta}
\end{figure}
\end{example}

We define
\[
\mathbf{s}(\beta)
:=
\bigl(\{X_k\}_{k\in K},\,L_\beta,\,B_\beta,\,K_{\mathrm{ex}}\bigr)
\]
to be the associated quantum \(\Lambda\)-seed.

\begin{definition}
For a word \(\beta\), we denote by
\[
\mathcal{A}(\mathbf{s}(\beta)),\qquad
U(\mathbf{s}(\beta)),\qquad
\mathcal{A}_t(\mathbf{s}(\beta)),\qquad
\overline{\mathcal{A}}_t(\mathbf{s}(\beta))
\]
the cluster algebra, the upper cluster algebra, the quantum cluster algebra
with frozen variables inverted, and the quantum cluster algebra without
inverting frozen variables, respectively, associated with the seed
\(\mathbf{s}(\beta)\).

The realization by the right-inductive weave for
\(\Delta\beta\), together with
\cite[Theorem~7.13]{casals2025cluster}, gives
\[
\mathcal{A}(\mathbf{s}(\beta))=U(\mathbf{s}(\beta)).
\]
\end{definition}

\subsubsection{Leftmost subexpressions}

Let \(v\le \delta(b)\), and set \(m=\ell(v)\). Let
\[
\beta_v=(i_{p_1},\dots,i_{p_m})
\]
be the leftmost reduced subexpression of \(\beta\) representing \(v\). Thus
\[
1\le p_1<\cdots<p_m\le r,
\qquad
v=s_{i_{p_1}}\cdots s_{i_{p_m}},
\]
and the sequence \((p_1,\dots,p_m)\) is lexicographically minimal among all
reduced subexpressions of \(\beta\) representing \(v\).

Set
\[
p_0=0,\qquad p_{m+1}=r+1.
\]
For \(0\le k\le r\), define
\[
v_k:=s_{i_{p_1}}\cdots s_{i_{p_t}}
\qquad
\text{if } p_t\le k<p_{t+1}.
\]
In particular, \(v_k=e\) for \(0\le k<p_1\), and \(v_k=v\) for
\(p_m\le k\le r\).

We also define \(v'_k\) inductively by setting \(v'_0=e\) and
\begin{equation}\label{eq:vk'}
v'_k=
\begin{cases}
v'_{k-1}s_{i_k},
& \text{if } s_{i_k}(v'_{k-1})^{-1}v < (v'_{k-1})^{-1}v,\\
v'_{k-1},
& \text{otherwise}.
\end{cases}
\end{equation}

\begin{lemma}\label{lem:vk}
Let \(\beta=(i_1,\dots,i_r)\) be an expression of \(b\), and let
\(v\le \delta(b)\). Then
\[
v_k=v'_k
\]
for all \(0\le k\le r\).
\end{lemma}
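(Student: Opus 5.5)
The plan is to read \eqref{eq:vk'} as a greedy algorithm and show that it produces exactly the positions $p_1<\cdots<p_m$. For $0\le k\le r$ put $u_k:=(v'_k)^{-1}v$. By construction, whenever the letter $i_k$ is taken we have $u_k=s_{i_k}u_{k-1}<u_{k-1}$. Hence $\ell(v'_k)+\ell(u_k)=\ell(v)$ for all $k$, and the taken letters form a reduced expression of $v'_k$. Let $g_1<g_2<\cdots$ be the positions where $v'$ changes. The lemma is then equivalent to $g_t=p_t$ for all $t$, together with the fact that exactly $m$ letters are taken, i.e. $u_r=e$. Indeed, both $v_k$ and $v'_k$ are then the product of the selected letters up to position $k$.

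\emph{Step 1 (the greedy never gets stuck).} I will prove the invariant $u_k\le\delta(\sigma_{i_{k+1}}\cdots\sigma_{i_r})$ by induction on $k$. By the subword property this says that $u_k$ has a reduced expression as a subword of $(i_{k+1},\dots,i_r)$. The base case $k=0$ is the hypothesis $v\le\delta(b)$. For the step, write $s=s_{i_{k+1}}$ and $\delta'=\delta(\sigma_{i_{k+2}}\cdots\sigma_{i_r})$, so that $u_k\le\max\{\delta',s\delta'\}$.
\begin{itemize}
\item If $su_k<u_k$, we need $su_k\le\delta'$.
\item If $su_k>u_k$, we need $u_k\le\delta'$.
\end{itemize}
Both follow from Deodhar's lifting property ($Z$-property) of the Bruhat order, applied with $y=\max\{\delta',s\delta'\}$, which satisfies $sy<y$ unless $y=\delta'$; in that case the claims are immediate. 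At $k=r$ the invariant gives $u_r\le\delta(1)=e$, so exactly $m$ letters are selected. I expect this case analysis with the lifting property to be the main technical point, and it is entirely standard.

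\emph{Step 2 (comparison with the leftmost subexpression).} I argue by induction on $t$, assuming $g_s=p_s$ for all $s<t$. Then the remaining element after position $p_{t-1}$ is the same for both procedures, namely $u:=u_{p_{t-1}}=(s_{i_{p_1}}\cdots s_{i_{p_{t-1}}})^{-1}v$, and $u\neq e$ since $t\le m$. By definition, $g_t$ is the first $k>p_{t-1}$ with $s_{i_k}u<u$.

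First, $(i_{p_t},\dots,i_{p_m})$ is a reduced expression of $u$, so $s_{i_{p_t}}u<u$. Hence $g_t\le p_t$.

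Conversely, Step 1 applied at $k=g_t$ shows that $s_{i_{g_t}}u$ is a reduced subword of $(i_{g_t+1},\dots,i_r)$. Concatenating gives a reduced subexpression of $\beta$ for $v$ with indices $p_1,\dots,p_{t-1},g_t,\dots$. Lexicographic minimality of $(p_s)$ then forces $p_t\le g_t$.

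Thus $g_t=p_t$, which completes the induction and, by the reduction above, gives $v_k=v'_k$ for all $k$.
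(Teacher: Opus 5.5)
Your proof is correct, but it is organized differently from the paper's.

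The paper inducts directly on $k$. Assuming $v'_{k-1}=v_{k-1}$, it writes the common residual as $z=s_{i_{p_{t+1}}}\cdots s_{i_{p_m}}$. If $k<p_{t+1}$ and $s_{i_k}z<z$, it applies the exchange condition to this reduced word, so that $s_{i_k}z$ becomes a subword $\gamma$ of $(i_{p_{t+1}},\ldots,i_{p_m})$. Then $(i_{p_1},\ldots,i_{p_t},i_k,\gamma)$ is a lexicographically smaller reduced subexpression for $v$, a contradiction.

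You instead induct on the number $t$ of selected letters. For the inequality $p_t\le g_t$, you build the tail after $g_t$ from your Step~1 invariant $u_k\le\delta(\sigma_{i_{k+1}}\cdots\sigma_{i_r})$, which you prove with Deodhar's lifting property. Your case analysis with $y=\max\{\delta',s\delta'\}$ checks out. The invariant gives slightly more: for any $v\le\delta(\beta)$ the greedy rule never gets stuck and ends with $u_r=e$, without any reference to the leftmost subexpression.

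For this lemma, though, Step~1 is not needed. You already know $g_t\le p_t$. If $g_t<p_t$, the exchange condition applied to the reduced word $(i_{p_t},\ldots,i_{p_m})$ of $u$ exhibits $s_{i_{g_t}}u$ as a subword at positions $>g_t$; this is exactly the paper's argument. Likewise, $u_r=e$ is automatic, because the residual is $e$ once $g_m=p_m$.

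So the paper's route is shorter and uses only the exchange condition. Yours replaces that with heavier Bruhat-order machinery: the lifting property plus the identification of subword products with the interval $[e,\delta(\cdot)]$.
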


\begin{proof}
The case \(v=e\) is immediate. Suppose that \(v'_{k-1}=v_{k-1}\),
and let \(t=\#\{s\mid p_s<k\}\). The residual element is
\[
 z:=v_{k-1}^{-1}v=s_{i_{p_{t+1}}}\cdots s_{i_{p_m}},
\]
with the empty product interpreted as \(e\).
If \(k=p_{t+1}\), the displayed expression is reduced and begins
with \(s_{i_k}\), so \(s_{i_k}z<z\). Thus both constructions append
\(s_{i_k}\).

Otherwise \(k<p_{t+1}\), or \(t=m\). In the latter case \(z=e\)
and neither construction changes the prefix. In the former case,
suppose that \(s_{i_k}z<z\). By the exchange condition,
\(s_{i_k}z\) has a reduced expression \(\gamma\) obtained by deleting
one letter from \((i_{p_{t+1}},\ldots,i_{p_m})\). Therefore
\[
 (i_{p_1},\ldots,i_{p_t},i_k,\gamma)
\]
is a reduced subexpression for \(v\). Its index sequence is
lexicographically smaller than \((p_1,\ldots,p_m)\), contrary to
leftmostness. Hence \(s_{i_k}z>z\), and both constructions leave
the prefix unchanged. Induction from \(v'_0=v_0=e\) proves the claim.
\end{proof}

\subsubsection{A sequence of mutations}

In this subsection, we introduce a sequence of mutations which will play an
important role in the rest of the paper.

\begin{definition}\label{def:svbeta}
If \(m=0\), set \(\mathbf s(e,\beta)=\mathbf s(\beta)\),
\(M_0=\mathrm{Id}\), and \(H_0=J_0=\varnothing\).
For \(m>0\), fix \(l\in[m]\), and set \(i=i_{p_l}\). We define the mutation sequence
\(\widetilde{\mu}_l\) by
\begin{equation}
\widetilde{\mu}_l=
\begin{cases}
\mu_{(i,n_i-a_l)}\circ \cdots \circ \mu_{(i,b_l+1)},
& \text{if } a_l+b_l<n_i,\\[4pt]
\Id,
& \text{if } a_l+b_l=n_i.
\end{cases}
\end{equation}
Here the composition is applied from right to left. Thus, in the first case,
the mutations are performed successively at
\[
(i,b_l+1),\ (i,b_l+2),\ \dots,\ (i,n_i-a_l).
\]

We further set
\begin{equation}
M_l:=\widetilde{\mu}_l\circ \cdots \circ \widetilde{\mu}_1 .
\end{equation}
We use the same notation for the induced sequence of quiver mutations.

We define a sequence of seeds \(\widetilde{\mathbf{s}}_l\) inductively, starting
from
\[
\widetilde{\mathbf{s}}_0=\mathbf{s}(\beta).
\]
Assume that \(\widetilde{\mathbf{s}}_{l-1}\) has been constructed. We apply the
mutation sequence \(\widetilde{\mu}_l\) to \(\widetilde{\mathbf{s}}_{l-1}\), and
then freeze the vertex
\[
(i_{p_l},\, n_{i_{p_l}}-a_l+1).
\]
The resulting seed is denoted by \(\widetilde{\mathbf{s}}_l\).
No vertices are deleted at an intermediate stage. For later use, write
\[
H_l:=\{(i_{p_s},n_{i_{p_s}}-a_s+1)\mid 1\le s\le l\},
\qquad H_0=\varnothing.
\]
Every vertex in \(H_l\) is frozen and is not mutated again. The full
intermediate seed and the auxiliary support quivers of
Section~\ref{sec:subalgebra} will be kept distinct.

Finally, starting from the seed \(\widetilde{\mathbf{s}}_m\), we delete the
frozen vertices
\[
(i_{p_l},\, n_{i_{p_l}}-a_l+1),
\qquad l\in[m].
\]
We denote this set of deleted vertices by
\[
J_m:=\{(i_{p_l},\, n_{i_{p_l}}-a_l+1)\mid l\in[m]\},
\]
and set
\[
J:=K\setminus J_m.
\]

Let \(B^{(m)}:=M_m(B_\beta)\) and \(L^{(m)}:=M_m(L_\beta)\). We freeze all
remaining vertices which are adjacent, in the quiver \(M_m(Q_\beta)\), to one
of the vertices in \(J_m\). Denote the resulting set of frozen vertices in
\(J\) by \(J_{\rm fr}\), and set
\[
J_{\rm ex}:=J\setminus J_{\rm fr}.
\]

Note that the exchange matrix of \(\widetilde{\mathbf{s}}_m\) satisfies
\[
B^{(m)}_{J_m\times J_{\rm ex}}=0.
\]
Therefore, deleting the vertices in \(J_m\) does not affect the compatibility
condition on the remaining exchangeable columns. Hence the restricted pair
\[
\left(L^{(m)}_{J\times J},\, B^{(m)}_{J\times J_{\rm ex}}\right)
\]
is compatible.

The resulting seed is denoted by
\[
\mathbf{s}(v,\beta)
=
\left(
\{y_i\}_{i\in J},
L^{(m)}_{J\times J},
B^{(m)}_{J\times J_{\rm ex}},
J_{\rm ex}
\right),
\]
where \(y_i\) denotes the cluster variable attached to the vertex \(i\) in
\(\widetilde{\mathbf{s}}_m\).

We denote by
\[
\cA(\mathbf{s}(v,\beta)),\quad
U(\mathbf{s}(v,\beta)),\quad
\cA_t(\mathbf{s}(v,\beta)),\quad
\overline{\cA}_t(\mathbf{s}(v,\beta))
\]
the cluster algebra, the upper cluster algebra, the quantum cluster algebra
with frozen variables inverted, and the quantum cluster algebra without
inverting frozen variables, respectively, associated with
\(\mathbf{s}(v,\beta)\).

We also denote by
\[
\cA_0(\mathbf{s}(v,\beta))
\]
the integral subalgebra generated by all cluster variables, including the
frozen variables, without inverting them. Thus
\(\cA(\mathbf{s}(v,\beta))\) is the rational scalar extension of
\(\cA_0(\mathbf{s}(v,\beta))_{\rm loc}\).
\end{definition}

\begin{example}\label{exm:mutations}
Continuing Example~\ref{exm:words}, we have
\[
n_1=2,\quad n_2=3,\quad n_3=3,
\]
and
\[
a_1=1,\ a_2=1,\ a_3=2,\ a_4=1,\ a_5=2,
\qquad
b_1=0,\ b_2=0,\ b_3=0,\ b_4=1,\ b_5=1.
\]

Then
\[
\widetilde{\mu}_1=\mu_{(3,2)}\mu_{(3,1)}=\mu_5\mu_1,\qquad
\widetilde{\mu}_2=\mu_{(2,2)}\mu_{(2,1)}=\mu_4\mu_2,
\]
\[
\widetilde{\mu}_3=\mu_{(3,1)}=\mu_1,\qquad
\widetilde{\mu}_4=\Id,\qquad
\widetilde{\mu}_5=\Id.
\]

The deleted vertices of the seed $\widetilde{\mathbf{s}}_5$ are given by
\[
\{(3,3),(2,3),(3,2),(1,2),(2,2)\}=\{4,5,6,7,8\}.
\]
Hence, the seed $\mathbf{s}(v,\beta)$ has vertex set $\{1,2,3\}$.
The frozen vertices are those vertices in $\{1,2,3\}$ that are connected to
$\{4,5,6,7,8\}$ in the quiver $M_5(Q_\beta)$.
\end{example}

\section{Twisted products of flag varieties}\label{sec:twisted}

In this section, we recall the notion of twisted products of flag varieties and
their cluster structures.

Let \(G\) be a connected, simply connected, simple algebraic group over
\(\mathbb C\). Fix a maximal torus \(T\subset G\), and let \(B^+\) and \(B^-\)
be a pair of opposite Borel subgroups containing \(T\). Let
\[
\mathcal B=G/B^+
\]
be the flag variety of \(G\). For each \(w\in W\), we choose a representative
\(\dot w\in N_G(T)\).

For \(w\in W\), we denote by
\[
\mathring{\mathcal B}_w:=B^+\dot wB^+/B^+
\]
the Schubert cell, and for \(v\in W\), we denote by
\[
\mathring{\mathcal B}^v:=B^-\dot vB^+/B^+
\]
the opposite Schubert cell. Their intersection
\[
\mathring{\mathcal B}_{w,v}
:=
\mathring{\mathcal B}_w\cap \mathring{\mathcal B}^v
\]
is called the open Richardson cell. It is nonempty if and only if
\(v\le w\) in the Bruhat order.

Let \(U^+\) and \(U^-\) be the unipotent radicals of \(B^+\) and \(B^-\),
respectively. For \(w\in W\), define
\[
N(w):=U^+\cap \dot wU^-\dot w^{-1}.
\]

Let \(V(\varpi_i)\) be the irreducible \(G\)-module of highest weight
\(\varpi_i\). Fix a highest weight vector
\(\eta_i\in V(\varpi_i)\), and let
\(\eta_i^*\in V(\varpi_i)^*_{-\varpi_i}\) be the dual weight vector
normalized by
\[
\langle \eta_i^*,\eta_i\rangle=1.
\]
We define
\[
\Delta_{\varpi_i}(g)
:=
\langle \eta_i^*,g\eta_i\rangle,
\qquad g\in G.
\]
More generally, for \(u,v\in W\), we define the generalized minor by
\[
\Delta_{u\varpi_i,v\varpi_i}(g)
:=
\Delta_{\varpi_i}(\dot u^{-1}g\dot v).
\]



\subsection{Twisted products of flag varieties}

Let
\[
\mathcal Z
:=
G\times^{B^+}G\times^{B^+}\cdots\times^{B^+}G/B^+
\]
be the twisted product with \(n\) factors. Equivalently, \(\mathcal Z\) is the
quotient of \(G^n\) by the right action of \((B^+)^n\) given by
\[
(g_1,\dots,g_n)\cdot(b_1,\dots,b_n)
=
(g_1b_1,b_1^{-1}g_2b_2,\dots,b_{n-1}^{-1}g_nb_n).
\]
We write a point of \(\mathcal Z\) as \([g_1,\dots,g_n]\).

For a word
\(
\overline w=(w_1,\dots,w_n)
\)
with \(w_k\in W\), define
\[
\mathcal Z_{\overline w}
:=
B^+\dot w_1B^+
\times^{B^+}
B^+\dot w_2B^+
\times^{B^+}
\cdots
\times^{B^+}
B^+\dot w_nB^+/B^+.
\]

The multiplication morphism
\(
m:\mathcal Z\to \mathcal B
\)
is defined by
\[
m([g_1,\dots,g_n])
=
g_1\cdots g_nB^+.
\]
This is well-defined with respect to the above \(B^+\)-actions.

For \(v\in W\), set
\[
\mathring{\mathcal Z}^{\,v}
:=
m^{-1}(\mathring{\mathcal B}^{\,v}),
\]
and define
\[
\mathring{\mathcal Z}_{v,\overline w}
:=
\mathring{\mathcal Z}^{\,v}\cap \mathcal Z_{\overline w}.
\]

In particular, for a word \(\beta=(i_1,\dots,i_N)\) in the index set \(I\), we
obtain the word
\(
\overline w_\beta=(s_{i_1},\dots,s_{i_N})
\)
in \(W\). We write
\[
\mathring{\mathcal Z}_{v,\beta}
:=
\mathring{\mathcal Z}_{v,\overline w_\beta}.
\]

\begin{remark}
By \cite{bao2022total}, the variety
\(\mathring{\mathcal Z}_{v,\beta}\) is nonempty if and only if
\(
v\le \delta(\beta).
\)
In this case, it is a smooth affine variety of dimension
\(N-\ell(v).
\)

More generally, for a word \(\overline w=(w_1,\dots,w_n)\) in \(W\), after
choosing reduced expressions for all \(w_k\) and concatenating them, one obtains
a word \(\beta\) in the simple reflections. Then there is an isomorphism
\[
\mathring{\mathcal Z}_{v,\overline w}
\cong
\mathring{\mathcal Z}_{v,\beta}.
\]
\end{remark}

\subsection{Braid varieties}

For a pair of flags \((xB^+,yB^+)\), we say that they are in relative position
\(w\in W\) if
\(x^{-1}y\in B^+\dot wB^+.
\)
We denote this relation by
\[
xB^+\xrightarrow{w} yB^+ .
\]

For each \(i\in I\), fix isomorphisms
\[
x_i:\mathbb C\to U_i^+,
\qquad
y_i:\mathbb C\to U_i^-,
\]
where \(U_i^+\) and \(U_i^-\) are the positive and negative root subgroups
corresponding to \(\alpha_i\). We choose these isomorphisms so that the assignments
\[
\begin{pmatrix}
1 & z \\ 0 & 1
\end{pmatrix}
\mapsto x_i(z),
\qquad
\begin{pmatrix}
b & 0 \\ 0 & b^{-1}
\end{pmatrix}
\mapsto \chi_i(b),
\qquad
\begin{pmatrix}
1 & 0 \\ z & 1
\end{pmatrix}
\mapsto y_i(z)
\]
define a morphism
\[
\varphi_i:\mathrm{SL}_2(\mathbb C)\to G,
\]
where \(\chi_i:\mathbb C^\times\to T\) is the simple coroot corresponding to
\(i\).

Let
\[
\dot s_i:=
\varphi_i
\begin{pmatrix}
0 & -1 \\ 1 & 0
\end{pmatrix}
\in G .
\]
For \(z\in\mathbb C\), define
\begin{equation}\label{eq:Biz}
B_i(z)
:=
\varphi_i
\begin{pmatrix}
z & -1 \\ 1 & 0
\end{pmatrix}
=
x_i(z)\dot s_i .
\end{equation}

\begin{proposition}
\label{pro:Biz}
{\rm \cite[Proposition~3.6]{casals2025cluster}.}
Fix a flag \(xB^+\in\mathcal B\). Then
\[
\{yB^+\in\mathcal B\mid xB^+\xrightarrow{s_i}yB^+\}
=
\{xB_i(z)B^+\mid z\in\mathbb C\}.
\]
Moreover,
\[
xB_i(z)B^+=xB_i(z')B^+
\]
if and only if \(z=z'\).
\end{proposition}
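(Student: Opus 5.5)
The statement is local to \(\mathrm{SL}_2\), so I would prove both assertions there and transport them to \(G\). Left translation by \(x\) preserves relative positions: \(x^{-1}y\in B^+\dot s_iB^+\) depends only on \(x^{-1}y\). So I may assume \(xB^+=B^+\). The first claim then becomes
\[
B^+\dot s_iB^+/B^+=\{B_i(z)B^+\mid z\in\mathbb C\}.
\]

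\emph{Inclusion} \(\supseteq\). By \eqref{eq:Biz}, \(B_i(z)=x_i(z)\dot s_i\). Since \(x_i(z)\in U_i^+\subset B^+\), we get \(B_i(z)\in B^+\dot s_iB^+\).

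\emph{Inclusion} \(\subseteq\). Write \(B^+=U^+T\) and \(U^+=U_i^+\cdot N_i\), where \(N_i\) is the unipotent radical of the minimal parabolic \(P_i\). Because \(\dot s_i^{-1}\) normalizes \(N_i\) and normalizes \(T\), we have
\[
N_i\,T\,\dot s_iB^+=\dot s_i\,(\dot s_i^{-1}N_iT\dot s_i)\,B^+=\dot s_iB^+ .
\]
It follows that \(B^+\dot s_iB^+/B^+=U_i^+\dot s_iB^+/B^+\). Every element of this set has the form \(x_i(z)\dot s_iB^+=B_i(z)B^+\), which proves the first claim.

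\emph{Injectivity.} Suppose \(B_i(z)B^+=B_i(z')B^+\). Then \(\dot s_i^{-1}x_i(z'-z)\dot s_i\in B^+\). The left-hand side equals \(y_i(\pm(z'-z))\) by the \(\mathrm{SL}_2\) computation under \(\varphi_i\). Since \(U_i^-\cap B^+=\{1\}\), this forces \(z=z'\).

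The only step needing care is the normality of \(N_i\) under \(\dot s_i\). This holds because \(s_i\) permutes \(R^+\setminus\{\alpha_i\}\). Everything else is standard Bruhat decomposition, so I expect no real obstacle. The result is in any case \cite[Proposition~3.6]{casals2025cluster}.
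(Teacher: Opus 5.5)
Your argument is correct and complete. The paper gives no proof of its own here, only the citation to \cite[Proposition~3.6]{casals2025cluster}, and your steps are the standard verification behind that reference: reduce to $xB^+=B^+$, collapse $B^+\dot s_iB^+$ to $U_i^+\dot s_iB^+$ using that $\dot s_i$ normalizes $N_i$ and $T$, and check injectivity via $\dot s_i^{-1}x_i(a)\dot s_i=y_i(-a)\notin B^+$ for $a\neq0$.
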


\begin{definition}
Let \(\beta=(i_1,\dots,i_r)\) be a positive braid word, and let
\(\delta(\beta)\in W\) be its Demazure product. The \emph{braid variety}
\(X(\beta)\) is defined by
\[
X(\beta)
:=
\left\{
(z_1,\dots,z_r)\in\mathbb C^r
\ \middle|\
B_{i_1}(z_1)\cdots B_{i_r}(z_r)
\in
\dot{\delta(\beta)}B^+
\right\}.
\]
By \cite[Corollary~3.7]{casals2025cluster}, this definition is equivalent to
the following flag-theoretic description:
\[
X(\beta)
\simeq
\left\{
(F_1,\dots,F_r)\in\mathcal B^r
\ \middle|\
B^+\xrightarrow{s_{i_1}}F_1
\xrightarrow{s_{i_2}}\cdots
\xrightarrow{s_{i_r}}F_r,\ 
F_r=\dot{\delta(\beta)}B^+
\right\}.
\]
\end{definition}

\begin{proposition}\label{pro:braid}
Let \(b\in \Br^+\), and let \(\beta=(i_1,\dots,i_r)\) be an expression of \(b\).
Then there exists an isomorphism
\[
\mathring{\mathcal{Z}}_{\delta(b),\beta}\cong X(\beta).
\]
\end{proposition}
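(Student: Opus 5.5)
The plan is to build the isomorphism directly from the flag-theoretic description of $X(\beta)$ in \cite[Corollary~3.7]{casals2025cluster}, using Proposition~\ref{pro:Biz} to normalize the representatives. Write $w:=\delta(b)$. A point of $\mathring{\mathcal Z}_\beta$ is a class $[g_1,\dots,g_r]$ with $g_k\in B^+\dot s_{i_k}B^+$. To it I attach the chain of flags
\[
F_0=B^+,\qquad F_k:=g_1\cdots g_kB^+,
\]
which is well defined on the quotient. Consecutive flags satisfy $F_{k-1}\xrightarrow{s_{i_k}}F_k$, and the multiplication map is $m_\beta([g])=F_r$. Conversely, for any chain $B^+=F_0\xrightarrow{s_{i_1}}\cdots\xrightarrow{s_{i_r}}F_r$, Proposition~\ref{pro:Biz} shows inductively that $F_k=B_{i_1}(z_1)\cdots B_{i_k}(z_k)B^+$ for unique $z_1,\dots,z_k\in\mathbb C$.

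The first step is to show that this gives $\mathring{\mathcal Z}_\beta\cong\mathbb C^r$ via $(z_1,\dots,z_r)\mapsto[B_{i_1}(z_1),\dots,B_{i_r}(z_r)]$. Injectivity and surjectivity follow from the uniqueness statement in Proposition~\ref{pro:Biz} together with the fact that $B^+\dot s_iB^+/B^+=\{B_i(z)B^+\}$. For algebraicity, I will note that the inverse is regular: $g_k$ can be normalized successively inside $B^+\dot s_{i_k}B^+=U_{i_k}^+\dot s_{i_k}B^+$, and the coordinate $z_k$ is read off regularly from that normalization.

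Under this identification, $\mathring{\mathcal Z}_{w,\beta}$ is the set of $z$ with
\[
B_{i_1}(z_1)\cdots B_{i_r}(z_r)B^+\in B^-\dot wB^+/B^+,
\]
whereas $X(\beta)$ requires this product to lie in $\dot wB^+$. Since $F_0=B^+$ and the chain has Demazure product $w$, one always has $F_r\in\overline{B^+\dot w B^+}/B^+$, that is, $F_r$ lies in the Schubert variety $X_w$. The key step is then the intersection
\[
X_w\cap\mathring{\mathcal B}^{w}=\{\dot wB^+\}.
\]
I plan to prove this from the standard facts that $\mathring{\mathcal B}_u\cap\mathring{\mathcal B}^w\neq\varnothing$ forces $w\le u$, and that $\mathring{\mathcal B}_{w,w}=\{\dot wB^+\}$. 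These imply that $F_r\in B^-\dot wB^+/B^+$ if and only if $F_r=\dot wB^+$, so the two defining conditions agree and the restricted map is a bijective morphism onto $X(\beta)$. Both sides are reduced and smooth: $X(\beta)$ by \cite{casals2025cluster} and $\mathring{\mathcal Z}_{w,\beta}$ by \cite{bao2022total}, the latter being an open subset of the closed condition. Hence the restriction of the isomorphism $\mathring{\mathcal Z}_\beta\cong\mathbb C^r$ is an isomorphism; both are the same locally closed subscheme, cut out by the same condition in $\mathbb C^r$.

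The main obstacle is justifying that $F_r\in X_w$ for every chain of type $\beta$. I would prove it by induction on $r$ using $B^+\dot s_iB^+\cdot\overline{B^+\dot uB^+}\subseteq\overline{B^+\dot{\delta}B^+}$ with $\delta=\max\{u,s_iu\}$. This is the usual Demazure-product property, and it matches the recursion $\delta(\sigma_i b)=\max\{\delta(b),s_i\delta(b)\}$ given in Section~\ref{subsec:words}; I will apply it from the left, building up $g_1\cdots g_r$ one factor at a time. A secondary point is to check that the condition in the definition of $X(\beta)$ ("$\in\dot{\delta(\beta)}B^+$") is equivalent to equality of flags, which is exactly \cite[Corollary~3.7]{casals2025cluster}.
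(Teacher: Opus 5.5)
Your proposal is correct and follows essentially the same route as the paper: identify $\mathcal Z_\beta$ with $\mathbb C^r$ via $[B_{i_1}(z_1),\dots,B_{i_r}(z_r)]$, observe that the product always lies in the Schubert variety $\overline{B^+\dot{\delta(b)}B^+/B^+}$, and use that this variety meets $B^-\dot{\delta(b)}B^+/B^+$ only at the point $\dot{\delta(b)}B^+$. You also supply justifications that the paper leaves implicit, namely the Demazure-product inclusion and the single-point Richardson intersection; the appeal to smoothness is unnecessary, since both sides are cut out by the same condition and so are the same subset of $\mathbb C^r$.
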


\begin{proof}
For each \(k\), the Schubert cell corresponding to \(s_{i_k}\) is parametrized by
\[
B^+\dot{s}_{i_k}B^+/B^+
=
\{x_{i_k}(z)\dot{s}_{i_k}B^+\mid z\in\mathbb C\}
=
\{B_{i_k}(z)B^+\mid z\in\mathbb C\}.
\]
Therefore the map
\[
\mathbb C^r\longrightarrow \mathcal Z_\beta,
\qquad
(z_1,\dots,z_r)
\longmapsto
[B_{i_1}(z_1),\dots,B_{i_r}(z_r)]
\]
is an isomorphism.

Under this parametrization, the multiplication map
\(
m:\mathcal Z_\beta\to \mathcal B
\)
is given by
\[
m([B_{i_1}(z_1),\dots,B_{i_r}(z_r)])
=
B_{i_1}(z_1)\cdots B_{i_r}(z_r)B^+.
\]
Hence
\[
\mathring{\mathcal Z}_{\delta(b),\beta}
\cong
\left\{
(z_1,\dots,z_r)\in\mathbb C^r
\ \middle|\
B_{i_1}(z_1)\cdots B_{i_r}(z_r)B^+
\in
B^-\dot{\delta(b)}B^+/B^+
\right\}.
\]

On the other hand, one has,
\[
B^+\dot{s}_{i_1}B^+\cdots B^+\dot{s}_{i_r}B^+/B^+
\subset
\overline{B^+\dot{\delta(b)}B^+/B^+}.
\]
Therefore every point in the image of \(m\) lies in the Schubert variety
\(
\overline{B^+\dot{\delta(b)}B^+/B^+}.
\)
Since
\[
\overline{B^+\dot{\delta(b)}B^+/B^+}
\cap
B^-\dot{\delta(b)}B^+/B^+
=
\{\dot{\delta(b)}B^+\},
\]
the above condition is equivalent to
\[
B_{i_1}(z_1)\cdots B_{i_r}(z_r)B^+
=
\dot{\delta(b)}B^+.
\]
Equivalently,
\(
B_{i_1}(z_1)\cdots B_{i_r}(z_r)
\in
\dot{\delta(b)}B^+.
\)
Thus
\[
\mathring{\mathcal Z}_{\delta(b),\beta}
\cong
\left\{
(z_1,\dots,z_r)\in\mathbb C^r
\ \middle|\
B_{i_1}(z_1)\cdots B_{i_r}(z_r)
\in
\dot{\delta(b)}B^+
\right\}
=
X(\beta).
\]
This proves the claim.
\end{proof}
In the Dynkin type, every twisted product of flag varieties is a braid variety.

\begin{lemma}\label{lem:isoZvbeta}
Let \(\beta=(i_1,\dots,i_r)\) be a word for \(b\in \Br^+\), and let
\(v\leq \delta(b)\). Set
\(
v^c:=v^{-1}w_0.
\)
Let
\(
\overline{v^c}=(i_{r+1},\dots,i_{r+q})
\)
be a reduced expression of \(v^c\), where \(q=\ell(v^c)\). Then there is an
isomorphism
\[
\mathring{\mathcal{Z}}_{v,\beta}\cong X(\beta\,\overline{v^c}).
\]
\end{lemma}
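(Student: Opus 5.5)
The plan is to compare both sides as moduli of flag chains and to show that the extra flags indexed by $\overline{v^c}$ are uniquely and regularly determined by the last flag of the $\beta$-part.

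First, by the proof of Proposition~\ref{pro:braid}, the coordinates $(z_1,\dots,z_r)\in\mathbb C^r$ identify $\mathcal Z_\beta$ with the set of chains
\[
B^+\xrightarrow{s_{i_1}}F_1\xrightarrow{s_{i_2}}\cdots\xrightarrow{s_{i_r}}F_r,\qquad F_k=B_{i_1}(z_1)\cdots B_{i_k}(z_k)B^+,
\]
using Proposition~\ref{pro:Biz}. Under this identification, $m_\beta$ sends the chain to $F_r$. Hence $\mathring{\mathcal Z}_{v,\beta}$ is the locally closed subvariety of $\mathbb C^r$ cut out by the single condition $F_r\in B^-\dot vB^+/B^+$.

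Second, I compute the Demazure product $\delta(\beta\,\overline{v^c})$, which fixes the target flag in the definition of $X(\beta\,\overline{v^c})$. Since $v\le\delta(\beta)$, the subword property gives a reduced subword of $\beta\,\overline{v^c}$ for $v\cdot v^{-1}w_0=w_0$, because $\ell(v)+\ell(v^{-1}w_0)=\ell(w_0)$. Therefore $\delta(\beta\,\overline{v^c})=w_0$. So $X(\beta\,\overline{v^c})$ is the space of chains $B^+\to F_1\to\cdots\to F_r\to F_{r+1}\to\cdots\to F_{r+q}$ with steps of type $s_{i_k}$ and $F_{r+q}=\dot w_0B^+$.

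Third, I translate the condition on the tail. Since $\overline{v^c}$ is reduced, a chain $F_r\xrightarrow{s_{i_{r+1}}}\cdots\xrightarrow{s_{i_{r+q}}}F_{r+q}$ exists if and only if $F_r\xrightarrow{v^c}F_{r+q}$. Moreover, the intermediate flags are then unique; this is the standard uniqueness of factorization in $B^+\dot wB^+=B^+\dot s_{j_1}B^+\cdots B^+\dot s_{j_q}B^+/\!\!\sim$ for reduced words. Writing $F_r=gB^+$, the relation $gB^+\xrightarrow{v^c}\dot w_0B^+$ means $g^{-1}\dot w_0\in B^+\dot v^c B^+$. Inverting, this becomes $\dot w_0^{-1}g\in B^+\,(\dot v^{c})^{-1}B^+=B^+\dot w_0\dot vB^+$ (up to torus elements). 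Equivalently, $g\in \dot w_0B^+\dot w_0^{-1}\,\dot vB^+=B^-\dot vB^+$, that is, $F_r\in\mathring{\mathcal B}^{v}$. Hence the projection $X(\beta\,\overline{v^c})\to\mathbb C^r$ forgetting $z_{r+1},\dots,z_{r+q}$ is a bijection onto $\mathring{\mathcal Z}_{v,\beta}$.

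The remaining and main obstacle is to upgrade this bijective morphism to an isomorphism. Equivalently, I must show that $z_{r+1},\dots,z_{r+q}$ are regular functions of $(z_1,\dots,z_r)$ on $\mathring{\mathcal Z}_{v,\beta}$. My first attempt would be inductive: for the first tail step, $z_{r+1}$ is determined by $F_{r+1}=F_rB_{i_{r+1}}(z_{r+1})B^+$ being the unique $s_{i_{r+1}}$-neighbour of $F_r$ that is closer to $\dot w_0B^+$. I would express it as a ratio of generalized minors $\Delta_{\varpi_j,\ldots}$ of $B_{i_1}(z_1)\cdots B_{i_r}(z_r)$ whose denominator is nonvanishing exactly on $B^-\dot vB^+$. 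That denominator should be the minor $\Delta_{v\varpi_{i},\varpi_i}$-type function with $i=i_{r+1}$, in the spirit of \cite[Corollary~3.7]{casals2025cluster}. I would then iterate along $\overline{v^c}$. Alternatively, since both varieties are smooth (by \cite{bao2022total} and \cite{casals2025cluster}) and the map is bijective, Zariski's main theorem over $\mathbb C$ also gives the isomorphism. I would use this as a fallback if the explicit minor formulas become unwieldy.
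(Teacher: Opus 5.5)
Your proposal is correct and is essentially the paper's proof: the steps $\delta(\beta\,\overline{v^c})=w_0$, the translation of $F_r\xrightarrow{v^c}\dot w_0B^+$ into $F_r\in B^-\dot vB^+/B^+$, and unique extension along the reduced word $\overline{v^c}$ (for which the paper cites \cite[Lemma~3.2]{casals2025cluster}) are exactly its argument. The paper only asserts that the inverse is regular; your Zariski main theorem fallback (a bijective morphism onto a smooth, hence normal, variety in characteristic $0$) does justify this, as would the regular factorization $B^+\dot v^cB^+\cong N(v^c)\times\dot v^cB^+$. Your minor-based sketch overstates things, though, since a single minor can be nonvanishing on the locally closed locus $B^-\dot vB^+$ but cannot cut it out.
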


\begin{proof}
Since \(v\le \delta(b)=\delta(\beta)\), we have
\[
\delta(\beta\,\overline{v^c})
=
\delta(\beta)* v^{-1}w_0
=
w_0,
\]
where \(*\) denotes the Demazure product.

Let
\begin{equation}\label{eq:B1sequence}
B^+
\xrightarrow{s_{i_1}} B_1
\xrightarrow{s_{i_2}} \cdots
\xrightarrow{s_{i_r}} B_r
\xrightarrow{s_{i_{r+1}}} B_{r+1}
\xrightarrow{s_{i_{r+2}}} \cdots
\xrightarrow{s_{i_{r+q}}} B_{r+q}
=
\dot w_0B^+
\end{equation}
be an element of \(X(\beta\,\overline{v^c})\).

Since \(\overline{v^c}\) is a reduced expression of \(v^c=v^{-1}w_0\), the last
part of the sequence implies that
\[
B_r\xrightarrow{v^{-1}w_0}\dot w_0B^+.
\]
Write \(B_r=gB^+\). Then
\[
g^{-1}\dot w_0\in B^+\dot v^{-1}\dot w_0B^+.
\]
Therefore
\[
g\in \dot w_0B^+\dot w_0^{-1}\dot vB^+
=
B^-\dot vB^+.
\]
Hence
\[
B_r\in B^-\dot vB^+/B^+
=
\mathring{\mathcal B}^{\,v}.
\]

By Proposition~\ref{pro:Biz}, the first \(r\) arrows in
\eqref{eq:B1sequence} are uniquely parametrized by elements
\(z_1,\dots,z_r\in\mathbb C\), namely
\[
B_k=
B_{i_1}(z_1)\cdots B_{i_k}(z_k)B^+,
\qquad 1\le k\le r.
\]
Thus the first part of the sequence determines a point of
\(\mathring{\mathcal Z}_{v,\beta}\).

Conversely, take a point of \(\mathring{\mathcal Z}_{v,\beta}\). In the
coordinates of \(\mathcal Z_\beta\), it is represented by
\[
(z_1,\dots,z_r)\in\mathbb C^r
\]
such that
\[
B_r:=
B_{i_1}(z_1)\cdots B_{i_r}(z_r)B^+
\in
B^-\dot vB^+/B^+.
\]
Equivalently,
\[
B_r\xrightarrow{v^{-1}w_0}\dot w_0B^+.
\]
Since \(\overline{v^c}\) is a reduced expression of \(v^{-1}w_0\), by
\cite[Lemma~3.2]{casals2025cluster}, the flag \(B_r\) uniquely extends to a
sequence
\[
B_r
\xrightarrow{s_{i_{r+1}}} B_{r+1}
\xrightarrow{s_{i_{r+2}}} \cdots
\xrightarrow{s_{i_{r+q}}} B_{r+q}
=
\dot w_0B^+.
\]
Together with the first \(r\) flags determined by \(z_1,\dots,z_r\), this gives
an element of \(X(\beta\,\overline{v^c})\).

The two constructions are inverse to each other, and they are regular in the
coordinates above. Hence they define an isomorphism of varieties.
\end{proof}

\subsection{Double Bruhat cells}

For \(v,w\in W\), the \emph{double Bruhat cell} is defined by
\[
G^{v,w}:=B^+\dot wB^+\cap B^-\dot vB^-.
\]
Let
\[
\pi:G\to G/T=:L
\]
be the quotient map for the right action of \(T\). The \emph{reduced double
Bruhat cell} is defined as
\[
L_{v,w}:=\pi(G^{v,w}).
\]

In \cite{bao2025upper}, Bao and Ye proved that the coordinate rings of twisted
products of flag varieties carry upper cluster algebra structures. We also
recall the following realization of reduced double Bruhat cells as special
cases of twisted products of flag varieties.

\begin{proposition}[{\cite[Proposition~2.1]{webster2007deodhar}}]
There is an isomorphism
\[
L_{v,w}\cong \mathring{\mathcal{Z}}_{v w_0,(w,w_0)}.
\]
\end{proposition}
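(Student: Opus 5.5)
The plan is to write down an explicit map in each direction and check that the two maps are inverse to each other. The key observation is that the second factor \(B^+\dot w_0B^+\) of the twisted product can always be normalized to the single representative \(\dot w_0\). After this normalization, the only residual freedom is the torus \(T\), and that matches the quotient \(G^{v,w}\to L_{v,w}\).

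\emph{Normalizing the second factor.} A point of \(\mathcal Z_{(w,w_0)}\) is a class \([g_1,g_2]\) with \(g_1\in B^+\dot wB^+\) and \(g_2\in B^+\dot w_0B^+\). By the Bruhat decomposition, \(B^+\dot w_0B^+=U^+\dot w_0B^+\), and the factorization \(g_2=u\,\dot w_0\,b\) with \(u\in U^+\), \(b\in B^+\) is unique. It is also regular, since \(U^+\cap \dot w_0B^+\dot w_0^{-1}=U^+\cap B^-=\{1\}\) and the big cell is an open product.

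Using the \((B^+)^2\)-action, we get \([g_1,g_2]=[g_1u,\dot w_0]\). Next we compute the stabilizer of this normal form. We have \([g_1,\dot w_0]=[g_1b_1,\dot w_0]\) exactly when \(b_1^{-1}\dot w_0b_2=\dot w_0\) for some \(b_2\). This holds if and only if \(b_1\in B^+\cap\dot w_0B^+\dot w_0^{-1}=T\), and in that case \(b_2=\dot w_0^{-1}b_1\dot w_0\in T\). Hence
\[
\mathcal Z_{(w,w_0)}\;\cong\;(B^+\dot wB^+)/T,\qquad [g_1,\dot w_0]\leftrightarrow g_1T,
\]
where \(T\) acts by right multiplication.

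\emph{Matching the open conditions.} The multiplication map sends \([g_1,\dot w_0]\) to \(g_1\dot w_0B^+\). Therefore the point lies in \(\mathring{\mathcal Z}_{vw_0,(w,w_0)}\) if and only if \(g_1\dot w_0\in B^-\dot v\dot w_0B^+\). Multiplying on the right by \(\dot w_0^{-1}\) and using \(\dot w_0B^+\dot w_0^{-1}=B^-\), this is equivalent to \(g_1\in B^-\dot vB^-\). The condition is visibly \(T\)-invariant, and the choice of representative of \(vw_0\) is irrelevant modulo \(T\). So the subvariety \(\mathring{\mathcal Z}_{vw_0,(w,w_0)}\) corresponds exactly to \((B^+\dot wB^+\cap B^-\dot vB^-)/T=G^{v,w}/T=L_{v,w}\).

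\emph{Checking it is an isomorphism of varieties.} The map \(G^{v,w}\to\mathring{\mathcal Z}_{vw_0,(w,w_0)}\), \(g\mapsto[g,\dot w_0]\), is a regular morphism. It is \(T\)-invariant, because \([gt,\dot w_0]=[g,t\dot w_0]=[g,\dot w_0\,(\dot w_0^{-1}t\dot w_0)]\). Hence it descends to a morphism \(L_{v,w}\to\mathring{\mathcal Z}_{vw_0,(w,w_0)}\), which is bijective by the two steps above.

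For the inverse, send \([g_1,g_2]\) to \(\pi(g_1u(g_2))\), where \(u(g_2)\in U^+\) is the regular first component of the factorization of \(g_2\). This is well defined and regular as follows. Locally on \(\mathcal Z\) one has regular sections of \(G^2\to\mathcal Z\). Alternatively, one can work directly with the principal \((B^+)^2\)-bundle \(G\times B^+\dot w_0B^+\to\mathcal Z_{(w,w_0)}\).

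The main obstacle I expect is this last regularity point: showing the inverse is a morphism rather than merely a bijection of points. My first attempt would be to exhibit \(\mathcal Z_{(w,w_0)}\) as \(G\times^{B^+}(B^+\dot w_0B^+/B^+)\) restricted over \(B^+\dot wB^+\). Then the identification \(B^+\dot w_0B^+/B^+\cong U^+\) (the big cell) trivializes the fibre, which gives \(\mathcal Z_{(w,w_0)}\cong B^+\dot wB^+\times^{B^+}U^+\dot w_0B^+/B^+\cong (B^+\dot wB^+)/T\) as varieties. The remaining fact needed is that the quotient by the free right \(T\)-action is geometric; this is standard for \(\pi:G\to G/T\).
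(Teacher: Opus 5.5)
Your proposal is correct. The paper gives no argument of its own here; it only cites \cite[Proposition~2.1]{webster2007deodhar}. So your proof is a self-contained replacement for a citation, not a variant of something in the text.

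Each step checks out with the paper's conventions:
\begin{itemize}
\item the factorization $g_2=u\dot w_0b$ is unique because $U^+\cap B^-=\{1\}$;
\item the residual freedom is $B^+\cap\dot w_0B^+\dot w_0^{-1}=T$;
\item $g\dot w_0\in B^-\dot v\dot w_0B^+$ is equivalent to $g\in B^-\dot vB^-$.
\end{itemize}
The last equivalence is worth recording. It confirms that $vw_0$ is the right index for the convention $G^{v,w}=B^+\dot wB^+\cap B^-\dot vB^-$ used here, where $w$ is the $B^+$-index.

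The regularity point you flag is harmless. The map $(g_1,g_2)\mapsto g_1u(g_2)T$ is a morphism on $B^+\dot wB^+\times B^+\dot w_0B^+$. It is also $(B^+)^2$-invariant: write $b_1^{-1}u(g_2)=u't'$ with $u'\in U^+$ and $t'\in T$. Then $u(b_1^{-1}g_2b_2)=u'$ and $g_1b_1u'=g_1u(g_2)t'^{-1}$. Hence the map descends along the Zariski-locally trivial quotient to $\mathcal Z_{(w,w_0)}$.

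Conceptually, your normalization is the coordinate form of a standard fact: pairs of flags in relative position $w_0$ form a single $G$-orbit with stabilizer $T$. Via $[g_1,g_2]\mapsto(g_1B^+,g_1g_2B^+)$, the variety $\mathring{\mathcal Z}_{vw_0,(w,w_0)}$ becomes the set of pairs $(F_1,F_2)$ with $B^+\xrightarrow{w}F_1\xrightarrow{w_0}F_2$ and $F_2\in B^-\dot v\dot w_0B^+/B^+$. The map $gT\mapsto(gB^+,g\dot w_0B^+)$ identifies $G/T$ with that open orbit, equivalently with the open $G$-orbit in $G/B^+\times G/B^-$.

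Phrased that way, regularity of the inverse comes for free from the orbit isomorphism. Your version instead gives explicit formulas in both directions, $gT\mapsto[g,\dot w_0]$ and $[g_1,g_2]\mapsto g_1u(g_2)T$. Those formulas are what you would need to transport explicit functions, such as generalized minors, between the two sides.
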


\subsection{Cluster structure on twisted products of flag varieties}

In this section, we recall the cluster structure on twisted products of flag
varieties.

\begin{theorem}[{\cite[Theorem~6.5 and Remark~6.1]{bao2025upper}}]
Let \(\beta\) be a word and let \(v\le \delta(\beta)\). Then the coordinate
ring of \(\mathring{\mathcal{Z}}_{v,\beta}\) admits an upper cluster algebra
structure given by the seed \(\mathbf{s}(v,\beta)\). More precisely, there is
an isomorphism
\[
U(\mathbf{s}(v,\beta))\otimes_{\mathbb Q}\mathbb C
\cong
\mathbb C[\mathring{\mathcal{Z}}_{v,\beta}].
\]
\end{theorem}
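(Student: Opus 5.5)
The plan is to reduce the statement to the known cluster structure on braid varieties, and then check that the Bao--Ye seed $\mathbf s(v,\beta)$ is a valid seed for that structure using the starfish lemma.

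\emph{Geometric reduction.} By Lemma~\ref{lem:isoZvbeta}, $\mathring{\mathcal Z}_{v,\beta}\cong X(\beta\,\overline{v^c})$ with $\delta(\beta\,\overline{v^c})=w_0$. Hence $\mathring{\mathcal Z}_{v,\beta}$ is a smooth affine variety, in particular normal, of dimension $r-\ell(v)$. By \cite{casals2025cluster,galashin2025braid}, its coordinate ring carries a cluster structure coming from a weave seed. For that structure $\mathcal A=U$, and all cluster variables, including the frozen ones, are regular functions. The frozen variables are units on $\mathring{\mathcal Z}_{v,\beta}$.

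\emph{Realising $\mathbf s(v,\beta)$ by regular functions.} For each vertex $k$ of $\mathbf s(\beta)$, the initial variable $X_k$ of the seed on $\mathcal Z_\beta\cong\mathbb C^r$ is a chamber minor. Concretely, it is $\Delta_{\varpi_{i_k}}$ evaluated on the partial product $B_{i_1}(z_1)\cdots B_{i_k}(z_k)$, twisted by $\dot w_k$. I would restrict these functions to the closed subvariety $\mathring{\mathcal Z}_{v,\beta}$ and follow the sequence $M_m=\widetilde\mu_m\circ\cdots\circ\widetilde\mu_1$ of Definition~\ref{def:svbeta}. The goal is to show, by induction on $l$, two facts.
\begin{enumerate}
\item Each mutated variable is again a generalized minor. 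At step $l$, the minors of colour $i_{p_l}$ at occurrences $b_l+1,\dots,n_i-a_l$ have their left Weyl group element shifted by $s_{i_{p_l}}$. This uses Lemma~\ref{lem:vk} to control the prefix $v_k$.
\item On $\mathring{\mathcal Z}_{v,\beta}$, the frozen vertex $(i_{p_l},n_{i_{p_l}}-a_l+1)$ becomes a minor $\Delta_{v\varpi,\varpi}$ composed with $m_\beta$, up to a unit. Such a minor is nonvanishing on $B^-\dot vB^+$, so deleting this vertex only loses a unit factor.
\end{enumerate}
The exchange relations are then the generalized Plücker (three-term) relations among these minors. The vertices adjacent to the deleted set $J_m$ must be frozen, because their exchange relations involve the deleted units.

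\emph{Starfish lemma.} With the seed realised, I would check the hypotheses of the starfish lemma on the normal affine variety $\mathring{\mathcal Z}_{v,\beta}$.
\begin{enumerate}
\item The restricted variables $y_i$, $i\in J$, are regular, irreducible and pairwise coprime. I would prove this by exhibiting a torus chart of dimension $|J|=r-\ell(v)$, for example from the Deodhar torus attached to the distinguished (leftmost) subexpression. On that chart the $y_i$ are monomial coordinates.
\item The once-mutated variables $\mu_k(y)$, $k\in J_{\rm ex}$, are regular. This follows from the Plücker relations.
\end{enumerate}
Together these give $U(\mathbf s(v,\beta))\otimes\mathbb C\subseteq\mathbb C[\mathring{\mathcal Z}_{v,\beta}]$ after inverting frozen variables. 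For the reverse inclusion, I would use that $\mathbb C[\mathring{\mathcal Z}_{v,\beta}]$ embeds in the Laurent ring of every seed whose cluster torus is an open chart. It then suffices that the charts of the initial seed and its one-step mutations cover $\mathring{\mathcal Z}_{v,\beta}$ up to codimension two, and the normality of $\mathring{\mathcal Z}_{v,\beta}$ finishes the argument.

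\emph{Main obstacle.} The hardest step is the inductive identification of the mutated variables after each $\widetilde\mu_l$. Specifically, one must check that the $\ell(v)$ frozen-then-deleted variables restrict to units on $\mathring{\mathcal Z}_{v,\beta}$, while the remaining variables stay algebraically independent and coprime. I would first attempt this by comparing directly with the right-inductive weave seed for $\beta\,\overline{v^c}$. In that weave, each letter of $\overline{v^c}$ appended on the right corresponds to one stage $\widetilde\mu_l$, with the new trivalent vertex producing exactly the frozen cycle $(i_{p_l},n_i-a_l+1)$. If this comparison goes through, it yields the identification of seeds and simultaneously gives local acyclicity and $\mathcal A=U$ via \cite[Theorem~7.13]{casals2025cluster}.
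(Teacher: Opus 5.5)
The paper does not prove this statement; it cites it from Bao--Ye \cite[Theorem~6.5]{bao2025upper}. Your independent proof fails at the central step, the realization.

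\textbf{The deleted variables are not units in your embedding.} The seed $\mathbf s(\beta)$ is realized on $\operatorname{Conf}(\beta)=\mathring{\mathcal Z}_{e,\beta}$ by $X_k\mapsto\Delta_{\varpi_{i_k}}(B_{i_1}(z_1)\cdots B_{i_k}(z_k))$. For $v\neq e$, the stratum $\mathring{\mathcal Z}_{v,\beta}$ is disjoint from $\operatorname{Conf}(\beta)$, and it is only locally closed (not closed) in $\mathcal Z_\beta\cong\mathbb C^r$. Take $l$ with $a_l=1$ and put $i=i_{p_l}$. The vertex $(i,n_i)$, frozen and deleted at stage $l$, is never mutated. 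Since letters of other colours fix $\eta_i$, it still carries $\Delta_{\varpi_i}(B_{i_1}(z_1)\cdots B_{i_r}(z_r))$. On $B^-\dot vB^+$ this function is a multiple of $\langle\eta_i^*,\dot v\eta_i\rangle$, which is $0$ because $s_i$ occurs in $v$. In Example~\ref{ex:positive-rank-truncated-mutation} ($\beta=(1,2,1,2)$, $v=s_1$), the deleted variable is $z_1z_3-z_2$, which vanishes on $\mathring{\mathcal Z}_{s_1,\beta}$; by contrast $\Delta_{s_1\varpi_1,\varpi_1}$ of the product is $\pm z_3$. So your item (2) is false for the restriction you describe. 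The deleted variables become units only in a different embedding, such as the normalized slices $\iota_i$ of Lemma~\ref{lem:terminal-left-restriction}. There the crossing coordinate is fixed to $1$ and the deleted variable restricts to a monomial in frozen variables.

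\textbf{The real content is asserted, not proved.} The mutated variables must be computed as regular functions on all of $\mathcal Z_\beta$ before restricting, since restriction does not commute with the divisions in the exchange relations. You then need that the retained variables restrict to coordinates on an open torus of $\mathring{\mathcal Z}_{v,\beta}$, that the newly frozen ones are units, and that the once-mutated ones are regular and coprime. That is exactly the content of the Bao--Ye theorem. Your plan only asserts it (``Deodhar torus'', ``follows from the Pl\"ucker relations'').

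\textbf{The reverse inclusion is argued in the wrong direction.} The starfish lemma and the codimension-two/normality argument both prove the same inclusion, $U(\mathbf s(v,\beta))\otimes\mathbb C\subseteq\mathbb C[\mathring{\mathcal Z}_{v,\beta}]$. For the other inclusion you need two things. First, the tori of the initial seed and of each one-step mutation must be open charts. Second, you need the Berenstein--Fomin--Zelevinsky fact that for a full-rank seed the upper bound equals $U$; full rank follows from the compatible $L$. Covering up to codimension two plays no role there.

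\textbf{Your fallback is the viable route, but it is set up incorrectly.} The weave comparison is what the paper carries out in Lemma~\ref{lem:inductivewave}. However, the stages do not correspond to letters of $\overline{v^c}$ appended to the right of $\beta$: for $v=e$ there are $\ell(w_0)$ such letters and no stages. The paper instead proceeds as follows.
\begin{enumerate}
\item It starts from the all-right string for $\Delta\beta$, with $\Delta=(k_1^*,\ldots,k_m^*)\gamma$, which realizes $\mathbf s(\beta)$ through $X(\Delta\beta)\cong\operatorname{Conf}(\beta)$.
\item At stage $l$ it turns $k_l^*$ into a left insertion and moves it past the rest of $\Delta$ and then past $\beta$.
\item The interchanges with the occurrences of $k_l$ after $p_l$ are exactly the mutations of $\widetilde\mu_l$, and the moving entry ends with label $(k_l,n_{k_l}-a_l+1)$.
\item Removing the $m$ left insertions gives $\overrightarrow{\mathfrak m}(\overline{w_0v^{-1}}\beta)$.
\end{enumerate}
Combined with \cite{casals2025cluster} and $\mathring{\mathcal Z}_{v,\beta}\cong X(\overline{w_0v^{-1}}\beta)$, this yields the statement as an abstract isomorphism in simply-laced type, together with $\mathcal A=U$. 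For the statement itself, though, the paper relies on Bao--Ye.
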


For \(\beta=(i_1,\ldots,i_r)\), we use the notation
\[
\operatorname{Conf}(\beta)
:=
\left\{(z_1,\ldots,z_r)\in\mathbb C^r\ \middle|\,
B_{i_1}(z_1)\cdots B_{i_r}(z_r)\in B^-B^+\right\}.
\]
Thus \(\operatorname{Conf}(\beta)=\mathring{\mathcal Z}_{e,\beta}\)
in the above coordinates. This is the normalized configuration space
of \cite[Section~3.7]{casals2025cluster}; the initial boundary torus
parameters of the decorated configuration space are fixed to \(1\).
By \cite[Corollary~5.21]{casals2025cluster}, one has
\[
\mathcal A(\mathbf{s}(\beta))\otimes_{\mathbb Q}\mathbb C
\cong
\mathbb C[\operatorname{Conf}(\beta)].
\]

Let \(\beta=(i_1,\dots,i_r)\) be a word in \(I\), and let
\[
\beta^{\operatorname{rev}}=(i_r,\dots,i_1).
\]
Since the Demazure product is compatible with the anti-involution
\(w\mapsto w^{-1}\), we have
\[
\delta(\beta^{\operatorname{rev}})
=
\delta(\beta)^{-1}.
\]

\begin{lemma}\label{lem:betarev}
There is an isomorphism
\[
X(\beta)\cong X(\beta^{\operatorname{rev}}).
\]
\end{lemma}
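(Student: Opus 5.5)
Reversing the chain of flags and then applying a left translation will give the isomorphism. We use the flag description of $X(\beta)$ from \cite[Corollary~3.7]{casals2025cluster}. A point of $X(\beta)$ is a chain
\[
B^+=F_0\xrightarrow{s_{i_1}}F_1\xrightarrow{s_{i_2}}\cdots\xrightarrow{s_{i_r}}F_r=\dot w B^+,
\qquad w:=\delta(\beta).
\]
Relative position is symmetric for simple reflections: $x^{-1}y\in B^+\dot s_iB^+$ if and only if $y^{-1}x\in B^+\dot s_i^{-1}B^+=B^+\dot s_iB^+$. So the reversed chain $F_r\xrightarrow{s_{i_r}}F_{r-1}\to\cdots\xrightarrow{s_{i_1}}F_0$ is again a chain of flags in successive position $s_{i_r},\dots,s_{i_1}$. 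It starts at $\dot wB^+$ and ends at $B^+$.

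To put it in normal form, left multiply every flag by $g:=\dot w^{-1}$. Left multiplication by $G$ preserves relative positions, so we obtain
\[
B^+=gF_r\xrightarrow{s_{i_r}}gF_{r-1}\to\cdots\xrightarrow{s_{i_1}}gF_0=\dot w^{-1}B^+.
\]
Since $\delta(\beta^{\operatorname{rev}})=w^{-1}$ and $\dot w^{-1}B^+=\dot{(w^{-1})}B^+$, this chain is a point of $X(\beta^{\operatorname{rev}})$. Here we note that $\dot w^{-1}$ and the chosen representative of $w^{-1}$ differ by an element of $T$, which lies in $B^+$.

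The inverse map is built symmetrically: reverse the chain and multiply on the left by $\dot w$, which is a representative of $(w^{-1})^{-1}$. Up to a factor in $T\subset B^+$, the two constructions are mutually inverse on flags. They are therefore bijections between the flag models.

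It remains to check that the map is a morphism of varieties and not just a bijection of sets. The flag models are locally closed subvarieties of $\mathcal B^{r}$ (or $\mathcal B^{r+1}$), and the map is the restriction of the regular map on $\mathcal B^{r+1}$ given by permuting the factors and translating by a fixed group element. Hence it is regular, and so is its inverse; composing with the isomorphisms of Corollary~3.7 gives $X(\beta)\cong X(\beta^{\operatorname{rev}})$.

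The only point needing care, and the step I expect to be the main obstacle, is this last regularity claim if one insists on working in the $z$-coordinates. There the new coordinates $z'_k$ are determined implicitly through Proposition~\ref{pro:Biz}, and writing them explicitly requires solving $gF_{k-1}=gF_k\,B_{i_k}(z'_k)B^+$. I would avoid this by staying in the flag model, where the map is manifestly regular.
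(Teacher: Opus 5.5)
Your proposal is correct and is essentially the paper's proof: it sets $G_k=\dot w^{-1}F_{r-k}$, uses that $B^+\dot s_iB^+$ is stable under inversion and that left translation preserves relative position, and gets the inverse $F_k=\dot w\,G_{r-k}$ and regularity directly from the flag model. One minor point: your hedge ``up to a factor in $T$'' is unnecessary and slightly misleading, since with the same $\dot w$ the two maps are exactly inverse (the paper notes that no compatibility between representatives of $w$ and $w^{-1}$ is needed), whereas a left torus factor does not in general fix individual flags, even though it preserves $X(\beta)$.
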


\begin{proof}
We use the flag-theoretic description of braid varieties. An element of
\(X(\beta)\) is a sequence of flags
\[
F_0=B^+
\xrightarrow{s_{i_1}}
F_1
\xrightarrow{s_{i_2}}
\cdots
\xrightarrow{s_{i_r}}
F_r
=
\dot{\delta(\beta)}B^+.
\]

Define a new sequence of flags by
\[
G_k:=\dot{\delta(\beta)}^{-1}F_{r-k},
\qquad 0\le k\le r.
\]
Then
\[
G_0
=
\dot{\delta(\beta)}^{-1}F_r
=
B^+,
\]
and
\[
G_r
=
\dot{\delta(\beta)}^{-1}F_0
=
\dot{\delta(\beta)}^{-1}B^+
=
\dot{\delta(\beta^{\operatorname{rev}})}B^+.
\]

Moreover, if
\[
F_{j-1}\xrightarrow{s_{i_j}}F_j,
\]
then also
\[
F_j\xrightarrow{s_{i_j}}F_{j-1},
\]
because the double coset \(B^+\dot{s}_{i_j}B^+\) is stable under inverse.
Since relative position is preserved by the left action of \(G\), we get
\[
G_{k-1}
\xrightarrow{s_{i_{r-k+1}}}
G_k
\]
for every \(1\le k\le r\). Hence
\[
G_0=B^+
\xrightarrow{s_{i_r}}
G_1
\xrightarrow{s_{i_{r-1}}}
\cdots
\xrightarrow{s_{i_1}}
G_r
=
\dot{\delta(\beta^{\operatorname{rev}})}B^+.
\]
Thus \((G_1,\dots,G_r)\) defines an element of \(X(\beta^{\operatorname{rev}})\).

This construction is regular, since it is induced by reversing the sequence and
applying the left action of the fixed element \(\dot{\delta(\beta)}^{-1}\).
Its inverse is explicitly
\[
F_k=\dot{\delta(\beta)}G_{r-k},\qquad 0\leq k\leq r.
\]
This formula is independent of any compatibility between the chosen
representatives of \(\delta(\beta)\) and \(\delta(\beta)^{-1}\).
Therefore the construction is an isomorphism.
\end{proof}

\begin{lemma}\label{lem:zv-1beta}
Let \(\beta=(i_1,\dots,i_r)\) be a word in \(I\), and let
\(v\leq\delta(\beta)\). Then there is an isomorphism
\[
\mathring{\mathcal{Z}}_{v,\beta}
\cong
\mathring{\mathcal{Z}}_{v^{-1},\beta^{\operatorname{rev}}}.
\]
\end{lemma}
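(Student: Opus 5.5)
Two routes are available. One goes through braid varieties, using Lemma~\ref{lem:isoZvbeta} and Lemma~\ref{lem:betarev}. The other works directly with flags: reverse the chain and translate by a fixed group element, as in Lemma~\ref{lem:betarev}. I would take the direct flag route, since it avoids having to compare $X(\beta\,\overline{v^c})$ with $X(\beta^{\mathrm{rev}}\,\overline{(v^{-1})^c})$; those words are not reverses of each other.

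\textbf{Flag description.} A point of $\mathring{\mathcal Z}_{v,\beta}$ is, by Proposition~\ref{pro:Biz}, a chain
\[
B^+=F_0\xrightarrow{s_{i_1}}F_1\xrightarrow{s_{i_2}}\cdots\xrightarrow{s_{i_r}}F_r
\qquad\text{with } F_r\in B^-\dot vB^+/B^+ .
\]
Equivalently, $F_r$ is in relative position $v^{-1}w_0$ to $\dot w_0B^+$, as in the proof of Lemma~\ref{lem:isoZvbeta}. Thus the data are a chain from the standard flag $B^+$ of type $\beta$, plus a condition on the relative position of $F_r$ with respect to the opposite flag $\dot w_0B^+$. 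The standard flag is in position $w_0$ to $\dot w_0 B^+$.

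\textbf{Reversing the chain.} The condition $F_r\in B^-\dot vB^+$ says $F_r=u\dot vB^+$ with $u\in U^-$, and such a $u$ is determined up to $U^-\cap \dot vU^-\dot v^{-1}$. Instead of choosing $u$, I would use the pair of flags $(B^+, F_r)$ together with $\dot w_0B^+$. Define $G_k:=g^{-1}F_{r-k}$, where $g$ is an element with $gB^+=F_r$ and $g\dot w_0B^+=\dot w_0 B^+$, i.e.\ $g\in B^-$. Such a $g$ exists: $F_r\in B^-\dot vB^+$ means $F_r=b\dot vB^+$ with $b\in B^-$, and we take $g=b\dot v$. But then $g\dot w_0B^+=b\dot v\dot w_0B^+$, which is not $\dot w_0B^+$. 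So the correct normalization is to translate by an element fixing the opposite flag up to a twist. Concretely, set $G_k:=\dot v^{-1}b^{-1}F_{r-k}$. Then $G_0=B^+$, each step has relative position $s_{i_{r-k+1}}$, and
\[
G_r=\dot v^{-1}b^{-1}B^+ .
\]
We must check that $G_r\in B^-\dot v^{-1}B^+/B^+$. This amounts to $b^{-1}\in B^-$ acting on $B^+$, giving a point of $B^-B^+/B^+$ translated by $\dot v^{-1}$. I would verify this using $\dot v^{-1}B^-\subset B^-\dot v^{-1}B^+\cdot(\text{stuff})$ together with the freedom to choose $b$. The freedom in $b$ is $b\mapsto b\,c$ with $c\in B^-\cap\dot vB^+\dot v^{-1}$. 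One must show that $G_\bullet$ is well defined modulo the left action of that stabilizer, or fix $b$ canonically. Canonically fixing $b\in U^-\cap \dot vU^+\dot v^{-1}$, which is possible by the Bruhat decomposition, makes the map regular.

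\textbf{Main obstacle.} The main obstacle is exactly this normalization: making $G_r$ land in the opposite cell of $v^{-1}$ while keeping $G_0=B^+$, and checking that the map is regular with a regular inverse given by the symmetric formula. If this becomes messy, my fallback is to use Lemma~\ref{lem:isoZvbeta}, which identifies both sides with $X(\beta\,\overline{v^c})$ and $X(\beta^{\mathrm{rev}}\,\overline{v w_0})$. I would then apply Lemma~\ref{lem:betarev} to the first to get $X(\overline{v^c}^{\,\mathrm{rev}}\,\beta^{\mathrm{rev}})$, where $\overline{v^c}^{\,\mathrm{rev}}$ is a reduced word for $w_0v$. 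Finally I would show that adjoining a reduced word of $w_0v$ on the left is isomorphic to adjoining a reduced word of $v w_0=w_0(w_0vw_0)$ on the right. This last comparison holds since both equal $\mathring{\mathcal Z}$ cut out by the same opposite-cell condition, by the unique-extension argument of \cite[Lemma~3.2]{casals2025cluster}.
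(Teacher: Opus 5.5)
Your proposal has a genuine gap. The key step, that the reversed chain ends in $B^-\dot v^{-1}B^+/B^+$, is never verified, and the normalization you propose for it is wrong. If $b\in U^-\cap\dot vU^+\dot v^{-1}$, then $b\dot v\in\dot vU^+$, so every such $b$ gives the same flag $b\dot vB^+=\dot vB^+$. This subgroup is part of the ambiguity in $b$; it does not parametrize the opposite cell. Moreover, $G_r=\dot v^{-1}b^{-1}B^+$ genuinely depends on that ambiguity. Since points of $\mathring{\mathcal Z}_{v^{-1},\beta^{\operatorname{rev}}}$ are honest chains, not orbits, ``well defined modulo the stabilizer'' is not an option either. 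For example, take $\mathrm{SL}_2$ and $v=s$. Then $F_r=\dot sB^+$ and every $b=y(c)$ is admissible, but $\dot s^{-1}y(-c)B^+$ is the line spanned by $(c,1)$. This lies in $B^-\dot s^{-1}B^+/B^+=\{\dot sB^+\}$ only when $c=0$.

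Your fallback is circular. Translate a point of $X(\overline{w_0v}\,\beta^{\operatorname{rev}})$ by $\dot w_0^{-1}$ and reverse the chain, using $\dot w_0^{-1}B^+\dot w_0=B^-$; this identifies that variety with $\mathring{\mathcal Z}_{v,\beta}$. On the other side, Lemma~\ref{lem:isoZvbeta} gives $X(\beta^{\operatorname{rev}}\,\overline{vw_0})\cong\mathring{\mathcal Z}_{v^{-1},\beta^{\operatorname{rev}}}$. So your ``last comparison'' is the lemma itself. The unique-extension result \cite[Lemma~3.2]{casals2025cluster} only fills in a reduced block next to a fixed flag. It cannot move that block from one end of the word to the other, and the two conditions are not ``the same opposite-cell condition.''

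The direct route does work once you fix $b$ in the complementary subgroup $U^-\cap\dot vU^-\dot v^{-1}$. The map $b\mapsto b\dot vB^+$ from this group onto $B^-\dot vB^+/B^+$ is an isomorphism, so $b$ is a regular function of $F_r$. Set $c:=\dot v^{-1}b^{-1}\dot v$; it lies in $U^-\cap\dot v^{-1}U^-\dot v$. Hence $G_r=c\,\dot v^{-1}B^+\in B^-\dot v^{-1}B^+/B^+$, and $c$ is exactly the canonical element attached to $G_r$. Since $(c\dot v^{-1})^{-1}=b\dot v$, the same recipe applied to $v^{-1}$ with representative $\dot v^{-1}$ is a regular inverse.

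With this repair you would have a coordinate-free alternative to the paper's argument. The paper instead applies the Chevalley transpose $\tau$, which swaps $B^\pm$ and inverts Weyl group elements, so $\tau(B^-\dot vB^+)=B^-\dot{v^{-1}}B^+$. It then uses $\tau(B_i(z))=B_i(-z)t_i$ to rewrite $B_{i_1}(z_1)\cdots B_{i_r}(z_r)$ as a $\beta^{\operatorname{rev}}$-product times a torus element. This yields the explicit linear isomorphism $(z_1,\ldots,z_r)\mapsto(\varepsilon_1z_r,\ldots,\varepsilon_rz_1)$, which sidesteps the normalization problem entirely.
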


\begin{proof}
Let \(\tau:G\to G\) be the Chevalley anti-involution determined by
the pinning, so that
\[
\tau(x_i(z))=y_i(z),\qquad
\tau(y_i(z))=x_i(z),\qquad
\tau(t)=t\quad(t\in T).
\]
It exchanges \(B^+\) and \(B^-\), and induces inversion on \(W\).
Consequently,
\begin{equation}\label{eq:transpose-Bruhat-cell}
\tau(B^-\dot vB^+)=B^-\dot{v^{-1}}B^+.
\end{equation}
For \(t_i:=\chi_i(-1)\), a calculation in \(\mathrm{SL}_2\) gives
\begin{equation}\label{eq:transpose-Bi}
\tau(B_i(z))=B_i(-z)t_i.
\end{equation}
We also have, for \(t\in T\),
\begin{equation}\label{eq:torus-through-Bi}
tB_i(z)=B_i\bigl(\alpha_i(t)z\bigr)s_i(t),
\end{equation}
where \(s_i(t)=\dot s_i^{-1}t\dot s_i\).

Write \(j_k=i_{r+1-k}\). Define fixed elements \(h_k\in T\)
and constants \(\varepsilon_k\) by
\[
h_0=1,\qquad
h_k=s_{j_k}(h_{k-1})t_{j_k},\qquad
\varepsilon_k=-\alpha_{j_k}(h_{k-1})\quad(1\leq k\leq r).
\]
Each \(h_k\) has order dividing \(2\), so
\(\varepsilon_k\in\{1,-1\}\). Applying
\eqref{eq:transpose-Bi} and \eqref{eq:torus-through-Bi} successively,
we obtain
\begin{equation}\label{eq:transpose-word}
\tau\bigl(B_{i_1}(z_1)\cdots B_{i_r}(z_r)\bigr)
=
B_{j_1}(\varepsilon_1z_r)\cdots
B_{j_r}(\varepsilon_rz_1)h_r.
\end{equation}
Right multiplication by \(h_r\in T\) preserves every double coset
\(B^-\dot uB^+\). Therefore
\eqref{eq:transpose-Bruhat-cell} and \eqref{eq:transpose-word} show
that the linear automorphism
\[
(z_1,\ldots,z_r)\longmapsto
(\varepsilon_1z_r,\ldots,\varepsilon_rz_1)
\]
of \(\mathbb C^r\) carries the locus
\(B_{i_1}(z_1)\cdots B_{i_r}(z_r)\in B^-\dot vB^+\)
onto the corresponding locus for
\((v^{-1},\beta^{\operatorname{rev}})\).
The parametrizations used in Proposition~\ref{pro:braid} now give
the asserted isomorphism, including its regular inverse.
\end{proof}

\begin{lemma}[Restriction after a terminal left insertion]
\label{lem:terminal-left-restriction}
Assume that \(G\) is of simply-laced Dynkin type. Let \(\eta\) be a
positive braid word, put \(d=\delta(\eta)\), and suppose that
\(\delta(i\eta)=d\). Let \(\mathfrak w\) be a double inductive weave
for \(\eta\), and let \(\mathfrak w^+\) be obtained by appending the
entry \(iL\) to its double string. Denote its new trivalent vertex by
\(c\), and write \(A_u\) and \(A_u^+\) for the cluster variables of
\(\mathfrak w\) and \(\mathfrak w^+\), respectively.

For \(z\in\mathbb C^\times\), put \(U_i(z)=x_i(-z^{-1})\).
Let \(\Phi_{i,z}\) be the coordinate change obtained by moving
\(U_i(z)^{-1}\) successively through the factors of \(B_\eta(a)\),
using \cite[Corollary~3.9 and Lemma~4.2]{casals2025cluster}.
It satisfies
\begin{equation}\label{eq:terminal-left-coordinate-change}
U_i(z)B_\eta\bigl(\Phi_{i,z}(a)\bigr)
=B_\eta(a)b(a,z),\qquad b(a,z)\in B^+,
\end{equation}
where \(B_\eta(a)\) is the ordered product of braid matrices for
\(\eta\). By \cite[Lemmas~4.2 and~5.24\textup{(2)}]{casals2025cluster},
\[
\begin{aligned}
\Psi_i:X(\eta)\times\mathbb C^\times
&\xrightarrow{\sim}\{z\ne0\}\subset X(i\eta),\\
(a,z)&\longmapsto\bigl(z,\Phi_{i,z}(a)\bigr).
\end{aligned}
\]
Define \(\iota_i(a)=\Psi_i(a,1)\). Then
\begin{equation}\label{eq:terminal-left-variable-restriction}
\iota_i^*(A_u^+)=A_u\qquad(u\ne c).
\end{equation}
The ordinary seed of \(\mathfrak w\) is obtained from that of
\(\mathfrak w^+\) by deleting the frozen vertex \(c\), freezing its
remaining neighbors, and using these restricted variables.
As usual, arrows between frozen vertices are omitted.
\end{lemma}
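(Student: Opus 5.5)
We will prove \eqref{eq:terminal-left-variable-restriction} by comparing the weave cluster variables of \(\mathfrak w^+\) and \(\mathfrak w\) through their definition as Lusztig cycle products of flag minors along the weave. The key point is that \(\mathfrak w^+\) agrees with \(\mathfrak w\) except for one extra left letter \(i\). Since \(\delta(i\eta)=d\), the appended entry \(iL\) is absorbed by a trivalent vertex \(c\) at the left boundary, and no other part of the weave changes. We will therefore show that, on the slice \(z=1\), the flags attached to every region of \(\mathfrak w^+\) away from \(c\) are the flags of \(\mathfrak w\), translated by one fixed group element.

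\emph{Step 1: normalize the flag sequence.} By \eqref{eq:terminal-left-coordinate-change}, a point \(\Psi_i(a,z)\) has braid product
\[
B_i(z)B_\eta\bigl(\Phi_{i,z}(a)\bigr)=x_i(z)\dot s_i\,U_i(z)^{-1}B_\eta(a)\,b(a,z).
\]
In \(\mathrm{SL}_2\) one checks that \(x_i(z)\dot s_i\,x_i(z^{-1})\in B^-\) is lower triangular; explicitly it is \(y_i(z^{-1})\) times a torus factor. Consequently the flags \(F_k^+\), \(k\ge1\), of the \(i\eta\)-sequence equal \(g_z F_{k-1}\), where \(g_z\in B^-\) is fixed and \(F_\bullet\) is the \(\eta\)-sequence of \(a\). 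The right factor \(b(a,z)\in B^+\) does not affect flags.

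\emph{Step 2: transport through the weave.} Every weave move (trivalent vertex, hexavalent move, and so on) is defined by relative positions, so it is equivariant under left translation by \(g_z\). Hence all flags in regions of \(\mathfrak w^+\) not adjacent to \(c\) are the \(g_z\)-translates of the corresponding flags of \(\mathfrak w\). The cluster variables are built from generalized minors \(\Delta_{\varpi_j}\) of pairs of flags, decorated at the boundary. These are invariant under simultaneous translation by the boundary normalization, which uses \(B^-\)-type decorations on the left. Therefore \(A_u^+\circ\Psi_i(a,z)=A_u(a)\cdot z^{m_u}\) for some exponent \(m_u\), and the exponent is \(0\) unless the Lusztig cycle of \(u\) passes through \(c\).

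\emph{Step 3: handle the cycles through \(c\).} Here \(A_c^+\) restricts to \(z\) (up to sign). Cycles of \(u\ne c\) can cross the edge emanating from \(c\), which is why only the slice \(z=1\) is used. Setting \(z=1\) kills the factor \(z^{m_u}\), and this gives \eqref{eq:terminal-left-variable-restriction}. I expect this step to be the main obstacle. It requires a careful check against the sign and decoration conventions of \cite[Section~5]{casals2025cluster}, to confirm that the torus factor in \(g_1\) acts trivially on the decorated minors. If it does not, I would absorb it into the normalization by rescaling with \eqref{eq:torus-through-Bi}.

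\emph{Step 4: the seed statement.} Finally we compare quivers. The intersection pairings between Lusztig cycles of \(u,u'\ne c\) are computed locally at the vertices of \(\mathfrak w\), so they coincide with those in \(\mathfrak w^+\). The vertex \(c\) is frozen because it lies at the boundary, so deleting it leaves the arrows among the remaining vertices unchanged. Its former neighbours must then be frozen, since \(\mathbb C[X(\eta)]\) no longer contains the corresponding exchange relations. The exchange relations at non-neighbours of \(c\) do not involve \(A_c^+\), and they restrict verbatim via \(\iota_i^*\).
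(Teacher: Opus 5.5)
Your Step~1 is correct. It matches the factorization $B_i(z)=L_i(z)U_i(z)$ with $L_i(z)=y_i(z^{-1})\chi_i(z)$ used in the paper, so the flags of the $\eta$-part of $\Psi_i(a,z)$ are indeed $L_i(z)$-translates of those of $a$.

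The gap is in Steps~2--3. The cluster variables of \cite{casals2025cluster} are not translation-invariant functions of the flags. They are built from the braid-matrix edge coordinates of the weave, via the Lusztig-cycle recursion \cite[Equation~(25)]{casals2025cluster}, and those coordinates depend on the chosen matrix representatives, not only on the flags. Flag equivariance therefore does not control them: already at the top the coordinates change, $a'=\Phi_{i,z}(a)\neq a$, although the flags are translates. So the formula $A_u^+\circ\Psi_i(a,z)=z^{m_u}A_u(a)$ is asserted rather than proved, as you acknowledge yourself.

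The missing idea is that $\Phi_{i,z}$ arises by pushing the \emph{unipotent} element $U_i(z)$ through the weave. By \cite[Lemma~4.2]{casals2025cluster}, such a push rescales each right incoming edge coordinate only by the torus component of the moved element, which here is trivial. Hence these coordinates are unchanged at every old trivalent vertex. A downward induction with Equation~(25) then gives $\Psi_i^*(A_u^+)=\mathrm{pr}_1^*(A_u)$ for every $z$, not only at $z=1$.

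Your picture of where $z$ lives is also wrong. An old cycle enters $c$ with weight $w_u$ on the northeast edge and $0$ on the new northwest edge, so it leaves with weight $\min(0,w_u)=0$; no old cycle continues below $c$. All the $z$-dependence sits in $A_c^+=z^{-1}\prod_{u\ne c}(A_u^+)^{w_u}$, not in $A_c^+=\pm z$. The slice $z=1$ serves to restrict $A_c^+$ to the monomial $\prod_{u\ne c}A_u^{w_u}$, whose factors are frozen in $\mathfrak w$.

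Step~4 has a similar defect. It is false in general that the pairings of old cycles agree in $\mathfrak w$ and $\mathfrak w^+$. An old cycle with $w_u>0$ is cut off at $c$ in $\mathfrak w^+$, but in $\mathfrak w$ it runs to the bottom boundary with weight $w_u$. This changes both its boundary term and its mutability. Also, ``$\mathbb C[X(\eta)]$ no longer contains the exchange relations'' does not show that exactly the neighbours of $c$ become frozen. What is needed is the local Lusztig-cycle computation at $c$:
\begin{itemize}
\item $c$ is frozen because its cycle runs straight to the bottom.
\item A vertex mutable in $\mathfrak w^+$ becomes frozen in $\mathfrak w$ precisely when $w_u>0$. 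For such $u$ the boundary term vanishes and the intersection with the cycle of $c$ has absolute value $w_u$, so these are exactly the mutable neighbours of $c$.
\item For every column that stays mutable, the cycle already has zero bottom weights in $\mathfrak w$. Hence $c$ contributes no intersection and both boundary terms vanish, so these exchangeable columns of the two seeds coincide.
\end{itemize}
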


\begin{proof}
Choose the bottom reduced word of \(\mathfrak w\) to begin with \(i\),
using braid equivalences if necessary. This is possible because
\(s_i d<d\). The terminal insertion adds an outer \(i\)-strand and
merges it with this first bottom strand at \(c\). Its new cycle runs
straight to the bottom, so \(c\) is frozen. Every old cycle has weight
zero on the northwest edge at \(c\). If its northeast weight is
\(w_u\), its outgoing weight is therefore \(\min(0,w_u)=0\).
Consequently, an old vertex that is mutable in \(\mathfrak w^+\)
becomes frozen upon removal of \(c\) exactly when \(w_u>0\).
For such a mutable vertex the boundary intersection term vanishes,
and its intersection with the new cycle has absolute value \(w_u\).
Thus these are exactly the mutable neighbors of \(c\).
For any column remaining mutable after removal, the corresponding
cycle has zero bottom weights already in \(\mathfrak w\); the new
trivalent vertex contributes no intersection with an old cycle in
that column, and both boundary terms vanish. Hence all surviving
exchangeable columns agree. This is the local computation of
\cite[Definitions~4.17,~4.26 and Lemma~4.47]{casals2025cluster}; it
uses no assumption on the earlier left or right insertions.

Set
\[
L_i(z):=
\varphi_i\!\begin{pmatrix}z&0\\1&z^{-1}\end{pmatrix}
=y_i(z^{-1})\chi_i(z),
\]
so that \(B_i(z)=L_i(z)U_i(z)\) for \(z\neq0\). Since
\(s_i d<d\), the length criterion gives
\(d^{-1}(\alpha_i)<0\). Hence
\[
c_i(z):=\dot d^{-1}L_i(z)\dot d\in B^+,
\]
because conjugation sends the root subgroup
\(U_{-\alpha_i}\) to the positive root subgroup
\(U_{-d^{-1}(\alpha_i)}\), while preserving \(T\).

Let \(a'=\Phi_{i,z}(a)\). By
\eqref{eq:terminal-left-coordinate-change},
\[
U_i(z)B_\eta(a')=B_\eta(a)b(a,z),
\qquad b(a,z)\in B^+.
\]
Therefore
\[
\dot d^{-1}B_i(z)B_\eta(a')
=
c_i(z)\,\dot d^{-1}B_\eta(a)\,b(a,z).
\]
Thus the left-hand side belongs to \(B^+\) if and only if
\(\dot d^{-1}B_\eta(a)\in B^+\). Since the coordinate change
\(\Phi_{i,z}\) is reversible, this proves that
\[
\Psi_i(a,z)=\bigl(z,\Phi_{i,z}(a)\bigr)
\]
is the stated isomorphism.

It remains to compare the cluster variables. By
\cite[Lemma~4.2]{casals2025cluster}, the coordinate change induced
by \(U_i(z)\) multiplies each right incoming edge coordinate by a
scalar determined by the \(T\)-component of \(U_i(z)\). Since
\(U_i(z)\in U^+\) has trivial \(T\)-component, all such coordinates
at the old trivalent vertices remain unchanged. Moreover,
\cite[Equation~(25)]{casals2025cluster} expresses \(A_u\)
recursively in terms of its right incoming edge coordinate and the
cluster variables associated with vertices above \(u\). A downward
induction therefore gives
\[
\Psi_i^*(A_u^+)=\operatorname{pr}_1^*(A_u)
\qquad (u\neq c).
\]
Finally, since \(\iota_i(a)=\Psi_i(a,1)\), we obtain
\[
\iota_i^*(A_u^+)=A_u,
\]
which proves
\eqref{eq:terminal-left-variable-restriction}.
These equalities on the weave tori extend to the regular functions.
At \(c\), the same formula gives
\[
z=(A_c^+)^{-1}\prod_{u\ne c}(A_u^+)^{w_u},
\qquad
\iota_i^*(A_c^+)=\prod_{u\ne c}A_u^{w_u},
\]
as in \cite[Lemma~5.26 and Equation~(32)]{casals2025cluster}.
Thus the normalization fixes the crossing coordinate \(z\) to \(1\);
the deleted cluster variable restricts to the displayed monomial.
Every factor with \(w_u>0\) is frozen for \(\mathfrak w\), since its
cycle reaches its bottom boundary, so this monomial is invertible.
\end{proof}

\begin{lemma}\label{lem:inductivewave}
Assume that \(G\) is of simply-laced Dynkin type. After identifying
the surviving vertex labels, the ordinary cluster seed induced by the
right-inductive weave
\[
\overrightarrow{\mathfrak m}(\overline{w_0v^{-1}}\beta)
\]
is obtained from \(\mathbf{s}(\beta)\) by the mutation, freezing, and deletion
procedure of Definition~\ref{def:svbeta}. In particular, it is isomorphic
to the ordinary cluster seed underlying \(\mathbf{s}(v,\beta)\).
\end{lemma}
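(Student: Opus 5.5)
We compare the two seeds by inserting the letters of a reduced word for \(w_0v^{-1}\) one at a time on the left of \(\beta\), keeping track of how the right-inductive weave seed changes at each step.

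\emph{Base case.} By the Definition in Section~\ref{subsec:seed_words}, the seed \(\mathbf s(\beta)\) is realized by the right-inductive weave for \(\Delta\beta\). Here \(\Delta\) is a reduced word of \(w_0\), and the resulting seed is the normalized configuration-space seed on \(\operatorname{Conf}(\beta)=\mathring{\mathcal Z}_{e,\beta}\).

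\emph{Choice of word.} Write \(w_0=(w_0v^{-1})\cdot v\) with lengths adding. Choose the reduced word \(\Delta\) to end with the reversed selected word, so that the letters of \(\beta_v\) sit adjacent to \(\beta\):
\[
\Delta=\overline{w_0v^{-1}}\,(i_{p_m},\dots,i_{p_1})^{\ast},
\]
where \(\ast\) denotes a suitable transcription under the conventions of Lemma~\ref{lem:betarev}. The braid equivalence \(\Delta\beta\sim\overline{w_0v^{-1}}\,(\text{word of }v)\,\beta\) induces a sequence of mutations on the weave seed. Moving the \(v\)-letters one by one into \(\beta\) is the braid move that absorbs the letter \(i_{p_l}\) into the occurrence at \(p_l\).

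\emph{Step 1: one letter at a time.} For each \(l=1,\dots,m\), absorbing the letter of \(i=i_{p_l}\) corresponds, on the weave, to sliding an \(i\)-strand across the \(i\)-colored trivalent vertices of \(\beta\). We claim this is exactly the sequence \(\widetilde\mu_l=\mu_{(i,n_i-a_l)}\circ\cdots\circ\mu_{(i,b_l+1)}\). The index range is forced as follows. Occurrences up to the \(b_l\)-th unselected one are skipped because of leftmostness: Lemma~\ref{lem:vk} says that at those positions \(s_{i_k}z>z\), so there the weave has no trivalent vertex to move through. The upper index \(n_i-a_l\) counts the occurrences not already consumed by earlier selected letters of color \(i\).

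\emph{Step 2: freezing and deletion.} After the \(l\)-th absorption, the letter \(i\) in \(\Delta\) has become a terminal left insertion with \(\delta(i\eta)=\delta(\eta)\) for the remaining word. By Lemma~\ref{lem:terminal-left-restriction}, removing it deletes one frozen vertex \(c\), namely \((i,n_i-a_l+1)\), and freezes its neighbours, with the restricted variables \(\iota_i^*A_u^+=A_u\). Iterating from the full word \(\Delta\beta\) down to \(\overline{w_0v^{-1}}\beta\) reproduces the freeze-then-delete recipe of Definition~\ref{def:svbeta}. Two points need checking:
\begin{itemize}
\item The definition freezes neighbours only at the end, with respect to \(M_m(Q_\beta)\), whereas the lemma freezes them at each stage. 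This is harmless, since the vertices of \(H_l\) are frozen and never mutated again, so adjacency to them is preserved by later mutations at other vertices. Frozen–frozen arrows are ignored.
\item The vertex \(c\) is frozen at the moment of its deletion, which Lemma~\ref{lem:terminal-left-restriction} guarantees.
\end{itemize}
On the geometric side, Lemma~\ref{lem:isoZvbeta} identifies the resulting braid variety with \(\mathring{\mathcal Z}_{v,\beta}\). Combining this with the order reversal of Lemma~\ref{lem:betarev} and Lemma~\ref{lem:zv-1beta}, which fixes left versus right conventions, yields the isomorphism with \(\mathbf s(v,\beta)\).

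\emph{Main obstacle.} The hard part is Step 1: showing that the weave mutations induced by the braid moves are exactly \(\widetilde\mu_l\), at the vertices \((i,b_l+1),\dots,(i,n_i-a_l)\) and in that order. I would first handle this by induction on \(l\), using the Casals et al.\ description of how a right-inductive weave changes under \(\sigma_i\sigma_j\sigma_i\) moves and under sliding a letter past a same-colored letter (a single mutation at the corresponding cycle). The remaining bookkeeping then tracks how occurrence labels shift after earlier absorptions. This is where the counts \(a_l\) and \(b_l\) enter, and where leftmostness, via Lemma~\ref{lem:vk}, guarantees that no extra mutations occur.
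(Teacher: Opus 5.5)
Your skeleton matches the paper's: start from the all-right weave for $\Delta\beta$, produce the sweeps $\widetilde\mu_l$, then remove terminal left insertions with Lemma~\ref{lem:terminal-left-restriction}. However, Step~1, which you leave open, is the entire content of the lemma, and the mechanism you sketch for it cannot work.

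\emph{No braid move absorbs a letter.} The words $\Delta\beta$ and $\overline{w_0v^{-1}}\beta$ differ in length by $m$, so they are not braid equivalent.

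\emph{Your $v$-letters cannot be removed.} Lemma~\ref{lem:terminal-left-restriction} deletes only the leftmost letter of the word, namely the last entry $iL$ of a double string. Letters placed next to $\beta$ are never in that position.

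\emph{Your $\Delta$ does not represent $w_0$.} The factor after $\overline{w_0v^{-1}}$ must represent $v$, whereas $(i_{p_m},\dots,i_{p_1})$ represents $v^{-1}$.

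The paper instead takes $\Delta=(k_1^*,\ldots,k_m^*,\gamma)$, with $k_l=i_{p_l}$ and $\gamma$ reduced for $w_0v^{-1}$, so that the letters to be removed sit at the left end. At stage $l$ it changes the first entry $k_l^*R$ into $k_l^*L$, which does not alter the weave. It then commutes this entry past $\Gamma_lR$, where $\Gamma_l=(k_{l+1}^*,\ldots,k_m^*,\gamma)$, and then past $i_1R,\ldots,i_rR$, using the elementary interchanges of \cite[Theorem~6.8]{casals2025cluster}. The word $\Delta\beta$ never changes; only the double string does.

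Which interchange occurs is governed by special versus solid status. Determining this requires the Demazure computation
$\delta(\Gamma_l(i_1,\ldots,i_a))=w_0u_l^{-1}u_{\min\{l,t_a\}}$, where $u_l=s_{k_1}\cdots s_{k_l}$ and $t_a=\#\{s:p_s\le a\}$; it follows from Lemma~\ref{lem:vk} and leftmostness. It shows three things:
\begin{enumerate}
\item Before $p_l$ the moving entry is special, so only Cases~1 and~3 occur, and these are equivalences.
\item At $p_l$ Case~2 applies with no mutation. The variable of $i_{p_l}R$, labelled $(i,b_l+1)$ after the earlier stages, is transferred to the moving entry.
\item After $p_l$ both entries are solid, and a mutation occurs exactly when $i_a=k_l$; the carried label advances by one each time.
\end{enumerate}
Your heuristic that the unselected $i$-occurrences before $p_l$ carry no trivalent vertex is wrong. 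They are solid; there is no mutation there only because the moving entry is special.

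Finally, your justification for deleting after each stage fails. Mutation at a neighbour of a frozen vertex $h$ does change the arrows at $h$, so adjacency to $H_l$ is not automatically preserved. The paper avoids this by completing all $m$ stages, so that the quiver is $M_m(Q_\beta)$, before removing the final left block. An interleaved version would need the nonadjacency statement of Theorem~\ref{thm:induction}(3).
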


\begin{proof}
We use double inductive weaves and the elementary interchanges of
\cite[Section~6.4]{casals2025cluster}. An entry \(iR\), respectively
\(iL\), inserts \(i\) on the right, respectively the left. We call
an entry \emph{special} if it increases the length of the Demazure
product, and \emph{solid} otherwise; solid entries index the
trivalent vertices and the cluster variables. We give the Demazure
calculation explicitly, since \(\beta\) need not be reduced.

Retain \(m=\ell(v)\), and put
\[
k_l=i_{p_l},\qquad
u_l=s_{k_1}\cdots s_{k_l}\quad(0\leq l\leq m),\qquad u_0=e.
\]
Choose a reduced word \(\gamma=(j_1,\ldots,j_q)\) for
\(w_0v^{-1}\), where \(q=\ell(w_0)-m\). Then
\[
\Delta=(k_1^*,\ldots,k_m^*,j_1,\ldots,j_q)
\]
is a reduced word for \(w_0\). For \(0\leq l\leq m\), set
\[
\Gamma_l=(k_{l+1}^*,\ldots,k_m^*,j_1,\ldots,j_q),\qquad
h_l=w_0u_l^{-1}.
\]
The word \(\Gamma_l\) is reduced and represents \(h_l\).
For \(0\leq a\leq r\), let
\(t_a=\#\{s\in[m]\mid p_s\leq a\}\).
We claim that
\begin{equation}\label{eq:weave-comparison-Demazure}
\delta\bigl(\Gamma_l(i_1,\ldots,i_a)\bigr)
=w_0u_l^{-1}u_{\min\{l,t_a\}}.
\end{equation}

To prove the claim, observe first that
\((i_{p_1},\ldots,i_{p_l})\) is the leftmost reduced
subexpression for \(u_l\) in \((i_1,\ldots,i_{p_l})\).
An earlier such subexpression could be concatenated with
\((i_{p_{l+1}},\ldots,i_{p_m})\), contradicting the leftmost
choice for \(v\). By Lemma~\ref{lem:vk}, the greedy residuals for
\(u_l\) along \(\beta\) are therefore
\[
z_{a,l}=u_{\min\{l,t_a\}}^{-1}u_l.
\]
They satisfy \(z_{0,l}=u_l\) and
\(z_{a,l}=\min\{z_{a-1,l},s_{i_a}z_{a-1,l}\}\);
after \(a=p_l\), the residual is \(e\).
Since \(x\mapsto w_0x^{-1}\) reverses Bruhat order,
\[
(w_0z_{a-1,l}^{-1})* s_{i_a}=w_0z_{a,l}^{-1}.
\]
Induction on \(a\) proves
\eqref{eq:weave-comparison-Demazure}. In particular,
\begin{equation}\label{eq:weave-comparison-length}
\ell\!\left(\delta\bigl(\Gamma_l(i_1,\ldots,i_a)\bigr)\right)
=\ell(w_0)-l+\min\{l,t_a\}.
\end{equation}

Start with the all-right double string for \(\Delta\beta\).
Under \(X(\Delta\beta)\cong\operatorname{Conf}(\beta)\), its ordinary
seed is identified with that underlying \(\mathbf s(\beta)\) by
\[
X_k\longmapsto
\Delta_{\varpi_{i_k}}\bigl(B_{i_1}(z_1)\cdots B_{i_k}(z_k)\bigr),
\qquad 1\leq k\leq r.
\]
Here \cite[Lemma~3.16, Corollary~4.45 and Proposition~5.20]{casals2025cluster}
identify the coordinates, the quiver, and the cluster variables,
respectively; the frozen vertices are the last occurrences of each color.
At stage \(l\), change the first entry \(k_l^*R\) to \(k_l^*L\)
and move it past \(\Gamma_l\) and then past all the entries of \(\beta\).
The first-entry change does not alter the weave. Moving past \(\Gamma_l\)
uses only special--special interchanges, since the remaining word
for the longest element is reduced. After completing stage \(l\),
the double string is
\begin{equation}\label{eq:comparison-double-string-l}
(\Gamma_lR,i_1R,\ldots,i_rR,k_l^*L,\ldots,k_1^*L),
\end{equation}
where \(\Gamma_lR\) means that every letter of \(\Gamma_l\) is inserted on
the right.

We now inspect the crossing with \(i_aR\). Before \(a=p_l\),
\eqref{eq:weave-comparison-length} shows that the moving
\(k_l^*L\)-entry is special. When \(a\) is one of
\(p_1,\ldots,p_{l-1}\), the interchange is Case~1 in the proof
of \cite[Theorem~6.8]{casals2025cluster}; at the other positions
before \(p_l\), it is Case~3. These interchanges are weave
equivalences and preserve the corresponding cluster variables.
At \(a=p_l\), the relevant products are
\[
\delta\bigl(\Gamma_l(i_1,\ldots,i_{p_l-1})\bigr)=w_0s_{k_l},
\qquad
s_{k_l^*}(w_0s_{k_l})=(w_0s_{k_l})s_{k_l}=w_0.
\]
Thus Case~2 applies: no mutation occurs, and the variable previously
attached to \(i_{p_l}R\) is transferred to the moving
\(k_l^*L\)-entry.

For \(a>p_l\), the Demazure product of the preceding block is
\(w_0\), by \eqref{eq:weave-comparison-Demazure}. Both entries
are solid. Cases~4 and~5 in the same proof show that the interchange
is a mutation exactly when
\[
s_{k_l^*}w_0=w_0s_{i_a},\quad\text{equivalently }i_a=k_l.
\]
All other interchanges are equivalences. This identifies every
mutation in stage \(l\), without imposing reducedness on \(\beta\).

Write \(i=k_l\). Since \(p_l\) is the \((a_l+b_l)\)-th occurrence of \(i\) in \(\beta\), and the \(a_l-1\) earlier selected \(i\)-occurrences have already been moved to the final left block, the variable transferred at \(p_l\) has label \((i,b_l+1)\). Under each subsequent Case~5 in the proof
of \cite[Theorem~6.8]{casals2025cluster} interchange, if the moving entry has label \((i,c)\), mutation is performed at \((i,c)\), after which the new cluster variable carried by the moving entry is relabeled \((i,c+1)\). Thus its label moves one place to the right after each mutation. Since there are \(n_i-a_l-b_l\) later occurrences of \(i\), the successive mutation labels are
$$
(i,b_l+1),\ (i,b_l+2),\ldots,(i,n_i-a_l),
$$
and the final label of the moving entry is
$
\kappa_l=(i,n_i-a_l+1).
$
 This is precisely
\(\widetilde\mu_l\) in Definition~\ref{def:svbeta}, including
the empty-sequence case. After all \(m\) stages the induced
mutation sequence is \(M_m\), and the final left entries have
labels \(\kappa_m,\ldots,\kappa_1\).

We have \(\delta(\gamma\beta)=w_0\), so all entries in the final
left block are solid. Remove them in reverse construction order,
with labels \(\kappa_1,\ldots,\kappa_m\), applying
Lemma~\ref{lem:terminal-left-restriction} at each step. Each removal
deletes a frozen vertex, freezes its remaining neighbors, and
preserves every surviving variable under the normalized restriction.

Thus the deleted labels are \(J_m=\{\kappa_l\mid l\in[m]\}\).
Among the surviving labels, the frozen ones are the original
frozen vertices together with the neighbors of \(J_m\) in
\(M_m(Q_\beta)\). This is exactly the freezing and deletion in
Definition~\ref{def:svbeta}. The quiver left after removal is the
one associated with the all-right string for \(\gamma\beta\).
Together with the variable identifications along weave equivalences
and mutations, the composite restriction therefore identifies the
ordinary seeds, including their cluster variables and frozen vertices.
\end{proof}

\begin{theorem}\label{thm:clusterproduct}
Assume that \(G\) is of simply-laced Dynkin type. Let \(\beta\) be a word in
\(I\), and let \(v\leq \delta(\beta)\). There is an isomorphism of
affine varieties
\[
\mathring{\mathcal Z}_{v,\beta}
\cong
X(\overline{w_0v^{-1}}\beta).
\]
The ordinary seed underlying \(\mathbf{s}(v,\beta)\) defines a
locally acyclic cluster structure on
\(\mathbb C[\mathring{\mathcal Z}_{v,\beta}]\). More precisely,
\[
U(\mathbf{s}(v,\beta))=\mathcal A(\mathbf{s}(v,\beta)),
\qquad
\mathcal A(\mathbf{s}(v,\beta))\otimes_{\mathbb Q}\mathbb C
\cong\mathbb C[\mathring{\mathcal Z}_{v,\beta}].
\]

If \(v=\delta(\beta)\), the ordinary seed underlying
\(\mathbf{s}(\delta(\beta),\beta)\) is isomorphic to the seed
associated with the right-inductive weave
\(
\overrightarrow{\mathfrak m}(\beta).
\)
\end{theorem}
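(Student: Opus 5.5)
The proof has three parts: the geometric isomorphism, the cluster structure, and the special case \(v=\delta(\beta)\).

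\emph{Geometric isomorphism.} Lemma~\ref{lem:isoZvbeta} gives \(\mathring{\mathcal Z}_{v,\beta}\cong X(\beta\,\overline{v^{-1}w_0})\), with the complementary word on the right. To move it to the left, apply Lemma~\ref{lem:zv-1beta} and then Lemma~\ref{lem:betarev}:
\[
\mathring{\mathcal Z}_{v,\beta}\cong\mathring{\mathcal Z}_{v^{-1},\beta^{\rm rev}}\cong X(\beta^{\rm rev}\,\overline{v w_0})\cong X(\overline{vw_0}^{\,\rm rev}\beta).
\]
Here Lemma~\ref{lem:isoZvbeta} is applied to the pair \((v^{-1},\beta^{\rm rev})\). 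This is legitimate because \(v^{-1}\le\delta(\beta)^{-1}=\delta(\beta^{\rm rev})\). Reversing a reduced word of \(vw_0\) gives a reduced word of \(w_0v^{-1}\). Since braid varieties depend only on the braid up to isomorphism, we may take this to be \(\overline{w_0v^{-1}}\). That settles the first assertion.

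\emph{Cluster structure.} By Lemma~\ref{lem:inductivewave}, the ordinary seed underlying \(\mathbf s(v,\beta)\) is the seed of the right-inductive weave \(\overrightarrow{\mathfrak m}(\overline{w_0v^{-1}}\beta)\). Casals et al.\ \cite{casals2025cluster} (Theorem~7.13 and its surrounding results) show that this weave seed gives a cluster structure on \(\mathbb C[X(\overline{w_0v^{-1}}\beta)]\). The same work shows the structure is locally acyclic, via a Banff-type argument on inductive weaves, and that the cluster algebra equals the upper cluster algebra. Transporting this through the isomorphism of the first part gives
\[
\mathcal A(\mathbf s(v,\beta))\otimes\mathbb C\cong\mathbb C[\mathring{\mathcal Z}_{v,\beta}],
\qquad
\mathcal A(\mathbf s(v,\beta))=U(\mathbf s(v,\beta)).
\]
Both are taken over \(\mathbb Q\), since the weave seed is defined integrally. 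The Bao--Ye theorem gives \(U(\mathbf s(v,\beta))\otimes\mathbb C\cong\mathbb C[\mathring{\mathcal Z}_{v,\beta}]\) independently, which is a consistency check. We must also confirm that the two identifications with \(\mathbb C[\mathring{\mathcal Z}_{v,\beta}]\) use the same initial variables. The restriction compatibility in Lemma~\ref{lem:terminal-left-restriction} ensures this, because it shows the composite isomorphism carries initial weave variables to \(y_i\).

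\emph{Special case \(v=\delta(\beta)\).} Here \(w_0v^{-1}\beta\) has the form \(\gamma\beta\) with \(\ell(\gamma)+\ell(\delta(\beta))=\ell(w_0)\). By the final sentence of the proof of Lemma~\ref{lem:inductivewave}, after removing the left block the remaining weave is the all-right one for \(\gamma\beta\). We then argue that the leading reduced block \(\gamma\) of the all-right string contributes only special entries, so it adds no vertices. The remaining right insertions of \(\beta\) starting from \(\delta=w_0\delta(\beta)^{-1}\) then reproduce \(\overrightarrow{\mathfrak m}(\beta)\) up to the standard Demazure shift, which can be compared by weave equivalence. Alternatively, one can use \(\mathring{\mathcal Z}_{\delta(\beta),\beta}\cong X(\beta)\) from Proposition~\ref{pro:braid} and identify the seeds directly.

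The main obstacle is this last step: checking that prefixing \(\gamma\) commutes with the weave construction in a way that preserves the quiver and variables exactly. I expect to use the same Case~1/Case~3 interchange analysis as in Lemma~\ref{lem:inductivewave}.
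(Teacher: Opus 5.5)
Your first two parts follow the paper. You use the same chain of Lemmas~\ref{lem:zv-1beta}, \ref{lem:isoZvbeta}, \ref{lem:betarev} for the variety, and Lemma~\ref{lem:inductivewave} with \cite[Theorem~7.13]{casals2025cluster} for local acyclicity and \(U=\mathcal A\). The paper takes the coordinate-ring identification from \cite[Theorem~6.5]{bao2025upper} rather than transporting the weave one. Since the asserted isomorphism is abstract, either source suffices, so your extra step of matching initial variables with the Bao--Ye realization is not needed.

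The genuine gap is the final assertion, for \(v=\delta(\beta)\), which you leave as an unresolved obstacle. The phrase ``up to the standard Demazure shift'' is not an argument. Proposition~\ref{pro:braid} only gives an isomorphism of varieties and says nothing about seeds.

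The missing idea is length additivity. Put \(h=w_0v^{-1}\) and \(w_a=\delta(\beta_{\le a})\). Each step \(w_{a-1}\mapsto w_a\) either fixes \(w_{a-1}\) or right-multiplies it by \(s_{i_a}\) with a length increase. Hence every \(w_a\) lies below \(v\) in right weak order. Since \(\ell(hv)=\ell(h)+\ell(v)\), this forces \(\ell(hw_a)=\ell(h)+\ell(w_a)\). The same holds with \(h\) replaced by the element of any suffix of the reduced word \(\gamma=(j_1,\ldots,j_q)\).

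With this in hand, the argument runs as follows:
\begin{enumerate}
\item Convert \(j_1R,j_2R,\ldots\) in turn into left entries and move each past the remaining right entries. The moving entry stays special, and each \(i_aR\) keeps the special or solid status it has in \(\beta\).
\item So only Cases~1 and~3 of \cite[Theorem~6.8]{casals2025cluster} occur. These are equivalences, and one reaches the double string \((i_1R,\ldots,i_rR,j_qL,\ldots,j_1L)\).
\item These terminal left entries increase Demazure length, so they carry no trivalent vertices. They can be removed without changing cycles, variables or frozen vertices, by \cite[Lemmas~4.47 and~5.25]{casals2025cluster}.
\item What remains is \(\overrightarrow{\mathfrak m}(\beta)\).
\end{enumerate}
Any direct comparison of \(\overrightarrow{\mathfrak m}(\gamma\beta)\) with \(\overrightarrow{\mathfrak m}(\beta)\), as you suggest, needs the same additivity. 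It is exactly what makes the solid letters of \(\gamma\beta\) and of \(\beta\) coincide.
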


\begin{proof}
First, we identify the relevant twisted product with a braid variety. By
Lemma~\ref{lem:zv-1beta}, Lemma~\ref{lem:isoZvbeta}, and
Lemma~\ref{lem:betarev}, we have a chain of isomorphisms
\[
\mathring{\mathcal Z}_{v,\beta}
\cong
\mathring{\mathcal Z}_{v^{-1},\beta^{\operatorname{rev}}}
\cong
X(\beta^{\operatorname{rev}}\overline{vw_0})
\cong
X(\overline{w_0v^{-1}}\beta).
\]
Here the second isomorphism uses
\[
(v^{-1})^c=vw_0,
\]
and the last isomorphism is the reversal isomorphism for braid varieties.

By Lemma~\ref{lem:inductivewave}, the ordinary seed induced by the
right-inductive weave
\[
\overrightarrow{\mathfrak m}(\overline{w_0v^{-1}}\beta)
\]
is obtained from \(\mathbf{s}(\beta)\) by the mutation, freezing, and deletion
procedure of Definition~\ref{def:svbeta}. The corresponding braid-variety
cluster algebra is locally acyclic by
\cite[Theorem~7.13]{casals2025cluster}. The seed isomorphism therefore
implies the same property for \(\mathcal A(\mathbf{s}(v,\beta))\),
and gives
\[
U(\mathbf{s}(v,\beta))=\mathcal A(\mathbf{s}(v,\beta)).
\]
The localizations and exchange relations in the cited local acyclicity
argument are defined over \(\mathbb Q\), so this equality holds
over the coefficient field fixed in Section~\ref{sec:pre}.
Combining it with the coordinate-ring realization of
\cite[Theorem~6.5]{bao2025upper} proves the claimed cluster
structure on \(\mathring{\mathcal Z}_{v,\beta}\).

Finally, suppose \(v=\delta(\beta)\), and write
\(\gamma=(j_1,\ldots,j_q)\) for the chosen reduced word of
\(h=w_0v^{-1}\). For each prefix \(\beta_{\leq a}\), put
\(w_a=\delta(\beta_{\leq a})\). Successive right Demazure
multiplications show that \(w_a\) lies below \(v\) in right
weak order. Since \(\ell(hv)=\ell(h)+\ell(v)\), it follows that
\[
\ell(hw_a)=\ell(h)+\ell(w_a).
\]
The same length-additivity holds with \(h\) replaced by any
suffix of the reduced word \(\gamma\). Move its letters, starting
with \(j_1\), from the initial right block to terminal left
insertions. At each interchange the moving left entry remains
special, while the right entry retains its original special or
solid status. Thus only Cases~1 and~3 of
\cite[Theorem~6.8]{casals2025cluster} occur. These are equivalences,
and the resulting double string is
\[
(i_1R,\ldots,i_rR,j_qL,\ldots,j_1L).
\]
All the final left insertions increase Demazure length.
Removing them preserves the cycles, cluster variables, and frozen
vertices by the terminal-insertion calculation of
\cite[Lemma~4.47 and Lemma~5.25]{casals2025cluster}.
The remaining weave is \(\overrightarrow{\mathfrak m}(\beta)\).
Together with Lemma~\ref{lem:inductivewave}, this proves the final
seed isomorphism.
\end{proof}

\section{Bosonic extension algebra}\label{sec:bosonic}

Retain the simply-laced finite-type Cartan matrix $C=(c_{ij})_{i,j\in I}$,
and let $q$ be a formal parameter. We use the notation
\[
[n]_q=\frac{q^{n}-q^{-n}}{q-q^{-1}},
\qquad
[n]_q!:=\prod_{k=1}^{n}[k]_q,
\qquad
\qbinom{m}{n}
:=
\frac{[m]_q!}{[n]_q![m-n]_q!}.
\]

\begin{definition}
The \emph{bosonic extension algebra} \(\widehat{\mathcal A}\) is the
\(\mathbb Q(q^{1/2})\)-algebra generated by elements
\[
\{f_{i,k}\mid i\in I,\ k\in\mathbb Z\},
\]
subject to the following relations:
\begin{align}
&\sum_{a=0}^{1-c_{ij}}(-1)^a
\qbinom{1-c_{ij}}{a}
f_{i,p}^{\,1-c_{ij}-a} f_{j,p} f_{i,p}^{\,a}
=0,
\qquad i\neq j,\ p\in\mathbb Z,
\label{eq:Serre}
\\[0.3em]
&f_{i,m}f_{j,p}
=
q^{(-1)^{p-m+1}c_{ij}} f_{j,p}f_{i,m}
+
\delta_{(i,m+1),(j,p)}(1-q^{2}),
\qquad m<p.
\label{eq:bosonic}
\end{align}
Here \(\delta_{x,y}\) denotes the Kronecker delta.
\end{definition}

The relations are homogeneous for
$\operatorname{wt}(f_{i,m})=(-1)^m\alpha_i$, giving the decomposition
$\widehat{\mathcal A}=\bigoplus_{\alpha\in Q}\widehat{\mathcal A}_\alpha$.
For $-\infty\leq a\leq b\leq+\infty$, let
$\widehat{\mathcal A}[a,b]$ be the subalgebra generated by the
$f_{i,k}$ with $a\leq k\leq b$. Write
\[
    \widehat{\mathcal A}[m]:=\widehat{\mathcal A}[m,m],\qquad
    \widehat{\mathcal A}_{\geq0}:=\widehat{\mathcal A}[0,+\infty],\qquad
    \widehat{\mathcal A}_{<0}:=\widehat{\mathcal A}[-\infty,-1].
\]
For finite $[a,b]$, ordered multiplication gives the vector-space
factorization
\[
    \widehat{\mathcal A}[b]\otimes\cdots\otimes\widehat{\mathcal A}[a]
    \xrightarrow{\ \sim\ }\widehat{\mathcal A}[a,b].
\]
Passing to the direct limit yields
$\widehat{\mathcal A}=\widehat{\mathcal A}_{\geq0}\cdot
\widehat{\mathcal A}_{<0}$, with larger level indices on the left.

The bar involution and the shift $\mathcal D$ are the
$\mathbb Q$-algebra anti-automorphisms determined by
\[
\begin{aligned}
    \overline{f_{i,k}}&=f_{i,k},&
    \overline{q^{\pm1/2}}&=q^{\mp1/2},\\
    \mathcal D(f_{i,k})&=f_{i,k+1},&
    \mathcal D(q^{\pm1/2})&=q^{\mp1/2}.
\end{aligned}
\]
For $x\in\widehat{\mathcal A}_\alpha$, set
$c(x):=q^{(\alpha,\alpha)/2}\overline{x}$, and extend $c$ additively
across the weight decomposition. Thus
$c(ax)=\overline a\,c(x)$ for $a\in\mathbb Q(q^{1/2})$.

\subsection{Braid symmetries and global basis}

\begin{proposition}[{\cite[Theorem~3.1]{kashiwara2024braid}}]\label{pro:braidsym}
For each \(i\in I\), there exist \(\mathbb Q(q^{1/2})\)-algebra automorphisms
\[
T_i,\; T_i^*:\widehat{\mathcal A}\to \widehat{\mathcal A}
\]
defined by
\[
T_i(f_{j,m}) =
\begin{cases}
f_{j,m+\delta_{ij}}, & \text{if } d(i,j)\neq 1,\\[4pt]
\dfrac{q^{1/2} f_{j,m} f_{i,m}-q^{-1/2} f_{i,m} f_{j,m}}{q-q^{-1}},
& \text{if } d(i,j)=1,
\end{cases}
\]
and
\[
T_i^{*}(f_{j,m}) =
\begin{cases}
f_{j,m-\delta_{ij}}, & \text{if } d(i,j)\neq 1,\\[4pt]
\dfrac{q^{1/2} f_{i,m} f_{j,m}-q^{-1/2} f_{j,m} f_{i,m}}{q-q^{-1}},
& \text{if } d(i,j)=1.
\end{cases}
\]
Here \(d(i,j)\) denotes the number of edges between \(i\) and \(j\) in the Dynkin
diagram. The families \(\{T_i\}_{i\in I}\) and \(\{T_i^*\}_{i\in I}\) satisfy
the commutation and braid relations associated with the Cartan matrix \(C\).
Moreover,
\[
T_iT_i^*=T_i^*T_i=\mathrm{Id}.
\]
\end{proposition}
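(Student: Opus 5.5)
The plan is to verify everything directly on generators. Since $\widehat{\mathcal A}$ is presented by generators and relations, I would show that the assignments for $T_i$ and $T_i^*$ respect \eqref{eq:Serre} and \eqref{eq:bosonic}, so that they give algebra endomorphisms. I would then check $T_i^*T_i=T_iT_i^*=\mathrm{Id}$ on the generators $f_{j,m}$, which makes them automorphisms. Finally I would check the braid and commutation relations on generators.

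\textbf{Reduction to one level.} For a fixed level $m$, the elements $f_{j,m}$ satisfy only the Serre relations, so $\widehat{\mathcal A}[m]\simeq U_q^-(\mathfrak g)$. For $d(i,j)=1$, the formula for $T_i(f_{j,m})$ is, up to normalization, Lusztig's $q$-commutator $T''_{i,1}$ applied to $f_j$. The exception is that $f_{i,m}$ is sent to $f_{i,m+1}$ instead of to a Cartan-type element. Hence the Serre relations among $\{T_i(f_{j,m})\}_j$ with $j\neq i$ follow from Lusztig's computations in $U_q^-$.

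\textbf{Mixed relations.} For relations involving $T_i(f_{i,m})=f_{i,m+1}$, I would use \eqref{eq:bosonic}. It gives that $f_{i,m+1}$ $q$-commutes with every $f_{j,m}$ for $j\neq i$, and that $f_{i,m+1}$ and $f_{i,m}$ satisfy the inhomogeneous relation with constant $1-q^2$. Using these, $f_{i,m+1}$ $q$-commutes with the degree-two element $T_i(f_{j,m})$. This gives the Serre relation between $T_i(f_{i,m})$ and $T_i(f_{j,m})$, because they $q$-commute with exponent matching $-c_{ij}$ after the sign flip $(-1)^{p-m+1}$. The bosonic relations \eqref{eq:bosonic} between levels $m<p$ then split into cases:
\begin{enumerate}
\item $p\ge m+2$: pure $q$-commutation, which is routine because the weights behave multiplicatively.
\item $p=m+1$: this case contains the inhomogeneous terms and needs care.
\end{enumerate}
When $p=m+1$, I would expand $T_i(f_{j,m})T_i(f_{k,m+1})$ using \eqref{eq:bosonic} term by term. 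I must then check two things. First, the quadratic corrections cancel except when $(j,m+1)=(k,p)$. Second, in that exceptional case the constant produced is exactly $1-q^2$. Here the normalization $(q-q^{-1})^{-1}$ and the $q^{\pm1/2}$ coefficients are used.

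\textbf{Inverses and braid relations.} I would compute $T_i^*T_i(f_{j,m})$:
\begin{enumerate}
\item For $d(i,j)\neq1$ it is trivially $f_{j,m}$.
\item For $d(i,j)=1$ it equals $T_i^*$ of a $q$-commutator, namely a $q$-commutator of $f_{i,m-1}$ and $T_i^*(f_{j,m})$. I would reduce this using the inhomogeneous relation between $f_{i,m-1}$ and $f_{i,m}$ together with Serre. The constant term $1-q^2$, divided by $(q-q^{-1})^2$, should exactly recover $f_{j,m}$.
\end{enumerate}
Commutation relations for $d(i,j)=0$ are immediate. The braid relation $T_iT_jT_i=T_jT_iT_j$ for $d(i,j)=1$ must be checked on $f_{i,m}$, $f_{j,m}$, and on $f_{k,m}$ for the neighbors $k$ of $i$ or $j$. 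The check on $f_{i,m}$ and $f_{j,m}$ is a short computation giving $f_{j,m+1}$ and $f_{i,m+1}$ respectively. For the neighbors $k$, it reduces to a rank-three computation in $\widehat{\mathcal A}[m,m+1]$, and I would import Lusztig's rank-two and rank-three identities via the one-level reduction above.

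I expect the main obstacle to be the $p=m+1$ bosonic relation, together with the inverse computation, since both hinge on precise cancellation of the inhomogeneous terms. I would handle them by working in the subalgebra $\widehat{\mathcal A}[m,m+1]$ with the PBW factorization $\widehat{\mathcal A}[m+1]\otimes\widehat{\mathcal A}[m]$ and comparing normal-ordered expansions.
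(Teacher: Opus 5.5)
The paper does not prove this statement. It quotes it from \cite{kashiwara2024braid}, so your proposal has to stand on its own. Checking the defining relations generator by generator is a legitimate route, but one step you rely on is false.

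For $d(i,j)=1$, the element $T_i(f_{i,m})=f_{i,m+1}$ does \emph{not} $q$-commute with $T_i(f_{j,m})$. From $f_{i,m}f_{i,m+1}=q^2f_{i,m+1}f_{i,m}+(1-q^2)$ and $f_{j,m}f_{i,m+1}=q^{-1}f_{i,m+1}f_{j,m}$ one gets
\[
T_i(f_{j,m})\,f_{i,m+1}=q\,f_{i,m+1}\,T_i(f_{j,m})+q^{-1/2}(1-q^2)\,f_{j,m}.
\]
So the inhomogeneous term survives. This is to be expected, since $f_{i,m+1}$ plays the role of $E_i$, not of a Cartan element.

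The correction is essential. Applying $T_i$ to $T_j(f_{i,m})$ shows that the display is equivalent to $T_iT_j(f_{i,m})=f_{j,m}$. Under your exact $q$-commutation, $T_iT_j(f_{i,m})$ would be $0$. Then $T_iT_j$ could not be injective, and your own braid check $T_jT_iT_j(f_{i,m})=f_{j,m+1}$ would fail. As written, the proposal is internally inconsistent: you account for exactly this phenomenon in the inverse computation but miss it here.

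The Serre relations between $Y:=f_{i,m+1}$ and $X:=T_i(f_{j,m})$ still hold, but for a different reason. Put $c=q^{-1/2}(1-q^2)$, so that $XY=qYX+c\,f_{j,m}$. In $Y^2X-[2]_qYXY+XY^2$ the correction terms cancel because $f_{j,m}Y=q^{-1}Yf_{j,m}$. In the other relation,
\[
X^2Y-[2]_qXYX+YX^2=c\,\bigl(Xf_{j,m}-q^{-1}f_{j,m}X\bigr),
\]
and the right-hand side is a scalar multiple of the level-$m$ Serre relation $f_{j,m}^2f_{i,m}-[2]_qf_{j,m}f_{i,m}f_{j,m}+f_{i,m}f_{j,m}^2=0$.

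The same care is needed in two other places.
\begin{enumerate}
\item In your case $p\ge m+2$, take $j=i$, $k\sim i$ and $p=m+2$. Then $f_{i,m+1}$ and $T_i(f_{k,m+2})$ sit in adjacent levels. The inhomogeneous terms do cancel, but by explicit computation, not by a weight argument.
\item For the braid relations, Lusztig's rank-three identities cannot be imported through your one-level reduction, because the intermediate expressions contain $f_{i,m+1}$. What does the job is the identity $T_iT_j(f_{i,m})=f_{j,m}$ and its mirror $T_jT_i(f_{j,m})=f_{i,m}$. For a neighbor $k$ of exactly one of $i,j$, these turn both sides of the braid relation on $f_{k,m}$ into nested $q$-commutators at level $m$. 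Their equality then follows because $f_{k,m}$ commutes with whichever of $f_{i,m},f_{j,m}$ is not adjacent to $k$.
\end{enumerate}

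With these repairs your plan goes through; for example, the case $p=m+1$, $j=k\sim i$ does give exactly the constant $1-q^2$. You can also skip the separate check for $T_i^*$. The $\mathbb Q(q^{1/2})$-linear anti-automorphism $f_{i,m}\mapsto f_{i,-m}$ conjugates $T_i$ into $T_i^*$.
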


For an expression $\beta=(i_1,\ldots,i_r)$ of $b\in\Br^+$, set
$T_b:=T_{i_1}\circ\cdots\circ T_{i_r}$.
Proposition~\ref{pro:braidsym} makes this independent of the expression.

\begin{definition}
Let \(\beta=(i_1,\dots,i_r)\) be a word. For each \(k\in[r]\), define the
\emph{PBW root vector}
\[
P_k^\beta
:=
T_{i_1}\cdots T_{i_{k-1}}\bigl(q^{1/2}f_{i_k,0}\bigr).
\]
For \(\mathbf a=(a_1,\ldots,a_r)\in\mathbb N^r\), define
\[
P^\beta(\mathbf a)
:=
\left(\prod_{k=1}^{r}q^{a_k(a_k-1)/2}\right)
(P_r^\beta)^{a_r}\cdots(P_1^\beta)^{a_1}.
\]
Thus larger indices occur on the left, in accordance with the ordered
product convention in \cite{kashiwara2024braid}.
\end{definition}

\begin{definition}
For \(b\in\Br^+\), the \emph{bosonic extension algebra associated with \(b\)}
is defined by
\[
\widehat{\mathcal A}(b)
:=
T_b(\widehat{\mathcal A}_{<0})\cap \widehat{\mathcal A}_{\geq 0}.
\]
\end{definition}

\begin{proposition}[{\cite{oh2025pbw,kashiwara2024braid}}]
Let \(b\in\Br^+\), and let \(\beta\) be an expression of \(b\). Then
\[
\{P^\beta(\mathbf a)\mid \mathbf a\in\mathbb N^r\}
\]
forms a \(\mathbb Q(q^{1/2})\)-basis of \(\widehat{\mathcal A}(b)\). This basis
is called the \emph{PBW basis} associated with \(\beta\).
\end{proposition}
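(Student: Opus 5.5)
This proposition is cited from Oh--Park and Kashiwara--Kim--Oh--Park, so the plan is to reduce it to the structure of these references in three steps: PBW vectors lie in $\widehat{\mathcal A}(b)$, they are linearly independent, and they span.

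\emph{Containment.} Fix an expression $\beta=(i_1,\ldots,i_r)$ and argue by induction on $r$, using $T_b=T_{i_1}T_{\beta'}$ where $\beta'=(i_2,\ldots,i_r)$. First check $P_k^\beta\in\widehat{\mathcal A}(b)$.
\begin{itemize}
\item Since $f_{i_k,0}\in\widehat{\mathcal A}_{\ge0}$, the standard property that $T_{i_1}\cdots T_{i_{k-1}}(f_{i_k,0})$ stays in $\widehat{\mathcal A}_{\ge0}$ gives the second condition. This is a KKOP lemma: $T_i$ preserves $\widehat{\mathcal A}_{\ge0}$ up to the degree $0$ part, and this is controlled by the positive-braid condition.
\item For the first condition, write $P_k^\beta=T_b\bigl(T_{i_r}^*\cdots T_{i_k}^*(q^{1/2}f_{i_k,0})\bigr)$ and note $T_{i_k}^*(f_{i_k,0})=f_{i_k,-1}$. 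It then suffices that $T_{i_r}^*\cdots T_{i_{k+1}}^*$ preserves $\widehat{\mathcal A}_{<0}$, which follows from the explicit formulas in Proposition~\ref{pro:braidsym}: $T_j^*$ sends $f_{l,m}$ to an expression in level $m$ and $m-1$ generators.
\end{itemize}
Products of these vectors then lie in the intersection, which is a subalgebra.

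\emph{Independence.} Use the triangular (Levendorskii--Soibelman type) ordering of PBW monomials and the convolution property. Alternatively, reduce to the quantum unipotent coordinate ring. Via $\widehat{\mathcal A}[0]\simeq U_q^-(\mathfrak g)$, when $\beta$ is reduced the $P_k^\beta$ are the Lusztig root vectors and independence is classical. For non-reduced $\beta$, factor $\beta$ into pieces whose PBW vectors live in the shifted copies $\widehat{\mathcal A}[m]$ after applying powers of $T_{w_0}$ (which acts like $\mathcal D$ up to the involution $*$). Then invoke the tensor factorization $\widehat{\mathcal A}[b]\otimes\cdots\otimes\widehat{\mathcal A}[a]\simeq\widehat{\mathcal A}[a,b]$.

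\emph{Spanning (main obstacle).} Here I would induct on $r$ using the decomposition
\[
\widehat{\mathcal A}(\sigma_i b')=\bigoplus_{n\ge0}(P_1)^n\,T_i\bigl(\widehat{\mathcal A}(b')\bigr)\cap\widehat{\mathcal A}_{\ge0},
\]
obtained from a $q$-boson (Kashiwara operator) argument. Define $e_i'$-type derivations on $\widehat{\mathcal A}_{\ge0}$ whose kernel is $T_i(\widehat{\mathcal A}_{\ge0})\cap\widehat{\mathcal A}_{\ge0}$, and show every element expands uniquely as $\sum_n f_{i,0}^{(n)}x_n$ with $x_n$ in that kernel. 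Intersecting this decomposition with $T_b(\widehat{\mathcal A}_{<0})$ and applying the inductive hypothesis to $b'$ yields spanning. I expect the compatibility of the kernel characterization with the intersection condition to be the delicate step. This is exactly what \cite{kashiwara2024braid} establishes, so I would cite it there.
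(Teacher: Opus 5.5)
Your containment check is correct; in fact $T_i(\widehat{\mathcal A}_{\ge0})\subseteq\widehat{\mathcal A}_{\ge0}$ and $T_i^*(\widehat{\mathcal A}_{<0})\subseteq\widehat{\mathcal A}_{<0}$ follow directly from the formulas, with no positivity condition needed. The paper proves nothing here and only cites Oh--Park and KKOP, so deferring the delicate spanning step to \cite{kashiwara2024braid} is on par with the paper. The genuine gap is your independence step, and both routes you offer fail.

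The Levendorskii--Soibelman triangularity expands products of PBW monomials in PBW monomials. It therefore presupposes the basis property and says nothing about linear relations among the $P^\beta(\mathbf a)$, so that route is circular. Your alternative is also false: you cannot place the root vectors of a non-reduced word in single levels $\widehat{\mathcal A}[m]$ and then apply the level factorization. Take type $A_2$ and $\beta=(1,1,2)$; the braid $\sigma_1^2\sigma_2$ admits no braid moves. Using $f_{1,0}f_{1,1}=q^2f_{1,1}f_{1,0}+(1-q^2)$ and $f_{2,0}f_{1,1}=q^{-1}f_{1,1}f_{2,0}$, one computes
\[
T_1^2(f_{2,0})=q^{1/2}f_{1,1}\,T_1(f_{2,0})-q\,f_{2,0}.
\]
By $\widehat{\mathcal A}[1]\otimes\widehat{\mathcal A}[0]\cong\widehat{\mathcal A}[0,1]$, the vector $P_3^\beta=q^{1/2}T_1^2(f_{2,0})$ has nonzero components in both $\widehat{\mathcal A}[1]\cdot\widehat{\mathcal A}[0]$ and $\widehat{\mathcal A}[0]$. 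The level-splitting argument only works for words braid-equivalent to prefixes of the periodic words $\dot w_0$.

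There are two ways to repair independence. First, make your last step a unique expansion ordered to match the PBW convention:
\[
\widehat{\mathcal A}(\sigma_ib')=\bigoplus_{n\ge0}T_i\bigl(\widehat{\mathcal A}(b')\bigr)\,f_{i,0}^n .
\]
Here $f_{i,0}$ sits on the right because $P_1^\beta$ is the rightmost factor of $P^\beta(\mathbf a)$ and $P_k^\beta=T_i(P_{k-1}^{\beta'})$ for $k\ge2$. Induction on $r$ then gives spanning and independence together, since $T_i$ is an automorphism. With $(P_1)^n$ on the left, as you wrote it, you only get the basis $\{P_1^{a_1}T_i(P^{\beta'}(\mathbf a'))\}$ and still need a triangularity argument to reach $P^\beta(\mathbf a)$.

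Second, independence is elementary via the Garside property used later in the paper. Extend $\beta$ to an expression $\beta\gamma$ of $\Delta^{M+1}$. For the periodic word, the PBW monomials form a basis of $\widehat{\mathcal A}[0,M]$ by Example~\ref{exam:Delta}, the level factorization, and the classical PBW theorem in $\widehat{\mathcal A}[0]\cong U_q^-(\mathfrak g)$. This basis property transports along $2$-moves, where the two local root vectors commute. It also transports along $3$-moves, because both local rank-two PBW bases span $T_{b_{<k-1}}$ of the subalgebra generated by $f_{i,0},f_{j,0}$. The $\beta$-monomials are then the subset of this basis supported on $[1,r]$.
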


Following \cite[Definition~4.14]{kashiwara2024braid}, for
$\mathbf a,\mathbf b\in\mathbb N^r$ we write $\mathbf a\prec\mathbf b$ if
there exist indices \(k_0\leq k_1\) such that
\[
a_j=b_j \quad \text{for all } j<k_0,
\qquad
a_{k_0}<b_{k_0},
\]
and
\[
a_j=b_j \quad \text{for all } j>k_1,
\qquad
a_{k_1}<b_{k_1}.
\]

Let \(\mathbb K_0=\mathbb Z[q^{\pm1}]\). We denote by
\(\widehat{\mathcal A}(b)_{\mathbb K_0}\) the integral form used in
\cite{kashiwara2024braid}.

\begin{proposition}[{\cite{kashiwara2024braid}}]
For each \(\mathbf a\in\mathbb N^r\), there exists a unique element
\[
G^\beta(\mathbf a)\in \widehat{\mathcal A}(b)_{\mathbb K_0}
\]
satisfying:
\begin{itemize}
\item \(c(G^\beta(\mathbf a))=G^\beta(\mathbf a)\);
\item
\begin{equation}\label{eq:GPbfa}
G^\beta(\mathbf a)
=
P^\beta(\mathbf a)
+
\sum_{\mathbf b\prec\mathbf a}
f_{\mathbf b,\mathbf a}(q)\,P^\beta(\mathbf b),
\qquad
f_{\mathbf b,\mathbf a}(q)\in q\mathbb Z[q].
\end{equation}
\end{itemize}
The set
\[
\mathbb B(b):=\{G^\beta(\mathbf a)\mid \mathbf a\in\mathbb N^r\}
\]
is the \emph{global basis} of $\widehat{\mathcal A}(b)$.
The \emph{$\beta$-Lusztig parameter} of $G^\beta(\mathbf a)$ is
$\mathbf a$, denoted by $\mathbf a^\beta(G^\beta(\mathbf a))$.
\end{proposition}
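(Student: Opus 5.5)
The plan is the standard Lusztig-lemma argument for canonical bases, carried out for the bar-type involution \(c\) on \(\widehat{\mathcal A}(b)_{\mathbb K_0}\) with the PBW basis attached to \(\beta\). I would verify three facts in order.

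\emph{Step 1: \(c\) preserves the integral form.} First I check that \(c\) is an involutive, \(\overline{\phantom{a}}\)-semilinear map preserving \(\widehat{\mathcal A}(b)\) and its \(\mathbb K_0\)-form.
Involutivity is immediate from \(\overline{\overline x}=x\) and the fact that the twisting factor \(q^{(\alpha,\alpha)/2}\) is inverted by the bar.
Stability of \(\widehat{\mathcal A}(b)=T_b(\widehat{\mathcal A}_{<0})\cap\widehat{\mathcal A}_{\ge0}\) should follow from two facts. The bar involution preserves \(\widehat{\mathcal A}_{\ge0}\) and \(\widehat{\mathcal A}_{<0}\). The braid symmetries commute with \(c\) up to the known twist; this is the compatibility \(c\circ T_i=T_i\circ c\) established in \cite{kashiwara2024braid}.

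\emph{Step 2: \(c\) is unitriangular on the PBW basis.} Next I show that \(c(P^\beta(\mathbf a))\) lies in \(P^\beta(\mathbf a)+\sum_{\mathbf b\prec\mathbf a}\mathbb K_0\,P^\beta(\mathbf b)\).
I would first treat a single root vector. The normalization \(q^{1/2}f_{i_k,0}\) together with the \(T_i\)-invariance of \(c\) gives \(c(P^\beta_k)=P^\beta_k\). The factor \(q^{a_k(a_k-1)/2}\) then makes \(c\) fix each power \((P^\beta_k)^{a_k}\).
Since \(c\) reverses products, \(c(P^\beta(\mathbf a))\) is the product of the same factors in increasing order of \(k\). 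Reordering it back uses the Levendorskii--Soibelman-type convexity formula: for \(j<k\), the \(q\)-commutator of \(P_k\) and \(P_j\) lies in the span of PBW monomials in \(P_{j+1},\dots,P_{k-1}\). Every correction term in such a reordering is strictly smaller in both the left and the right lexicographic comparison, which is exactly the order \(\prec\). This gives the required unitriangularity.

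\emph{Step 3: Lusztig's lemma.} Finally I apply the standard argument. Write \(c(P(\mathbf a))=\sum_{\mathbf b\preceq\mathbf a}r_{\mathbf b\mathbf a}P(\mathbf b)\) with \(r_{\mathbf a\mathbf a}=1\). Involutivity gives \(\sum_{\mathbf b}\overline{r_{\mathbf a\mathbf b}}\,r_{\mathbf b\mathbf c}=\delta_{\mathbf a\mathbf c}\).
I solve for the coefficients \(f_{\mathbf b,\mathbf a}\in q\mathbb Z[q]\) by induction on the interval \(\{\mathbf b\prec\mathbf a\}\). Each step uses the unique splitting \(\mathbb Z[q^{\pm1}]=q\mathbb Z[q]\oplus\mathbb Z\oplus q^{-1}\mathbb Z[q^{-1}]\). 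The induction is legitimate because the interval is finite: all \(\mathbf b\) in it share the weight of \(\mathbf a\), and the PBW weights \(\beta\)-roots lie in \(Q^+\) (up to sign) with finitely many decompositions.
Uniqueness follows because a \(c\)-invariant element lying in \(\sum_{\mathbf b}q\mathbb Z[q]P(\mathbf b)\) must vanish. Indeed, applying \(c\) to its top term gives a coefficient in \(q\mathbb Z[q]\cap q^{-1}\mathbb Z[q^{-1}]=0\).

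\emph{Main obstacle.} The hard part is Step 2. In the bosonic extension, the convexity property of PBW root vectors for a possibly non-reduced word \(\beta\) (where \(b\in\Br^+\) is arbitrary) is not automatic. I would first try to reduce to the reduced-word case. The idea is to use the \(T_i\)-equivariance to move \(P_j,\dots,P_k\) so that \(j=1\). The commutation of \(P_k^\beta\) with \(f_{i_1,0}\) then reduces to relation \eqref{eq:bosonic} across levels, after applying \(T_{i_1}^*\). I would quote \cite{kashiwara2024braid,oh2025pbw} for the resulting \(\prec\)-triangularity, which is what the paper's citation provides.
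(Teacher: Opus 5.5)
The paper does not prove this statement; it imports it from \cite{kashiwara2024braid}. Your Lusztig-lemma route is the natural one. Steps~1 and~2 are acceptable modulo the Levendorskii--Soibelman formula you cite, and your uniqueness argument (take a $\prec$-maximal element of the finite support) is correct.

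The genuine gap is the finiteness claim you use for existence in Step~3: it is false. Since $\operatorname{wt}(f_{i,m})=(-1)^m\alpha_i$, root vectors from different levels have weights of opposite sign and can cancel. So the weight spaces of $\widehat{\mathcal A}(b)$ are in general infinite-dimensional. For instance, take $\mathfrak{sl}_2$ and $\beta=(1,1,1,1)$, so that $P_k^\beta=q^{1/2}f_{1,k-1}$. The parameters $\mathbf a=(1,0,0,1)$ and $\mathbf b_n=(0,n,n,0)$ all have weight $0$, and $\mathbf b_n\prec\mathbf a$ for every $n$ (take $k_0=1$, $k_1=4$). Fixing the weight therefore does not make $\{\mathbf b\prec\mathbf a\}$ finite. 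Your justification thus does not show that the recursion for $f_{\mathbf d,\mathbf a}$ is well founded, or that it produces only finitely many nonzero coefficients.

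The repair is easy. First, $\mathbf b\prec\mathbf a$ implies that $\mathbf b$ is smaller than $\mathbf a$ in the left lexicographic order, which is a well-order on $\mathbb N^r$. Second, each $c(P^\beta(\mathbf b))$ has finite support. Consider the parameters reachable from $\mathbf a$ through nonzero off-diagonal coefficients $r_{\mathbf d,\mathbf b}$. This set is finitely branching and has no infinite descending paths, so it is finite by K\"onig's lemma. It is also stable under taking supports of $c$. The usual recursion then runs on this finite poset, with $f_{\mathbf d,\mathbf a}=0$ outside it.

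Two further inputs should be stated rather than left implicit.
\begin{enumerate}
\item You need $\{P^\beta(\mathbf a)\}$ to be a $\mathbb K_0$-basis of $\widehat{\mathcal A}(b)_{\mathbb K_0}$. This is not evident, since $T_i$ divides by $q-q^{-1}$. It is what puts $r_{\mathbf b,\mathbf a}$ in $\mathbb Z[q^{\pm1}]$ and $G^\beta(\mathbf a)$ in the integral form.
\item Involutivity only gives $\overline{r_{\mathbf a,\mathbf a}}\,r_{\mathbf a,\mathbf a}=1$, that is, $r_{\mathbf a,\mathbf a}=\pm q^n$. The equality $r_{\mathbf a,\mathbf a}=1$ comes from matching the twist in $c(xy)=q^{(\mathrm{wt}\,x,\mathrm{wt}\,y)}c(y)c(x)$ against the exact $q$-power in the Levendorskii--Soibelman formula.
\end{enumerate}
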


For a homogeneous global basis element $G$ of weight $\alpha$, set
\[
    \widetilde G:=q^{-(\alpha,\alpha)/4}G.
\]
The elements $\widetilde G$ form the \emph{normalized global basis}; they
are invariant under the bar involution, whereas $G$ is invariant under $c$.
We write $\widetilde G^\beta(\mathbf a)$ for the normalization of
$G^\beta(\mathbf a)$. We extend the Lusztig-parameter notation by
\[
    \mathbf a^\beta(q^{s/2}G^\beta(\mathbf a)):=\mathbf a
    \qquad(s\in\mathbb Z).
\]
Thus normalization does not change parameters; the same convention applies
to products that are powers of $q^{1/2}$ times global basis elements.

\begin{remark}
By \cite[Lemma~4.17]{kashiwara2024braid}, the basis
\(\{G^\beta(\mathbf a)\}\) coincides with the global basis constructed in
\cite{kashiwara2025global}. The integral form
\(\widehat{\mathcal A}(b)_{\mathbb K_0}\) satisfies
\[
\widehat{\mathcal A}(b)_{\mathbb K_0}
\otimes_{\mathbb K_0}\mathbb Q(q^{1/2})
\cong
\widehat{\mathcal A}(b).
\]
For its construction, see \cite{kashiwara2025global}.
\end{remark}
For $\mathbf a\in\mathbb N^r$, put
$\operatorname{supp}(\mathbf a)=\{k:a_k\ne0\}$, with
$\min\varnothing=+\infty$ and $\max\varnothing=-\infty$.
Write $\mathbf a>\mathbf b$ if
$\min\operatorname{supp}(\mathbf a)>\max\operatorname{supp}(\mathbf b)$;
in particular, their supports are disjoint.

\begin{lemma}\label{lem:Gproductab}
If \(\bfa,\bb\in\mathbb N^r\) satisfy \(\bfa>\bb\), then
\[
G^\beta(\bfa)\cdot G^\beta(\bb)
=
G^\beta(\bfa+\bb)
+
\sum_{\mathbf{c}\prec\bfa+\bb}
g_{\mathbf{c},\bfa+\bb}(q)\,G^\beta(\mathbf{c}),
\quad
g_{\mathbf{c},\bfa+\bb}(q)\in q\mathbb Z[q].
\]
\end{lemma}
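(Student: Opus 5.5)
The plan is to work entirely with the PBW expansions \eqref{eq:GPbfa} and to use that, for supports separated as in \(\bfa>\bb\), PBW monomials multiply without any straightening. No bar- or \(c\)-invariance of the product will be needed. Only the unitriangularity of \(\{G^\beta(\mathbf a)\}\) against \(\{P^\beta(\mathbf a)\}\), with off-diagonal coefficients in \(q\mathbb Z[q]\), will be used.

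\emph{Step 1 (support control).} I first check that \(\mathbf a'\prec\bfa\) forces \(\min\operatorname{supp}(\mathbf a')\geq\min\operatorname{supp}(\bfa)\). Indeed, in the definition of \(\prec\) we have \(a'_{k_0}<a_{k_0}\), so \(k_0\geq\min\operatorname{supp}(\bfa)\). For \(j<k_0\) we have \(a'_j=a_j\), and these vanish below \(\min\operatorname{supp}(\bfa)\). Symmetrically, \(\mathbf b'\prec\bb\) forces \(\max\operatorname{supp}(\mathbf b')\leq\max\operatorname{supp}(\bb)\), using the index \(k_1\). Hence every pair \(\mathbf a'\in\{\bfa\}\cup\{\mathbf a'\prec\bfa\}\) and \(\mathbf b'\in\{\bb\}\cup\{\mathbf b'\prec\bb\}\) still satisfies \(\mathbf a'>\mathbf b'\).

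\emph{Step 2 (PBW products).} For \(\mathbf a'>\mathbf b'\), the product \(P^\beta(\mathbf a')P^\beta(\mathbf b')\) is exactly \(P^\beta(\mathbf a'+\mathbf b')\). This is because larger indices already sit on the left in \(P^\beta(\mathbf a')\), every factor of \(P^\beta(\mathbf a')\) has index exceeding every factor of \(P^\beta(\mathbf b')\), and the scalar \(\prod_k q^{a_k(a_k-1)/2}\) is multiplicative over disjoint supports.

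Expanding both factors by \eqref{eq:GPbfa} then gives
\[
G^\beta(\bfa)G^\beta(\bb)=P^\beta(\bfa+\bb)+\sum_{(\mathbf a',\mathbf b')\neq(\bfa,\bb)} f_{\mathbf a',\bfa}f_{\mathbf b',\bb}\,P^\beta(\mathbf a'+\mathbf b').
\]
Here we set \(f_{\bfa,\bfa}=f_{\bb,\bb}=1\). Every displayed coefficient lies in \(q\mathbb Z[q]\), since at least one factor is off-diagonal.

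\emph{Step 3 (order comparison).} I then show that \(\mathbf a'+\mathbf b'\prec\bfa+\bb\) whenever \((\mathbf a',\mathbf b')\neq(\bfa,\bb)\). Because \(\bb\) occupies the low indices and \(\bfa\) the high ones, the witness \(k_0\) is taken from \(\mathbf b'\prec\bb\) if \(\mathbf b'\neq\bb\), and otherwise from \(\mathbf a'\). The witness \(k_1\) is taken from \(\mathbf a'\prec\bfa\) if \(\mathbf a'\neq\bfa\), and otherwise from \(\mathbf b'\). The inequality \(k_0\leq k_1\) holds in the mixed case because the supports are separated. This case check is the only place where care is needed, and I expect it to be the main (though mild) obstacle.

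\emph{Step 4 (inversion).} It remains to re-express the result in the global basis. The elements \(P^\beta(\mathbf c)\) of fixed weight with \(\mathbf c\preceq\bfa+\bb\) form a finite set, and \(\prec\) is transitive on it, with the usual witness bookkeeping. Therefore the transition matrix from \(G^\beta\) to \(P^\beta\) on this set is unitriangular with off-diagonal entries in \(q\mathbb Z[q]\), and so is its inverse, since \(q\mathbb Z[q]\) is an ideal of \(\mathbb Z[q]\). Hence
\[
P^\beta(\mathbf c)=G^\beta(\mathbf c)+\sum_{\mathbf d\prec\mathbf c}h_{\mathbf d,\mathbf c}(q)\,G^\beta(\mathbf d),\qquad h_{\mathbf d,\mathbf c}(q)\in q\mathbb Z[q].
\]
Substituting this into the expansion of Step 2 yields the asserted expansion of \(G^\beta(\bfa)G^\beta(\bb)\), with all coefficients \(g_{\mathbf c,\bfa+\bb}(q)\in q\mathbb Z[q]\).
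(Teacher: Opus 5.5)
Your proposal is correct and follows the paper's proof essentially step for step. The shared steps are: support control for $\prec$-lower terms, the straightening-free identity $P^\beta(\mathbf a')P^\beta(\mathbf b')=P^\beta(\mathbf a'+\mathbf b')$ for separated supports, compatibility of $\prec$ with addition, and inversion of the unitriangular PBW-to-global transition.

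One justification in Step~4 is false and should be replaced. The weights $(-1)^m\alpha_i$ in $\widehat{\mathcal A}$ have mixed signs, so the set of $\mathbf c\preceq\bfa+\bb$ of fixed weight can be infinite. For example, in type $A_1$ with $\beta=(1,1,1,1)$, $\bfa=(0,0,0,1)$ and $\bb=(1,0,0,0)$, every $(0,n,n,0)$ has weight $0$ and satisfies $(0,n,n,0)\prec(1,0,0,1)$. Argue instead that $\mathbf c\prec\mathbf d$ makes $\mathbf c$ lexicographically smaller than $\mathbf d$, and lexicographic order is a well-order on $\mathbb N^r$. Hence recursively substituting the finite expansions \eqref{eq:GPbfa} terminates, and it yields inverse coefficients in $q\mathbb Z[q]$ indexed by strictly $\prec$-lower parameters.
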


\begin{proof}
In the triangular expansion \eqref{eq:GPbfa}, lower parameters satisfy
\[
    \min\operatorname{supp}(\mathbf a')\geq
       \min\operatorname{supp}(\mathbf a),\qquad
    \max\operatorname{supp}(\mathbf b')\leq
       \max\operatorname{supp}(\mathbf b).
\]
Thus $\mathbf a'>\mathbf b'$ for every pair of terms from the expansions
of $G^\beta(\mathbf a)$ and $G^\beta(\mathbf b)$, including their leading
terms. The descending PBW product order gives
\[
    P^\beta(\mathbf a')P^\beta(\mathbf b')
    =P^\beta(\mathbf a'+\mathbf b').
\]
The order $\prec$ is preserved by addition: if either input is strictly
lower and the other is unchanged or lower, the first and last differing
coordinates of the sum are both smaller. Hence
$\mathbf a'+\mathbf b'\prec\mathbf a+\mathbf b$ unless both inputs
are leading. Multiplying the two triangular expansions therefore gives
\[
    G^\beta(\mathbf a)G^\beta(\mathbf b)
    =P^\beta(\mathbf a+\mathbf b)
      +\sum_{\mathbf c\prec\mathbf a+\mathbf b}
         h_{\mathbf c}(q)P^\beta(\mathbf c),
    \qquad h_{\mathbf c}(q)\in q\mathbb Z[q].
\]
The inverse PBW-to-global transition is also unitriangular with lower
coefficients in $q\mathbb Z[q]$. Substituting it proves the asserted
expansion with leading coefficient one.
\end{proof}
\begin{lemma}\label{lem:twoprod}
Let \(\bfa,\bb\in\mathbb N^r\). Suppose that
\[
G^\beta(\bfa)\,G^\beta(\bb)
=
\sum_{\mathbf c\in\mathbb N^r}
f_{\bfa,\bb}^{\mathbf c}(q)\,G^\beta(\mathbf c).
\]
Then the unique maximal index \(\mathbf c\), with respect to the order \(\prec\), for which
\(f_{\bfa,\bb}^{\mathbf c}(q)\neq 0\), is
\[
\mathbf c_{\max}=\bfa+\bb.
\]
\end{lemma}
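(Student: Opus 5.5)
The plan is to work in the PBW basis, where the leading term is visible, and then pass back to the global basis using the unitriangularity of \eqref{eq:GPbfa}. The order $\prec$ of \cite[Definition~4.14]{kashiwara2024braid} is a partial order. As noted in the proof of Lemma~\ref{lem:Gproductab}, it is compatible with addition: if $\mathbf a'\preceq\mathbf a$ and $\mathbf b'\preceq\mathbf b$ with at least one strict, then $\mathbf a'+\mathbf b'\prec\mathbf a+\mathbf b$. Substituting \eqref{eq:GPbfa} for both factors expresses $G^\beta(\mathbf a)G^\beta(\mathbf b)$ as $P^\beta(\mathbf a)P^\beta(\mathbf b)$ plus a $\mathbb Z[q^{\pm1/2}]$-combination of products $P^\beta(\mathbf a')P^\beta(\mathbf b')$ with $(\mathbf a',\mathbf b')\ne(\mathbf a,\mathbf b)$ and $\mathbf a'\preceq\mathbf a$, $\mathbf b'\preceq\mathbf b$. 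It therefore suffices to prove the following \emph{straightening claim}: for all $\mathbf a',\mathbf b'$,
\[
P^\beta(\mathbf a')P^\beta(\mathbf b')=q^{s}P^\beta(\mathbf a'+\mathbf b')+\sum_{\mathbf c\prec\mathbf a'+\mathbf b'}h_{\mathbf c}P^\beta(\mathbf c),
\]
for some $s\in\tfrac12\mathbb Z$.

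Granting the claim, the coefficient of $P^\beta(\mathbf a+\mathbf b)$ in the product is exactly $q^{s}\neq0$. This is because every other pair $(\mathbf a',\mathbf b')$ contributes only terms $\preceq\mathbf a'+\mathbf b'\prec\mathbf a+\mathbf b$, by transitivity. All remaining PBW terms are indexed by parameters $\prec\mathbf a+\mathbf b$. The inverse transition from PBW to global basis is unitriangular with respect to $\prec$. Re-expanding thus gives $G^\beta(\mathbf a)G^\beta(\mathbf b)=q^sG^\beta(\mathbf a+\mathbf b)+\sum_{\mathbf c\prec\mathbf a+\mathbf b}(\cdots)G^\beta(\mathbf c)$. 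Hence $\mathbf a+\mathbf b$ occurs, and every other $\mathbf c$ with $f^{\mathbf c}_{\mathbf a,\mathbf b}\ne0$ is strictly below it. This shows that $\mathbf a+\mathbf b$ is the unique maximal index.

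The straightening claim is the main obstacle. I would derive it from the Levendorskii--Soibelman (convexity) property of the PBW root vectors for the braid symmetries, as established in \cite{kashiwara2024braid,oh2025pbw}. For $k<l$ this property reads
\[
P_k^\beta P_l^\beta=q^{\ast}P_l^\beta P_k^\beta+\sum_{\mathbf c}x_{\mathbf c}P^\beta(\mathbf c),
\qquad \operatorname{supp}(\mathbf c)\subset(k,l).
\]
Each such $\mathbf c$ satisfies $\mathbf c\prec\mathbf e_k+\mathbf e_l$, since its first nonzero coordinate lies after $k$ and its last lies before $l$.

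I would then argue by induction. The primary induction is on $\mathbf a'+\mathbf b'$ with respect to $\prec$; the secondary induction is on the number of out-of-order adjacent pairs in the word of root vectors. Recall that in $P^\beta$ larger indices sit on the left, so the misordered pairs are those with $P_k$ to the left of $P_l$ and $k<l$. Each swap produces the reordered monomial up to a power of $q$, together with correction terms. In a correction term, the factor $P_kP_l$ is replaced by a monomial supported strictly between $k$ and $l$, so the total exponent vector of the new word is $\prec$ the old one. The delicate point is that $\prec$-lowering persists after the remaining factors are straightened. To handle it, I would use the fact that $\prec$ is defined through the first and last differing coordinates: replacing $\mathbf e_k+\mathbf e_l$ inside a larger sum by a vector supported in $(k,l)$ strictly lowers the sum. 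The primary induction hypothesis then applies to these correction words and keeps all their contributions strictly below $\mathbf a'+\mathbf b'$.
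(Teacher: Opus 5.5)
Your argument is correct and is essentially the paper's proof: the paper likewise multiplies the two unitriangular expansions \eqref{eq:GPbfa}, uses exactly your straightening claim (cited there as the Levendorskii--Soibelman formula \cite[Lemma~5.5]{oh2025pbw}), uses the compatibility of $\prec$ with addition, and re-expands via the inverse unitriangular transition. The only difference is that you derive the straightening claim from the two-term relation rather than citing it. If you keep that derivation, state the inductive claim for arbitrary words in the root vectors $P_k^\beta$, not only for products of two PBW monomials, since the intermediate and correction words have that general form. Also note that $\prec$ is well-founded on $\mathbb N^r$, because it is contained in the left lexicographic order, which well-orders $\mathbb N^r$.
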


\begin{proof}
By the Levendorskii--Soibelman formula
\cite[Lemma~5.5]{oh2025pbw}, for any
\(\mathbf d,\mathbf e\in\mathbb N^r\), we have
\[
P^\beta(\mathbf d)\,P^\beta(\mathbf e)
=
q^{A(\mathbf d,\mathbf e)}P^\beta(\mathbf d+\mathbf e)
+
\sum_{\mathbf c\prec \mathbf d+\mathbf e}
g_{\mathbf c}(q)\,P^\beta(\mathbf c),
\]
for some integer \(A(\mathbf d,\mathbf e)\).

Recall that the global basis is triangular with respect to the PBW basis:
\[
G^\beta(\bfa)
=
P^\beta(\bfa)
+
\sum_{\bfa'\prec\bfa}
f_{\bfa',\bfa}(q)\,P^\beta(\bfa'),
\]
and similarly
\[
G^\beta(\bb)
=
P^\beta(\bb)
+
\sum_{\bb'\prec\bb}
f_{\bb',\bb}(q)\,P^\beta(\bb').
\]
Multiplying these two expansions and applying the Levendorskii--Soibelman
formula to each product \(P^\beta(\bfa')P^\beta(\bb')\), we obtain
\[
G^\beta(\bfa)G^\beta(\bb)
=
q^{A(\bfa,\bb)}P^\beta(\bfa+\bb)
+
\sum_{\mathbf c\prec\bfa+\bb}
h_{\mathbf c}(q)\,P^\beta(\mathbf c).
\]
Indeed, if \(\bfa'\prec\bfa\) or \(\bb'\prec\bb\), then
\[
\bfa'+\bb'\prec\bfa+\bb
\]
by the definition of the order $\prec$. Hence no product involving a lower
PBW term can produce a PBW monomial indexed by \(\bfa+\bb\) or by an index
larger than \(\bfa+\bb\).

Now use the inverse triangular transition from the PBW basis to the global
basis. Since
\[
G^\beta(\mathbf d)
=
P^\beta(\mathbf d)
+
\sum_{\mathbf e\prec\mathbf d}
f_{\mathbf e,\mathbf d}(q)P^\beta(\mathbf e),
\]
we also have
\[
P^\beta(\mathbf d)
=
G^\beta(\mathbf d)
+
\sum_{\mathbf e\prec\mathbf d}
r_{\mathbf e,\mathbf d}(q)G^\beta(\mathbf e)
\]
for suitable coefficients \(r_{\mathbf e,\mathbf d}(q)\). Therefore
\[
G^\beta(\bfa)G^\beta(\bb)
=
q^{A(\bfa,\bb)}G^\beta(\bfa+\bb)
+
\sum_{\mathbf c\prec\bfa+\bb}
H_{\mathbf c}(q)G^\beta(\mathbf c).
\]
Since \(q^{A(\bfa,\bb)}\neq 0\), the coefficient of
\(G^\beta(\bfa+\bb)\) is nonzero, and every other index appearing in the
expansion is strictly smaller than \(\bfa+\bb\). Hence the unique maximal
index is \(\bfa+\bb\).
\end{proof}

\begin{lemma}\label{lem:exchange-parameters}
Fix a word $\omega$ of length $n$, and write $\widehat{\mathcal A}(\omega)$
for the algebra associated with the positive braid represented by $\omega$.
Suppose that $X,Y,N_+,N_-\in\widehat{\mathcal A}(\omega)$ are powers of
$q^{1/2}$ times global basis elements and that
\[
    XY=q^aN_++q^bN_-,\qquad a,b\in\tfrac12\mathbb Z.
\]
Then the two parameters $\mathbf a^\omega(N_+)$ and
$\mathbf a^\omega(N_-)$ are either equal or comparable for $\prec$, and
\[
    \mathbf a^\omega(X)+\mathbf a^\omega(Y)
    =\max_{\prec}\{\mathbf a^\omega(N_+),\mathbf a^\omega(N_-)\}.
\]
For $0\leq m\leq n$, let $\pi_m$ denote projection to the first $m$
coordinates and put $\nu_m=\pi_m\mathbf a^\omega$. Then
\begin{equation}\label{eq:prefix-exchange-parameters}
    \nu_m(Y)
    =\max_{\mathrm{lex}}\{\nu_m(N_+),\nu_m(N_-)\}-\nu_m(X),
\end{equation}
where $\max_{\mathrm{lex}}$ uses the left lexicographic order. For $m=0$
all projected vectors are the unique empty tuple.
\end{lemma}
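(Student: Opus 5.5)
Write $X=q^{s}G^\omega(\mathbf x)$ and $Y=q^{t}G^\omega(\mathbf y)$, and $N_\pm=q^{s_\pm}G^\omega(\mathbf n_\pm)$. By Lemma~\ref{lem:twoprod}, the expansion of $XY$ in the global basis has a unique $\prec$-maximal index $\mathbf x+\mathbf y$, with nonzero coefficient. The right-hand side $q^aN_++q^bN_-$ is a combination of at most two global basis elements.

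If $\mathbf n_+=\mathbf n_-$, the right-hand side is a single basis element with a nonzero coefficient; it cannot vanish, since $XY\neq0$ because its leading coefficient is nonzero. Hence $\mathbf n_\pm=\mathbf x+\mathbf y$. If $\mathbf n_+\ne\mathbf n_-$, the two basis elements are linearly independent and both coefficients $q^a,q^b$ are nonzero. So the support of $XY$ is exactly $\{\mathbf n_+,\mathbf n_-\}$, and this set has the unique $\prec$-maximal element $\mathbf x+\mathbf y$. Then one of $\mathbf n_\pm$ equals $\mathbf x+\mathbf y$. The other is different, hence not maximal, hence strictly $\prec$-below it. This gives comparability together with the max formula.

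For the prefix statement, I would first check that $\mathbf c\prec\mathbf d$ implies $\pi_m\mathbf c\le_{\mathrm{lex}}\pi_m\mathbf d$ for every $m$. By definition of $\prec$ there is $k_0$ with $c_j=d_j$ for $j<k_0$ and $c_{k_0}<d_{k_0}$. If $m<k_0$ the projections agree. Otherwise the first difference of the projections occurs at $k_0$ and is a strict decrease. So $\pi_m$ is monotone from $\preceq$ to $\le_{\mathrm{lex}}$, and therefore
\[
\pi_m\max_{\prec}\{\mathbf n_+,\mathbf n_-\}=\max_{\mathrm{lex}}\{\pi_m\mathbf n_+,\pi_m\mathbf n_-\}.
\]
Since $\pi_m$ is additive, applying it to $\mathbf x+\mathbf y=\max_\prec$ and subtracting $\nu_m(X)$ gives \eqref{eq:prefix-exchange-parameters}. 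The case $m=0$ is trivial.

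The only delicate point is the possibility that $N_+$ and $N_-$ are the same basis element with cancelling coefficients. This is excluded because $q^a+q^b\neq0$ for real half-integer exponents. That condition, together with the nonvanishing leading coefficient from Lemma~\ref{lem:twoprod}, is the crux. Note that the definition of $\prec$ requires $k_0\le k_1$, and the lexicographic monotonicity uses only the left index $k_0$.
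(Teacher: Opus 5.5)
Your proposal is correct and follows essentially the same route as the paper. Lemma~\ref{lem:twoprod} gives the unique $\prec$-maximal index $\mathbf a^\omega(X)+\mathbf a^\omega(Y)$ on the left; non-cancellation of the two powers of $q^{1/2}$ then forces the right-hand parameters to be equal or comparable with that maximum; and the prefix identity follows because the first differing coordinate in $\prec$ also fixes the left lexicographic order of the projections. Your explicit monotonicity check for $\pi_m$ is a slightly more formal version of the paper's last two sentences.
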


\begin{proof}
By Lemma~\ref{lem:twoprod}, the left-hand side has a unique maximal
global-basis parameter, namely
$\mathbf a^\omega(X)+\mathbf a^\omega(Y)$.
The right-hand side consists of at most two global basis elements, and
its coefficients cannot cancel: if the two elements coincide, their
coefficient is a sum of two powers of $q^{1/2}$. Thus its two parameters
are comparable, and its maximum equals the maximum on the left.
If their first $m$ coordinates differ, the first differing coordinate
also determines their left lexicographic order and hence the larger
of the comparable full parameters. If those coordinates agree, either
full parameter has the same projection. Projecting the first identity
therefore gives \eqref{eq:prefix-exchange-parameters}.
\end{proof}

\begin{example}\label{exam:Delta}
Let $\Delta$ be the positive braid lift of $w_0$, and define $i^*$ by
$w_0(\alpha_i)=-\alpha_{i^*}$. The identity
$T_\Delta(f_{i,k})=f_{i^*,k+1}$ gives, for every $m\geq0$,
\[
    T_{\Delta^{m+1}}(\widehat{\mathcal A}_{<0})
    =\widehat{\mathcal A}[-\infty,m],\qquad
    \widehat{\mathcal A}(\Delta^{m+1})
    =\widehat{\mathcal A}[0,m].
\]
In particular,
$\widehat{\mathcal A}(\Delta)=\widehat{\mathcal A}[0]
\cong U_q^-(\mathfrak g)$, equivalently $U_q(\mathfrak n)$ under our
convention. For a reduced word of $w_0$, the PBW construction above is
the finite-type construction with the normalization fixed here.
\end{example}

\subsection{Transition maps of Lusztig parameters}

A \emph{$2$-move} exchanges adjacent letters $(i,j)$ with $c_{ij}=0$;
a \emph{$3$-move} replaces $(i,j,i)$ by $(j,i,j)$ with $c_{ij}=-1$.
Any two positive expressions of the same $b\in\Br^+$ are connected
by these moves.

\begin{definition}
For an expression $\beta$ of $b$, let
$\Psi_\beta:\mathbb N^r\to\mathbb B(b)$, $\mathbf a\mapsto G^\beta(\mathbf a)$.
For two expressions $\beta,\beta'$, define the \emph{transition map}
\[
\Psi_\beta^{\beta'}:=\Psi_{\beta'}^{-1}\circ\Psi_\beta:\NN^r\to\NN^r.
\]
\end{definition}
These maps are well-defined because the global basis is expression-independent.

\begin{theorem}[{\cite[Theorem~3.9]{bi2025cluster}}]\label{theo:braidmoves}
Let \(\beta\) and \(\beta'\) be two expressions of \(b\in\Br^+\).
Then the transition map \(\Psi_\beta^{\beta'}\) is given as follows.
\begin{enumerate}
    \item \emph{(2-move)}
    If \(\beta'\) is obtained from \(\beta\) by a \(2\)-move exchanging
    positions \(k\) and \(k+1\), then for
    \(\mathbf a=(a_1,\dots,a_r)\in\mathbb N^r\),
    \[
    \bigl(\Psi_\beta^{\beta'}(\mathbf a)\bigr)_s=
    \begin{cases}
        a_{k+1}, & s=k,\\
        a_k, & s=k+1,\\
        a_s, & \text{otherwise}.
    \end{cases}
    \]

    \item \emph{(3-move)}
    If \(\beta'\) is obtained from \(\beta\) by a \(3\)-move replacing
    \[
    (i_{k-1},i_k,i_{k+1})=(i,j,i)
    \]
    with
    \[
    (j,i,j),
    \]
    where \(c_{ij}=-1\), then
    \[
    \bigl(\Psi_\beta^{\beta'}(\mathbf a)\bigr)_s=
    \begin{cases}
        a_k+a_{k+1}-p, & s=k-1,\\
        p, & s=k,\\
        a_{k-1}+a_k-p, & s=k+1,\\
        a_s, & \text{otherwise},
    \end{cases}
    \qquad
    p=\min\{a_{k-1},a_{k+1}\}.
    \]
\end{enumerate}
\end{theorem}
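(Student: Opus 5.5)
We will work at the level of the crystal lattice rather than the global basis itself. Let
\[
L^\beta:=\bigoplus_{\mathbf a\in\mathbb N^r}\mathbb Z[q]\,P^\beta(\mathbf a)\subset\widehat{\mathcal A}(b).
\]
By \eqref{eq:GPbfa} we have $G^\beta(\mathbf a)\equiv P^\beta(\mathbf a)\bmod qL^\beta$. Hence $\mathbf a$ is the unique index with $G^\beta(\mathbf a)\equiv P^\beta(\mathbf a)$ modulo $qL^\beta$. It therefore suffices to prove two things for a single move $\beta\to\beta'$. First, $L^\beta=L^{\beta'}$. Second, for every $\mathbf a$ one has $P^\beta(\mathbf a)\equiv q^{s}P^{\beta'}(\Psi(\mathbf a))\bmod qL^{\beta}$, where $\Psi$ is the displayed piecewise-linear map and $s$ is a half-integer. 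The power $q^s$ is in fact forced to be $1$, because both sides are congruent to the same $c$-invariant element, as for Lemma~\ref{lem:exchange-parameters}.

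The first step is a localization. Write $T_{<}:=T_{i_1}\cdots T_{i_{k-2}}$ in the 3-move case, and $T_{<}:=T_{i_1}\cdots T_{i_{k-1}}$ in the 2-move case. The root vectors of $\beta$ and $\beta'$ at the moved positions are then $T_{<}$ applied to the PBW root vectors of the rank-two words $(i,j,i)$, $(j,i,j)$, respectively $(i,j)$, $(j,i)$, inside $\widehat{\mathcal A}[0]\cong U_q^-(\mathfrak g)$ (Example~\ref{exam:Delta}). All other root vectors agree. By the descending order convention, every PBW monomial factors as
\[
P^\beta(\mathbf a)=q^{e}\,P_{>}\cdot T_{<}\bigl(P_{\rm loc}(\mathbf a_{\rm loc})\bigr)\cdot P_{<},
\]
where $P_{>}$ and $P_{<}$ are the monomials in the untouched root vectors and $e$ collects the normalizing exponents.

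Because $T_{<}$ is an algebra automorphism, any $\mathbb Z[q]$-linear identity among local PBW monomials transports unchanged. Sandwiching a local PBW monomial between $P_{>}$ and $P_{<}$ again gives a global PBW monomial, up to $q^{e}$. Thus any local congruence modulo the local $q$-lattice becomes a congruence modulo $qL^\beta$, and likewise for $L^{\beta'}$. Applying this in both directions also gives $L^\beta=L^{\beta'}$.

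The second step is the rank-two computation. For a 2-move, $c_{ij}=0$, so $f_{i,0}$ and $f_{j,0}$ commute. Moreover $T_i(f_{j,0})=f_{j,0}$, and the $q$-powers $q^{a(a-1)/2}$ are attached to individual exponents. Hence the two local monomials are literally equal after swapping the exponents, which gives part~(1).

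For a 3-move we are in $U_q^-(\mathfrak{sl}_3)$. With our normalization, the PBW bases for $(i,j,i)$ and $(j,i,j)$ are Lusztig's, up to powers of $q^{1/2}$ that are checked directly from the formula for $T_i$ in Proposition~\ref{pro:braidsym}. Lusztig's classical computation (the $A_2$ piecewise-linear transition) gives $P_{(i,j,i)}(a_{k-1},a_k,a_{k+1})\equiv P_{(j,i,j)}(\mathbf a')$ modulo $q$ times the local lattice, with $\mathbf a'$ as displayed and $p=\min\{a_{k-1},a_{k+1}\}$. Concretely, this is obtained by expanding $f_i^{(a)}f_j^{(b)}f_i^{(c)}$ by the $q$-Serre relation and reading off the $q^0$ term. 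Transporting this congruence by the first step proves part~(2).

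The main obstacle is normalization bookkeeping. One must check that the $q^{1/2}$ prefactors in $P_k^\beta$, the factors $q^{a_k(a_k-1)/2}$, and the $q^{e}$ from reordering all combine to exponent $0$ in the leading congruence. Otherwise one gets an extra $q^s$ that is not in $1+q\mathbb Z[q]$. I would handle this by not tracking exponents at all. Instead, apply the uniqueness argument to the $c$-invariant element $G^\beta(\mathbf a)$: it is congruent modulo $qL$ both to $P^\beta(\mathbf a)$ and to $q^{s}P^{\beta'}(\mathbf a')$. By uniqueness of the global basis in $\mathbb B(b)$, its $\beta'$-parameter is then $\mathbf a'$, whatever the value of $s$.
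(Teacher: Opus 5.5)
Your argument is sound in outline but organized differently from the paper's proof. The paper works with global basis elements throughout. It imports the rank-two identity $G^{\mathbf j}(\mathbf a)=G^{\mathbf j'}(\Psi_{\mathbf j}^{\mathbf j'}(\mathbf a))$ from \cite{kamnitzer2010mirkovic} and transports it by $T_{b_{<k-1}}$, using that braid symmetries preserve the global basis. It treats parameters supported in the prefix or suffix separately. It then handles a general $\mathbf a$ by expanding $G^\beta(\mathbf a_{>k+1})G^\beta(\mathbf a_{[k-1,k+1]})G^\beta(\mathbf a_{<k-1})$ in both global bases via Lemma~\ref{lem:Gproductab} and comparing modulo $q$.

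You instead work at the PBW level, where $P^\beta(\mathbf a)=P_{>}\,T_{<}(P_{\rm loc}(\mathbf a_{\rm loc}))\,P_{<}$ is an exact identity. The normalizing powers are attached coordinatewise, so in fact $e=0$. You pass to the global basis only at the end, through $G\equiv P\bmod qL$. This buys real simplifications:
\begin{itemize}
\item $T_<$ is used only as an algebra automorphism;
\item the separated-support product lemma is not needed;
\item arbitrary $\mathbf a$ is handled in one step;
\item because you compare only modulo $qL$, you never have to check that the swapped or rank-two-transformed expansion is still triangular for $\prec$.
\end{itemize}
Also, $L^\beta=L^{\beta'}$ needs no local input. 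The PBW-to-global transition and its inverse are unitriangular with off-diagonal entries in $q\mathbb Z[q]$, so $L^\beta=\bigoplus_{G\in\mathbb B(b)}\mathbb Z[q]\,G$, which is independent of $\beta$ because $\mathbb B(b)$ is. In exchange, the paper's route needs no lattice bookkeeping, since its rank-two input is already a statement about global basis elements.

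The step that needs correcting is the rank-two normalization. The bosonic PBW monomials are \emph{dual} PBW monomials: they specialize to products of cuspidal modules with no factorials. They are not Lusztig's divided-power ones. Already for a single root vector,
$q^{a(a-1)/2}(q^{1/2}f_{i,0})^a=q^{a/2}\phi_a(q)\,f_{i,0}^a/[a]_q!$, where $\phi_a(q)=\prod_{k=1}^{a}(1+q^2+\cdots+q^{2k-2})$. For $a\geq2$ this factor is neither a power of $q^{1/2}$ nor a unit of $\mathbb Z[q]$. So ``Lusztig's, up to powers of $q^{1/2}$'' is false as stated, and the two $\mathbb Z[q]$-lattices genuinely differ. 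Your device of letting uniqueness absorb an unknown $q^s$ does not absorb such factors.

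The repair is easy; either of the following works.
\begin{itemize}
\item Use the dual form of Lusztig's $A_2$ result. By duality for the standard bilinear form, the transition matrix between the two dual PBW bases is the transpose of the transition matrix between the PBW bases, taken in the opposite direction. It therefore still lies in $GL(\mathbb Z[q])$ and reduces to the same permutation modulo $q$.
\item Run the whole lattice argument over the localization of $\mathbb Z[q^{1/2}]$ at $1+q\mathbb Z[q]$. There these factors are units congruent to $1$ modulo $q$, and your final uniqueness step goes through verbatim, still forcing $s=0$.
\end{itemize}

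Relatedly, expanding $f_i^{(a)}f_j^{(b)}f_i^{(c)}$ by the Serre relation is not the relevant computation. What you need is the expansion of the $(i,j,i)$ PBW monomials in the $(j,i,j)$ PBW basis modulo $q$.
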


\begin{proof}
\emph{The \(2\)-move case.}
Assume that \(\beta'\) is obtained from \(\beta\) by exchanging
\((i_k,i_{k+1})\), where \(c_{i_ki_{k+1}}=0\). Then
\[
P_k^\beta=P_{k+1}^{\beta'},
\qquad
P_{k+1}^\beta=P_k^{\beta'},
\]
and all other PBW root vectors agree. The two exchanged root vectors
commute because $c_{i_ki_{k+1}}=0$. Therefore
\[
P^\beta(\mathbf a)
=
P^{\beta'}\bigl(\Psi_\beta^{\beta'}(\mathbf a)\bigr).
\]
The defining triangular expansion and $c$-invariance of the global basis
therefore give, by uniqueness,
\[
G^\beta(\mathbf a)
=
G^{\beta'}\bigl(\Psi_\beta^{\beta'}(\mathbf a)\bigr).
\]

\medskip
\noindent\emph{The \(3\)-move case.}
Write $\mathbf j=(i,j,i)$ and $\mathbf j'=(j,i,j)$ for the subwords
at positions $k-1,k,k+1$.
In type \(A_2\), the Lusztig parameters of the global basis of
\(\widehat{\mathcal A}[0]\) agree with the usual Lusztig parameters for
\(U_q(\mathfrak n)\). Hence, by
\cite[Proposition~5.2]{kamnitzer2010mirkovic}, we have
\[
G^{\mathbf j}(\mathbf a)
=
G^{\mathbf j'}\!\left(\Psi_{\mathbf j}^{\mathbf j'}(\mathbf a)\right).
\]

Write
\[
\beta=(\beta_{<k-1},\mathbf j,\beta_{>k+1}),
\qquad
\beta'=(\beta_{<k-1},\mathbf j',\beta_{>k+1}).
\]
First assume that \(\mathbf a\) is supported on the positions
\(\{k-1,k,k+1\}\). Then
\[
P^\beta(\mathbf a)
=
T_{b_{<k-1}}\!\left(P^{\mathbf j}(\mathbf a)\right),
\]
and
\[
P^{\beta'}\!\left(\Psi_\beta^{\beta'}(\mathbf a)\right)
=
T_{b_{<k-1}}\!\left(
P^{\mathbf j'}\!\left(\Psi_{\mathbf j}^{\mathbf j'}(\mathbf a)\right)
\right).
\]
Using the triangular characterization of the global basis and the fact that
\(T_{b_{<k-1}}\) preserves the global basis
\cite[Theorem~3.7]{kashiwara2024braid}, we obtain
\[
G^\beta(\mathbf a)
=
T_{b_{<k-1}}\!\left(G^{\mathbf j}(\mathbf a)\right),
\]
and
\[
G^{\beta'}\!\left(\Psi_\beta^{\beta'}(\mathbf a)\right)
=
T_{b_{<k-1}}\!\left(
G^{\mathbf j'}\!\left(\Psi_{\mathbf j}^{\mathbf j'}(\mathbf a)\right)
\right).
\]
Therefore
\[
G^\beta(\mathbf a)
=
G^{\beta'}\!\left(\Psi_\beta^{\beta'}(\mathbf a)\right).
\]

If $\mathbf a$ is supported entirely in the prefix $[1,k-2]$ or
entirely in the suffix $[k+2,r]$, the PBW root vectors on that interval
are unchanged. Every lower term in the triangular expansion stays inside
the same interval, by the definition of $\prec$. The triangular
characterization therefore gives
\[
    G^\beta(\mathbf a)=G^{\beta'}(\mathbf a).
\]

Now let \(\mathbf a\) be arbitrary. Decompose
\[
\mathbf a
=
\mathbf a_{<k-1}
+
\mathbf a_{[k-1,k+1]}
+
\mathbf a_{>k+1}.
\]
By the two special cases above, we have
\[
G^\beta(\mathbf a_{<k-1})
=
G^{\beta'}(\mathbf a_{<k-1}),
\]
\[
G^\beta(\mathbf a_{>k+1})
=
G^{\beta'}(\mathbf a_{>k+1}),
\]
and
\[
G^\beta(\mathbf a_{[k-1,k+1]})
=
G^{\beta'}\!\left(
\Psi_\beta^{\beta'}(\mathbf a_{[k-1,k+1]})
\right).
\]
Therefore the following two products are equal:
\[
G^\beta(\mathbf a_{>k+1})
G^\beta(\mathbf a_{[k-1,k+1]})
G^\beta(\mathbf a_{<k-1})
=
G^{\beta'}(\mathbf a_{>k+1})
G^{\beta'}\!\left(
\Psi_\beta^{\beta'}(\mathbf a_{[k-1,k+1]})
\right)
G^{\beta'}(\mathbf a_{<k-1}).
\]
Let $H$ denote this common product, and let $\mathbf a^{\mathrm{new}}$
be the vector obtained from $\mathbf a$ by the displayed rank-two
transformation. Applying Lemma~\ref{lem:Gproductab} successively to the
three consecutively supported factors gives
\[
    H=G^\beta(\mathbf a)
       +\sum_{\mathbf d\prec\mathbf a}
         h_{\mathbf d}(q)G^\beta(\mathbf d),
    \qquad h_{\mathbf d}(q)\in q\mathbb Z[q],
\]
and
\[
    H=G^{\beta'}(\mathbf a^{\mathrm{new}})
       +\sum_{\mathbf e\prec\mathbf a^{\mathrm{new}}}
         h'_{\mathbf e}(q)G^{\beta'}(\mathbf e),
    \qquad h'_{\mathbf e}(q)\in q\mathbb Z[q].
\]
Both are expansions in the same, expression-independent global basis.
Reducing their coefficients modulo $q$, exactly one basis element has
nonzero coefficient on either side. Consequently,
\[
    G^\beta(\mathbf a)=G^{\beta'}(\mathbf a^{\mathrm{new}}).
\]
By the definition of the transition map,
$\Psi_\beta^{\beta'}(\mathbf a)=\mathbf a^{\mathrm{new}}$.
This completes the proof.
\end{proof}
\begin{example}\label{exm:010}
For a $3$-move, writing $\mathbf b=\Psi_\beta^{\beta'}(\mathbf a)$,
the following local transitions will be used below:
\[
\begin{array}{c|cccc}
 (a_{k-1},a_k,a_{k+1})&(1,0,1)&(1,0,0)&(0,1,0)&(0,0,1)\\
 \hline
 (b_{k-1},b_k,b_{k+1})&(0,1,0)&(0,0,1)&(1,0,1)&(1,0,0).
\end{array}
\]
\end{example}

\subsection{Cluster structure on bosonic extension algebras}
\begin{definition}
Let $\beta=(i_1,\ldots,i_r)$, and let $E_1,\ldots,E_r$ be the standard
basis of $\mathbb Z^r$. For $1\leq a\leq b\leq r$, set
\[
    \beta\{a,b]:=\sum_{\substack{a\leq k\leq b\\i_k=i_b}}E_k,
    \qquad
    \beta[a,b\}:=\sum_{\substack{a\leq k\leq b\\i_k=i_a}}E_k.
\]
If $i_a=i_b$, denote their common value by $\beta[a,b]$.
The normalized quantum minors are
\[
    D_k^\beta:=\widetilde G^\beta\bigl(\beta\{1,k]\bigr)
    \qquad(k\in[r]).
\]
\end{definition}

\begin{theorem}[{\cite[Theorem~9.7]{kashiwara2025monoidal}; \cite{bi2025cluster}}]
Let $b\in\Br^+$, and let $\beta=(i_1,\ldots,i_r)$ be an expression of $b$.
Use the quantum parameter $t$ and the conventions fixed above, with the
coefficient embedding
\[
    \mathbb Z[t^{\pm1/2}]\longrightarrow\mathbb Q(q^{1/2}),
    \qquad t^{1/2}\longmapsto q^{-1/2}.
\]
Then
\[
    \mathbf s(\beta)
    =\bigl(\{D_k^\beta\}_{k\in[r]},L_\beta,B_\beta,K_{\rm ex}\bigr)
\]
is a quantum seed in $\widehat{\mathcal A}(b)$. More precisely, the
assignment $X_k\mapsto D_k^\beta$ induces an algebra isomorphism
\[
    \overline{\mathcal A}_t\bigl(\mathbf s(\beta)\bigr)
    \otimes_{\mathbb Z[t^{\pm1/2}]}\mathbb Q(q^{1/2})
    \xrightarrow{\ \sim\ }\widehat{\mathcal A}(b).
\]
Every normalized quantum cluster monomial with nonnegative frozen
exponents belongs to the normalized global basis of
$\widehat{\mathcal A}(b)$.
\end{theorem}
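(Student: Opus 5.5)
The plan is to deduce the statement from \cite[Theorem~9.7]{kashiwara2025monoidal}, as refined in \cite{bi2025cluster}. Most of the work is transporting their conventions to the normalization fixed here: the ordering of PBW products, the function $c$ versus the bar involution, and the parameter map $t^{1/2}\mapsto q^{-1/2}$. I would check three things in turn: quasi-commutation of the $D_k^\beta$, the exchange relations at the mutable vertices, and the equality of the generated algebra with $\widehat{\mathcal A}(b)$ together with the global basis property.

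\emph{Quasi-commutation.} First, identify $D_k^\beta=\widetilde G^\beta(\beta\{1,k])$ with the image under $T_{i_1}\cdots T_{i_{k-1}}$-type braid symmetries of a unipotent quantum minor of weight $\varpi_{i_k}-w_k\varpi_{i_k}$. Concretely, $D_k^\beta$ is the normalized $c$-invariant leading term of the ordered product of the PBW root vectors $P^\beta_s$ with $s\le k$ and $i_s=i_k$. Lemma~\ref{lem:Gproductab} shows this product has leading parameter $\beta\{1,k]$.

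For $k<l$, the elements $D_k^\beta$ and $D_l^\beta$ are then $T$-translates of a pair of quantum minors. Their $q$-commutation exponent is $(\varpi_{i_k}-w_k\varpi_{i_k},\ \varpi_{i_l}+w_l\varpi_{i_l})$, by the standard formula transported through the $T_i$, which are algebra automorphisms. Comparing with $L_\beta$ and Remark~\ref{rem_BGLS} gives $D_kD_l=q^{-\lambda_{kl}}D_lD_k$. Algebraic independence follows because the leading parameters $\beta\{1,k]$ are linearly independent, via Lemma~\ref{lem:twoprod}. Compatibility of $(B_\beta,L_\beta)$ is \cite[Proposition~1.2]{fujita2023isomorphisms}.

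\emph{Exchange relations.} For each mutable $k$, I would verify the quantum $T$-system identity
\[
D_k^\beta\,D_k'=q^{a}\prod_{b_{jk}>0}D_j^\beta+q^{b}\prod_{b_{jk}<0}D_j^\beta,
\]
where $D_k'=\widetilde G^\beta(\beta[k^+,\,(k^+)^+\}\dots)$ is the appropriate interval minor. This is the generalized $T$-system of KKOP for $i$-boxes, read off on the arrows of $B_\beta$. Lemma~\ref{lem:exchange-parameters} pins down which monomial is leading, and hence fixes the normalizations. Once a single mutation in every direction is realized in $\widehat{\mathcal A}(b)$, the KKOP theory of admissible (monoidal) seeds makes the seed propagate. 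The theory is applied through the categorification by a complete duality datum, but the statement is purely algebraic. Propagation gives that every normalized cluster monomial with nonnegative frozen exponents is the class of a real simple module, hence a normalized global basis element.

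\emph{Surjectivity.} The image of $\overline{\mathcal A}_t(\mathbf s(\beta))$ lies in $\widehat{\mathcal A}(b)$. For the reverse inclusion, I would show by induction on $r$ that each PBW root vector $P_k^\beta$ is, up to a power of $q^{1/2}$, a cluster variable. It is $\widetilde G^\beta(E_k)$, obtained from the seed by the standard "shift" mutation sequence at the vertices of color $i_k$ preceding $k$, which realizes the minor of the interval $[k,k]$. Since the PBW monomials span $\widehat{\mathcal A}(b)$, this gives surjectivity after tensoring with $\mathbb Q(q^{1/2})$. Injectivity follows from the quantum Laurent phenomenon embedding into the quantum torus generated by the algebraically independent $D_k^\beta$.

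I expect the main obstacle to be the bookkeeping of conventions. Signs in $L_\beta$ versus the bosonic commutation exponent, and the normalization of the exchange coefficients $q^a,q^b$, must match exactly. Otherwise the identification with \cite{kashiwara2025monoidal} holds only up to reversing all arrows.
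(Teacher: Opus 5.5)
Your top-level strategy matches the paper's. The paper gives no argument of its own for this statement: it imports it from \cite[Theorem~9.7]{kashiwara2025monoidal} and \cite{bi2025cluster}, and its only local input is the conventions of Remark~\ref{rem_BGLS} and \eqref{eq:quantum-parameter-convention}. The problem is that several of the checks you sketch in support are false as stated. They cannot count as a verification, so your argument stands only as that citation.

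\emph{Quasi-commutation.} For non-reduced $\beta$ the minors need not be $T$-translates of unipotent quantum minors. For $\beta=(1,1)$ one has $w_2=e$, so $D_2^\beta$ has weight $\varpi_1-w_2\varpi_1=0$. Yet $D_2^\beta$ is not a scalar: its PBW leading term is $P_2^\beta P_1^\beta=q\,f_{1,1}f_{1,0}$. On the other hand, every $T_x(\widehat{\mathcal A}[0])$ has only scalars in weight $0$. The commutation exponents you state are correct, but they come from \cite{fujita2023isomorphisms,kashiwara2025monoidal}, not from transporting the finite-type formula.

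\emph{Normalizations.} Lemma~\ref{lem:exchange-parameters} cannot be used inside a proof of this theorem to fix normalizations. Its hypothesis, that all four factors are global basis elements up to powers of $q^{1/2}$, is part of the conclusion.

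\emph{Exchange relations.} More seriously, the exchange relations of $\mathbf s(\beta)$ are in general not $T$-system identities among the initial variables, and $D_k'$ is in general not an interval minor. Take type $A_2$ with $\beta=(1,2,1,2,1)$. The arrows at the mutable vertex $3$ are $1\to3\to5$, $4\to3$ and $3\to2$, so
\[
D_3^\beta D_3'=q^AD_2^\beta D_5^\beta+q^BD_1^\beta D_4^\beta .
\]
Granting the theorem, Lemma~\ref{lem:exchange-parameters} gives $\mathbf a^\beta(D_3')=(1,1,1,0,1)-(1,0,1,0,0)=E_2+E_5$, which is supported on two colors. By contrast, the $T$-system for the box $[1,3]$ involves the non-initial elements $\widetilde G^\beta(E_3)$ and $\widetilde G^\beta(E_3+E_5)$.

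\emph{Surjectivity.} Your ``shift'' sequence fails in the same example. Mutating at $1$ and then at $3$ (the vertices of color $1$ preceding $5$) yields $\widetilde G^\beta(E_3)$ and $\widetilde G^\beta(E_3+E_5)$. Such a sweep only removes the first occurrence of the color (cf.\ Proposition~\ref{pro:mu1Q}). It does not produce $\widetilde G^\beta(E_5)$, which is $P_5^\beta$ up to a power of $q^{1/2}$.

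\emph{Global basis property.} The step ``class of a real simple module, hence a normalized global basis element'' does not follow by specializing at $q^{1/2}=1$. A bar-invariant coefficient such as $q^{1/2}-2+q^{-1/2}$ vanishes there without being zero. The quantum assertion is a separate part of the cited theorem, recorded in this paper as \eqref{eq:cluster-monomial-quantizability}.

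All the facts you need are true: mutations in every direction, the root vectors lying in the image, and the global-basis property. But they are exactly what is imported from \cite{kashiwara2025monoidal}, so they should be cited rather than derived as you propose.
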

Braid moves relate these normalized minors as follows.

\begin{proposition}\label{prop:Dbraid}
Let \(\beta,\beta'\) be two expressions of an element \(b\in\Br^+\).
\begin{enumerate}
    \item If \(\beta'\) is obtained from \(\beta\) by a \(2\)-move exchanging
    \((i_k,i_{k+1})\), then
    \[
    D^{\beta'}_k = D^\beta_{k+1},\qquad
    D^{\beta'}_{k+1} = D^\beta_k,\qquad
    D^{\beta'}_j = D^\beta_j \quad \text{for all } j\neq k,k+1.
    \]

    \item If \(\beta'\) is obtained from \(\beta\) by a \(3\)-move replacing
    \[
    (i_{k-1},i_k,i_{k+1})=(i,j,i)
    \]
    with
    \[
    (j,i,j),
    \]
    where \(c_{ij}=-1\), then
    \[
    D^{\beta'}_j=
    \begin{cases}
        D^\beta_j, & j\neq k-1,k,k+1,\\
        D^\beta_{k+1}, & j=k,\\
        D^\beta_k, & j=k+1,\\
        \mu_{k-1}(D^\beta_{k-1}), & j=k-1.
    \end{cases}
    \]
\end{enumerate}
\end{proposition}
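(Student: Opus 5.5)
The plan is to read off every identity from the transition maps of Theorem~\ref{theo:braidmoves}. We have $D_s^\beta=\widetilde G^\beta(\beta\{1,s])$, and the normalized global basis does not depend on the expression. So for every $s$ it suffices to find the $\beta'$-parameter of the $\beta$-parameter vector, that is, to check
\[
\Psi_\beta^{\beta'}\bigl(\beta\{1,s]\bigr)=\beta'\{1,s'],
\]
where $s'$ is the claimed partner index. Normalization does not change parameters, so this gives $D^{\beta'}_{s'}=D^\beta_s$.

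For the $2$-move, $\Psi_\beta^{\beta'}$ only swaps coordinates $k$ and $k+1$. Since $i_k\neq i_{k+1}$, the vector $\beta\{1,s]$ has at most one of the coordinates $k,k+1$ equal to $1$. Consider the cases in turn:
\begin{enumerate}
\item If $s=k+1$, the vector has $E_{k+1}$ and the earlier occurrences of the color $i_{k+1}$. Swapping gives exactly $\beta'\{1,k]$, so $D^{\beta'}_k=D^\beta_{k+1}$.
\item If $s=k$, the symmetric argument gives $D^{\beta'}_{k+1}=D^\beta_k$.
\item If $s<k$, the two coordinates are untouched. If $s>k+1$, the single relevant coordinate is moved to the position now carrying the same color in $\beta'$. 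In both cases the vector becomes $\beta'\{1,s]$.
\end{enumerate}

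For the $3$-move, $\Psi_\beta^{\beta'}$ changes only the triple of coordinates $(k-1,k,k+1)$, and I use the local table of Example~\ref{exm:010}.
\begin{enumerate}
\item If $s<k-1$, the triple is $(0,0,0)$ and nothing changes.
\item If $s\geq k+2$, the triple restricted from $\beta\{1,s]$ is $(1,0,1)$, $(0,1,0)$, or $(0,0,0)$ according as $i_s=i$, $i_s=j$, or neither. By the table these go to $(0,1,0)$, $(1,0,1)$, and $(0,0,0)$. These are precisely the triples of $\beta'\{1,s]$, since in $\beta'$ the color $i$ sits at $k$ and the color $j$ at $k\pm1$.
\item If $s=k+1$ (color $i$, triple $(1,0,1)$), the image is $(0,1,0)$, which is $\beta'\{1,k]$. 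Hence $D^{\beta'}_k=D^\beta_{k+1}$.
\item If $s=k$ (color $j$, triple $(0,1,0)$), the image is $(1,0,1)$, which is $\beta'\{1,k+1]$. Hence $D^{\beta'}_{k+1}=D^\beta_k$.
\end{enumerate}

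The remaining index $k-1$ is the real content. Here $\beta'\{1,k-1]$ has color $j$ with triple $(1,0,0)$. Its $\beta$-preimage is the vector $\mathbf c$ consisting of the earlier $j$-occurrences in $\beta$ plus $E_{k+1}$, which is not a minor of $\beta$. I would argue as follows.
\begin{enumerate}
\item The mutated variable $\mu_{k-1}(D^\beta_{k-1})$ is a cluster variable of $\mathbf s(\beta)$. By the quantum cluster theorem recalled above, it is a normalized global basis element of $\widehat{\mathcal A}(b)$.
\item Its $\beta$-parameter can be computed from the exchange relation. In the quantum seed this reads $D_{k-1}\,\mu_{k-1}(D_{k-1})=q^{a}N_++q^{b}N_-$. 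The $N_\pm$ are normalized monomials in the initial minors; by the same theorem they are global basis elements (up to $q^{1/2}$-powers), and by Lemma~\ref{lem:twoprod} their parameters are the sums of the parameters of their factors. Lemma~\ref{lem:exchange-parameters} then gives
\[
\mathbf a^\beta\bigl(\mu_{k-1}(D_{k-1})\bigr)=\max_\prec\{\mathbf a^\beta(N_+),\mathbf a^\beta(N_-)\}-\beta\{1,k-1].
\]
\item I would show this equals $\mathbf c$. Then $\mu_{k-1}(D^\beta_{k-1})$ and $D^{\beta'}_{k-1}$ are two normalized global basis elements with the same parameter, hence equal.
\end{enumerate}

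To identify the two monomials I read off the arrows at $k-1$ from \eqref{eq_braidmatrix}. These are the horizontal arrows $(k-1)^-\to k-1\to k+1$, together with the ordinary arrows to and from the $j$-colored vertex $k$ and the neighbouring $j$-occurrence $k^-$. I expect the two sides to be $D_{(k-1)^-}D_k$ and $D_{k+1}D_{k^-}$, possibly with further ordinary neighbours, each contributing a parameter of the form $\beta\{1,\cdot]$. One then compares the two sums in the order $\prec$ and subtracts $\beta\{1,k-1]$.

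This bookkeeping is the main obstacle. The difficulty is that the ordinary arrows through $k-1$ depend on the relative positions of further occurrences of $i$, $j$ and adjacent colors. My first attempt would be to reduce to a generic local picture: use the $2$-moves already established to push letters of colors not adjacent to $i$ or $j$ away from the window. In the reduced picture the needed difference is checked directly, for example against the type $A_2$ identity $D_{(i,1)}D'_{(j,1)}=D_{(j,1)}D_{(i,2)}+\cdots$ transported by $T_{b_{<k-1}}$.
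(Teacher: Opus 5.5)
Your overall route is the paper's. You read off all minors other than $D^{\beta'}_{k-1}$ from the transition maps of Theorem~\ref{theo:braidmoves}. You then match $D^{\beta'}_{k-1}$ with $\mu_{k-1}(D^\beta_{k-1})$: compute the latter's $\beta$-parameter through the exchange relation and Lemma~\ref{lem:exchange-parameters}, and use that a normalized global basis element is determined by its parameter. Your $2$-move cases and the non-exceptional $3$-move cases are correct.

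The gap is at the exceptional index $k-1$. You never pin down the exchange relation there and never carry out the $\prec$-comparison. The workaround you sketch would not close this, for three reasons:
\begin{enumerate}
\item Commuting away colors not adjacent to $i$ or $j$ changes nothing relevant.
\item Colors adjacent to $i$, and earlier occurrences of $i$ and $j$, cannot be moved by $2$-moves.
\item Transporting a rank-two identity by $T_{b_{<k-1}}$ only yields elements supported on $\{k-1,k,k+1\}$. But $D^\beta_{k-1}$, $D^\beta_{(k-1)^-}$ and $D^\beta_{k^-}$ involve earlier occurrences of $i$ and $j$.
\end{enumerate}

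The missing step is immediate from \eqref{eq_braidmatrix}, because $(k-1)^+=k+1$. Apart from the horizontal rows $k+1$ (entry $1$) and $(k-1)^-$ (entry $-1$), a row $x$ with $b_{x,k-1}\neq0$ must satisfy one of two conditions:
\begin{enumerate}
\item $k-1<x<k+1<x^+$, which forces $x=k$, with $b_{k,k-1}=-1$;
\item $x<k-1<x^+<k+1$, which forces $x^+=k$, i.e.\ $x=k^-$, with $b_{k^-,k-1}=1$.
\end{enumerate}
So there are no further ordinary neighbours, for any word, and
\[
D^\beta_{k-1}\,\mu_{k-1}(D^\beta_{k-1})
=q^A D^\beta_kD^\beta_{(k-1)^-}+q^B D^\beta_{k+1}D^\beta_{k^-}.
\]
Put $\gamma=\beta\{1,(k-1)^-]+\beta\{1,k^-]$, with absent terms read as zero. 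The two monomials have parameters $\gamma+E_k$ and $\gamma+E_{k-1}+E_{k+1}$. Their first and last differing coordinates, $k-1$ and $k+1$, are both smaller in the former, so the latter is the $\prec$-maximum. Subtracting $\beta\{1,k-1]=\beta\{1,(k-1)^-]+E_{k-1}$ gives $\beta\{1,k^-]+E_{k+1}$, which is your $\mathbf c$. With this step inserted, your argument is the paper's proof.
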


\begin{proof}
The \(2\)-move case follows directly from the formula for
\(\Psi_{\beta'}^\beta\) in Theorem~\ref{theo:braidmoves}. Indeed,
\[
D^{\beta'}_j
=
\widetilde G^{\beta'}(\beta'\{1,j])
=
\widetilde G^\beta\!\left(\Psi_{\beta'}^\beta(\beta'\{1,j])\right),
\]
and the \(2\)-move formula exchanges the two relevant coordinates.

We now prove the \(3\)-move case. Assume that \(\beta'\) is obtained from
\(\beta\) by replacing the subword
\[
(i_{k-1},i_k,i_{k+1})=(i,j,i)
\]
with
\[
(j,i,j).
\]

If \(s\neq k-1,k,k+1\), then the transition formulas in
Theorem~\ref{theo:braidmoves}, equivalently Example~\ref{exm:010}, give
\[
\Psi_{\beta'}^\beta(\beta'\{1,s])=\beta\{1,s].
\]
Hence
\[
D^{\beta'}_s
=
\widetilde G^{\beta'}(\beta'\{1,s])
=
\widetilde G^\beta(\beta\{1,s])
=
D^\beta_s.
\]

For \(s=k+1\), the local part of \(\beta'\{1,k+1]\) at
\((k-1,k,k+1)\) is \((1,0,1)\). By the \(3\)-move transition formula,
\[
\Psi_{\beta'}^\beta(\beta'\{1,k+1])=\beta\{1,k].
\]
Therefore
\[
D^{\beta'}_{k+1}=D^\beta_k.
\]
Similarly, the local part of \(\beta'\{1,k]\) is \((0,1,0)\), and the
transition formula gives
\[
\Psi_{\beta'}^\beta(\beta'\{1,k])=\beta\{1,k+1].
\]
Thus
\[
D^{\beta'}_k=D^\beta_{k+1}.
\]

It remains to identify \(D^{\beta'}_{k-1}\). The local part of
\(\beta'\{1,k-1]\) is
\[
(1,0,0)
\]
at the positions \((k-1,k,k+1)\) of \(\beta'\). Applying
\(\Psi_{\beta'}^\beta\), we obtain the local vector
\[
(0,0,1)
\]
at the corresponding positions \((k-1,k,k+1)\) of \(\beta\). Hence
\[
\Psi_{\beta'}^\beta(\beta'\{1,k-1])
=
\bfa^\beta(D^\beta_{k^-})+\epsilon_{k+1},
\]
where \(\epsilon_{k+1}\) denotes the unit vector at position \(k+1\), and we
use the convention \(D^\beta_{-\infty}=1\) if \(k^-\) does not exist.

We claim that this vector is the \(\beta\)-Lusztig parameter of
\(\mu_{k-1}(D^\beta_{k-1})\).

By the exchange relation at the vertex \(k-1\), the local quiver gives
\[
D^\beta_{k-1}\,\mu_{k-1}(D^\beta_{k-1})
=
q^{A}D^\beta_kD^\beta_{(k-1)^-}
+
q^{B}D^\beta_{k+1}D^\beta_{k^-}
\]
for some \(A,B\in\tfrac12\mathbb Z\). Here, if \((k-1)^-\) or \(k^-\) does not exist, the
corresponding factor is omitted.

Let
\[
\mathbf u
=
\bfa^\beta(D^\beta_kD^\beta_{(k-1)^-}),
\qquad
\mathbf v
=
\bfa^\beta(D^\beta_{k+1}D^\beta_{k^-}).
\]
The two vectors agree outside $\{k-1,k,k+1\}$, and their local
coordinates are
\[
    \mathbf u|_{\{k-1,k,k+1\}}=(0,1,0),\qquad
    \mathbf v|_{\{k-1,k,k+1\}}=(1,0,1).
\]
Thus both the first and the last differing coordinates are smaller for
$\mathbf u$, so $\mathbf u\prec\mathbf v$.
Thus the maximal Lusztig parameter on the right-hand side of the exchange
relation is
\[
\mathbf v
=
\bfa^\beta(D^\beta_{k+1})+\bfa^\beta(D^\beta_{k^-}).
\]

On the other hand, the maximal Lusztig parameter of the left-hand side is
\[
\bfa^\beta(D^\beta_{k-1})
+
\bfa^\beta\bigl(\mu_{k-1}(D^\beta_{k-1})\bigr),
\]
again by Lemma~\ref{lem:twoprod}. Comparing maximal parameters in the exchange
relation gives
\[
\bfa^\beta(D^\beta_{k-1})
+
\bfa^\beta\bigl(\mu_{k-1}(D^\beta_{k-1})\bigr)
=
\bfa^\beta(D^\beta_{k+1})
+
\bfa^\beta(D^\beta_{k^-}).
\]
Since
\[
\bfa^\beta(D^\beta_{k+1})
=
\bfa^\beta(D^\beta_{k-1})+\epsilon_{k+1},
\]
we obtain
\[
\bfa^\beta\bigl(\mu_{k-1}(D^\beta_{k-1})\bigr)
=
\epsilon_{k+1}+\bfa^\beta(D^\beta_{k^-}).
\]
This is precisely
\[
\Psi_{\beta'}^\beta(\beta'\{1,k-1]).
\]
Therefore
\[
\mu_{k-1}(D^\beta_{k-1})
=
\widetilde G^\beta\!\left(\Psi_{\beta'}^\beta(\beta'\{1,k-1])\right)
=
\widetilde G^{\beta'}(\beta'\{1,k-1])
=
D^{\beta'}_{k-1}.
\]
This proves the \(3\)-move case and hence the proposition.
\end{proof}

\section{ New subalgebras of bosonic extension algebras}
\label{sec:subalgebra}

In this section, we introduce a distinguished subalgebra of the bosonic
extension algebra.

\subsection{The intersection subalgebra}
\label{sec:algebracA}

Let \(b\in\Br^+\). By the Garside property \cite[Corollary~7.3]{oh2025pbw}, there exist an element
\(u\in\Br^+\) and an integer \(m>0\) such that
\begin{equation}\label{eq:Delta-factor}
    bu=\Delta^m,
\end{equation}
where \(\Delta\) denotes the positive braid lift of the longest Weyl group
element \(w_0\).

Let \(v\le \delta(b)\), and let
\[
\beta=(i_1,\dots,i_r)
\]
be an expression of \(b\). Let
\[
\beta_v=(i_{p_1},\dots,i_{p_\ell})
\]
be the leftmost reduced subexpression of \(\beta\) representing \(v\). Since
\(v\le w_0\), we may extend \(\beta_v\) to a reduced expression of \(w_0\);
we denote such an extension by \(\overline w_0\).

This reduced expression determines an infinite word, as in
\eqref{eq:dotw0}. We denote the resulting infinite word by
\begin{equation}
    \dot{\beta}_v
    =
    (i_{p_1},\dots,i_{p_\ell},\dots,
    i^*_{p_1},\dots,i^*_{p_\ell},\dots),
\end{equation}
where \(i^*\in I\) is determined by
\[
w_0(\alpha_i)=-\alpha_{i^*}.
\]

For a Weyl group element \(v\in W\), we write \(T_v\) for the braid symmetry
associated with the positive braid lift of \(v\). Equivalently, if
\[
v=s_{j_1}\cdots s_{j_\ell}
\]
is a reduced expression, then
\[
T_v:=T_{j_1}\cdots T_{j_\ell}.
\]
This is independent of the chosen reduced expression.

\begin{definition}
Let \(b\in\Br^+\), and let \(v\le \delta(b)\). We define
\[
\widehat{\mathcal A}_{v,b}
:=
\widehat{\mathcal A}(b)\cap T_v\bigl(\widehat{\mathcal A}_{\ge0}\bigr).
\]
\end{definition}

The following proposition gives an intrinsic characterization of
\(\widehat{\cA}_{v,b}\) in terms of these Lusztig parameters.

\begin{proposition}\label{prop:Avb-characterization}
Let \(x\in \widehat{\cA}(b)\) be a global basis element. Then
\[
x\in \widehat{\cA}_{v,b}
\quad\Longleftrightarrow\quad
\bfa^{\dot{\beta}_v}_k(x)=0
\quad \text{for all } k\in[\ell(v)].
\]
\end{proposition}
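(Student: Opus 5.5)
Write $\ell=\ell(v)$. The plan is to work inside a single braid subalgebra that contains both $\widehat{\cA}(b)$ and the relevant part of $T_v(\widehat{\cA}_{\ge0})$. Everything will be expressed through PBW and global bases attached to the word $\dot\beta_v$, truncated to a finite length.

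\emph{Step 1: reduce to a finite prefix of $\dot\beta_v$.} Using \eqref{eq:Delta-factor}, write $bu=\Delta^m$, so that $\widehat{\cA}(b)\subset\widehat{\cA}(\Delta^m)=\widehat{\cA}[0,m-1]$ by Example~\ref{exam:Delta}. The first $mN$ letters of $\dot\beta_v$, with $N=\ell(w_0)$, form an expression $\omega$ of $\Delta^m$. This holds because $(j_1,\dots,j_N)$ and $(j_1^*,\dots,j_N^*)$ are reduced words for $w_0$, and $T_\Delta$ twists by $*$. Consequently $x$ is a global basis element of $\widehat{\cA}(\Delta^m)$, and its $\omega$-parameter is what $\bfa^{\dot\beta_v}(x)$ means. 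Its first $\ell$ coordinates sit at the positions of $\beta_v$, since $\omega$ begins with $(i_{p_1},\dots,i_{p_\ell})$.

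\emph{Step 2: factor the PBW basis.} Write $\omega=\beta_v\cdot\omega'$, where $\omega'$ is an expression of $v^{-1}\Delta^m$. The PBW vectors satisfy $P^\omega_{\ell+k}=T_v(P^{\omega'}_k)$. Hence $P^\omega(\bfa)=T_v(P^{\omega'}(\bfa''))\cdot P^{\beta_v}(\bfa')$, where $\bfa=(\bfa',\bfa'')$ is split at position $\ell$. The standard consequence of Oh--Park / Kashiwara--Kim--Oh--Park PBW theory is that
\[
\widehat{\cA}(\Delta^m)\cap T_v(\widehat{\cA}_{\ge0})=T_v\bigl(\widehat{\cA}(v^{-1}\Delta^m)\bigr).
\]
This intersection has PBW basis $\{P^\omega(\bfa)\mid \bfa'=0\}$. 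The inclusion $\supseteq$ is clear. For $\subseteq$, use the factorization $\widehat{\cA}(\Delta^m)\cong T_v(\widehat{\cA}(v^{-1}\Delta^m))\otimes\widehat{\cA}(v)$ together with $\widehat{\cA}(v)\cap T_v(\widehat{\cA}_{\ge 0})=\mathbb Q(q^{1/2})$. The latter holds because $\widehat{\cA}(v)\subset T_v(\widehat{\cA}_{<0})$, and $\widehat{\cA}_{\ge0}\cap\widehat{\cA}_{<0}$ consists of scalars by the triangular factorization of $\widehat{\cA}$.

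\emph{Step 3: pass from PBW to global basis.} The element $T_v$ preserves global bases \cite[Theorem~3.7]{kashiwara2024braid}, and it sends the triangular expansion for $\omega'$ to the one for $\omega$ on vectors with $\bfa'=0$. Moreover, the order $\prec$ preserves the condition $\bfa'=0$: lower terms have $\min\operatorname{supp}$ at least as large. Together these give that $G^\omega(0,\bfa'')=T_v(G^{\omega'}(\bfa''))$. Thus the global basis elements lying in $T_v(\widehat{\cA}_{\ge0})\cap\widehat{\cA}(\Delta^m)$ are exactly those with vanishing first $\ell$ coordinates. Intersecting with $\widehat{\cA}(b)$ then gives the claim for $x\in\mathbb B(b)$. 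For the direction ``$\Rightarrow$'', the point is that $x$ lies in a subspace spanned by the global basis elements with $\bfa'=0$, so $x$ is one of them.

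\emph{Main obstacle.} The delicate step is the inclusion $\subseteq$ in Step 2, namely that membership in $T_v(\widehat{\cA}_{\ge0})$ forces $\bfa'=0$. I would prove it by applying $T_v^{-1}$. This sends $P^{\beta_v}(\bfa')$ into $\widehat{\cA}_{<0}$ with nonzero weight when $\bfa'\neq0$, and sends the tail factor into $\widehat{\cA}_{\ge0}$. Uniqueness in the factorization $\widehat{\cA}=\widehat{\cA}_{\ge0}\cdot\widehat{\cA}_{<0}$ then forces all terms with $\bfa'\ne0$ to vanish. A second point to check is that the result does not depend on the choices of $\overline w_0$ and $m$. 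For the finite prefix, extending $m$ only appends letters, and the parameters are stable under this.
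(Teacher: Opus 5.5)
Your proof is correct and is essentially the paper's argument: you identify the $\dot\beta_v$-PBW monomials with vanishing first $\ell(v)$ coordinates as $T_v$ of a tail PBW basis via $P^{\omega}_{\ell+k}=T_v(P^{\omega'}_k)$, and then use that this parameter set is downward closed for $\prec$ to pass to the global basis. The only difference is that the paper avoids your \emph{main obstacle} by observing that the tail of $\dot\beta_v$ is itself the alternating-star word of the reduced word $\tau\beta_v^*$ of $w_0$, so $T_v$ carries a PBW basis of all of $\widehat{\cA}_{\ge0}$ onto exactly those monomials. In your truncated setting, the identity $\widehat{\cA}(\Delta^m)\cap T_v(\widehat{\cA}_{\ge0})=T_v(\widehat{\cA}(v^{-1}\Delta^m))$ is in fact immediate from the definition of $\widehat{\cA}(\cdot)$, because $T_vT_{v^{-1}\Delta^m}=T_{\Delta^m}$ and $T_v(\widehat{\cA}_{\ge0})\subseteq\widehat{\cA}_{\ge0}$.
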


\begin{proof}
Write the chosen reduced word of \(w_0\) as \(\beta_v\tau\), and
put \(m=\ell(v)\). The word \(\tau\beta_v^*\) is also reduced for
\(w_0\): it has length \(\ell(w_0)\) and represents
\[
 (v^{-1}w_0)(w_0vw_0)=w_0.
\]
The tail of \(\dot\beta_v\) after its first \(m\) letters is the
alternating-star word built from \(\tau\beta_v^*\). By
Example~\ref{exam:Delta}, its finite Garside prefixes provide PBW
bases of \(\widehat{\cA}[0,M]\) for all \(M\). Applying \(T_v\)
to these root vectors gives \(P_k^{\dot\beta_v}\) for \(k>m\).
Consequently, the PBW monomials with zero first \(m\) coordinates
form a basis of \(T_v(\widehat{\cA}_{\ge0})\).

The set of such parameters is downward closed in the triangular
order. Indeed, if \(\mathbf a\) has zero first \(m\) coordinates
and \(\mathbf b\prec\mathbf a\), let \(k_0\) be their first
differing coordinate. Then \(b_{k_0}<a_{k_0}\), so nonnegativity
forces \(k_0>m\). Hence \(b_i=0\) for \(i\le m\).

Both the triangular expansion of \(G(\mathbf a)\) in the PBW basis
and its inverse therefore preserve this parameter set. It follows
that \(T_v(\widehat{\cA}_{\ge0})\) is also spanned by precisely the
global basis elements with zero first \(m\) coordinates. All these
expansions can be made in a finite Garside prefix. Since \(x\) is
itself a global basis element, its membership in this span is
equivalent to the stated vanishing. Intersecting with
\(\widehat{\cA}(b)\) proves the assertion.
\end{proof}

\begin{corollary}\label{cor:Avb-based}
The algebra $\widehat{\cA}_{v,b}$ has as a basis the global basis elements
$x\in\mathbb B(b)$ satisfying
\[
    a_k^{\dot\beta_v}(x)=0\qquad(1\leq k\leq\ell(v)).
\]
The same statement holds with the normalized global basis.
\end{corollary}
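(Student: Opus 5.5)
The plan is to show two things: that $\widehat{\cA}_{v,b}$ is spanned by the global basis elements of $\mathbb B(b)$ that lie in it, and then to read off membership from Proposition~\ref{prop:Avb-characterization}.

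First, the proof of Proposition~\ref{prop:Avb-characterization} shows that $T_v(\widehat{\cA}_{\ge0})$ has a basis of global basis elements. These are the elements whose $\dot\beta_v$-parameters have vanishing first $\ell(v)$ coordinates, computed in a finite Garside prefix $\widehat{\cA}[0,M]=\widehat{\cA}(\Delta^{M+1})$. Second, $\widehat{\cA}(b)$ has the basis $\mathbb B(b)$. By the Garside factorization \eqref{eq:Delta-factor}, $b$ is a left divisor of $\Delta^{M+1}$ for $M$ large. Using the global-basis compatibility of \cite{kashiwara2024braid,kashiwara2025global}, the basis $\mathbb B(b)$ is then the subset of $\mathbb B(\Delta^{M+1})$ spanning $\widehat{\cA}(b)$. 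Concretely, one takes an expression of $\Delta^{M+1}$ beginning with $\beta$, and $\mathbb B(b)$ consists of the elements whose parameters are supported on the first $r$ positions; this is the same downward-closedness argument as in the proposition.

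With both subspaces spanned by subsets of one common basis $\mathbb B(\Delta^{M+1})$, their intersection is spanned by the intersection of the two subsets. That intersection is exactly
\[
\{x\in\mathbb B(b)\mid a_k^{\dot\beta_v}(x)=0,\ 1\le k\le\ell(v)\},
\]
by Proposition~\ref{prop:Avb-characterization}. Linear independence is inherited from $\mathbb B(b)$, so this set is a basis of $\widehat{\cA}_{v,b}$.

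The main obstacle is ensuring both subalgebras really are spanned by subsets of the same basis inside a single finite-dimensional weight space. For $\widehat{\cA}(b)$ I would argue directly: $\widehat{\cA}(b)\subset\widehat{\cA}(\Delta^{M+1})$, and the global bases are expression-independent. Hence the global basis of $\widehat{\cA}(\Delta^{M+1})$, for an expression $\beta\beta'$, restricts on parameters supported in $[1,r]$ to $\mathbb B(b)$. This holds because the PBW vectors $P_k^{\beta\beta'}$ for $k\le r$ coincide with $P_k^\beta$, and the triangular characterization (bar/$c$-invariance plus the $q\mathbb Z[q]$ condition) gives uniqueness. The normalized statement follows immediately, since $\widetilde G$ differs from $G$ by a power of $q^{1/4}$ on each weight space, and parameters are unchanged by this normalization.
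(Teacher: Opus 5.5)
Your proposal is correct and is essentially the paper's proof: both $T_v(\widehat{\cA}_{\ge0})$ and $\widehat{\cA}(b)$ are spanned by subsets of one ambient global basis, so their intersection is spanned by the common subset. Proposition~\ref{prop:Avb-characterization} identifies that subset, and normalization is only a nonzero rescaling. Your prefix-plus-uniqueness argument that $\mathbb B(b)$ sits inside the ambient basis just makes explicit what the paper asserts. One correction: the subset-of-a-basis argument needs no finite-dimensionality, and weight spaces of $\widehat{\cA}[0,M]$ are in fact infinite-dimensional once $M\geq1$ (e.g.\ every $f_{i,1}^nf_{i,0}^n$ has weight $0$).
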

\begin{proof}
The proof of Proposition~\ref{prop:Avb-characterization} shows that
$T_v(\widehat{\cA}_{\ge0})$ is spanned by the indicated subset of the
ambient global basis. The algebra $\widehat{\cA}(b)$ is likewise
spanned by its global basis, viewed as a subset of that ambient basis.
The intersection of two subspaces spanned by subsets of one basis is
spanned by the intersection of those subsets. Normalization rescales
basis elements by nonzero scalars and does not alter either subspace.
\end{proof}

\subsection{Lusztig parameters for two expressions}
We now study the \(\dot{\beta}_v\)-Lusztig parameters of the cluster variables
\(D_k^\beta\). Choose \(M\ge0\) and an expression \(\gamma\) such that
\(
\beta\gamma
\)
is an expression of \(\Delta^{M+1}\). We denote this extended word by
\(
\overline{\beta}:=\beta\gamma .
\)
By Example~\ref{exam:Delta}, we have
\[
\widehat{\mathcal A}(\Delta^{M+1})
=
\widehat{\mathcal A}[0,M],
\]
and hence
\[
\widehat{\mathcal A}(b)\subseteq \widehat{\mathcal A}[0,M].
\]

On the other hand, the infinite word \(\dot{\beta}_v\) was constructed from a
reduced expression of \(w_0\). Let \(\dot{\beta}^M\) denote the finite prefix of
\(\dot{\beta}_v\) corresponding to the first \(M+1\) copies of \(\Delta\). Thus
\(\dot{\beta}^M\) is an expression of \(\Delta^{M+1}\).

Since both \(\overline{\beta}\) and \(\dot{\beta}^M\) are expressions of the
same positive braid element \(\Delta^{M+1}\), they are related by a finite
sequence of \(2\)-moves and \(3\)-moves.
\begin{lemma}\label{lem:beta3move}
Let \(v\le \delta(b)\), and let
\(
\beta=(i_1,\dots,i_r)
\)
be an expression of \(b\in \Br^+\). Let
\(
\beta_v=(p_1,\dots,p_m)
\)
be the sequence of positions of the leftmost reduced subexpression of \(v\) in
\(\beta\), where \(m=\ell(v)\). Then the following hold.

\begin{enumerate}
\item For \(i\in I\), one has
\[
s_iv<v
\quad\Longleftrightarrow\quad
v^{-1}(\alpha_i)\in R^-.
\]

\item Suppose that \(\beta'\) is obtained from \(\beta\) by a \(3\)-move
\[
(i_{k-1},i_k,i_{k+1})=(i,j,i)
\quad\longleftrightarrow\quad
(j,i,j),
\]
where \(c_{ij}=-1\). Then, in the following cases, the sequence of positions
\(\beta'_v=(p'_1,\dots,p'_m)\) is given as follows.

\begin{itemize}
\item[(1)] If
\[
\{p_1,\dots,p_m\}\cap\{k-1,k,k+1\}=\{k-1=p_t\},
\]
then
\[
p'_s=p_s \quad \text{for }s\neq t,
\qquad
p'_t=k.
\]

\item[(2)] If
\[
\{p_1,\dots,p_m\}\cap\{k-1,k,k+1\}
=
\{k-1=p_t,\ k=p_{t+1}\},
\]
then
\[
p'_s=p_s \quad \text{for }s\neq t,t+1,
\qquad
p'_t=k,\quad p'_{t+1}=k+1.
\]

\item[(3)] If
\[
\{p_1,\dots,p_m\}\cap\{k-1,k,k+1\}=\{k=p_t\},
\]
then
\[
p'_s=p_s \quad \text{for }s\neq t,
\qquad
p'_t=k-1.
\]

\item[(4)] If
\[
\{p_1,\dots,p_m\}\cap\{k-1,k,k+1\}
=
\{k=p_t,\ k+1=p_{t+1}\},
\]
then
\[
p'_s=p_s \quad \text{for }s\neq t,t+1,
\qquad
p'_t=k-1,\quad p'_{t+1}=k.
\]
\end{itemize}
\end{enumerate}
\end{lemma}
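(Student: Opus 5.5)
The plan is to prove part (1) from the standard length formula, and part (2) from the greedy description of leftmost subexpressions in Lemma~\ref{lem:vk}, using that the greedy update is a $0$-Hecke action and hence satisfies braid relations.

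For part (1), use $\ell(w)=\#\{\alpha\in R^+\mid w(\alpha)\in R^-\}$. Apply it to $w=v^{-1}$ and compare $v^{-1}$ with $v^{-1}s_i=(s_iv)^{-1}$. The reflection $s_i$ permutes $R^+\setminus\{\alpha_i\}$, so $\ell(v^{-1}s_i)=\ell(v^{-1})\pm1$. The sign is $-$ exactly when $v^{-1}(\alpha_i)\in R^-$. Since $\ell(s_iv)=\ell(v^{-1}s_i)$, this gives the equivalence.

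For part (2), work with the residuals $z_a:=v_a^{-1}v$. By Lemma~\ref{lem:vk}, and as noted in the proof of Lemma~\ref{lem:inductivewave}, they satisfy
\[
z_0=v,\qquad z_a=\min\{z_{a-1},\,s_{i_a}z_{a-1}\}.
\]
Position $a$ is selected, i.e.\ $a\in\{p_1,\dots,p_m\}$, exactly when $z_a=s_{i_a}z_{a-1}<z_{a-1}$. The map $z\mapsto\min\{z,s_iz\}$ is the left action of the $0$-Hecke generator on $W$, so these operators satisfy the braid relations. Hence
\[
\pi_i\pi_j\pi_i=\pi_j\pi_i\pi_j\qquad\text{whenever }c_{ij}=-1.
\]
It follows that $\beta$ and $\beta'$ have the same residuals $z_a$ for $a\le k-2$ and for $a\ge k+1$. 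Since selection at a position depends only on the residual before it and the letter there, $p'_s=p_s$ for every $p_s\notin\{k-1,k,k+1\}$. It remains to locate the selected positions inside the window.

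Put $z:=z_{k-2}$ and $z'':=z_{k+1}$; both are common to $\beta$ and $\beta'$. The number of selected positions in the window equals $\ell(z)-\ell(z'')$, and each selection multiplies the residual on the left by the corresponding simple reflection. We treat case (1) first.
\begin{itemize}
\item[\textbullet] Here $z''=s_iz$, with a single drop. In $\beta'=(\ldots,j,i,j,\ldots)$ exactly one window position is selected.
\item[\textbullet] If it were $k-1$ or $k+1$, which both carry the letter $j$, then $z''=s_jz\ne s_iz$, a contradiction. So the selected position is $k$, and $p'_t=k$.
\end{itemize}
Case (3) is the same argument: $z''=s_jz$ forces the single selection in $\beta'$ onto a $j$-letter. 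It must be $k-1$, because $k+1$ is excluded. If $k+1$ were the selected position, then $s_jz>z$ at position $k-1$ and $s_iz>z$ at position $k$. Since $c_{ij}=-1$ and $s_is_jz<s_iz$ would then hold, this gives a length-three braid relation on $z$. That relation would force a drop already at $k-1$, i.e.\ $s_jz<z$, contradicting $s_jz>z$.

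In cases (2) and (4) there are two drops, with $z''=s_is_jz$, respectively $z''=s_js_iz$. In $\beta'$, the only pair of window positions whose letters multiply (left first) to the required product is $\{k,k+1\}$, respectively $\{k-1,k\}$. This matches the asserted $p'_t,p'_{t+1}$.

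I expect the main obstacle to be this disambiguation inside the window, particularly in case (3). There a naive count of drops leaves two candidate positions with the same letter $j$, and ruling out $k+1$ needs the explicit length argument above in the rank-two parabolic coset of $z$, for which I would use part (1). If that becomes messy, the fallback is to reduce to the $A_2$ subgroup $\langle s_i,s_j\rangle$ and check the six coset representatives directly.
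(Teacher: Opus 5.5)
Your proof is correct, and for part~(2) it follows a different route from the paper.

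\textbf{The paper's route.} The paper argues case by case, using leftmostness and root positivity. In case~(1) it rules out selecting position $k-1$ of $\beta'$, because that would force $z^{-1}s_i(\alpha_j)=z^{-1}(\alpha_i)+z^{-1}(\alpha_j)\in R^-$ with both summands positive. Lexicographic minimality then places the selection at $k$. Case~(2) uses $s_js_i(\alpha_j)=\alpha_i$ in the same way, and cases~(3),~(4) are referred to the inverse move.

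\textbf{Your route.} You observe that the greedy update $z\mapsto\min\{z,s_iz\}$ from Lemma~\ref{lem:vk} is the downward $0$-Hecke action. It is conjugate, via $x\mapsto w_0x$, to the left Demazure product, essentially the relation exploited in the proof of Lemma~\ref{lem:inductivewave}. The braid relation therefore makes all residuals outside the window coincide, and the selections inside the window are read off from the net change $z\mapsto z''$.

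\textbf{Comparison.} Your approach is more uniform:
\begin{itemize}
\item the agreement of selected positions outside the window, which the paper asserts rather briefly, becomes explicit;
\item all four cases are handled the same way;
\item the two remaining patterns $\varnothing\mapsto\varnothing$ and $\{k-1,k,k+1\}\mapsto\{k-1,k,k+1\}$, needed later in \eqref{eq:selected-local-patterns}, come for free.
\end{itemize}
The paper's route avoids the $0$-Hecke input and shows the local root-theoretic reason directly.

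\textbf{Two small corrections.}
\begin{itemize}
\item Each selection multiplies the residual on the left, so $z''=s_js_iz$ in case~(2) and $z''=s_is_jz$ in case~(4). This is the reverse of what you wrote. Your matching still gives the right positions, because you compare ordered letter pairs under one consistent convention, $s_is_j\neq s_js_i$, and the pair $\{k-1,k+1\}$ would give $z''=z$.
\item Case~(3) is not a real obstacle, and the step ``$s_is_jz<s_iz$ would then hold'' is unjustified and unnecessary. Since $\beta$ does not select $k-1$ but does select $k$, the residual entering $k$ is still $z$, and $s_jz<z$. Position $k-1$ of $\beta'$ carries the letter $j$ and sees the same residual $z$, so it is selected. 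The drop count $\ell(z)-\ell(z'')=1$ then excludes every other position in the window.
\end{itemize}
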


\begin{proof}
The first assertion is the standard left-descent criterion in Coxeter theory.
Indeed,
\[
s_i v<v
\]
if and only if \(s_i\) is a left descent of \(v\), which is equivalent to
\[
v^{-1}(\alpha_i)\in R^-.
\]

We prove the first two cases in (2). The last two follow by applying the same
argument to the inverse \(3\)-move.

\medskip
\noindent
\emph{First case.}
Assume
\[
\{p_1,\dots,p_m\}\cap\{k-1,k,k+1\}
=
\{k-1=p_t\}.
\]
Thus the local letter \(i_{k-1}=i\) is selected, while \(i_k=j\) and
\(i_{k+1}=i\) are not selected.

Set
\[
z:=v_{k-1}^{-1}v.
\]
Then
\[
v_{k-2}^{-1}v=s_i z.
\]
Since the letter at position \(k-1\) is selected, Lemma~\ref{lem:vk} gives
\[
z<s_i z.
\]
Equivalently,
\[
s_i z>z.
\]
Since the letter at position \(k\) is not selected, Lemma~\ref{lem:vk} gives
\[
s_j z>z.
\]
By part (1), we obtain
\[
z^{-1}(\alpha_i)\in R^+,
\qquad
z^{-1}(\alpha_j)\in R^+.
\]

In \(\beta'\), the local word is \((j,i,j)\). The candidate subexpression
obtained by replacing the selected position \(k-1\) in \(\beta\) by the
position \(k\) in \(\beta'\) still represents \(v\). Thus, by leftmostness,
\(\beta'_v\) is lexicographically no larger than this candidate.

We claim that \(\beta'_v\) cannot select position \(k-1\). Suppose otherwise.
Then, before position \(k-1\) in \(\beta'\), the remaining factor is \(s_i z\),
and selecting the local letter \(j\) gives, by Lemma~\ref{lem:vk},
\[
s_js_i z<s_i z.
\]
By part (1), this is equivalent to
\[
z^{-1}s_i(\alpha_j)\in R^-.
\]
Since \(c_{ij}=-1\), we have
\[
s_i(\alpha_j)=\alpha_i+\alpha_j.
\]
Therefore
\[
z^{-1}s_i(\alpha_j)
=
z^{-1}(\alpha_i)+z^{-1}(\alpha_j)\in R^-.
\]
This is impossible, because both \(z^{-1}(\alpha_i)\) and
\(z^{-1}(\alpha_j)\) are positive roots, and their sum is the root
\(z^{-1}s_i(\alpha_j)\). Hence \(p'_t\neq k-1\).

The candidate has \(p'_t=k\). Since \(\beta'_v\) is lexicographically no larger
than the candidate and cannot choose \(k-1\), it follows that
\[
p'_t=k.
\]

We next show that \(k+1\) is not selected. If \(p'_{t+1}=k+1\), then after the
position \(k\) has been selected, the remaining factor is \(z\), and selecting
the letter \(j\) at position \(k+1\) would imply
\[
s_jz<z
\]
by Lemma~\ref{lem:vk}. This contradicts \(s_jz>z\). Hence no further position
in \(\{k-1,k,k+1\}\) is selected.

Since \(\beta\) and \(\beta'\) coincide outside the local interval, the
remaining selected positions agree:
\[
p'_s=p_s\qquad(s\neq t).
\]
This proves the first case.

\medskip
\noindent
\emph{Second case.}
Assume
\[
\{p_1,\dots,p_m\}\cap\{k-1,k,k+1\}
=
\{k-1=p_t,\ k=p_{t+1}\}.
\]
Thus the local selected subword in \(\beta\) is \((i,j)\), and the position
\(k+1\) is not selected.

Set
\[
z:=v_k^{-1}v.
\]
Then
\[
v_{k-1}^{-1}v=s_jz,
\qquad
v_{k-2}^{-1}v=s_is_jz.
\]
Since the position \(k+1\) is not selected, Lemma~\ref{lem:vk} gives
\[
s_iz>z.
\]
By part (1), we have
\[
z^{-1}(\alpha_i)\in R^+.
\]

The candidate subexpression in \(\beta'\) selects the local positions
\[
k,\quad k+1,
\]
which again gives the local product \(s_is_j\). Hence the candidate has
\[
p'_t=k,\qquad p'_{t+1}=k+1.
\]

We claim that \(\beta'_v\) cannot select \(k-1\). Suppose otherwise. Then the
local letter at \(k-1\) in \(\beta'\) is \(j\), and before this position the
remaining factor is \(s_is_jz\). By Lemma~\ref{lem:vk}, selecting this letter
would imply
\[
s_js_is_jz<s_is_jz.
\]
By part (1), this gives
\[
z^{-1}s_js_i(\alpha_j)\in R^-.
\]
Since \(c_{ij}=-1\), we have
\[
s_js_i(\alpha_j)=\alpha_i.
\]
Thus
\[
z^{-1}(\alpha_i)\in R^-,
\]
contradicting \(z^{-1}(\alpha_i)\in R^+\). Hence \(p'_t\neq k-1\).

Since the candidate begins at \(k\), leftmostness forces
\[
p'_t=k.
\]
Moreover, the next selected position must be \(k+1\); otherwise the candidate
would be lexicographically smaller. Hence
\[
p'_{t+1}=k+1.
\]

Outside the local interval, \(\beta\) and \(\beta'\) coincide. Therefore
Lemma~\ref{lem:vk} gives
\[
p'_s=p_s
\qquad
\text{for all }s\neq t,t+1.
\]
This proves the second case.

The third and fourth cases are obtained by applying the first and second cases
to the inverse \(3\)-move
\[
(j,i,j)\longleftrightarrow(i,j,i).
\]
This completes the proof.
\end{proof}

\begin{remark}\label{rem:dotbetamoves}
Fix a reduced word $\tau$ for $v^{-1}w_0$. For every expression $\eta$
under consideration, let $\eta_v$ denote the selected word of its
leftmost reduced subexpression for $v$, and construct $\dot\eta_v$
from the reduced expression $\eta_v\tau$ of $w_0$.

Under a $2$-move, the selected word is unchanged unless both local
positions are selected; in that case it undergoes the same $2$-move.
Under a $3$-move, Lemma~\ref{lem:beta3move} shows that the selected word
is unchanged unless all three local positions are selected; in that
case it undergoes the same $3$-move.

Thus, when the selected word is unchanged, the chosen infinite words
are identical. When it changes by a braid move, the corresponding
move, with the appropriate star on the colors, occurs in every
$\Delta$-block of the infinite words. Their prefixes representing
$\Delta^{M+1}$ are therefore related by $M+1$ such moves. Only the move
in the first block can affect their first $\ell(v)$ Lusztig
coordinates; all the other moves fix those coordinates by
Theorem~\ref{theo:braidmoves}.
\end{remark}

\begin{proposition}\label{pro:bfadotbeta}
Let $v\leq\delta(b)$, and let $\beta=(i_1,\ldots,i_r)$ be an expression
of $b\in\Br^+$. Write $(p_1,\ldots,p_m)$ for the selected positions of
the leftmost reduced subexpression of $v$ in $\beta$, where
$m=\ell(v)$. Then
\[
    a_j^{\dot\beta_v}(D_k^\beta)
    =a_{p_j}^{\beta}(D_k^\beta)
    \qquad(1\leq j\leq m,\ 1\leq k\leq r).
\]
\end{proposition}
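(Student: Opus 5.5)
The strategy is to compare the two Lusztig parametrizations through an explicit chain of braid moves. The moves start at the extended word $\overline\beta=\beta\gamma$ and end at the prefix $\dot\beta^M$ of $\dot\beta_v$. Throughout, we track only two things: the leftmost selected positions of $v$ and the parameter of $D_k^\beta$. Both words represent $\Delta^{M+1}$, and $D_k^\beta=\widetilde G^{\beta}(\beta\{1,k])=\widetilde G^{\overline\beta}(\beta\{1,k],0,\ldots,0)$. The embedding $\widehat{\mathcal A}(b)\subset\widehat{\mathcal A}(\Delta^{M+1})$ sends PBW vectors of $\beta$ to the first $r$ PBW vectors of $\overline\beta$, so the global bases are compatible, as in the suffix argument of Theorem~\ref{theo:braidmoves}. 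Moreover, the leftmost reduced subexpression for $v$ in $\overline\beta$ is the one in $\beta$, since it already lies inside $\beta$. So it suffices to prove the following invariant. For any expression $\eta$ of $\Delta^{M+1}$ with selected positions $p_1^\eta<\cdots<p_m^\eta$, the vector $(a^\eta_{p_1^\eta}(x),\ldots,a^\eta_{p_m^\eta}(x))$ is unchanged under every braid move, for each fixed global basis element $x$.

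Before the invariant can apply, two reductions are needed. First, we check that in $\dot\beta^M$ the selected positions are exactly $1,\ldots,m$. This holds because $\dot\beta^M$ begins with a reduced word $\beta_v$ for $v$, and the lexicographically minimal choice is the initial segment. With this, the invariant gives $a^{\dot\beta_v}_j(D_k^\beta)=a^{\overline\beta}_{p_j}(D_k^\beta)=a^\beta_{p_j}(D_k^\beta)$. Second, the infinite word may depend on the completion of $\beta_v$, and Remark~\ref{rem:dotbetamoves} handles this. The remark says that the later $\Delta$-blocks never touch the first $\ell(v)$ coordinates. It also says that the first block is governed by the selected word, so different completions $\tau$ give the same first $m$ coordinates.

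The invariant is checked move by move, using Theorem~\ref{theo:braidmoves} for the parameters and Lemma~\ref{lem:beta3move} together with Remark~\ref{rem:dotbetamoves} for the selected positions.

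For a $2$-move at $(k,k+1)$, coordinates $k$ and $k+1$ are swapped. If exactly one of the two positions is selected, leftmostness moves the selection with its letter, because the two letters commute and one of them is not a descent at that stage. If both are selected, the selected word undergoes the same swap, but the selected coordinates are still read in positional order. This is consistent with reading them in $\dot\eta_v$ after the same swap.

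For a $3$-move on $(i,j,i)\to(j,i,j)$, there are three cases according to how many of the three local positions are selected.
\begin{itemize}
\item If none are selected, nothing changes.
\item If all three are selected, the selected word itself undergoes the $3$-move. This case matches by the same transition formula applied in the first block of $\dot\eta_v$, as in Remark~\ref{rem:dotbetamoves}.
\item If one or two are selected, Lemma~\ref{lem:beta3move}, cases (1)–(4), says where the selection moves. The claim is then that the transition map sends the selected coordinate(s) to the new ones, i.e.\ $b_k=a_{k-1}$ in case (1), $(b_k,b_{k+1})=(a_{k-1},a_k)$ in case (2), and so on.
\end{itemize}

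The main obstacle is the partially selected $3$-move case, because the tropical formula gives $b_k=\min(a_{k-1},a_{k+1})$ rather than $a_{k-1}$. The identity holds only for vectors satisfying a constraint, and I would restrict to the parameters actually in play. These are the parameters of $D_k^\beta$ and its images along the chain, which are $0/1$ vectors supported on one color in $\beta$ before moves. I would try to show that along the chain they stay among the local patterns of Example~\ref{exm:010}, for which the needed identities can be read off directly.

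A cleaner alternative avoids the case analysis. The first $m$ coordinates in $\dot\beta_v$ are determined by successive ``$T_v$-leading'' extraction, via the factorization $\widehat{\mathcal A}=T_v(\widehat{\mathcal A}_{\ge0})\cdot(\text{root vectors of }v)$ implicit in Proposition~\ref{prop:Avb-characterization}. The selected coordinates in any $\eta$ would be characterized by the same extraction, and independence of expression would then follow from Lemma~\ref{lem:vk}. I would try this route first if the local verification for general parameters fails.
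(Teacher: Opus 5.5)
There is a genuine gap. The invariant you transport fails for the minors $D_k^\beta$ themselves, not only for general $x$, so restricting to ``the parameters in play'' cannot rescue it.

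\emph{The counterexample.} Take type $A_2$, $\beta=(1,2)$ and $v=s_2$, so $m=1$ and $p_1=2$. Use the legitimate completion $\overline\beta=(1,2,1,2,1,2)$ of $\Delta^2$, and write $r_\eta(\mathbf a)=(a_{p_1^\eta},\ldots,a_{p_m^\eta})$.
\begin{itemize}
\item The minor $x=D_2^\beta$ has $\overline\beta$-parameter $E_2$, so its selected coordinate is $1$.
\item Apply the $3$-move $(2,1,2)\to(1,2,1)$ at positions $2,3,4$, giving $\eta=(1,1,2,1,1,2)$. This is case (1) of Lemma~\ref{lem:beta3move}. The selected position moves from $2$ to $3$, while the selected word $(2)$, and hence the associated infinite word, is unchanged.
\item The local pattern of $x$ is $(1,0,0)\mapsto(0,0,1)$. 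So $\mathbf a^\eta(x)=E_4$, and the selected coordinate of $x$ in $\eta$ is $0$.
\item At $\dot\beta^1=(2,1,2,1,2,1)$ the value is $1$ again, since $\mathbf a(x)=E_1+E_3$ there.
\end{itemize}
So $\eta\mapsto r_\eta(\mathbf a^\eta(x))$ is not constant on expressions of $\Delta^{M+1}$, even for $x=D_k^\beta$. Whether it is constant along a chain depends on the chain, and you never specify one.

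\emph{Why the pattern restriction does not help.} The pattern $(1,0,0)$ is one of those in Example~\ref{exm:010}. The partially selected identities hold for $(0,0,0)$, $(1,0,1)$ and $(0,1,0)$. They fail for $(1,0,0)$ in cases (1) and (2), and for $(0,0,1)$ in cases (3) and (4). Tracking a fixed element produces exactly these bad patterns. An element has pattern $(1,0,0)$ at a move precisely when it is the exceptional minor $D^\eta_{k-1}$ of the current word.

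\emph{The alternative route.} The same example shows that selected coordinates are not an intrinsic function of $x$. So a ``$T_v$-leading extraction'' cannot characterize them in every $\eta$ either.

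\emph{The missing idea.} Transport a statement about the \emph{current} word's minors rather than about a fixed element. For every expression $\eta$ of $\Delta^{M+1}$, prove
$\nu_\eta(D_s^\eta)=r_\eta(\mathbf a^\eta(D_s^\eta))$ for all $s$, where $\nu_\eta$ reads the first $m$ coordinates in the word built from $\eta_v\tau$.
\begin{itemize}
\item This is trivial at $\eta=\dot\beta^M$.
\item Under a $3$-move, Proposition~\ref{prop:Dbraid} shows that every minor of $\eta'$ other than $D_{k-1}^{\eta'}$ is an old minor. Its local pattern is among $(0,0,0)$, $(1,0,1)$, $(0,1,0)$, for which all the case identities hold.
\item The one new minor $D_{k-1}^{\eta'}=\mu_{k-1}(D_{k-1}^\eta)$ is handled through its exchange relation. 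Use the prefix form of Lemma~\ref{lem:exchange-parameters} together with additivity on the old minors.
\item The old exceptional minor, whose selected coordinates genuinely change, simply leaves the family.
\end{itemize}
End at $\eta=\overline\beta$ and combine with your correct reductions. These are the zero extension from $\beta$ to $\overline\beta$, the leftmost selection lying inside $\beta$, the selected positions $1,\ldots,m$ in $\dot\beta^M$, and the $2$-move case. Together they prove the proposition.
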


\begin{proof}
Choose an extension $\overline\beta=\beta\gamma$ representing
$\Delta^{M+1}$, and put $n=(M+1)\ell(w_0)$. Fix the reduced word $\tau$
for $v^{-1}w_0$ used in the chosen completion $\beta_v\tau$ defining
$\dot\beta_v$, as in Remark~\ref{rem:dotbetamoves}. For an expression
$\eta$ of $\Delta^{M+1}$, let $(p_1^\eta,\ldots,p_m^\eta)$ denote its
selected positions, and define
\[
    \nu_\eta(x):=\pi_m\mathbf a^{\dot\eta_v^M}(x),\qquad
    r_\eta(\mathbf a):=(a_{p_1^\eta},\ldots,a_{p_m^\eta}),
\]
where $\dot\eta_v^M$ is the finite prefix constructed from $\eta_v\tau$.
We prove
\begin{equation}\label{eq:prefix-minor-claim}
    \nu_\eta(D_s^\eta)
    =r_\eta\bigl(\mathbf a^\eta(D_s^\eta)\bigr)
    \qquad(1\leq s\leq n)
\end{equation}
by transport along braid moves.

For $\eta=\dot\beta_v^M$, the first $m$ letters are already a reduced
word for $v$, so the selected positions are $p_j^\eta=j$. With the
fixed completion $\tau$, the associated word is $\eta$ itself, and
\eqref{eq:prefix-minor-claim} is immediate. Connect this word to
$\overline\beta$ by $2$- and $3$-moves. It remains to check that the
claim is preserved by each move.

For a $2$-move, Proposition~\ref{prop:Dbraid} interchanges the two
local minors, and Theorem~\ref{theo:braidmoves} interchanges the two
local parameter coordinates. If both positions are selected, the
same interchange occurs among the first $m$ coordinates of the
associated word; otherwise that associated prefix is unchanged.
Remark~\ref{rem:dotbetamoves} shows that the moves in subsequent
$\Delta$-blocks do not affect these coordinates. This proves the
required transport for a $2$-move.

Now suppose that $\eta'$ is obtained from $\eta$ by
\[
    (i_{k-1},i_k,i_{k+1})=(i,j,i)\longleftrightarrow(j,i,j).
\]
For this local calculation write
\[
    S=\{p_1^\eta,\ldots,p_m^\eta\}\cap\{k-1,k,k+1\},\qquad
    S'=\{p_1^{\eta'},\ldots,p_m^{\eta'}\}\cap\{k-1,k,k+1\}.
\]
Use $1,2,3$ as abbreviations for the local positions $k-1,k,k+1$.
The possible pairs $(S,S')$ are
\begin{equation}\label{eq:selected-local-patterns}
\begin{array}{c|cccccc}
 S & \varnothing & \{1\} & \{2\} & \{1,2\} & \{2,3\} & \{1,2,3\}\\
 \hline
 S'& \varnothing & \{2\} & \{1\} & \{2,3\} & \{1,2\} & \{1,2,3\}.
\end{array}
\end{equation}
These follow from Lemma~\ref{lem:beta3move}. The set $\{3\}$ cannot
occur by leftmostness, and $\{1,3\}$ cannot occur because the selected
word is reduced. Selected positions outside this interval agree.

First consider a nonexceptional minor $D_s^{\eta'}$ with $s\ne k-1$.
By Proposition~\ref{prop:Dbraid}, it is one of the old minors, so the
induction hypothesis computes $\nu_\eta(D_s^{\eta'})$. Its local
parameters in the two words have one of the following forms:
\begin{equation}\label{eq:regular-local-parameters}
\begin{array}{c|ccc}
 \mathbf a^\eta(D_s^{\eta'})|_{\{k-1,k,k+1\}}
     &(0,0,0)&(1,0,1)&(0,1,0)\\
 \hline
 \mathbf a^{\eta'}(D_s^{\eta'})|_{\{k-1,k,k+1\}}
     &(0,0,0)&(0,1,0)&(1,0,1).
\end{array}
\end{equation}
All other parameter coordinates are unchanged. For each of the first
five columns of \eqref{eq:selected-local-patterns}, restricting the
first row of \eqref{eq:regular-local-parameters} to $S$ gives the same
tuple as restricting the second row to $S'$. The associated selected
word, and hence its prefix parameters, is unchanged in these cases.
If all three positions are selected, both the associated prefix and
the local parameter undergo the same rank-two transition. Thus
\eqref{eq:prefix-minor-claim} holds for every nonexceptional minor.

It remains to consider
\[
    Y:=D_{k-1}^{\eta'}=\mu_{k-1}(D_{k-1}^{\eta}).
\]
The exchange relation is
\begin{equation}\label{eq:exchangeD}
    D_{k-1}^{\eta}Y
    =q^A D_k^\eta D_{(k-1)^-}^\eta
      +q^B D_{k+1}^\eta D_{k^-}^\eta,
    \qquad A,B\in\tfrac12\mathbb Z,
\end{equation}
where a missing predecessor contributes the factor $1$. Set
\[
    N_+=D_k^\eta D_{(k-1)^-}^\eta,\qquad
    N_-=D_{k+1}^\eta D_{k^-}^\eta.
\]
These monomials and both exchange variables are powers of $q^{1/2}$
times global basis elements. Lemma~\ref{lem:exchange-parameters}
therefore gives the identity of \emph{prefix} parameters
\begin{equation}\label{eq:k-1inpi}
    \nu_\eta(Y)
    =\max_{\mathrm{lex}}\{\nu_\eta(N_+),\nu_\eta(N_-)\}
       -\nu_\eta(D_{k-1}^{\eta}).
\end{equation}
By Lemma~\ref{lem:twoprod}, parameters add in each old cluster
monomial. Hence the induction hypothesis also computes
$\nu_\eta(N_+)$ and $\nu_\eta(N_-)$ by restricting their
$\eta$-parameters to the selected positions.

Write
\[
    \gamma_i:=\mathbf a^\eta(D_{(k-1)^-}^\eta),\qquad
    \gamma_j:=\mathbf a^\eta(D_{k^-}^\eta),\qquad
    \gamma:=\gamma_i+\gamma_j.
\]
These vectors are supported strictly before $k-1$. The three relevant
parameters are
\begin{align*}
    \mathbf a^\eta(N_+)&=\gamma+E_k,\\
    \mathbf a^\eta(N_-)&=\gamma+E_{k-1}+E_{k+1},\\
    \mathbf a^\eta(D_{k-1}^{\eta})&=\gamma_i+E_{k-1}.
\end{align*}
Thus, outside the selected local positions, \eqref{eq:k-1inpi}
gives the restriction of $\gamma_j$. On the selected local positions,
its values are as follows:
\begin{equation}\label{eq:exceptional-prefix-table}
\begin{array}{c|cccccc}
 S & \varnothing & \{1\} & \{2\} & \{1,2\} & \{2,3\} & \{1,2,3\}\\
 \hline
 \nu_\eta(Y)\text{ on }S
   & () & (0) & (1) & (0,0) & (1,0) & (0,0,1)\\
 \nu_{\eta'}(Y)\text{ on }S'
   & () & (0) & (1) & (0,0) & (1,0) & (1,0,0).
\end{array}
\end{equation}
Indeed, if $1\in S$, the local restriction of $(1,0,1)$ wins the
left lexicographic comparison. If $1\notin S$ but $2\in S$, the
restriction of $(0,1,0)$ wins. If $S=\varnothing$, the two projected
candidates agree, so no choice of a larger full parameter is needed.
Only in the last column does the associated prefix change; there the
rank-two transition sends $(0,0,1)$ to $(1,0,0)$.

By definition, the local $\eta'$-parameter of $Y=D_{k-1}^{\eta'}$ is
$(1,0,0)$, and its outside parameter is $\gamma_j$. Restricting this
vector to the positions $S'$ in \eqref{eq:selected-local-patterns}
gives exactly the last row of \eqref{eq:exceptional-prefix-table}.
This proves \eqref{eq:prefix-minor-claim} for the exceptional minor and
completes the transport along a $3$-move.

Finally, apply \eqref{eq:prefix-minor-claim} to $\eta=\overline\beta$.
The leftmost reduced subexpression of $v$ in $\overline\beta$ is the
one already contained in $\beta$: the left-descent selection rule of
Lemma~\ref{lem:vk} has already completed $v$ by position $r$.
For $s\leq r$, the PBW construction and its triangular characterization
identify $D_s^{\overline\beta}=D_s^\beta$ and extend their parameters
by zeros in the appended positions. The associated word is constructed
from the same selected word $\beta_v$ and the same completion $\tau$.
Its finite prefix therefore computes the first $m$ coordinates of
$\mathbf a^{\dot\beta_v}$. Restricting
\eqref{eq:prefix-minor-claim} to $s\leq r$ proves the proposition.
\end{proof}

\subsection{Mutations of quivers}

We first establish a quiver calculation that will be used to keep track of
which vertices can contribute to the selected Lusztig coordinates. All
quivers in this subsection have their arrows between frozen vertices
suppressed, except that the incidence calculation in the proof of
Proposition~\ref{pro:mu1Q} temporarily retains such arrows until the final
deletion and freezing step. These arrows do not affect mutation at any
mutable vertex. A \emph{full embedding} identifies its source with an induced
subquiver of its target, with this convention for frozen vertices.

Let \(\gamma\) be a word whose first letter has color \(j\), and list its
vertices of color \(j\) in increasing word order as
\[
 x_1<x_2<\cdots<x_N.
\]
The horizontal arrows of this color are \(x_s\to x_{s+1}\).
For a vertex \(y\) of a color adjacent to \(j\), put
\[
 a(y):=\#\{s:x_s<y\},
 \qquad
 b(y):=\#\{s:x_s<y^+\},
\]
where \(b(y)=N\) if \(y^+=+\infty\). Since \(\gamma\) begins with
\(j\), one has \(a(y)\geq1\). In the next two statements, let
\(e_1,\ldots,e_N\) be the standard basis of \(\mathbb Z^N\), and set
\(e_0=0\). Encode the ordinary arrows incident with \(y\) by the vector
\[
 c(y):=\sum_{s=1}^N
 \bigl(\#\{x_s\to y\}-\#\{y\to x_s\}\bigr)e_s.
\]

\begin{lemma}\label{lem:ordinary-arrows-zigzag}
If \(a(y)=b(y)\), then \(c(y)=0\). Otherwise,
\begin{equation}\label{eq:jk-recursion}
 c(y)=
 \begin{cases}
 -e_{a(y)}+e_{b(y)},&y^+<+\infty,\\
 -e_{a(y)},&y^+=+\infty.
 \end{cases}
\end{equation}
These formulas list all ordinary arrows between color \(j\) and the
color of \(y\).
\end{lemma}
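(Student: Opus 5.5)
The plan is to read the ordinary arrows directly off the definition \eqref{eq_braidmatrix} of \(B_\gamma\), specialized to a pair consisting of \(y\) and a vertex \(x_s\) of color \(j\). Since \(y\) has a color adjacent to \(j\), we have \(c_{i_y,j}=-1\), so the two relevant clauses say the following.

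\begin{enumerate}
\item There is an arrow \(y\to x_s\) exactly when \(x_s<y<x_s^+<y^+\).
\item There is an arrow \(x_s\to y\) exactly when \(y<x_s<y^+<x_s^+\).
\end{enumerate}

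Throughout we use the conventions \(x_N^+=+\infty\) and that no inequality \(\cdots<+\infty<\cdots\) involving \(+\infty\) strictly on the left side holds. There are no other arrows between the two colors, since horizontal arrows join only vertices of equal color. Thus \(c(y)\) is the sum of \(-e_s\) over the \(s\) satisfying the first condition and \(+e_s\) over the \(s\) satisfying the second.

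For the first condition, the \(x_s\) are consecutive occurrences of \(j\), so \(x_s^+=x_{s+1}\). The inequality \(x_s<y<x_{s+1}\) singles out \(s=a(y)\); this index is \(\ge1\) because \(\gamma\) begins with \(j\), so \(e_{a(y)}\neq0\). The remaining requirement \(x_{a+1}<y^+\) says that at least \(a+1\) occurrences of \(j\) precede \(y^+\), that is, \(b(y)>a(y)\). This holds also when \(y^+=+\infty\), where \(b=N\) and the condition is \(a<N\). Hence we get exactly one contribution \(-e_{a(y)}\) when \(a(y)<b(y)\), and none otherwise.

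For the second condition, \(x_s<y^+<x_{s+1}\) singles out \(s=b(y)\), provided \(y^+<+\infty\). If \(y^+=+\infty\) the condition \(y^+<x_s^+\) can never hold, so there is no contribution. We also need \(y<x_{b}\), which means that some occurrence of \(j\) lies strictly between \(y\) and \(y^+\), i.e. \(b(y)>a(y)\). This yields \(+e_{b(y)}\) exactly when \(y^+<+\infty\) and \(a<b\).

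Combining the two cases gives \(c(y)=0\) if \(a(y)=b(y)\), and formula \eqref{eq:jk-recursion} otherwise. The only point needing care is the suppression of arrows between frozen vertices. Neither arrow found is between two frozen vertices: \(x_{a}\) with \(a<b\le N\) is not the last \(j\)-vertex, hence mutable, and in the second case \(y^+<+\infty\) makes \(y\) mutable. So the suppression convention removes nothing, and the list is complete. I expect no real obstacle beyond this bookkeeping of the \(\pm\infty\) conventions.
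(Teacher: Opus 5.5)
Your proposal is correct and matches the paper's proof: you read the conditions for \(y\to x_s\) and \(x_s\to y\) off the definition of ordinary arrows, identify the unique admissible indices \(s=a(y)\) and \(s=b(y)\), and treat \(y^+=+\infty\) the same way the paper does. Your closing check on frozen--frozen suppression is a harmless addition; in fact the condition \(p<q<p^+<q^+\) already forces the target \(p\) to be mutable, so such arrows never occur in a word quiver.
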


\begin{proof}
Recall that an ordinary arrow between adjacent colors is \(q\to p\)
precisely when \(p<q<p^+<q^+\). An arrow \(y\to x_s\) therefore requires
\(x_s<y<x_{s+1}<y^+\), so it occurs exactly for \(s=a(y)\) when
\(b(y)>a(y)\). An arrow \(x_s\to y\) requires
\(y<x_s<y^+<x_{s+1}\). It occurs exactly for \(s=b(y)\), provided
\(b(y)>a(y)\) and \(y^+\) is finite; here \(x_{N+1}=+\infty\).
If \(y^+=+\infty\), there is no such arrow, since the strict inequality
\(y^+<x_{s+1}\) cannot hold. This proves the assertion.
\end{proof}

\begin{figure}[htbp]
\centering
\begin{tikzpicture}[>=Stealth, every node/.style={font=\small}]
\node (a) at (0,0) {$x_a$};
\node (ap) at (2,0) {$x_{a+1}$};
\node (dots) at (4,0) {$\cdots$};
\node (b) at (6,0) {$x_b$};
\node (y) at (3,1.4) {$y$};
\draw[->] (a)--(ap);
\draw[->] (ap)--(dots);
\draw[->] (dots)--(b);
\draw[->] (b)--(y);
\draw[->] (y)--(a);
\end{tikzpicture}
\caption{The ordinary arrows attached to a vertex \(y\) with
\(a(y)<b(y)\) and finite successor.}
\label{fig:Qjk}
\end{figure}

\begin{lemma}\label{lem:firstarrow}
Suppose that an induced subquiver of a word quiver contains a vertex
\(p\) of color \(j\), every vertex of color \(j\) at or after \(p\), and
no vertex before \(p\). Whenever there is an ordinary arrow between
color \(j\) and another fixed color, the first such arrow has target
of color \(j\). Here ``first'' means minimal in the lexicographic
order on \((\max\{u,w\},\min\{u,w\})\) for the endpoints \(u,w\).
\end{lemma}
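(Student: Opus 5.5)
Let $x_1<\cdots<x_N$ be the vertices of color $j$ in the induced subquiver, so that $x_1=p$ is the first of them. Fix an adjacent color $j'$; for a colour different from $j$ and not adjacent to it there are no ordinary arrows, since $c_{ij'}=0$. Let $y_1<\cdots<y_M$ be the retained vertices of color $j'$. All ordinary arrows between the two colors are described by Lemma~\ref{lem:ordinary-arrows-zigzag}, applied to the $j$-strand beginning at $p$, with $a(y)$ and $b(y)$ counted relative to the $x_s$. That lemma was stated for a word beginning with $j$. I would apply it to the subword consisting of the retained positions. The hypothesis that every $j$-vertex at or after $p$ is retained and nothing before $p$ is retained gives two facts: the $j$-strand of the subword is exactly $x_1,\dots,x_N$, and the arrows of the induced subquiver between colors $j,j'$ are the word-quiver arrows. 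For the second fact one checks that the defining condition $p'<q<p'^+<q^+$ involves successors inside the retained set. For color $j$ this is immediate. For color $j'$, a vertex $y$ whose successor $y^+$ is not retained must be the last retained vertex of its color, and then it behaves as in the case $y^+=+\infty$.

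Each ordinary arrow has one of two forms. The first is $y\to x_{a(y)}$, which occurs when $a(y)<b(y)$. The second is $x_{b(y)}\to y$, which occurs when $a(y)<b(y)$ and $y^+$ is finite. For an arrow of the first form, the endpoints satisfy $\max\{y,x_{a(y)}\}=y$ and $\min=x_{a(y)}$, since $x_{a(y)}<y$. For an arrow of the second form, $\max=x_{b(y)}>y$ and $\min=y$.

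To identify the first arrow, take an arrow of the second form, $x_{b(y)}\to y$. By Lemma~\ref{lem:ordinary-arrows-zigzag} the same vertex $y$ also carries the arrow $y\to x_{a(y)}$. Its key is $(y,x_{a(y)})$, and this is strictly smaller than $(x_{b(y)},y)$ because $y<x_{b(y)}$. Hence every arrow whose target has color $j'$ is beaten in the lexicographic order by an arrow whose target has color $j$. The minimum exists because there are finitely many arrows, so the minimal arrow must be of the first form, and its target $x_{a(y)}$ has color $j$.

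The main obstacle is the reduction in the first step. One must confirm that, for an induced subquiver with the stated truncation, no arrow computed in the full word quiver disappears and none is created. Arrows that are deleted because both endpoints are frozen, under the paper's convention on frozen vertices, only remove arrows. Removing arrows cannot affect the pairing argument as long as the partner arrow $y\to x_{a(y)}$ survives. I would check this by noting that the partner arrow is ordinary whenever the arrow $x_{b(y)}\to y$ is; if needed, I would state the lemma for the quiver with the arrows between frozen vertices retained, as Proposition~\ref{pro:mu1Q} does.
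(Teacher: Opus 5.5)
Your pairing of $x_{b(y)}\to y$ with $y\to x_{a(y)}$, using the smaller key $(y,x_{a(y)})$, is exactly the paper's argument; its $s'$ is your $x_{a(t)}$. The gap is in the reduction you use to produce these arrows.

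The hypotheses constrain only color $j$. So the induced subquiver need not be the word quiver of the retained subword. Your claim that a $j'$-vertex whose successor is not retained must be the last retained vertex of its color is false.

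Take the word $(j,j',j',j,j')$, retain positions $\{1,2,4,5\}$, and let $p=1$; the hypotheses hold. Position $2$ has successor $3$, which is not retained, although $5$ is. In original labels, the quiver of the retained word $(j,j',j,j')$ has the ordinary arrows $2\to1$ and $4\to2$. Neither lies in the induced subquiver, because the full-word successor $2^+=3$ violates both $1<2<1^+<2^+$ and $2<4<2^+$.

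Conversely, take $(j,j',j,j',j)$ with $\{1,2,3,5\}$ retained. The induced subquiver has $3\to2$, since $2<3<2^+=4<3^+=5$. The retained word $(j,j',j,j)$ gives $2$ no successor and hence no such arrow. So the vertex does not "behave as $y^+=+\infty$".

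Thus Lemma~\ref{lem:ordinary-arrows-zigzag}, applied to the subword, can list absent arrows and omit present ones. Your partner arrow is therefore not justified by the argument as written. In the paper's application every color is retained as a terminal interval, and there your identification is valid. The lemma, however, is stated without that assumption.

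The repair is to drop the reduction and check the partner directly with full-word successors, as the paper does. Suppose $x_s\to y$, that is, $y<x_s<y^+<x_s^+$.
\begin{itemize}
\item Since nothing before $p$ is retained, $y>p$.
\item So the last color-$j$ vertex $x_a$ before $y$ satisfies $p\le x_a$, and it is retained by the tail hypothesis.
\item Then $x_a<y<x_a^+\le x_s<y^+$ gives the arrow $y\to x_a$, whose key $(y,x_a)$ is smaller than $(x_s,y)$.
\end{itemize}
No assumption on the other colors is needed.

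For the frozen-arrow convention, the thing to check is not that the partner is ordinary but that it is not suppressed. Here $x_a$ has the retained successor $x_a^+\le x_s$, so it is not a terminal vertex of its color, and the arrow $y\to x_a$ survives.
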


\begin{proof}
Suppose instead that the first arrow is \(s\to t\), with \(s\) of
color \(j\). Then \(p<t<s<t^+<s^+\). Let \(s'\) be the last vertex
of color \(j\) before \(t\). It exists and is retained, because
\(p\leq s'<t\) and the entire indicated color-\(j\) tail is retained.
Moreover,
\[
 s'<t<(s')^+\leq s<t^+.
\]
Thus \(t\to s'\) is an ordinary arrow of the induced subquiver. Its
endpoint pair is smaller than that of \(s\to t\), a contradiction.
\end{proof}

\subsubsection{A mutation sweep}\label{sec:mut1}

\begin{proposition}\label{pro:mu1Q}
For a word \(\gamma\) beginning with \(j\), use the vertices
\(x_1<\cdots<x_N\) above and perform the mutations
\[
 \mu_{x_1},\mu_{x_2},\ldots,\mu_{x_{N-1}}.
\]
Immediately before mutation at \(x_s\), every ordinary arrow incident
with \(x_s\) points into it, and its horizontal arrows point out of it
to \(x_{s-1}\), if \(s>1\), and to \(x_{s+1}\).

After the sweep, delete \(x_N\), freeze \(x_{N-1}\) if \(N>1\), and
suppress arrows between frozen vertices. The map
\begin{equation}\label{eq:muQ0}
 x_s\longmapsto x_{s+1}\quad(1\leq s<N),
 \qquad y\longmapsto y\quad(i_y\neq j)
\end{equation}
identifies the resulting quiver with the word quiver obtained by
removing the first letter of \(\gamma\). Before deletion, the only
possible neighbors of \(x_N\) are \(x_{N-1}\) and the last vertices
of other colors. If \(N=1\), the sweep is empty and the same statement
holds without \(x_{N-1}\).
\end{proposition}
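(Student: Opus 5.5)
We argue by induction on the sweep index \(s\), keeping track of the quiver \(Q^{(s)}:=\mu_{x_{s-1}}\cdots\mu_{x_1}(Q_\gamma)\) restricted to color \(j\) and its neighbors. Arrows between colors not equal to \(j\) and not adjacent to \(j\) are never touched: mutation at \(x_s\) only changes arrows between neighbors of \(x_s\), and we will see that these neighbors are confined to a controlled set. Our induction hypothesis has two parts. First, the claimed orientation holds just before \(\mu_{x_s}\): every ordinary arrow at \(x_s\) points in, and the horizontal arrows go \(x_s\to x_{s-1}\) and \(x_s\to x_{s+1}\). Second, the quiver \(Q^{(s)}\) with the relabeling \(x_t\mapsto x_{t+1}\) for \(t<s-1\) has a prescribed shape. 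Concretely, the part on \(\{x_1,\dots,x_{s-1}\}\) together with the non-\(j\) vertices \(y\) with \(a(y)<s\) looks like the word quiver of \(\gamma\) with its first letter removed. The part on \(\{x_s,\dots,x_N\}\) and the remaining \(y\) is still the original word quiver, described by Lemma~\ref{lem:ordinary-arrows-zigzag}.

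For the base case \(s=1\), we use that \(\gamma\) begins with \(j\), so \(a(y)\ge1\) for every adjacent \(y\). By Lemma~\ref{lem:ordinary-arrows-zigzag}, \(x_1\) can only receive an arrow \(x_1\to y\) when \(b(y)=1\). That requires \(y<x_1\), which is impossible. It can emit \(y\to x_1\) exactly when \(a(y)=1<b(y)\). So all ordinary arrows at \(x_1\) point in, and the horizontal arrow is \(x_1\to x_2\). Mutating at \(x_1\) reverses these arrows. It also creates, for each \(y\) with \(y\to x_1\), a composite arrow \(y\to x_2\). This composite cancels or combines with the old arrow \(x_{b(y)}\to y\) precisely when \(b(y)=2\). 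Otherwise it produces the arrow that the shortened word requires, after the shift \(x_1\mapsto x_2\): in the shortened word, \(a(y)\) drops by one. One then checks from \eqref{eq:jk-recursion} that the new configuration at \(x_2\) again has only incoming ordinary arrows. These come from the vertices \(y\) with \(a(y)\le 2<b(y)\) that were rerouted, together with the original ones having \(a(y)=2\). The horizontal arrows \(x_2\to x_1\) (reversed) and \(x_2\to x_3\) are as claimed. The inductive step repeats this local computation with \(x_s\) in place of \(x_1\). The main point is a bookkeeping identity: after \(\mu_{x_{s-1}}\), each \(y\) with \(a(y)\le s-1<b(y)\) has had its outgoing arrow moved from \(x_{a(y)}\) to \(x_s\). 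The incoming arrow from \(x_{b(y)}\) is untouched until \(s=b(y)\), where the two cancel. This matches \(a'(y)=a(y)-1\) and \(b'(y)=b(y)-1\) in the shortened word under the shift. For the first-arrow orientation claim I would invoke Lemma~\ref{lem:firstarrow} applied to the tail \(x_s,x_{s+1},\dots\) as a sanity check on the direction of the earliest arrow.

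After the sweep, \(x_N\) carries the rerouted arrows from all \(y\) with \(b(y)=N\), meaning \(y^+=+\infty\) or \(y^+\) after \(x_N\). It also carries the horizontal arrow from \(x_{N-1}\). By the cancellation identity, every non-terminal \(y\) has already cancelled its arrow before reaching \(x_N\). The survivors therefore have \(y^+=+\infty\), so they are last vertices of their colors, hence frozen. This gives the claim about the neighbors of \(x_N\). Deleting \(x_N\), freezing \(x_{N-1}\), which is now the last \(j\)-vertex of the shortened word, and suppressing frozen–frozen arrows yields exactly the word quiver of \(\gamma\) minus its first letter under \eqref{eq:muQ0}. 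This also uses the fact that frozen vertices in that word quiver are the last occurrences. The case \(N=1\) is the same check with an empty sweep: \(x_1\) then has only arrows to terminal \(y\).

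The main obstacle is the inductive bookkeeping of the composite arrows created at each step. The delicate point is arrows \(y\to y'\) between two non-\(j\) colors: mutation at \(x_s\) can create one whenever \(y\to x_s\to y'\). I expect these to cancel against existing ordinary arrows \(y'\to y\) or \(y\to y'\) precisely when the shortened word predicts it. I would first verify this in the case of a single adjacent color pair, via the interlacing inequalities \(l<k<l^+<k^+\), before treating the general case. Where both endpoints are frozen, the convention of suppressing such arrows absorbs any discrepancy.
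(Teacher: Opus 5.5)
Your skeleton matches the paper's proof. You induct along the sweep, read the initial incidences from Lemma~\ref{lem:ordinary-arrows-zigzag}, follow the arrow $y\to x_s$ as it propagates, and cancel it against $x_{b(y)}\to y$. Your arguments for the orientation at $x_s$ and for the neighbors of $x_N$ are sound. The gap is in the identification with the shortened word quiver.

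First, the ``main obstacle'' you name does not exist, and the mechanism you propose for it is wrong. By your own first induction claim, immediately before $\mu_{x_s}$ every arrow into $x_s$ comes from another color, and every arrow out of $x_s$ goes to $x_{s-1}$ or $x_{s+1}$. So there is no path $y\to x_s\to y'$ with $y,y'$ of colors other than $j$. There is also no path through $x_s$ joining $x_{s-1}$ and $x_{s+1}$. Hence the sweep never creates or changes any arrow among the other colors, nor any arrow between $x_{s-1}$ and $x_{s+1}$. This is exactly what is needed, because deleting the initial $j$ changes no successor and no interlacing among the other colors; the paper makes precisely this observation. Deferring it to an expected cancellation ``precisely when the shortened word predicts it'', with frozen suppression absorbing ``any discrepancy'', leaves that part unproved.

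Second, your bookkeeping of the arrows between $y$ and color $j$ drops the arrows that actually survive.
\begin{itemize}
\item For $a(y)\le t<b(y)$, mutation at $x_t$ reverses $y\to x_t$ to $x_t\to y$. It also creates $y\to x_{t+1}$ and, if $t>1$, $y\to x_{t-1}$.
\item The arrow $y\to x_{a-1}$ created at $t=a$ is permanent.
\item For $t>a$, the created $y\to x_{t-1}$ cancels the reversed arrow left by the previous step.
\item At $t=b-1$, the created $y\to x_b$ cancels $x_b\to y$, and the reversed $x_{b-1}\to y$ survives.
\end{itemize}
The net incidence vector is $-e_{a-1}+e_{b-1}$, or $-e_{a-1}+e_{N-1}-e_N$ for terminal $y$. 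This is what matches $a'=a-1$, $b'=b-1$ under \eqref{eq:muQ0}, after deletion and suppression. If you track only the moving arrow and $x_b\to y$, as your identity does, they cancel each other and leave nothing. So your claimed match is asserted, not derived.

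Your base-case sentence is also incorrect: for $b(y)>2$, the composite $y\to x_2$ is not an arrow of the shortened word. It is the transient arrow that later cancels. Similarly, the second clause of your induction hypothesis fails at intermediate stages because of the transient arrow $x_{s-1}\to y$.

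Two minor points. For $N=1$, the vertex $x_1$ has no ordinary arrows at all, since $a(y)=b(y)=1$ for every adjacent $y$. Lemma~\ref{lem:firstarrow} plays no role in this proposition, and it does not apply to the intermediate mutated quivers.
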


\begin{proof}
The horizontal chain has, immediately before mutation at \(x_s\),
the arrows \(x_s\to x_{s-1}\) and \(x_s\to x_{s+1}\), with the
first omitted for \(s=1\). Reversing them establishes the same
assertion for the next step. We check the ordinary arrows at the
same time, using Lemma~\ref{lem:ordinary-arrows-zigzag}.

Consider first \(c(y)=-e_a+e_b\), where \(a<b\). Mutations before
\(x_a\) do not affect this vector. At \(x_a\), the incoming arrow
\(y\to x_a\) creates \(y\to x_{a-1}\), if \(a>1\), and
\(y\to x_{a+1}\), while the arrow at \(x_a\) is reversed. The
negative coefficient then propagates through \(x_{a+1},\ldots,x_{b-1}\).
At each step the arrow created toward the preceding vertex cancels
the positive coefficient left by the preceding mutation. At \(x_b\)
the propagated negative coefficient cancels its original positive
coefficient. Thus the final vector is
\[
 -e_{a-1}+e_{b-1}.
\]
This also covers \(b=N\), since the cancellation at \(x_N\) occurs
when \(x_{N-1}\) is mutated; mutation at \(x_N\) is unnecessary.

If \(c(y)=-e_a\), then \(y\) is the last vertex of its color and
\(a<N\). The same propagation continues to the end of the chain,
giving
\begin{equation}\label{eq_frozen_arrows}
 c_{\rm final}(y)=-e_{a-1}+e_{N-1}-e_N.
\end{equation}
The last two terms encode the boundary arrows
\(x_{N-1}\to y\to x_N\). Deleting \(x_N\) removes the latter,
and suppressing frozen--frozen arrows removes the former.
A zero initial vector remains zero.

These computations show, at every mutation, that all ordinary
arrows at the mutated vertex are incoming. Consequently no oriented
length-two path through that vertex joins two vertices whose colors
differ from \(j\). Thus no arrows among the other colors are created
or changed. They also show that no arrow between two horizontal
neighbors is created: both horizontal arrows at the mutated vertex
are outgoing. The horizontal chain after deletion has its original
orientation, with its labels shifted by one.

After removing the boundary terms in \eqref{eq_frozen_arrows}, the
ordinary incidence vectors are exactly those obtained by deleting
\(x_1\) from the word and replacing \(x_s\) by \(x_{s+1}\). This
proves the quiver identification and the assertion about the neighbors
of \(x_N\). For \(N=1\), there are no ordinary arrows incident with
\(x_1\): it is frozen and precedes all vertices of other colors.
Deleting it therefore gives the stated result directly.
\end{proof}

\subsection{Lusztig parameters of cluster variables}

Write \(m=\ell(v)\), and retain the word \(\dot\beta^M\) from the
preceding subsection. For a global-basis element, or a power of
\(q^{1/2}\) times such an element, define its selected parameter by
\begin{equation}\label{eq:selected-parameter}
 \nu(x):=\pi_m\bigl(\mathbf a^{\dot\beta^M}(x)\bigr)
 =\bigl(a_1^{\dot\beta^M}(x),\ldots,a_m^{\dot\beta^M}(x)\bigr).
\end{equation}
All parameter identities below concern this truncated vector. In
particular, a variable with \(\nu(x)=0\) need not have zero full
Lusztig parameter. Write \(E_1,\ldots,E_m\) for the standard basis
of \(\mathbb Z^m\).

\begin{lemma}\label{lem:selected-exchange}
The selected parameter is additive on quantum cluster monomials.
For an exchange relation in the full ambient seed,
\[
 x'x=q^A M_+ +q^B M_-,
\]
where \(M_+,M_-\) are quantum cluster monomials and the scalar
powers depend on the normalization, one has
\begin{equation}\label{eq:selected-exchange}
 \nu(x')+\nu(x)
 =\max_{\rm lex}\{\nu(M_+),\nu(M_-)\}.
\end{equation}
\end{lemma}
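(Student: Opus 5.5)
The plan is to deduce both assertions from Lemma~\ref{lem:twoprod} and Lemma~\ref{lem:exchange-parameters}, applied in the ambient algebra $\widehat{\mathcal A}[0,M]=\widehat{\mathcal A}(\Delta^{M+1})$ with the word $\omega=\dot\beta^M$. Since $\widehat{\mathcal A}(b)\subseteq\widehat{\mathcal A}[0,M]$ and the global basis of $\widehat{\mathcal A}(b)$ is a subset of the global basis of $\widehat{\mathcal A}[0,M]$, the full parameter $\mathbf a^{\dot\beta^M}$ is defined for every element of the form $q^{s/2}G$. By the theorem quoted from \cite{kashiwara2025monoidal,bi2025cluster}, every normalized quantum cluster monomial with nonnegative frozen exponents in a seed mutation-equivalent to $\mathbf s(\beta)$ is a normalized global basis element. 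Hence all quantities in the statement, namely $x$, $x'$, $M_\pm$ and products of cluster variables from a common cluster, are powers of $q^{1/2}$ times global basis elements, so $\nu$ is defined on them.

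For additivity, let $x_1,x_2$ be compatible quantum cluster monomials, meaning monomials in one common cluster. Their normalized product $X^{\mathbf a+\mathbf b}$ is again a cluster monomial, so it equals $q^{s/2}$ times a single global basis element. On the other hand, Lemma~\ref{lem:twoprod} applied in $\omega=\dot\beta^M$ says that the product expands with unique $\prec$-maximal parameter $\mathbf a^\omega(x_1)+\mathbf a^\omega(x_2)$. Because the product is a single basis element up to a scalar, its parameter is exactly this sum. Projecting by $\pi_m$ then gives $\nu(x_1x_2)=\nu(x_1)+\nu(x_2)$. Induction on the number of factors extends this to arbitrary cluster monomials.

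For the exchange identity, we apply Lemma~\ref{lem:exchange-parameters} with $X=x$, $Y=x'$ and $N_\pm=M_\pm$, after commuting the product into the order $xx'$. Reordering only changes the scalar by a power of $q$, because $x$ and $x'$ quasi-commute up to the exchange relation; alternatively one can use the symmetric form of the exchange relation. This yields $\nu(x')=\max_{\rm lex}\{\nu(M_+),\nu(M_-)\}-\nu(x)$, which is exactly \eqref{eq:selected-exchange}.

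The main obstacle I expect is justifying that $M_\pm$ are global basis elements up to scalar, even though they may contain frozen variables and, after the freezing in Definition~\ref{def:svbeta}, belong to the full ambient seed of the sequence $\widetilde{\mathbf s}_l$. To handle this, I would note that freezing does not change the variables or the exchange relations at the remaining mutable vertices. Thus $\widetilde{\mathbf s}_l$ is simply a seed mutation-equivalent to $\mathbf s(\beta)$ with some mutations disallowed, and all exponents in $M_\pm$ are nonnegative, so the cited theorem applies. A second point to check is the one in the proof of Lemma~\ref{lem:exchange-parameters}: if $M_+$ and $M_-$ were proportional to the same basis element, the two coefficients $q^A$ and $q^B$ could not cancel. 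This is covered there, and the projection to the first $m$ coordinates is compatible with the comparability of the two full parameters.
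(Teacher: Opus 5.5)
Your proposal is correct and is essentially the paper's proof, which applies Lemma~\ref{lem:exchange-parameters} in the word \(\dot\beta^M\), projects to the first \(m\) coordinates, and gets additivity from Lemma~\ref{lem:twoprod}; your checks that \(x\), \(x'\) and \(M_\pm\) are scalar multiples of global basis elements spell out what the paper leaves implicit. One small correction: no reordering into \(xx'\) is needed, since you may take \(X=x'\) and \(Y=x\) directly. Also, your stated reason for the reordering is false, because an exchange pair \(x,x'\) does not quasi-commute; what is true is only that \(xx'\) also has a two-term form \(q^{A'}M_++q^{B'}M_-\).
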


\begin{proof}
Apply Lemma~\ref{lem:exchange-parameters}, with the ambient word
\(\dot\beta^M\), and project to the first \(m\) coordinates.
Additivity on quantum cluster monomials follows from
Lemma~\ref{lem:twoprod}.
\end{proof}

\begin{definition}\label{def:Ql}
Let \(\Sigma_l:=\widetilde{\mathbf s}_l\) be the full intermediate
seed from Definition~\ref{def:svbeta}, and write \(D^l_{(j,k)}\) for
its variable at the indicated vertex. The full seed retains all
vertices throughout the mutation process. Let \(H_l\) be the set of
top vertices designated for deletion during the first \(l\) stages,
as in that definition, and set
\[
 R_l:=K\setminus H_l.
\]
Thus \(R_l\) is a set of vertices on which we will assert parameter
formulas; no seed deletion is implicit in this notation.

Extend the conventions by \(\alpha(j,0)=0\),
\(0(j)^\oplus=\min\{s\in[m]:i_{p_s}=j\}\), and
\(d_{+\infty}=+\infty\), with the minimum of an empty set taken
to be \(+\infty\). Put
\begin{align*}
 L_j(l)&:=d_{l(j)^\oplus}-\alpha(j,l),&
 U_j(l)&:=n_j-\alpha(j,l),\\
 V_l&:=\{(j,k):L_j(l)\leq k\leq U_j(l)\},&
 F_l&:=\{(j,U_j(l))\in V_l\}.
\end{align*}
An interval with lower endpoint \(+\infty\), or with lower endpoint
larger than its upper endpoint, is empty. The auxiliary quiver
\(Q^l\) is the induced subquiver of the quiver of \(\Sigma_l\) on
\(V_l\), with frozen set \(F_l\) and with frozen--frozen arrows
suppressed. Finally, write \(Z_l:=R_l\setminus V_l\).
\end{definition}

The sets \(V_l\) are nested. Indeed, to compare \(V_{l-1}\) and
\(V_l\), set
\[
i=i_{p_l},
\qquad
\alpha=\alpha(i,l-1).
\]
The color-\(i\) interval changes from
\[
[d_l-\alpha,\;n_h-\alpha]
\quad\text{to}\quad
[d_{l^\oplus}-\alpha-1,\;n_h-\alpha-1].
\]
If \(l^\oplus<+\infty\), then \(d_{l^\oplus}\geq d_l+1\), so the
lower endpoint does not decrease, while the upper endpoint decreases
by one. If \(l^\oplus=+\infty\), the new interval is empty. The
intervals of all other colors remain unchanged. Hence
\[
V_l\subseteq V_{l-1}.
\]

For \(l<m\), put \(h=i_{p_{l+1}}\). Since
\[
l(h)^\oplus=l+1,
\qquad
\alpha(h,l)=a_{l+1}-1,
\]
the color-\(h\) vertices of \(V_l\setminus F_l\) are precisely
\[
(h,b_{l+1}+1),\ldots,(h,n_h-a_{l+1}),
\]
namely the mutation vertices of \(\widetilde{\mu}_{l+1}\).

Consequently, vertices outside \(V_l\) are never mutated later. To
see the same for \(F_l\), fix
\[
x=(j,U_j(l))\in F_l
\]
and let \(s=l(j)^\oplus\). If \(s=+\infty\), no later mutation
sequence has color \(j\). Otherwise, \(x\) remains the terminal
\(j\)-vertex until stage \(s\), so it is excluded from
\(\widetilde{\mu}_s\); after that stage, the color-\(j\) upper
endpoint decreases by one, and hence \(x\notin V_s\). Thus no vertex
of \(F_l\) is mutated at a later stage.

For comparison with the original word, define
\begin{equation}\label{eq:support-image}
 S_l:=\{(j,t):d_{l(j)^\oplus}\leq t\leq n_j\},
 \qquad
 \Phi_l(j,k):=(j,k+\alpha(j,l)).
\end{equation}
Thus \(\Phi_l\) is a bijection from \(V_l\) to \(S_l\). Word order
on \(Q^l\) will mean the order transported by \(\Phi_l\). The set
\(S_l\) consists of a terminal interval for each color. The quiver
induced by \(S_l\) in \(Q_\beta\) is the word quiver of the word
obtained by retaining just these occurrences: deleting an initial
segment of a color does not change the successor of any retained
occurrence. If \(l<m\), the first letter of this retained word is
\(i_{p_{l+1}}\). Consequently it is a full subquiver of
\(Q_{\geq p_{l+1}}\), and it contains the entire color-
\(i_{p_{l+1}}\) tail beginning at \(p_{l+1}\).

\begin{theorem}\label{thm:induction}
For every \(0\leq l\leq m\), the following assertions hold.
\begin{enumerate}
\item For every \((j,k)\in R_l\) and \(s\in[m]\),
\begin{equation}\label{eq:blfacasel}
 a_s^{\dot\beta^M}\bigl(D^l_{(j,k)}\bigr)
 =
 \begin{cases}
 1,&p_s=(j,t),\quad d_{l(j)^\oplus}\leq t\leq k+\alpha(j,l),\\
 0,&\text{otherwise}.
 \end{cases}
\end{equation}
In particular, \(Z_l\) is precisely the set of vertices in \(R_l\)
whose variables have zero selected parameter.
\item The map \(\Phi_l\) identifies \(Q^l\) with the full subquiver
of \(Q_\beta\) on \(S_l\). For \(l<m\), it therefore gives a full
embedding
\[
 \Phi_l:Q^l\hookrightarrow Q_{\geq p_{l+1}}.
\]
Whenever there is an ordinary arrow between color \(i_{p_{l+1}}\)
and another color, the first such arrow in \(Q^l\) has target of
color \(i_{p_{l+1}}\).
\item In the full quiver of \(\Sigma_l\), no vertex of \(H_l\) is
adjacent to a vertex of \(V_l\setminus F_l\).
\item One has \(Q^m=\varnothing\), and every variable of the final
seed \(\mathbf s(v,\beta)\) has zero selected parameter.
\end{enumerate}
\end{theorem}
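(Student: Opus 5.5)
We argue by induction on \(l\), proving (1)--(3) together and deducing (4) at \(l=m\). For \(l=0\), Proposition~\ref{pro:bfadotbeta} gives \(a_s^{\dot\beta^M}(D_k^\beta)=a_{p_s}^\beta(D_k^\beta)\). Moreover \(\mathbf a^\beta(D_k^\beta)=\beta\{1,k]\) is the indicator of the occurrences of \(i_k\) up to \(k\). So for \(k\leftrightarrow(j,k')\) the selected coordinate at \(s\) equals \(1\) exactly when \(p_s=(j,t)\) with \(t\le k'\). Since \(\alpha(j,0)=0\) and \(d_{0(j)^\oplus}\) is the first selected occurrence of \(j\), this is \eqref{eq:blfacasel}. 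For (2), \(\Phi_0\) is the identity on \(V_0=S_0\), and \(S_0\) retains terminal intervals of each color. Hence \(Q^0\) is the word quiver of the retained word, whose first letter is \(i_{p_1}\); the arrow assertion then follows from Lemma~\ref{lem:firstarrow}. Assertion (3) is vacuous since \(H_0=\varnothing\).

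For the inductive step \(l-1\to l\), put \(i=i_{p_l}\). By the remark after Definition~\ref{def:Ql}, the mutations of \(\widetilde\mu_l\) are exactly the non-frozen color-\(i\) vertices \(x_1<\dots<x_{N-1}\) of \(V_{l-1}\), swept in word order; \(x_N\) is the new member of \(H_l\). Via \(\Phi_{l-1}\) and (2), \(Q^{l-1}\) is a word quiver beginning with \(i\), so Proposition~\ref{pro:mu1Q} applies. It shows that before each mutation at \(x_s\) every ordinary arrow points in and both horizontal arrows point out. After the sweep, deleting \(x_N\) and relabeling \(x_s\mapsto x_{s+1}\) yields the word quiver with the first letter removed, which is \(S_l\) under \(\Phi_l\). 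This gives (2) for \(l\), with Lemma~\ref{lem:firstarrow} again giving the first-arrow claim. Proposition~\ref{pro:mu1Q} also says the neighbours of \(x_N\) are \(x_{N-1}\) and last vertices of other colours, which lie in \(F_l\). Combined with (3) at \(l-1\) and the fact that mutations only touch \(V_{l-1}\setminus F_{l-1}\), this gives (3).

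The main obstacle is assertion (1) for the mutated variables, because the exchange relations in the full seed \(\Sigma\) involve neighbours outside \(Q^{l-1}\). We apply Lemma~\ref{lem:selected-exchange} at each \(x_s\) in turn. A neighbour outside \(V_{l-1}\), in \(Z_{l-1}\), has \(\nu=0\) by induction and so contributes nothing to either monomial. By (3), \(H_{l-1}\) contributes no neighbours at all. Hence only the auxiliary quiver matters. There the incoming monomial is the product of \(x_{s-1}\) (already mutated) and \(x_{s+1}\); the outgoing monomial is the product of the ordinary in-neighbours, with the propagated arrows recorded in the proof of Proposition~\ref{pro:mu1Q}.

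We then compute \(\nu\) from \eqref{eq:blfacasel}. The horizontal monomial contains the selected coordinate \(E_l\) at position \(p_l=(i,d_l)\), because \(x_{s+1}\) lies beyond \(d_l-\alpha\) in transported order. The ordinary in-neighbours cannot contain \(E_l\): they have other colours, so their supports avoid \(p_l\). Since all selected coordinates before \(l\) already vanish on these vertices, as the lower endpoints \(d_{l(j)^\oplus}\) exceed them, the horizontal monomial is lex-larger. Subtracting \(\nu(x_s)\) gives \(\nu(x_s')=\nu(x_{s+1})\) with the \(i\)-count shifted. This is \eqref{eq:blfacasel} with \(\alpha(i,l)=\alpha(i,l-1)+1\), and with the new lower endpoint \(d_{l^\oplus}\), because the indicator at \(p_l\) has been removed. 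Variables not mutated in stage \(l\) keep their values, and the formula for other colours is unchanged. The delicate point is checking that the lex comparison always favours the horizontal term, including at the boundary steps \(s=1\) and \(s=N-1\); this is where the first-arrow property of (2) is used.

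Finally, for (4): at \(l=m\) every \(l(j)^\oplus=+\infty\), so \(V_m=\varnothing\) and \(Q^m=\varnothing\). Then \eqref{eq:blfacasel} gives zero selected parameter on all of \(R_m=J\), which is the vertex set of \(\mathbf s(v,\beta)\).
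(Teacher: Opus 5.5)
Your proof follows the paper's proof. It uses the same simultaneous induction, the same reduction of each stage to the sweep of Proposition~\ref{pro:mu1Q} on $Q^{l-1}$, and the same observation that factors at $Z_{l-1}$ have $\nu=0$ while $H_{l-1}$ contributes none. The lexicographic comparison is also the same, decided by the $l$-th coordinate, and gives $\nu(D^l_{(i,k)})=\nu(D^{l-1}_{(i,k+1)})-E_l$.

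\textbf{One step of assertion~(2) is false as written.} After the sweep, deleting $x_N$ and shifting labels gives the word quiver on $S_{l-1}\setminus\{p_l\}$, not on $S_l$. The color-$i$ part of $S_l$ begins at occurrence $d_{l^\oplus}$, and is empty if $l^\oplus=+\infty$. So the occurrences of $i$ strictly between $p_l$ and the next selected occurrence of $i$ are still present.

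Take Example~\ref{exm:words} with $l=2$. Position $4$, the occurrence $(2,2)$, survives first-letter removal but is not in $S_2$. The word you would obtain is $(2,3,1,3,2)$, which begins with $2$ rather than with $i_{p_3}=3$. With your identification, Proposition~\ref{pro:mu1Q} could not be applied at the next stage.

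The repair is the step the paper includes. $Q^l$ is by definition the induced subquiver on $V_l$. This set is obtained from the post-sweep vertex set by deleting a further initial segment of color $i$; these vertices have zero selected parameter by the formula just proved. Deleting an initial segment of one color from a word quiver gives the word quiver of the shortened word, because successors of retained occurrences are unchanged. Only after this restriction does the retained word begin with $i_{p_{l+1}}$.

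\textbf{Two smaller points.}

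First, the first-arrow property plays no role in the lexicographic comparison at $s=1$ or $s=N-1$. Your own argument is already uniform in $s$:
\begin{itemize}
\item the first $l-1$ selected coordinates vanish on every variable involved;
\item the horizontal monomial has $l$-th coordinate $1$, coming from the unmutated $x_{s+1}$;
\item the ordinary neighbours have $l$-th coordinate $0$.
\end{itemize}
What you actually need from (2) is only that $Q^{l-1}$ is a word quiver beginning with $i$, so that Proposition~\ref{pro:mu1Q} applies.

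Second, the unmutated color-$i$ vertices of $Z_{l-1}$ do not simply keep their formula, since both $\alpha(i,\cdot)$ and the threshold change at stage $l$. You should check that \eqref{eq:blfacasel} still gives zero for them. It does: if $k<d_l-\alpha(i,l-1)$, then $k+\alpha(i,l)\le d_l<d_{l^\oplus}$.
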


\begin{proof}
If \(m=0\), then \(v=e\), the vector \(\nu\) is empty, and all the
sets \(V_l\) and \(S_l\) under consideration are empty. The assertions
are immediate. Assume henceforth that \(m>0\).

We prove the first three assertions simultaneously. At \(l=0\),
Proposition~\ref{pro:bfadotbeta} and the definition of \(D_k^\beta\)
give
\[
 \nu(D^0_{(j,k)})
 =\sum_{\substack{s\in[m]:\ i_{p_s}=j\\d_s\leq k}}E_{s}.
\]
This is \eqref{eq:blfacasel}. Hence its nonzero support is exactly
\(V_0\), and the quiver identification is the identity on \(S_0\).
In particular, the initial comparison is the full embedding
\begin{equation}\label{eq:Q0}
 Q^0\hookrightarrow Q_{\geq p_1},
 \qquad (Q^0)_0=S_0;
\end{equation}
it need not be an equality of the two quivers. The third assertion is
vacuous because \(H_0=\varnothing\).

Suppose that the assertions hold at \(l-1\), and set
\[
 j=i_{p_l},\qquad
 \alpha=\alpha(j,l-1),\qquad
 L=d_l-\alpha=b_l+1,\qquad U=n_j-\alpha.
\]
The color-\(j\) vertices of \(V_{l-1}\) are
\((j,L),\ldots,(j,U)\), and the mutation sequence is
\[
 \mu_{(j,L)},\mu_{(j,L+1)},\ldots,\mu_{(j,U-1)}.
\]
It is empty when \(L=U\). By the quiver identification at \(l-1\),
Proposition~\ref{pro:mu1Q} applies to this sequence on \(Q^{l-1}\).
Restriction to an induced subquiver commutes with mutation at a
retained vertex, since the formula for an entry between two retained
vertices uses only their entries with the mutated vertex. Thus this
calculation gives the induced quiver of the full mutation process on
\(V_{l-1}\).

By the third induction assertion, no vertex of \(H_{l-1}\) is
adjacent to any vertex being mutated. These incidences remain zero
throughout the sweep: mutating at a vertex with zero incidence to
\(H_{l-1}\) cannot create an incidence with \(H_{l-1}\).
Every variable attached to \(Z_{l-1}\) has zero selected parameter
and is unchanged, since these vertices are never mutated.

We now compute the selected parameters without discarding any
factors from the full exchange relation. Suppose first that \(L<U\).
Immediately before mutation at \((j,k)\), where \(L\leq k<U\),
Proposition~\ref{pro:mu1Q} separates the retained neighbors into
outgoing horizontal neighbors and incoming ordinary neighbors.
Consequently the full exchange relation can be written as
\begin{equation}\label{eq:induction-exchange-general}
 D^{\rm new}_{(j,k)}D^{\rm old}_{(j,k)}
 =q^{A_k}U_{+,k}H_k+q^{B_k}U_{-,k}O_k.
\end{equation}
Here \(H_k\) is the quantum monomial in the retained horizontal
neighbors, \(O_k\) is the quantum monomial in the retained ordinary
neighbors, and \(U_{\pm,k}\) collect the factors at vertices of
\(Z_{l-1}\). Products are taken in a fixed order and scalar powers
are absorbed into \(A_k,B_k\in\tfrac12\mathbb Z\). There are no
factors from \(H_{l-1}\), by the preceding paragraph, and
\[
 \nu(U_{+,k})=\nu(U_{-,k})=0.
\]
In particular, at the first mutation the horizontal monomial is
\begin{equation}\label{eq:induction-exchange-first}
 H_L=D^{l-1}_{(j,L+1)}.
\end{equation}
A predecessor outside \(V_{l-1}\), if present in the full seed, belongs
to one of the factors \(U_{\pm,L}\); it is not inserted into
\(H_L\). For the later mutations, additivity gives
\[
 \nu(H_k)=
 \nu(D^{l-1}_{(j,k+1)})+\nu(D^l_{(j,k-1)})
 \qquad(k>L).
\]

The induction hypothesis gives
\begin{equation}\label{eq:Dl-1=0}
 a_s^{\dot\beta^M}(D^{l-1}_u)=0
 \quad(s<l,\ u\in R_{l-1}).
\end{equation}
Every ordinary neighbor has color different from \(j\), so the
\(l\)-th coordinate of \(\nu(O_k)\) is also zero. The old variables
\(D^{l-1}_{(j,k)}\) and \(D^{l-1}_{(j,k+1)}\) have \(l\)-th
coordinate one. At the first mutation, \(\nu(H_L)\) therefore has
its first \(l-1\) coordinates zero and its \(l\)-th coordinate one.
Lemma~\ref{lem:selected-exchange} selects this monomial and gives
\begin{equation}\label{eq:induction-recurrence}
 \nu(D^l_{(j,L)})
 =\nu(D^{l-1}_{(j,L+1)})-\nu(D^{l-1}_{(j,L)})
 =\nu(D^{l-1}_{(j,L+1)})-E_l.
\end{equation}
The last equality uses \(L+\alpha=d_l\), so that
\(\nu(D^{l-1}_{(j,L)})=E_l\).

Inductively, suppose that the preceding new variable has selected
parameter \(\nu(D^{l-1}_{(j,k)})-E_l\). Its first \(l\)
coordinates vanish. Thus \(\nu(H_k)\) again has first \(l-1\)
coordinates zero and \(l\)-th coordinate one, whereas
\(\nu(O_k)=0\) in its first \(l\) coordinates. Applying
Lemma~\ref{lem:selected-exchange} yields
\begin{align}
 \nu(D^l_{(j,k)})
 &=\nu(D^{l-1}_{(j,k+1)})+\nu(D^l_{(j,k-1)})
       -\nu(D^{l-1}_{(j,k)})\notag\\
 &=\nu(D^{l-1}_{(j,k+1)})-E_l.
 \label{eq:induction-recurrence-general}
\end{align}
This proves the recurrence through the entire sweep. The right-hand
side is precisely the sum of those \(E_s\) with
\(s>l\), \(i_{p_s}=j\), and \(d_s\leq k+\alpha+1\), which is
\eqref{eq:blfacasel} at stage \(l\).

The variables of all other colors are unchanged and their thresholds
are unchanged. The old vertices in \(Z_{l-1}\) remain zero, and their
thresholds can only increase. The sole new member of \(H_l\) is
\((j,U)\), so no assertion about its selected parameter is needed.
This proves the first assertion for every vertex of \(R_l\).
If \(L=U\), the color-\(j\) part has just this one vertex, which is
moved into \(H_l\); there is no surviving color-\(j\) variable to
compute, and the same argument for the other vertices proves the
assertion in the empty-sweep case.

For the second assertion, first apply the quiver identification in
Proposition~\ref{pro:mu1Q}. Removing \((j,U)\) and shifting the
color-\(j\) labels by one identifies the induced quiver with the word
quiver obtained by deleting its first letter. The additional vertices
outside \(V_l\) are exactly those below the next selected occurrence
of color \(j\); by the parameter formula just proved their selected
parameters are zero. Deleting these vertices amounts to deleting a
further initial segment of that color. All other color thresholds
are unchanged. The resulting word is therefore exactly the word
retained by \(S_l\), and the resulting map is \(\Phi_l\).
Terminal occurrences of retained colors are preserved, so the stated
frozen sets agree. This proves the full quiver identification.
For \(l<m\), the retained color-\(i_{p_{l+1}}\) tail is complete;
Lemma~\ref{lem:firstarrow}, with its explicit retained-tail hypothesis,
proves the assertion about the first ordinary arrow.

It remains to check the third assertion. Old vertices of \(H_{l-1}\)
remain nonadjacent to all old nonterminal vertices of \(V_{l-1}\),
as already observed. The set \(V_l\setminus F_l\) is contained in
that set of old nonterminal vertices. The new vertex \((j,U)\) has,
within the retained quiver, only the new terminal vertex \((j,U-1)\)
and old terminal vertices of other colors as possible neighbors,
by Proposition~\ref{pro:mu1Q}. Deleting zero-parameter vertices does
not add any arrows. Thus \((j,U)\) is also nonadjacent to
\(V_l\setminus F_l\). This completes the simultaneous induction.

Finally, for \(l=m\), every next selected occurrence is \(+\infty\).
Thus \(V_m=S_m=\varnothing\), giving \(Q^m=\varnothing\), and
\eqref{eq:blfacasel} gives \(\nu(D^m_u)=0\) for every \(u\in R_m\).
The vertices of the final seed \(\mathbf s(v,\beta)\) are precisely
\(R_m\), and its variables are the corresponding variables of the
full intermediate seed. This proves the final assertion.
\end{proof}

\begin{remark}
The recurrence \eqref{eq:induction-recurrence-general} removes the
contribution \(E_l\) of the current selected letter and shifts the
upper color-occurrence bound by one. It concerns only the first
\(m\) coordinates in the fixed \(\dot\beta^M\)-parameterization.
Neither the full \(\dot\beta^M\)-parameter nor the parameter for the
original word \(\beta\) is asserted to obey this recurrence.
\end{remark}

\begin{example}\label{ex:rank-two-truncated-mutation}
Consider type $A_2$, with $\beta=(1,2,1,1)$ and $v=s_2s_1$.
The selected positions are $p_1=2$ and $p_2=3$; hence
$\widetilde\mu_1=\mathrm{Id}$ and $\widetilde\mu_2=\mu_3$ in positional
labels. Proposition~\ref{pro:bfadotbeta} gives
\[
\begin{array}{c|cccc}
 k&1&2&3&4\\\hline
 \nu(D_k^\beta)&(0,0)&(1,0)&(0,1)&(0,1).
\end{array}
\]
Write $x_k$ for the commutative specialization of $D_k^\beta$.
The full quiver has the arrows $1\to3\to4$ and $2\to1$.
Consequently, its exchange relation at vertex $3$ is
\[
 x_3'x_3=x_1+x_4.
\]
The variable $x_1$ is absent from the auxiliary support quiver but
remains in this full exchange relation. Applying the selected-parameter
formula to its quantum lift yields
\[
 \nu(D_3')=\max_{\rm lex}\{(0,0),(0,1)\}-(0,1)=(0,0).
\]
The designated deletion set is $H_2=\{2,4\}$. The surviving vertices
$1,3$ are both frozen, and therefore
\[
 \cA_0(\mathbf s(v,\beta))=\mathbb Z[x_1,x_3'],
 \qquad
 \cA_0(\mathbf s(v,\beta))_{\rm loc}
 =\mathbb Z[x_1^{\pm1},(x_3')^{\pm1}].
\]
Theorem~\ref{thm:categorification} realizes every monomial
$x_1^a(x_3')^b$, with $a,b\geq0$, by a real simple object in
$\mathscr C_{v,\beta}$.
\end{example}

\begin{example}\label{ex:positive-rank-truncated-mutation}
Consider type $A_2$, with $\beta=(1,2,1,2)$ and $v=s_1$.
Use positional labels and abbreviate $D_k=D_k^\beta$.
Here $p_1=1$, $\widetilde\mu_1=\mu_1$, and $H_1=\{3\}$.
Identifying the one-coordinate selected parameter with its entry,
Proposition~\ref{pro:bfadotbeta} gives
\[
 \bigl(\nu(D_1),\nu(D_2),\nu(D_3),\nu(D_4)\bigr)=(1,0,1,0).
\]
The initial frozen vertices are $3,4$, and the full initial quiver has
the arrows
\[
 1\longrightarrow3\longrightarrow2\longrightarrow1,
 \qquad 2\longrightarrow4.
\]
Thus, writing $D_1'=\mu_1(D_1)$, the full quantum exchange relation is
\[
 D_1'D_1=q^A D_2+q^B D_3,
 \qquad A,B\in\tfrac12\mathbb Z.
\]
In particular, the factor $D_2$, whose selected parameter is zero,
remains in the exchange relation. Lemma~\ref{lem:selected-exchange}
gives
\[
 \nu(D_1')=\max\{0,1\}-1=0.
\]
Mutation at $1$ creates the arrow $2\to3$, which cancels $3\to2$,
and reverses the two arrows incident with $1$. Consequently the full
quiver after the sweep is
\[
 3\longrightarrow1\longrightarrow2\longrightarrow4.
\]
Delete $3$ and freeze its sole remaining neighbor $1$; vertex $4$
remains frozen. Hence
\[
 J=\{1,2,4\},\qquad J_{\rm fr}=\{1,4\},\qquad J_{\rm ex}=\{2\}.
\]
The final seed has exchangeable rank one, with quiver
\begin{center}
\begin{tikzpicture}[>=Stealth, every node/.style={font=\small}]
\node[draw,rectangle] (one) at (0,0) {$1$};
\node[draw,circle] (two) at (1.5,0) {$2$};
\node[draw,rectangle] (four) at (3,0) {$4$};
\draw[->] (one)--(two);
\draw[->] (two)--(four);
\end{tikzpicture}
\end{center}
Here boxes indicate frozen vertices, and the attached variables are
$D_1',D_2,D_4$, all with zero selected parameter. Mutating at the
retained exchangeable vertex $2$ gives
\[
 D_2'D_2=q^{A'}D_1'+q^{B'}D_4,
 \qquad A',B'\in\tfrac12\mathbb Z.
\]
This is also the exchange relation in the full ambient seed, since
the deleted vertex $3$ is not adjacent to $2$. Its commutative
specialization is $x_2'x_2=x_1'+x_4$, where $x$ denotes specialization
of the corresponding $D$. Applying Lemma~\ref{lem:selected-exchange}
once more yields
\[
 \nu(D_2')=\max\{\nu(D_1'),\nu(D_4)\}-\nu(D_2)
 =\max\{0,0\}-0=0.
\]
\end{example}

\section{Categorification of twisted products of flag varieties}
\label{sec:category}

Throughout this section, the Cartan matrix is of type \(ADE\).
We use the integral cluster algebra \(\cA_0\) of
Section~\ref{sec:pre}; in particular, frozen variables are not inverted.

\subsection{Hernandez--Leclerc's category and complete duality data}

Let \(\fg\) be the simple Lie algebra associated with \(C\). Fix
\(\zeta\in\CC^*\) which is not a root of unity, and consider the
quantum affine algebra \(U_{\zeta}'(\widehat{\fg})\).
Its parameter \(\zeta\) is distinct from the formal bosonic and cluster
parameters \(q\) and \(t\). For \((i,a)\in I\times\CC^*\), write
\(V_i(a)\) for the fundamental module. Fix a height function
\(\xi:I\to\ZZ\) with \(|\xi(i)-\xi(j)|=1\) whenever \(c_{ij}=-1\),
and set
\[
\Delta^\xi=\{(i,p)\in I\times\ZZ\mid p-\xi(i)\in2\ZZ\}.
\]
The Hernandez--Leclerc category \(\mathscr C^{\ZZ}\) is the full
subcategory of finite-dimensional type~\(1\)
\(U_{\zeta}'(\widehat{\fg})\)-modules generated by
\[
\{V_i(\zeta^p)\mid(i,p)\in\Delta^\xi\}\cup\{\mathbf1\}
\]
under tensor products, subquotients, and extensions.

Write \(\mathcal D\) and \(\mathcal D^{-1}\) for the right and left
dual functors. For simple modules \(M,N\), put
\(M\triangledown N:=\operatorname{hd}(M\otimes N)\).
A simple module \(L\) is \emph{real} if \(L\otimes L\) is simple.
We use the \(R\)-matrix invariants of
\cite[Section~1.3]{kashiwara2025monoidal}, with the normalization
\[
\mathfrak d(M,N)=\frac12\bigl(\Lambda(M,N)+\Lambda(N,M)\bigr).
\]
A real simple module \(L\) is a \emph{root module} if
\[
\mathfrak d(L,\mathcal D^kL)=\delta_{k,1}+\delta_{k,-1}
\qquad(k\in\ZZ).
\]

\begin{definition}[{\cite[Definitions~2.6 and~2.10]{kashiwara2025monoidal}}]
\label{def:complete-duality-datum}
A family of root modules \(\DD=\{L_i^{\DD}\}_{i\in I}\) is a
\emph{strong duality datum} if
\[
\mathfrak d(L_i^{\DD},\mathcal D^kL_j^{\DD})
=-\delta_{k,0}c_{ij}
\qquad(i\ne j,\ k\in\ZZ).
\]
Let \(\mathscr C_{\DD}\) be the full subcategory generated by
\(\{L_i^{\DD}\}_{i\in I}\cup\{\mathbf1\}\) under tensor products,
subquotients, and extensions. The datum \(\DD\) is \emph{complete} if
every simple \(M\in\mathscr C^{\ZZ}\) has a presentation
\[
M\simeq\operatorname{hd}\bigl(
\cdots\otimes\mathcal D^2M_2\otimes\mathcal DM_1
\otimes M_0\otimes\mathcal D^{-1}M_{-1}\otimes\cdots\bigr),
\]
where each \(M_k\) is simple in \(\mathscr C_{\DD}\) and
\(M_k\simeq\mathbf1\) for all but finitely many \(k\).
\end{definition}

For \([a,c]\subset\ZZ\), denote by \(\mathscr C^{\DD}[a,c]\) the full
subcategory generated by \(\{\mathcal D^mL_i^{\DD}\mid
i\in I,\ a\le m\le c\}\cup\{\mathbf1\}\) under the same operations.
We use the same notation for any complete datum and set
\(\mathscr C^{\DD}[0,\infty):=\bigcup_{c\ge0}\mathscr C^{\DD}[0,c]\).
Thus the interval indices here specify powers of the right dual functor.
From now on, \(\DD\) is an arbitrary fixed complete duality datum.

For \(j\in I\), define \(\mathscr S_j(\DD)=\{L_i'\}_{i\in I}\) by
\[
L_i'=
\begin{cases}
\mathcal D(L_j^{\DD}),&i=j,\\
L_j^{\DD}\triangledown L_i^{\DD},&c_{ij}=-1,\\
L_i^{\DD},&\text{otherwise}.
\end{cases}
\]
This is again a complete duality datum
\cite[Proposition~2.11]{kashiwara2025monoidal}.
For a finite word \(\beta=(i_1,\ldots,i_r)\), define recursively
\begin{equation}\label{eq:categorical-cuspidals}
\DD^{(0)}:=\DD,\qquad
\DD^{(k)}:=\mathscr S_{i_k}(\DD^{(k-1)}),\qquad
C_k^{\DD,\beta}:=L_{i_k}^{\DD^{(k-1)}}.
\end{equation}
This agrees with the affine cuspidal modules in
\cite[Corollary~3.23 and~(4.4)]{kashiwara2025monoidal}.
The same recursion defines \(C_k^{\DD,\omega}\) for a one-sided
infinite word \(\omega\).

\begin{definition}
For a finite or one-sided infinite word \(\omega\), let
\(\mathscr C_{\DD}(\omega)\) be the smallest full subcategory of
\(\mathscr C^{\ZZ}\) containing its affine cuspidal modules and
\(\mathbf1\), and closed under tensor products, subquotients, and
extensions.
\end{definition}

If \(\beta\) represents \(b\in\Br^+\), the category
\(\mathscr C_{\DD}(\beta)\) depends only on \(b\), by
\cite[Corollary~5.34]{kashiwara2025monoidal}. We also write
\(\mathscr C_{\DD}(b)\), or \(\mathscr C(b)\) when the datum is fixed.

Let \(R:=\ZZ[q^{\pm1/2}]\), and denote by
\(\widehat{\cA}_R\) the integral form spanned by the global basis, and set
\(\widehat{\cA}_R(b):=\widehat{\cA}_R\cap\widehat{\cA}(b)\).
The homomorphism of
\cite[Proposition~3.18 and Theorem~3.19]{kashiwara2025monoidal} is
\begin{equation}\label{eq:categorical-specialization}
\Phi_{\DD}:\widehat{\cA}_R\longrightarrow K_0(\mathscr C^{\ZZ}),
\qquad q^{1/2}\longmapsto1.
\end{equation}
It induces an isomorphism after quotienting by \(q^{1/2}-1\), and
\[
\Phi_{\DD}(P_k^\beta)=[C_k^{\DD,\beta}].
\]
Consequently, for \(\mathbf a\in\NN^r\), the PBW monomial specializes
to the class of the standard module
\begin{equation}\label{eq:categorical-standard-module}
\mathsf P^{\DD,\beta}(\mathbf a):=
(C_r^{\DD,\beta})^{\otimes a_r}\otimes\cdots\otimes
(C_1^{\DD,\beta})^{\otimes a_1},
\qquad
\Phi_{\DD}(P^\beta(\mathbf a))=[\mathsf P^{\DD,\beta}(\mathbf a)].
\end{equation}
Here all scalar powers used to normalize PBW monomials specialize to one.

\begin{theorem}[{\cite[Theorems~9.4 and~9.7, Corollary~9.8]
{kashiwara2025monoidal}}]\label{theo:isoAb}
For \(b\in\Br^+\) and an expression \(\beta\) of \(b\), there is an
isomorphism of \(\ZZ\)-algebras
\[
\cA_0(\mathbf s(\beta))\xrightarrow{\sim}K_0(\mathscr C_{\DD}(b))
\]
realizing every cluster monomial, with nonnegative frozen exponents,
as the class of a real simple module.
Moreover, let \(X\) be the normalized quantum lift of such an ambient
cluster monomial in \(\widehat{\cA}_R(b)\). Then \(X\) is a normalized
global basis element, and the corresponding simple module \(M_X^{\DD}\)
satisfies
\begin{equation}\label{eq:cluster-monomial-quantizability}
\Phi_{\DD}(X)=[M_X^{\DD}].
\end{equation}
The element \(X\) is independent of the complete datum \(\DD\).
\end{theorem}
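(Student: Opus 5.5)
The plan is to transport the quantum cluster structure on $\widehat{\cA}(b)$ to $K_0(\mathscr C_{\DD}(b))$ along the specialization $\Phi_{\DD}$ of \eqref{eq:categorical-specialization}, following \cite{kashiwara2025monoidal}. The starting point is the quantum seed theorem of Section~\ref{sec:bosonic}: $X_k\mapsto D_k^\beta$ identifies $\overline{\mathcal A}_t(\mathbf s(\beta))$, after scalar extension, with $\widehat{\cA}(b)$. Every normalized quantum cluster monomial with nonnegative frozen exponents is a normalized global basis element.

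First I would verify that the integral forms match. The $\mathbb K_t$-form $\overline{\mathcal A}_t(\mathbf s(\beta))$ should land inside $\widehat{\cA}_R(b)$, the span of the global basis. The initial minors $D_k^\beta$ are normalized global basis elements, so each exchange relation expresses a new cluster variable as a normalized global basis element. Closure of $\widehat{\cA}_R(b)$ under products then gives the inclusion.

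Next I would identify the image of $\widehat{\cA}_R(b)$ under $\Phi_{\DD}$. By \eqref{eq:categorical-standard-module}, PBW monomials go to classes of standard modules in $\mathscr C_{\DD}(b)$, and the PBW and global bases are unitriangular over $\ZZ[q^{\pm1}]$. Hence $\Phi_{\DD}(\widehat{\cA}_R(b))$ is the span of standard classes, which is $K_0(\mathscr C_{\DD}(b))$ by the categorical PBW theory. Because $\Phi_{\DD}$ becomes an isomorphism modulo $q^{1/2}-1$, the restriction to $\widehat{\cA}_R(b)/(q^{1/2}-1)$ is an isomorphism onto $K_0(\mathscr C_{\DD}(b))$. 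Composing with specialization $t^{1/2}\mapsto1$, which sends the quantum seed to the commutative seed, gives a homomorphism $\cA_0(\mathbf s(\beta))\to K_0(\mathscr C_{\DD}(b))$. It is surjective because the global basis of $\widehat{\cA}(b)$ is spanned by images of cluster monomials in a suitable sense, and in any case $K_0$ is generated by the cuspidal classes, which are cluster variables up to mutation. It is injective because a cluster algebra is a subring of the field of rational functions in the initial cluster, and $\Phi_{\DD}$ is injective on the specialized global basis lattice.

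The main obstacle is the simplicity statement \eqref{eq:cluster-monomial-quantizability}: $\Phi_{\DD}(X)$ must be the class of a single real simple module, not merely a positive combination. Here I would use the monoidal seed of \cite{kashiwara2025monoidal}. The initial minors correspond to the real simple modules $M_k$ obtained as heads of ordered tensor products of cuspidals $C_s^{\DD,\beta}$ with $i_s=i_k$, $s\le k$. These form a $\Lambda$-admissible monoidal seed whose $\Lambda$-matrix matches $L_\beta$. The KKOP monoidal categorification theorem then shows every mutation is realized by a real simple module fitting into the short exact sequences lifting the exchange relations. By induction along mutations, each cluster monomial is $[M]$ for a real simple $M$. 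Finally, $M$ must be matched with $\Phi_{\DD}(X)$. Both are determined by their leading Lusztig parameter: the class of a simple has unitriangular expansion in standard modules, and so does $\Phi_{\DD}(X)$ by \eqref{eq:GPbfa}. Comparing leading terms via Lemma~\ref{lem:twoprod} along the exchange relations identifies them. Independence of $X$ from $\DD$ is automatic, since $X$ is defined purely inside $\widehat{\cA}$.
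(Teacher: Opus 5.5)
The paper does not prove this statement. It imports it from \cite{kashiwara2025monoidal} and uses \cite[Theorem~6.10]{kashiwara2025monoidal} to identify the initial minors $D_k^\beta$ with affine determinantial modules. In your reconstruction, the step meant to establish \eqref{eq:cluster-monomial-quantizability} fails. You match $M$ with $\Phi_{\DD}(X)$ on the grounds that both are determined by their leading Lusztig parameter. A simple module is determined this way, but $\Phi_{\DD}(X)$ is not. For $X=\widetilde G^\beta(\mathbf a)$, specializing \eqref{eq:GPbfa} and applying \eqref{eq:categorical-PBW-triangularity} only gives $\Phi_{\DD}(X)=[\mathsf V^{\DD,\beta}(\mathbf a)]+\sum_{\mathbf b\prec\mathbf a}c_{\mathbf b}[\mathsf V^{\DD,\beta}(\mathbf b)]$, with unknown integers $c_{\mathbf b}$. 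Comparing leading terms along exchange relations via Lemma~\ref{lem:twoprod} shows at most that the parameters of $X$ and $M$ agree. The vanishing of the $c_{\mathbf b}$ is exactly the claim that $\Phi_{\DD}$ sends $X$ to a simple class, which is what is being proved. The paper deliberately avoids assuming any such global-basis-to-simples statement for an arbitrary complete datum.

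What is missing is the base case $\Phi_{\DD}(D_k^\beta)=\bigl[\operatorname{hd}\bigl(C_k^{\DD,\beta}\otimes C_{k^-}^{\DD,\beta}\otimes\cdots\bigr)\bigr]$, which is the cited Theorem~6.10. Alternatively, it should follow from T-system relations on both sides, starting from $\Phi_{\DD}(P_k^\beta)=[C_k^{\DD,\beta}]$. With this base case, no Lusztig parameters are needed. Suppose $\Phi_{\DD}$ sends the variables of a seed to the classes of the corresponding monoidal seed. Apply $\Phi_{\DD}$ to $XX'=q^a\mathcal M_++q^b\mathcal M_-$ and compare with $[M][M']=[M_+]+[M_-]$ from the monoidal mutation. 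Since $K_0(\mathscr C^{\ZZ})$ is a commutative domain and $[M]\ne0$, this gives $\Phi_{\DD}(X')=[M']$. Cluster monomials then follow because modules in a monoidal cluster strongly commute.

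Two further steps also need repair.
\begin{itemize}
\item The homomorphism $\cA_0(\mathbf s(\beta))\to K_0(\mathscr C_{\DD}(b))$ is not obtained by ``composing with specialization'', since specialization goes from the quantum algebra to $\cA_0$. You must show the composite factors through it. The tools are the quantum Laurent phenomenon, commutativity of $K_0$, and algebraic independence of the initial classes. Their monomials have distinct leading parameters, and this independence also gives injectivity.
\item Surjectivity cannot rest on the global basis being spanned by cluster monomials, which is false in general. It needs the fact that the cuspidal classes $[C_k^{\DD,\beta}]$ are cluster variables. This is again a determinantial-module input from \cite{kashiwara2025monoidal}.
\end{itemize}
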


In this statement, the quantum torus parameter \(t\) is related to
the bosonic parameter by \(t^{1/2}\mapsto q^{-1/2}\), as in
\cite[Theorem~9.7]{kashiwara2025monoidal}. The identification of the
initial variables with affine determinantial modules is supplied by
\cite[Theorem~6.10]{kashiwara2025monoidal}. Thus the statement applies
to the normalized variables \(D_k^\beta\) used in
Section~\ref{sec:bosonic}. We will use only
\eqref{eq:cluster-monomial-quantizability}, and do not require an
identification of the quantum Grothendieck ring for an arbitrary datum.

\begin{theorem}[{\cite[Lemma~5.26, Theorem~5.28, and Proposition~5.33]
{kashiwara2025monoidal}}]\label{theo:Lusztigsim}
Let \(\beta=(i_1,\ldots,i_r)\). The module
\[
\mathsf V^{\DD,\beta}(\mathbf a):=
\operatorname{hd}\mathsf P^{\DD,\beta}(\mathbf a)
\]
is simple for every \(\mathbf a\in\NN^r\), and these modules give a
bijection between \(\NN^r\) and the simple objects of
\(\mathscr C_{\DD}(\beta)\), up to isomorphism. We write
\(\bfa^\beta(M)=\mathbf a\) when
\(M\simeq\mathsf V^{\DD,\beta}(\mathbf a)\).
Furthermore,
\begin{equation}\label{eq:categorical-PBW-triangularity}
[\mathsf P^{\DD,\beta}(\mathbf a)]
=[\mathsf V^{\DD,\beta}(\mathbf a)]
+\sum_{\mathbf b\prec\mathbf a}n_{\mathbf a,\mathbf b}
[\mathsf V^{\DD,\beta}(\mathbf b)],
\qquad n_{\mathbf a,\mathbf b}\in\NN.
\end{equation}
The same assertions hold for a one-sided infinite word, with parameters
of finite support.
\end{theorem}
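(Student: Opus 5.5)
The plan is to derive the statement from the categorical PBW theory of \cite{kashiwara2025monoidal} rather than reprove it. We proceed in three steps: first simplicity of heads, then the triangular decomposition \eqref{eq:categorical-PBW-triangularity}, and finally the bijection, followed by the extension to infinite words.

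\emph{Simplicity of heads.} The affine cuspidal modules $C_k^{\DD,\beta}$ are real and form a strongly unmixed sequence: $\mathfrak d(C_k,\mathcal D^{-1}C_l)=0$ whenever $k>l$. This is the content of \cite[Lemma~5.26]{kashiwara2025monoidal}, proved by induction on $r$ via the reflection $\mathscr S_{i_1}$. Indeed, $\mathscr S_{i_1}$ sends $C_{k+1}^{\DD,\beta}$ to $C_k^{\mathscr S_{i_1}\DD,\beta_{>1}}$, and the first cuspidal $L_{i_1}^{\DD}$ is unmixed with everything generated by $\mathscr S_{i_1}\DD$. The standard consequence for unmixed sequences of real modules is that the ordered tensor product $\mathsf P^{\DD,\beta}(\mathbf a)$ is a normal sequence, so its head is simple. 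Hence $\mathsf V^{\DD,\beta}(\mathbf a)$ is simple.

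\emph{Triangularity.} Next I would apply $\Phi_{\DD}$ from \eqref{eq:categorical-specialization} to the inverse of the triangular expansion \eqref{eq:GPbfa}. This gives $[\mathsf P^{\DD,\beta}(\mathbf a)]=\sum_{\mathbf b\preceq\mathbf a}r_{\mathbf b,\mathbf a}(1)\,\Phi_{\DD}(G^\beta(\mathbf b))$, with $r_{\mathbf a,\mathbf a}=1$. We then use \cite[Theorem~5.28]{kashiwara2025monoidal}: the images $\Phi_{\DD}(G^\beta(\mathbf b))$ are classes of simple modules, namely $[\mathsf V^{\DD,\beta}(\mathbf b)]$. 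Given this, the expansion has leading coefficient one. Its remaining coefficients are nonnegative integers $n_{\mathbf a,\mathbf b}$, because they are composition multiplicities. The identification of the leading simple constituent with the head is a separate point. I would obtain it from the fact that $\mathsf V^{\DD,\beta}(\mathbf a)$ occurs in $\mathsf P^{\DD,\beta}(\mathbf a)$ with multiplicity one. This can be checked via the $\Lambda$-invariants, since the head of a normal sequence is characterized by additivity of $\Lambda$.

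\emph{Bijection.} Injectivity of $\mathbf a\mapsto\mathsf V^{\DD,\beta}(\mathbf a)$ follows from unitriangularity: distinct $\mathbf a$ give linearly independent classes in $K_0$. For surjectivity onto the simple objects of $\mathscr C_{\DD}(\beta)$, I would argue as follows.
\begin{enumerate}
\item The span of the classes $[\mathsf P^{\DD,\beta}(\mathbf a)]$ is closed under multiplication. This follows by specializing the Levendorskii--Soibelman formula \cite[Lemma~5.5]{oh2025pbw} at $q^{1/2}=1$.
\item This span contains the classes of all generators $C_k^{\DD,\beta}$.
\item Every simple object of $\mathscr C_{\DD}(\beta)$ is therefore a composition factor of some product of cuspidals, hence of some standard module $\mathsf P^{\DD,\beta}(\mathbf a)$.
\item By the triangularity just proved, every such composition factor is one of the $\mathsf V^{\DD,\beta}(\mathbf b)$.
\end{enumerate}
This is \cite[Proposition~5.33]{kashiwara2025monoidal}.

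\emph{Infinite words.} A finitely supported parameter lives in a finite prefix, and every object of $\mathscr C_{\DD}(\omega)$ lies in $\mathscr C_{\DD}$ of some finite prefix. Both properties hold because the closure operations are finitary. The finite case therefore applies verbatim.

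The main obstacle is the claim in the triangularity step that $\Phi_{\DD}$ carries global basis elements to classes of simple modules for an \emph{arbitrary} complete duality datum. This is where the quantum Grothendieck ring comparison would ordinarily enter. I would handle it by citing the positivity and simple-head results of \cite[Theorem~5.28]{kashiwara2025monoidal} directly, rather than assuming an identification of quantum Grothendieck rings.
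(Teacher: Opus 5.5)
The gap is in your triangularity step. You derive \eqref{eq:categorical-PBW-triangularity} by applying $\Phi_{\DD}$ to the inverse of \eqref{eq:GPbfa}. You then use that $\Phi_{\DD}(G^\beta(\mathbf b))$ is the simple class $[\mathsf V^{\DD,\beta}(\mathbf b)]$ for every $\mathbf b$ and every complete datum. That is a Kazhdan--Lusztig-type comparison between the global basis and simple modules. It is stronger than the statement you are proving, and the paper deliberately does not assume it for an arbitrary complete $\DD$.

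The only comparison of this kind the paper uses is \eqref{eq:cluster-monomial-quantizability}, for cluster monomials. Lemma~\ref{lem:categorical-PBW-compatibility} takes ``$\Phi_{\DD}(X)$ is a simple class'' as a \emph{hypothesis}, and then invokes \eqref{eq:categorical-PBW-triangularity} to identify the parameter. That hypothesis would be automatic if \cite[Theorem~5.28]{kashiwara2025monoidal} gave what you attribute to it. In the paper the logic runs the other way. The triangularity is a categorical input about standard modules and their heads, stated for any complete datum without reference to global bases. Global-basis information is deduced from it only where a simple class is already known. Calling the comparison ``the main obstacle'' and citing it does not close the step.

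Even granting that each $\Phi_{\DD}(G^\beta(\mathbf b))$ is the class of some simple module $S_{\mathbf b}$, you have not identified $S_{\mathbf a}$ with $\operatorname{hd}\mathsf P^{\DD,\beta}(\mathbf a)$. The expansion only shows that $\mathsf P^{\DD,\beta}(\mathbf a)$ contains $S_{\mathbf a}$ once, and otherwise factors $S_{\mathbf b}$ with $\mathbf b\prec\mathbf a$. Multiplicity one of the head does not exclude $\operatorname{hd}\mathsf P^{\DD,\beta}(\mathbf a)\simeq S_{\mathbf c}$ for some $\mathbf c\prec\mathbf a$ that also occurs once.

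The identification would follow from injectivity of $\mathbf a\mapsto\operatorname{hd}\mathsf P^{\DD,\beta}(\mathbf a)$. Indeed, $\mathbf b\prec\mathbf a$ implies that $\mathbf b$ precedes $\mathbf a$ lexicographically, and the lexicographic order well-orders $\NN^r$; induction then forces the head to be $S_{\mathbf a}$. But your bijection step deduces injectivity from the very unitriangularity being established, so the argument is circular.

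To recover $\mathbf a$ from the head you need a genuinely categorical invariant. Examples are $R$-matrix invariants against the cuspidal modules, or a decomposition $M\simeq(C_r^{\DD,\beta})^{\otimes n}\triangledown M'$ with $M'\in\mathscr C_{\DD}(\beta_{<r})$. Such an ingredient gives the triangularity directly and makes the detour through $G^\beta$ unnecessary. Without it, your proposal reduces to citing the three results, as the paper does.

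The other steps are fine. The Levendorskii--Soibelman closure in the surjectivity argument is superfluous, because $K_0(\mathscr C^{\ZZ})$ is commutative; this is the reordering argument of Lemma~\ref{lem:categorical-prefix-vanishing}.
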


We need the compatibility between these categorical parameters and the
parameters of those global basis elements that represent simple modules.

\begin{lemma}\label{lem:categorical-PBW-compatibility}
Let \(\omega\) be a finite word, let
\(X=\widetilde G^\omega(\mathbf a)\in\widehat{\cA}_R(\omega)\), and
suppose that \(\Phi_{\DD}(X)=[M]\) for a simple module
\(M\in\mathscr C_{\DD}(\omega)\). Then
\[
\bfa^\omega(M)=\mathbf a=\bfa^\omega(X).
\]
This assertion also holds for a one-sided infinite word whenever the
element and the module belong to the subalgebra and subcategory of a
finite prefix.
\end{lemma}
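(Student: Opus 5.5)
The plan is to compare two unitriangular expansions in the same basis of \(K_0(\mathscr C_{\DD}(\omega))\) and then apply a leading-term argument. Expand \(X\) in the PBW basis using \eqref{eq:GPbfa}. After normalization we have
\[
X=q^{s/2}\Bigl(P^\omega(\mathbf a)+\sum_{\mathbf b\prec\mathbf a}f_{\mathbf b,\mathbf a}(q)P^\omega(\mathbf b)\Bigr),
\qquad f_{\mathbf b,\mathbf a}(q)\in q\mathbb Z[q].
\]
Applying \(\Phi_{\DD}\) and using \eqref{eq:categorical-standard-module}, which sends \(q^{1/2}\mapsto1\), gives
\[
[M]=[\mathsf P^{\DD,\omega}(\mathbf a)]+\sum_{\mathbf b\prec\mathbf a}f_{\mathbf b,\mathbf a}(1)[\mathsf P^{\DD,\omega}(\mathbf b)].
\]
Next substitute the categorical triangularity \eqref{eq:categorical-PBW-triangularity} for each standard class. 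The classes \([\mathsf V^{\DD,\omega}(\mathbf c)]\) of the simples form a \(\mathbb Z\)-basis of \(K_0(\mathscr C_{\DD}(\omega))\) by Theorem~\ref{theo:Lusztigsim}. In this basis,
\[
[M]=[\mathsf V^{\DD,\omega}(\mathbf a)]+\sum_{\mathbf c\prec\mathbf a}m_{\mathbf c}[\mathsf V^{\DD,\omega}(\mathbf c)]
\]
for some integers \(m_{\mathbf c}\). Here I use that \(\prec\) is transitive, which is clear from its definition via first and last differing coordinates.

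Since \(M\) is simple, \([M]\) is a single basis vector \([\mathsf V^{\DD,\omega}(\mathbf a^\omega(M))]\). The coefficient of \([\mathsf V^{\DD,\omega}(\mathbf a)]\) in the expansion above is \(1\ne0\), so comparing coefficients forces \(\mathsf V^{\DD,\omega}(\mathbf a)\simeq M\). Hence \(\mathbf a^\omega(M)=\mathbf a\), and all \(m_{\mathbf c}\) vanish, although we do not need that. The equality \(\mathbf a=\mathbf a^\omega(X)\) holds by definition, since normalization does not change the parameter.

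The main point needing care is that \(X\) lies in \(\widehat{\cA}_R(\omega)\) and that \(\Phi_{\DD}\) maps this subalgebra into \(K_0(\mathscr C_{\DD}(\omega))\). This follows because \(\Phi_{\DD}(P_k^\omega)=[C_k^{\DD,\omega}]\), and the PBW expansion of \(X\) stays inside the subalgebra. A second point is that the coefficients \(f_{\mathbf b,\mathbf a}\) lie in \(q\mathbb Z[q]\) and may contain negative powers only through the normalization factor; both specialize to integers, so the argument works over \(\mathbb Z\).

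For a one-sided infinite word, I would replace \(\omega\) by a finite prefix containing the supports of \(\mathbf a\) and of the parameter of \(M\). The PBW root vectors and affine cuspidal modules of this prefix coincide with the first ones of the infinite word. Moreover, lower terms in \(\prec\) keep their support inside the prefix, as in the proof of Theorem~\ref{theo:braidmoves}. So the finite case applies verbatim.
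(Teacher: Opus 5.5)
Your proposal is correct and follows essentially the same route as the paper: specialize the unitriangular PBW expansion of \(\widetilde G^\omega(\mathbf a)\) under \(\Phi_{\DD}\), re-expand each standard class via \eqref{eq:categorical-PBW-triangularity}, and compare with the single simple class \([M]\) in the \(\mathbb Z\)-basis of simple classes, with the infinite case reduced to a finite prefix. Your two extra remarks, that \(\prec\) is transitive and that \(\prec\)-lower parameters stay supported in the prefix, spell out points the paper's argument uses without stating them.
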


\begin{proof}
The normalization relating \(\widetilde G^\omega(\mathbf a)\) and
\(G^\omega(\mathbf a)\) is a power of \(q^{1/2}\), so both have the
same image under \(\Phi_{\DD}\). Specializing the triangular
PBW expansion of \(G^\omega(\mathbf a)\) gives
\[
[M]=[\mathsf P^{\DD,\omega}(\mathbf a)]
+\sum_{\mathbf b\prec\mathbf a}c_{\mathbf b}
[\mathsf P^{\DD,\omega}(\mathbf b)],
\qquad c_{\mathbf b}\in\ZZ.
\]
Apply \eqref{eq:categorical-PBW-triangularity} to each term. The
coefficient of \([\mathsf V^{\DD,\omega}(\mathbf a)]\) is one, and
all remaining simple classes have strictly smaller parameters.
Since simple classes form a \(\ZZ\)-basis of the Grothendieck group,
the equality with the single simple class \([M]\) implies
\(M\simeq\mathsf V^{\DD,\omega}(\mathbf a)\).
For an infinite word, choose a finite prefix containing the parameters
in question. Enlarging the prefix appends zero coordinates and leaves
the standard module and its head unchanged; uniqueness in
Theorem~\ref{theo:Lusztigsim} finishes the proof.
\end{proof}

\subsection{The subcategory determined by the vanishing parameters}

Fix \(b\in\Br^+\), an expression \(\beta\) of \(b\), and
\(v\le\delta(b)\). Put \(m:=\ell(v)\), and retain the infinite word
\(\dot\beta_v\) of Section~\ref{sec:algebracA}. Define
\(\mathscr C^v\) to be the full subcategory generated by
\[
\{C_k^{\DD,\dot\beta_v}\mid k>m\}\cup\{\mathbf1\}
\]
under tensor products, subquotients, and extensions, and set
\[
\mathscr C_{v,\beta}:=\mathscr C_{\DD}(\beta)\cap\mathscr C^v.
\]
All these are Serre monoidal subcategories. In particular, their
Grothendieck groups embed into \(K_0(\mathscr C^{\ZZ})\) via the
corresponding subsets of the basis of simple classes.

\begin{proposition}\label{prop:categorical-tail-intrinsic}
Let \(\nu=(j_1,\ldots,j_m)\) be a reduced expression of \(v\). Set
\[
\mathbb E_0:=\DD,\qquad
\mathbb E_s:=\mathscr S_{j_s}(\mathbb E_{s-1})\quad(1\le s\le m),
\qquad
\DD_v:=\mathbb E_m
=\mathscr S_{j_m}\cdots\mathscr S_{j_1}(\DD).
\]
The complete datum \(\DD_v\) depends only on \(v\), up to isomorphism,
and
\begin{equation}\label{eq:categorical-tail-intrinsic}
\mathscr C^v=\mathscr C^{\DD_v}[0,\infty).
\end{equation}
In particular, \(\mathscr C^v\) is independent of the reduced expression
of \(v\) and of its completion to a reduced expression of \(w_0\).
Consequently, \(\mathscr C_{v,\beta}\) depends only on \(\DD,v,b\).
\end{proposition}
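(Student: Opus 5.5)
We need to prove three things: $\DD_v$ depends only on $v$, the equality \eqref{eq:categorical-tail-intrinsic}, and the independence consequences. The plan is to use the braid-invariance of the reflection operations $\mathscr S_j$ on complete duality data.

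\emph{Step 1: $\DD_v$ depends only on $v$.} By \cite[Proposition~2.11]{kashiwara2025monoidal} and the braid-group action on complete duality data established there (the $\mathscr S_j$ satisfy the braid relations up to isomorphism, cf.\ \cite[Section~5]{kashiwara2025monoidal}), it suffices to check invariance under a single $2$-move or $3$-move of the reduced word $\nu$. Since any two reduced expressions of $v$ are connected by such moves (Matsumoto), this gives the independence of $\DD_v$ up to isomorphism.

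\emph{Step 2: the equality \eqref{eq:categorical-tail-intrinsic}.} By recursion \eqref{eq:categorical-cuspidals}, the datum after the first $m$ letters of $\dot\beta_v$ is exactly $\DD_v$. Hence
\[
C_{m+k}^{\DD,\dot\beta_v}=C_k^{\DD_v,\omega},
\]
where $\omega$ is the tail word. As noted in the proof of Proposition~\ref{prop:Avb-characterization}, $\omega$ is the alternating-star word built from the reduced word $\tau\beta_v^*$ of $w_0$. Its prefixes of length $(M+1)\ell(w_0)$ represent $\Delta^{M+1}$. The categorical counterpart of Example~\ref{exam:Delta} should say that
\[
\mathscr C_{\DD_v}(\Delta^{M+1})=\mathscr C^{\DD_v}[0,M].
\]
To prove this, take the word $\Delta^{M+1}$ realized as an alternating-star word. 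Each full $\Delta$-block applies $\mathscr S_{w_0}$, which should send $L_i$ to $\mathcal D L_{i^*}$ by the analogue of $T_\Delta(f_{i,k})=f_{i^*,k+1}$. The $s$-th block's cuspidals then generate $\mathcal D^{s}\mathscr C_{\DD_v}$, which is the image of the category for one block. Combined with the dependence of $\mathscr C_{\DD}(\beta)$ only on the braid \cite[Corollary~5.34]{kashiwara2025monoidal}, and the fact that $\mathscr C_{\DD'}(\Delta)=\mathscr C_{\DD'}$, this identifies the categories block by block. Taking the union over $M$ gives \eqref{eq:categorical-tail-intrinsic}.

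\emph{Step 3: consequences.} The right-hand side of \eqref{eq:categorical-tail-intrinsic} involves only $\DD_v$, which depends only on $v$ by Step 1. So $\mathscr C^v$ is independent of both the reduced expression of $v$ and its completion. Since $\mathscr C_{\DD}(\beta)$ depends only on $b$, the intersection $\mathscr C_{v,\beta}$ depends only on $\DD,v,b$.

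The main obstacle is the identification $\mathscr S_{w_0}(\DD')\simeq\mathcal D(\DD')^{*}$ used in Step 2, with the star twist on indices. If this is not directly citable from \cite{kashiwara2025monoidal}, I would argue it through $\Phi_{\DD'}$. The PBW vectors of a reduced word of $w_0$ followed by the starred word are $T_\Delta$-translates, and $\Phi_{\DD'}(P_k)=[C_k]$. Equality of the classes of root modules then forces isomorphism, since simple modules are determined by their classes. This would reduce the claim to $T_\Delta(f_{i,0})=f_{i^*,1}$, which is Example~\ref{exam:Delta}.
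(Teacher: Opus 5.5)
Your proof is correct and follows the paper's skeleton: braid relations for the $\mathscr S_j$, restarting the recursion at $\DD_v$ after the first $m$ letters, the block description $\mathscr C_{\mathbb E}(\Delta^n)=\mathscr C^{\mathbb E}[0,n-1]$, and a union over $n$. Your Step~2 is, however, more direct than the paper's.

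You note that the tail of $\dot\beta_v$ is itself the alternating-star word of the reduced word $\tau\beta_v^*$ of $w_0$. Cutting it at its own block boundaries therefore gives expressions of $\Delta^n$, and the block description applies verbatim. The paper instead cuts the tail at the block boundaries inherited from $\dot\beta_v$, where the prefixes represent $d\Delta^n$ (with $d$ represented by $\tau$). It then needs the Garside identities $d\Delta^n=\Delta^n\theta^n(d)$ and $d\theta(u)=\Delta$, together with prefix inclusion and expression-independence, to sandwich $\mathscr C_{\DD_v}(d\Delta^n)$ between $\mathscr C_{\DD_v}(\Delta^n)$ and $\mathscr C_{\DD_v}(\Delta^{n+1})$. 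Your observation makes this sandwich unnecessary.

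For the block periodicity, the paper simply cites \cite[(3.24) and Theorem~4.6]{kashiwara2025monoidal}. Your fallback deduces $L_i^{\mathscr S_{w_0}\mathbb E}\simeq\mathcal D L_{i^*}^{\mathbb E}$ from three facts: $\Phi_{\DD_v}=\Phi_{\DD}\circ T_v$ at $v=w_0$, the identity $T_\Delta(f_{i,0})=f_{i^*,1}$, and the fact that simple modules are determined by their classes. This is a legitimate alternative, but you must also invoke $\Phi_{\mathbb E}(f_{j,1})=[\mathcal D L_j^{\mathbb E}]$ (up to the $q^{1/2}$-normalization). That comes from the definition of the homomorphism in \cite{kashiwara2025monoidal}, not from anything displayed in the paper.

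The remaining input, $\mathscr C_{\mathbb E}(\Delta)=\mathscr C_{\mathbb E}$, which you take as known, is what the paper justifies via the Schur--Weyl description of the first block. The braid relations you use in Step~1 are \cite[Corollary~3.22]{kashiwara2025monoidal}, not Proposition~2.11 or Section~5.
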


\begin{proof}
The braid relations for the operators \(\mathscr S_i\), proved in
\cite[Corollary~3.22]{kashiwara2025monoidal}, imply that \(\DD_v\)
is independent of \(\nu\). The order of the operators is fixed by the
recursion; with this convention,
\[
\Phi_{\DD_v}=\Phi_{\DD}\circ T_v
\]
by \cite[Corollary~3.23]{kashiwara2025monoidal}.

For any complete datum \(\mathbb E\) and \(n\ge1\), the periodic
word formed from a reduced word of \(w_0\) and its successive
\(*\)-transforms gives
\[
\mathscr C_{\mathbb E}(\Delta^n)=\mathscr C^{\mathbb E}[0,n-1].
\]
Indeed, its first block consists of Schur--Weyl images of finite-type
cuspidal modules, all lying in \(\mathscr C_{\mathbb E}\), and includes
every \(L_i^{\mathbb E}\). Its block indexed by \(p\ge0\) consists of
the \(\mathcal D^p\)-duals of the first block; see
\cite[(3.24) and Theorem~4.6(i)--(ii)]{kashiwara2025monoidal}.
Since duality is exact and reverses tensor products, these first \(n\)
blocks generate exactly the subcategory generated by
\(\mathcal D^pL_i^{\mathbb E}\) for \(0\le p<n\).

To compute \(\mathscr C^v\), take \(\nu=\beta_v\), write its chosen
completion as \(\overline w_0=\nu\tau\), and let \(d\in\Br^+\) be
represented by \(\tau\). Removing the initial
\(m\) letters from \(\dot\beta_v\) restarts
\eqref{eq:categorical-cuspidals} with datum \(\DD_v\). The finite
prefixes ending at the subsequent block boundaries represent
\(d\Delta^n\), \(n\ge0\). Hence
\[
\mathscr C^v=\bigcup_{n\ge0}\mathscr C_{\DD_v}(d\Delta^n).
\]
Let \(\theta\) be the braid automorphism \(\sigma_i\mapsto\sigma_{i^*}\).
The Garside identity gives \(d\Delta^n=\Delta^n\theta^n(d)\).
If \(u\) is represented by \(\nu\), then \(ud=\Delta\) and
\(u\Delta=\Delta\theta(u)\) imply \(d\theta(u)=\Delta\).
Thus \(\theta^n(d)\) is a positive braid prefix of \(\Delta\), and
\(\Delta^n\) is a prefix of \(d\Delta^n\), which is itself a prefix
of \(\Delta^{n+1}\). Prefix inclusion and independence
of finite expressions
\cite[Corollary~5.34]{kashiwara2025monoidal} therefore give
\[
\mathscr C_{\DD_v}(\Delta^n)
\subset\mathscr C_{\DD_v}(d\Delta^n)
\subset\mathscr C_{\DD_v}(\Delta^{n+1}).
\]
Taking unions and using the block description proves
\eqref{eq:categorical-tail-intrinsic}. The final assertion also uses
the independence of \(\mathscr C_{\DD}(\beta)\) from the expression
of \(b\).
\end{proof}

Choose \(M\) such that \(b\) is a positive braid prefix of
\(\Delta^{M+1}\), and let \(\dot\beta^M\) be the corresponding finite
prefix of \(\dot\beta_v\). The independence of the expression gives
\[
\mathscr C_{\DD}(\beta)\subset
\mathscr C_{\DD}(\Delta^{M+1})
=\mathscr C_{\DD}(\dot\beta^M)
\subset\mathscr C_{\DD}(\dot\beta_v).
\]
Thus all the parameters used below are defined in a common category.

\begin{lemma}\label{lem:categorical-prefix-vanishing}
For a simple object \(S\in\mathscr C_{\DD}(\dot\beta_v)\), one has
\[
S\in\mathscr C^v
\quad\Longleftrightarrow\quad
\bfa^{\dot\beta_v}_i(S)=0\quad(1\le i\le m).
\]
Consequently, if \(X\) is an ambient normalized quantum cluster
monomial in \(\widehat{\cA}_R(b)\), then
\[
M_X^{\DD}\in\mathscr C_{v,\beta}
\quad\Longleftrightarrow\quad
\bfa^{\dot\beta_v}_i(X)=0\quad(1\le i\le m).
\]
\end{lemma}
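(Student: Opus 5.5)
The plan is to prove the first equivalence with the categorical PBW theory of Theorem~\ref{theo:Lusztigsim}, applied to the infinite word \(\dot\beta_v\) and to its tail. The tail \(\dot\beta_v^{>m}\) (the letters after position \(m\)) is itself a one-sided infinite word. By Proposition~\ref{prop:categorical-tail-intrinsic} its cuspidal modules are exactly \(C_k^{\DD,\dot\beta_v}\) for \(k>m\), obtained by restarting \eqref{eq:categorical-cuspidals} with the datum \(\DD_v\). Hence \(\mathscr C^v=\mathscr C_{\DD_v}(\dot\beta_v^{>m})\), and Theorem~\ref{theo:Lusztigsim} applies to it.

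\emph{Direction} (\(\Leftarrow\)). Suppose the first \(m\) coordinates of \(\mathbf a:=\bfa^{\dot\beta_v}(S)\) vanish. Then \(S=\operatorname{hd}\mathsf P^{\DD,\dot\beta_v}(\mathbf a)\), and the standard module is a tensor product of the cuspidals \(C_k^{\DD,\dot\beta_v}\) with \(k>m\) only. It therefore lies in \(\mathscr C^v\), and so does its head, since \(\mathscr C^v\) is closed under subquotients.

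\emph{Direction} (\(\Rightarrow\)). Let \(S\in\mathscr C^v\) be simple. By Theorem~\ref{theo:Lusztigsim} for the tail word, \(S\simeq\operatorname{hd}\mathsf P^{\DD_v,\dot\beta_v^{>m}}(\mathbf b)\) for some finitely supported \(\mathbf b\). Put \(\mathbf a:=(0,\ldots,0,\mathbf b)\), with \(m\) leading zeros. Then the standard module \(\mathsf P^{\DD,\dot\beta_v}(\mathbf a)\) is literally the same tensor product as \(\mathsf P^{\DD_v,\dot\beta_v^{>m}}(\mathbf b)\), because the cuspidal modules coincide. So \(S\simeq\mathsf V^{\DD,\dot\beta_v}(\mathbf a)\). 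The bijection in Theorem~\ref{theo:Lusztigsim} for \(\dot\beta_v\), applied in a finite prefix as in Lemma~\ref{lem:categorical-PBW-compatibility}, then gives \(\bfa^{\dot\beta_v}(S)=\mathbf a\), whose first \(m\) coordinates vanish.

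The only point needing care is that both theorems are applied with the same cuspidal family. This is exactly the restart statement already recorded in the proof of Proposition~\ref{prop:categorical-tail-intrinsic}, so I would cite it rather than reprove it. An alternative, if one prefers to avoid the tail datum, is to read \(\Rightarrow\) off the triangularity \eqref{eq:categorical-PBW-triangularity}: the parameters with vanishing first \(m\) coordinates are downward closed for \(\prec\), as in the proof of Proposition~\ref{prop:Avb-characterization}.

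For the consequence, let \(X\) be an ambient normalized quantum cluster monomial. Theorem~\ref{theo:isoAb} says that \(X\) is a normalized global basis element with \(\Phi_{\DD}(X)=[M_X^{\DD}]\) and \(M_X^{\DD}\in\mathscr C_{\DD}(b)\). I would use the choice of \(M\) with \(b\) a prefix of \(\Delta^{M+1}\). Then \(X\in\widehat{\cA}_R(\Delta^{M+1})\), and by independence of expressions \(X=\widetilde G^{\dot\beta^M}(\mathbf a)\) for some \(\mathbf a\). The module satisfies \(M_X^{\DD}\in\mathscr C_{\DD}(\dot\beta^M)\). Lemma~\ref{lem:categorical-PBW-compatibility} with \(\omega=\dot\beta^M\) gives \(\bfa^{\dot\beta^M}(M_X^{\DD})=\bfa^{\dot\beta^M}(X)\). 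These are the first coordinates of the \(\dot\beta_v\)-parameters, padded with zeros.

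Since \(M_X^{\DD}\) already lies in \(\mathscr C_{\DD}(\beta)\), it lies in \(\mathscr C_{v,\beta}\) exactly when it lies in \(\mathscr C^v\). By the first part, this happens exactly when the first \(m\) coordinates of \(\bfa^{\dot\beta_v}(X)\) vanish.

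I expect the main obstacle to be justifying the identification of the \(\dot\beta_v\)-parameter of \(X\) computed in \(\widehat{\cA}(b)\) with its parameter computed in the finite prefix. This reduces to the expression-independence of the global basis together with the prefix-extension remark in Lemma~\ref{lem:categorical-PBW-compatibility}.
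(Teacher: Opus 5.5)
Your proposal is correct. The ($\Leftarrow$) direction and the deduction of the consequence (via Theorem~\ref{theo:isoAb} and Lemma~\ref{lem:categorical-PBW-compatibility} in the prefix $\dot\beta^M$) match the paper; the ($\Rightarrow$) direction takes a different route.

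\emph{The paper's route.} It never treats $\mathscr C^v$ as a PBW category in its own right. It observes that a simple object of $\mathscr C^v$ is a composition factor of a finite tensor product of the $C_k^{\DD,\dot\beta_v}$ with $k>m$. Commutativity of $K_0$ lets it reorder that product into a standard module $\mathsf P^{\DD,\dot\beta_v}(\mathbf a)$ with $a_1=\cdots=a_m=0$. The triangularity \eqref{eq:categorical-PBW-triangularity}, together with the fact that such parameters are downward closed under $\prec$, then forces the simple object's first $m$ coordinates to vanish. This is exactly the alternative you mention at the end.

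\emph{Your route.} You use the restart identity $C_{m+k}^{\DD,\dot\beta_v}=C_k^{\DD_v,\dot\beta_v^{>m}}$ to write $\mathscr C^v=\mathscr C_{\DD_v}(\dot\beta_v^{>m})$. You then apply the classification of Theorem~\ref{theo:Lusztigsim} a second time, to this tail category, and match standard modules literally. This gives the exact parameter $(0,\ldots,0,\mathbf b)$ and a cleaner structural description of $\mathscr C^v$.

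\emph{Inputs you should state.} Your route needs two inputs that you leave implicit.
\begin{enumerate}
\item $\DD_v$ is again a complete duality datum, by iterating \cite[Proposition~2.11]{kashiwara2025monoidal}.
\item The tail $\dot\beta_v^{>m}$ is a one-sided infinite word of the admissible kind. It is the alternating-star word built from the reduced word $\tau\beta_v^*$ of $w_0$, where $\beta_v\tau$ is the chosen completion, as observed in the proof of Proposition~\ref{prop:Avb-characterization}.
\end{enumerate}
The paper's argument needs only triangularity for the single word $\dot\beta_v$ plus a routine composition-factor argument, so it is more self-contained.
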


\begin{proof}
Suppose first that \(\bfa^{\dot\beta_v}_i(S)=0\) for \(i\le m\).
The standard module with parameter \(\bfa^{\dot\beta_v}(S)\) uses
only cuspidal modules indexed by \(k>m\). Its head is \(S\), so
\(S\in\mathscr C^v\).

Conversely, a simple object of \(\mathscr C^v\) occurs as a
composition factor of a finite tensor product of cuspidal modules
indexed by \(k>m\). This follows directly from the definition of the
subcategory and exactness of tensor products. The Grothendieck ring
is commutative, so reordering these tensor factors does not change
their composition multiplicities. Hence \(S\) occurs in
\(\mathsf P^{\DD,\dot\beta_v}(\mathbf a)\) for a finite-support
parameter with \(a_i=0\) for \(i\le m\). By
\eqref{eq:categorical-PBW-triangularity}, its parameter \(\mathbf b\)
satisfies \(\mathbf b\preceq\mathbf a\). If \(\mathbf b\ne\mathbf a\),
the first coordinate at which they differ is smaller for
\(\mathbf b\). It cannot lie in \([m]\), because those coordinates of
\(\mathbf a\) are zero and \(\mathbf b\) is nonnegative. Therefore
\(b_i=0\) for \(i\le m\).
The final assertion follows from
\eqref{eq:cluster-monomial-quantizability} and
Lemma~\ref{lem:categorical-PBW-compatibility}, applied in the finite
prefix \(\dot\beta^M\).
\end{proof}

\begin{theorem}\label{thm:categorification}
Let \(\DD\) be a complete duality datum, let \(b\in\Br^+\), let
\(\beta\) be an expression of \(b\), and let \(v\le\delta(b)\).
There is an injective homomorphism of \(\ZZ\)-algebras
\[
\cA_0(\mathbf s(v,\beta))\hookrightarrow
K_0(\mathscr C_{v,\beta}).
\]
Under this homomorphism, every cluster monomial with nonnegative
frozen exponents is the class of a real simple object of
\(\mathscr C_{v,\beta}\).
\end{theorem}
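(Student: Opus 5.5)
The plan is to realize \(\cA_0(\mathbf s(v,\beta))\) as a subring of \(\cA_0(\mathbf s(\beta))\), and then combine Theorem~\ref{theo:isoAb} with the vanishing criterion of Lemma~\ref{lem:categorical-prefix-vanishing}.

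\emph{Step 1: seeds and cluster variables.} By Definition~\ref{def:svbeta}, the seed \(\mathbf s(v,\beta)\) is obtained from the mutation-equivalent seed \(\widetilde{\mathbf s}_m=M_m(\mathbf s(\beta))\) by deleting the vertices in \(J_m\) and freezing their neighbours. We have \(B^{(m)}_{J_m\times J_{\rm ex}}=0\). Theorem~\ref{thm:induction}(3), together with the freezing of all neighbours of \(J_m\), shows that no vertex of \(J_m\) is adjacent to a vertex of \(J_{\rm ex}\). Hence every mutation sequence at vertices of \(J_{\rm ex}\) can be performed in the full seed \(\widetilde{\mathbf s}_m\) with the same exchange relations. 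The deleted vertices stay isolated from the mutable part: mutating at a vertex with zero incidence to \(J_m\) creates no such incidence. This is the same argument used inside Theorem~\ref{thm:induction}. Consequently, every cluster monomial of \(\mathbf s(v,\beta)\) with nonnegative frozen exponents is a cluster monomial of a seed mutation-equivalent to \(\mathbf s(\beta)\), with nonnegative exponents on its frozen variables. This holds because freezing only restricts the mutable set, and deleted vertices get exponent zero. The same statement holds for the quantum lifts. This gives an inclusion of \(\ZZ\)-algebras \(\cA_0(\mathbf s(v,\beta))\subset\cA_0(\mathbf s(\beta))\) inside the common ambient field.

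\emph{Step 2: composing with the categorification.} Composing this inclusion with the isomorphism \(\cA_0(\mathbf s(\beta))\simeq K_0(\mathscr C_{\DD}(b))\) of Theorem~\ref{theo:isoAb} gives an injective homomorphism into \(K_0(\mathscr C_{\DD}(\beta))\). By the same theorem, each cluster monomial \(x\) of \(\mathbf s(v,\beta)\) is sent to \([M_X^{\DD}]\), where \(M_X^{\DD}\) is real simple and \(X\) is the normalized quantum lift, which is a global basis element.

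\emph{Step 3: landing in \(\mathscr C_{v,\beta}\).} By Lemma~\ref{lem:categorical-prefix-vanishing}, it suffices to show \(\nu(X)=0\) for every such \(X\).
\begin{itemize}
\item For the initial variables of \(\mathbf s(v,\beta)\), this is Theorem~\ref{thm:induction}(4).
\item The selected parameter is additive on quantum cluster monomials by Lemma~\ref{lem:selected-exchange}, so initial cluster monomials are also covered.
\item For an arbitrary seed, we induct along the mutation sequence. Each exchange relation in \(\mathbf s(v,\beta)\) is a full ambient exchange relation by Step 1, so Lemma~\ref{lem:selected-exchange} gives \(\nu(x')=\max_{\rm lex}\{\nu(M_+),\nu(M_-)\}-\nu(x)\). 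If all variables in the current seed have \(\nu=0\), then both monomials have \(\nu=0\), hence \(\nu(x')=0\). This is exactly the propagation seen in Example~\ref{ex:positive-rank-truncated-mutation}.
\end{itemize}
Thus every cluster monomial maps into \(K_0(\mathscr C_{v,\beta})\).

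\emph{Step 4: conclusion.} The algebra \(\cA_0(\mathbf s(v,\beta))\) is generated by cluster variables, and \(K_0(\mathscr C_{v,\beta})\) is the subring of \(K_0(\mathscr C^{\ZZ})\) spanned by the simple classes lying in \(\mathscr C_{v,\beta}\), a Serre monoidal subcategory. Since all generators land there, the homomorphism factors through \(K_0(\mathscr C_{v,\beta})\), and it remains injective.

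The main obstacle I expect is Step 1. One must check carefully that exchange relations of \(\mathbf s(v,\beta)\) agree with ambient ones throughout arbitrary mutation sequences, including frozen factors. One must also check that the ambient quantum seed restricts to a valid quantum seed, with the normalization of quantum monomials matching after deletion, so that Lemma~\ref{lem:selected-exchange} and Theorem~\ref{theo:isoAb} apply verbatim. I would handle this by the isolation argument above, applied at the level of the full matrices \(B^{(m)}\) and \(L^{(m)}\).
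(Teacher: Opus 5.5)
Your proposal is correct and follows the paper's proof. You embed $\cA_0(\mathbf s(v,\beta))$ into $\cA_0(\mathbf s(\beta))\simeq K_0(\mathscr C_{\DD}(b))$ using that $B_{J_m\times J_{\rm ex}}=0$ persists under mutations in $J_{\rm ex}$, propagate the vanishing of the first $m$ $\dot\beta^M$-coordinates from Theorem~\ref{thm:induction} along ambient exchange relations, and finish with Lemma~\ref{lem:categorical-prefix-vanishing}. The only cosmetic difference is that you propagate the vanishing through the max-lex formula of Lemma~\ref{lem:selected-exchange}, while the paper runs the same leading-term argument from Lemma~\ref{lem:twoprod} through the subalgebra $\mathcal V_m$ spanned by global basis elements with vanishing initial coordinates.
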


\begin{proof}
We first justify the use of the ambient categorification. In
Definition~\ref{def:svbeta}, the seed \(\mathbf s(v,\beta)\) is
obtained from an ambient mutated seed by freezing vertices and
deleting the vertices \(J_m\). If \(J_{\rm ex}\) is the final set of
exchangeable vertices, the construction gives
\[
B_{J_m\times J_{\rm ex}}=0.
\]
This condition persists under mutations in \(J_{\rm ex}\): for a
deleted row \(d\) and a retained mutable column \(j\), the mutation
formula at \(k\in J_{\rm ex}\) uses only the entries \(b_{dj}\) and
\(b_{dk}\), both of which are zero. Thus the exchange relations on
the retained variables coincide with their ambient exchange
relations after any such mutation sequence. The same holds for
their quantum lifts, since the compatible form is restricted from
the ambient torus. Every seed, and every cluster monomial, of
\(\mathbf s(v,\beta)\) therefore lifts to an ambient seed and
ambient cluster monomial. In particular,
\[
\cA_0(\mathbf s(v,\beta))\hookrightarrow
\cA_0(\mathbf s(\beta))
\xrightarrow{\sim}K_0(\mathscr C_{\DD}(b)).
\]

We next propagate the parameter vanishing of
Theorem~\ref{thm:induction}. Work inside
\(\widehat{\cA}(\Delta^{M+1})\) with its \(\dot\beta^M\)-PBW
parameters. Let \(\mathcal V_m\) be the span of the normalized global
basis elements whose first \(m\) coordinates vanish. This is a
subalgebra: by Lemma~\ref{lem:twoprod}, every index in a product with
parameters \(\mathbf a,\mathbf b\) is at most
\(\mathbf a+\mathbf b\) in the triangular order. If both parameters
have zero initial \(m\) coordinates, the same first-difference
argument as in Lemma~\ref{lem:categorical-prefix-vanishing} gives this
vanishing for every index of the product.

The normalized quantum variables of the initial seed
\(\mathbf s(v,\beta)\) belong to \(\mathcal V_m\), by
Theorem~\ref{thm:induction}. Suppose this holds for all variables in
a reached seed, and mutate a variable \(X\) to \(X'\). In the
bosonic realization the exchange relation has the form
\[
XX'=q^a\mathcal M_+ +q^b\mathcal M_-,
\qquad a,b\in\tfrac12\ZZ,
\]
where \(\mathcal M_\pm\) are products of variables of that seed.
The right-hand side lies in \(\mathcal V_m\). Both \(X\) and \(X'\)
are ambient normalized global basis elements by
Theorem~\ref{theo:isoAb}. Lemma~\ref{lem:twoprod} shows that the
maximal index occurring on the left is
\[
\bfa^{\dot\beta^M}(X)+\bfa^{\dot\beta^M}(X'),
\]
with nonzero coefficient. It must have zero first \(m\) coordinates,
because the right-hand side is supported on such indices. Since
\(X\) has this vanishing, so does \(X'\). Induction proves the
vanishing for all quantum cluster variables of the restricted seed,
and for their cluster monomials by the subalgebra property.

By Lemma~\ref{lem:categorical-prefix-vanishing}, every corresponding
real simple module belongs to \(\mathscr C_{v,\beta}\). The image of
the preceding injective homomorphism consequently lies in
\(K_0(\mathscr C_{v,\beta})\), since \(\cA_0\) is generated over
\(\ZZ\) by its cluster and frozen variables. This proves both
assertions.
\end{proof}

\begin{remark}
The preceding proof is independent of a geometric identification of
\(\mathbf s(v,\beta)\) and of an equality between its cluster and
upper cluster algebras. It establishes an embedding and the
realization of cluster monomials; it does not assert surjectivity
onto \(K_0(\mathscr C_{v,\beta})\).
\end{remark}

\noindent
\textbf{Use of ChatGPT.}
The author used OpenAI's ChatGPT to improve the language and
presentation of this paper, including the exposition of some of its
main results.


\begin{thebibliography}{CGGLSS25}

\bibitem[BH22]{bao2022total}
H.~Bao and X.~He,
\newblock Total positivity in twisted product of flag varieties,
\newblock Preprint, arXiv:2211.11168, 2022.

\bibitem[BY25]{bao2025upper}
H.~Bao and J.~Y.~Ye,
\newblock Upper cluster structure on Kac--Moody Richardson varieties,
\newblock Preprint, arXiv:2506.10382, 2025.

\bibitem[Bi24]{bi2024monoidal}
Y.~Bi,
\newblock Monoidal categorification on open Richardson varieties,
\newblock Preprint, arXiv:2409.04715, 2024.

\bibitem[Bi25]{bi2025cluster}
Y.~Bi,
\newblock On cluster structures of bosonic extensions,
\newblock Preprint, arXiv:2506.00882, 2025.

\bibitem[BZ05]{berenstein2005quantum}
A.~Berenstein and A.~Zelevinsky,
\newblock Quantum cluster algebras,
\newblock {\em Adv. Math.} \textbf{195} (2005), no.~2, 405--455.

\bibitem[CGGLSS25]{casals2025cluster}
R.~Casals, E.~Gorsky, M.~Gorsky, I.~Le, L.~Shen, and J.~Simental,
\newblock Cluster structures on braid varieties,
\newblock {\em J. Amer. Math. Soc.} \textbf{38} (2025), no.~2, 369--479.

\bibitem[FHOO23]{fujita2023isomorphisms}
R.~Fujita, D.~Hernandez, S.-j.~Oh, and H.~Oya,
\newblock Isomorphisms among quantum Grothendieck rings and cluster algebras,
\newblock Preprint, arXiv:2304.02562, 2023.

\bibitem[FZ02]{fomin2002cluster}
S.~Fomin and A.~Zelevinsky,
\newblock Cluster algebras I: foundations,
\newblock {\em J. Amer. Math. Soc.} \textbf{15} (2002), no.~2, 497--529.

\bibitem[GLSB26]{galashin2025braid}
P.~Galashin, T.~Lam, and M.~Sherman-Bennett,
\newblock Braid variety cluster structures, II: general type,
\newblock {\em Invent. Math.} \textbf{243} (2026), no.~3, 1079--1127.

\bibitem[GLS13]{geiss2013cluster}
C.~Gei{\ss}, B.~Leclerc, and J.~Schr{\"o}er,
\newblock Cluster structures on quantum coordinate rings,
\newblock {\em Selecta Math. (N.S.)} \textbf{19} (2013), no.~2, 337--397.

\bibitem[HL10]{hernandez2010cluster}
D.~Hernandez and B.~Leclerc,
\newblock Cluster algebras and quantum affine algebras,
\newblock {\em Duke Math. J.} \textbf{154} (2010), no.~2, 265--341.

\bibitem[HL16]{hernandez2016cluster}
D.~Hernandez and B.~Leclerc,
\newblock A cluster algebra approach to $q$-characters of Kirillov--Reshetikhin modules,
\newblock {\em J. Eur. Math. Soc. (JEMS)} \textbf{18} (2016), no.~5, 1113--1159.

\bibitem[Kam10]{kamnitzer2010mirkovic}
J.~Kamnitzer,
\newblock Mirkovi\'c--Vilonen cycles and polytopes,
\newblock {\em Ann. of Math. (2)} \textbf{171} (2010), no.~1, 245--294.

\bibitem[KKKO18]{kang2018monoidal}
S.-J.~Kang, M.~Kashiwara, M.~Kim, and S.-j.~Oh,
\newblock Monoidal categorification of cluster algebras,
\newblock {\em J. Amer. Math. Soc.} \textbf{31} (2018), no.~2, 349--426.

\bibitem[KKOP21]{kashiwara2021braid}
M.~Kashiwara, M.~Kim, S.-j.~Oh, and E.~Park,
\newblock Braid group action on the module category of quantum affine algebras,
\newblock {\em Proc. Japan Acad. Ser. A Math. Sci.} \textbf{97} (2021), no.~3, 13--18.

\bibitem[KKOP24a]{kashiwara2024monoidal}
M.~Kashiwara, M.~Kim, S.-j.~Oh, and E.~Park,
\newblock Monoidal categorification and quantum affine algebras II,
\newblock {\em Invent. Math.} \textbf{236} (2024), no.~2, 837--924.

\bibitem[KKOP24b]{kashiwara2024braid}
M.~Kashiwara, M.~Kim, S.-j.~Oh, and E.~Park,
\newblock Braid symmetries on bosonic extensions,
\newblock Preprint, arXiv:2408.07312, 2024.

\bibitem[KKOP25a]{kashiwara2025global}
M.~Kashiwara, M.~Kim, S.-j.~Oh, and E.~Park,
\newblock Global bases for bosonic extensions of quantum unipotent coordinate rings,
\newblock {\em Proc. Lond. Math. Soc. (3)} \textbf{131} (2025), no.~2, e70076.

\bibitem[KKOP25b]{kashiwara2025monoidal}
M.~Kashiwara, M.~Kim, S.-j.~Oh, and E.~Park,
\newblock Monoidal categorification and quantum affine algebras III,
\newblock Preprint, arXiv:2509.14552, 2025.

\bibitem[Men22]{menard2022cluster}
\'E.~M\'enard,
\newblock Cluster algebras associated with open Richardson varieties: an algorithm to compute initial seeds,
\newblock Preprint, arXiv:2201.10292, 2022.

\bibitem[OP25]{oh2025pbw}
S.-j.~Oh and E.~Park,
\newblock PBW theory for bosonic extensions of quantum groups,
\newblock {\em Int. Math. Res. Not. IMRN} \textbf{2025} (2025), no.~6, rnaf049.

\bibitem[Qin24]{qin2024analogs}
F.~Qin,
\newblock Analogs of the dual canonical bases for cluster algebras from Lie theory,
\newblock Preprint, arXiv:2407.02480, 2024.

\bibitem[SW21]{shen2021cluster}
L.~Shen and D.~Weng,
\newblock Cluster structures on double Bott--Samelson cells,
\newblock {\em Forum Math. Sigma} \textbf{9} (2021), e66.

\bibitem[WY07]{webster2007deodhar}
B.~Webster and M.~Yakimov,
\newblock A Deodhar-type stratification on the double flag variety,
\newblock {\em Transform. Groups} \textbf{12} (2007), no.~4, 769--785.

\end{thebibliography}
\end{document}